\documentclass[a4paper,twoside,11pt]{article}

\usepackage{a4wide, fancyhdr, amsmath, amssymb, mathtools, yfonts}
\usepackage{mathrsfs}
\usepackage{graphicx}
\usepackage{tikz}
\usepackage[all]{xy}
\usepackage[utf8]{inputenc}
\usepackage{amsthm}
\usepackage[english]{babel}
\usepackage{chngcntr}
\usepackage{ifthen}
\usepackage{calc}
\usepackage{hyperref}
\usepackage{authblk}
\numberwithin{equation}{section}
\usepackage{tikz}
\usetikzlibrary{arrows,decorations.markings}


\setlength\headheight{20pt}
\addtolength\topmargin{-10pt}
\addtolength\footskip{20pt}

\newcommand{\Z}{\mathbb{Z}}
\newcommand{\Q}{\mathbb{Q}}

\newcommand{\pr}{\textup{pr}}

\newcommand\FF{\mathbb{F}}

\newcommand\pp{\mathfrak{p}}

\newcommand\Gal{\mathrm{Gal}}

\DeclareMathOperator{\sep}{sep}

\newtheorem{lemma}{Lemma}[section]
\newtheorem{theorem}[lemma]{Theorem}
\newtheorem{prop}[lemma]{Proposition}
\newtheorem{corollary}[lemma]{Corollary}
\newtheorem{mydef}[lemma]{Definition}

\newtheorem{remark}{Remark}

\title{\vspace{-\baselineskip}\sffamily\bfseries On the distribution of $\text{Cl}(K)[l^\infty]$ for degree $l$ cyclic fields}
\author[1]{Peter Koymans\thanks{Niels Bohrweg 1, 2333 CA Leiden, Netherlands, p.h.koymans@math.leidenuniv.nl}}
\author[2]{Carlo Pagano\thanks{Vivatsgasse 7, 53111 Bonn, Germany, carlein90@gmail.com}}
\affil[1]{Mathematisch Instituut, Leiden University}
\affil[2]{Max Planck Institute for Mathematics}

\date{\today}

\begin{document}
\maketitle

\begin{abstract}
Using a recent breakthrough of Smith \cite{Smith}, we prove that $l^\infty$-class groups of cyclic degree $l$ fields have the distribution conjectured by Gerth under GRH.
\end{abstract}

\tableofcontents

\section{Introduction}
\label{sIntro}
Class groups have a long and rich history going back to Gauss, who studied them in the language of binary quadratic forms. In modern terms, Gauss gave an explicit description of $\text{Cl}(K)[2]$ for $K$ a quadratic number field with narrow class group $\text{Cl}(K)$. This is now known as genus theory. Since then class groups have been extensively studied leading to the development of class field theory and the Langlands conjectures.

Nowadays the class group is typically thought of as a `random' object. Cohen and Lenstra put forward conjectures on the average behavior of class groups. Their conjecture predicts that for all odd primes $p$ and all finite, abelian $p$-groups $A$
\[
\lim_{X \rightarrow \infty} \frac{\left|\left\{K \text{ quadratic} : 0 < D_K < X \text{ and } \text{Cl}(K)[p^\infty] \cong A\right\}\right|}{\left|\left\{K \text{ quadratic} : 0 < D_K < X\right\}\right|} = \frac{\prod_{i = 2}^\infty \left(1 - \frac{1}{p^i}\right)}{|A| |\text{Aut}(A)|},
\]
where $D_K$ denotes the discriminant of our field $K$. They also proposed a similar conjecture for imaginary quadratic fields, namely
\[
\lim_{X \rightarrow \infty} \frac{\left|\left\{K \text{ quadratic} : -X < D_K < 0 \text{ and } \text{Cl}(K)[p^\infty] \cong A\right\}\right|}{\left|\left\{K \text{ quadratic} : -X < D_K < 0\right\}\right|} = \frac{\prod_{i = 1}^\infty \left(1 - \frac{1}{p^i}\right)}{|\text{Aut}(A)|}.
\]
Although the Cohen and Lenstra conjectures have attracted a great deal of attention, there are very few proven instances. Davenport and Heilbronn \cite{DH} obtained partial results in the case $p = 3$, while the case $p > 3$ is still wide open. Cohen and Lenstra originally stated their conjectures only for odd $p$, but the case $p = 2$ is also very interesting. In the case $p = 2$ we have a very explicit description of $\text{Cl}(K)[2]$, and the class group can no longer be thought of as a random object.

Gerth \cite{Gerth} proposed the following modification of the Cohen--Lenstra conjectures; instead of $\text{Cl}(K)[2^\infty]$, it is $(2\text{Cl}(K))[2^\infty]$ that behaves randomly. Fouvry and Kl\"uners \cite{FK2, FK}, building on earlier work of Heath-Brown on $2$-Selmer groups \cite{HB}, proved that $(2\text{Cl}(K))[2]$ has the correct distribution for both imaginary and real quadratic fields. A major breakthrough came when Smith \cite{Smith}, extending earlier work of himself \cite{Smith8}, proved 
\[
\lim_{X \rightarrow \infty} \frac{\left|\left\{K \text{ quadratic} : -X < D_K < 0 \text{ and } (2\text{Cl}(K))[2^\infty] \cong A\right\}\right|}{\left|\left\{K \text{ quadratic} : -X < D_K < 0\right\}\right|} = \frac{\prod_{i = 1}^\infty \left(1 - \frac{1}{2^i}\right)}{|\text{Aut}(A)|}
\]
for all finite, abelian $2$-groups $A$. In the course of the proof Smith develops several powerful and versatile methods. Using the same methods, Smith also deals with the distribution of $2^k$-Selmer groups of elliptic curves. 

Essential to Smith's method is the explicit description of $\text{Cl}(K)[2]$, i.e. genus theory. This allows us to study complicated sets such as $2^{k - 1}\text{Cl}(K)[2^k]$ via its natural inclusion in $\text{Cl}(K)[2]$. Now let $l$ be an odd prime and $K$ be a cyclic degree $l$ field, so that $\text{Cl}(K)$ becomes a $\Z[\zeta_l]$-module in $l - 1$ different ways depending on the identification between $\Gal(K/\Q)$ and $\langle \zeta_l \rangle$. Fortunately, since $K$ is cyclic, the isomorphism type of $\text{Cl}(K)$ as a $\Z[\zeta_l]$-module does not depend on this identification. 

Genus theory gives an explicit description of $\text{Cl}(K)[1 - \zeta_l]$. Klys \cite{Klys} proved conditional on GRH that $((1 - \zeta_l) \text{Cl}(K))[1 - \zeta_l]$ has the expected distribution \cite[p.\ 312]{Gerth2} and also gave an unconditional proof in the case $l = 3$. Both these results use the Fouvry--Kl\"uners method \cite{FK2, FK}. Our main theorem proves that $((1 - \zeta_l) \text{Cl}(K))[(1 - \zeta_l)^\infty]$ has the expected distribution using the breakthrough method of Smith \cite{Smith}.

\begin{theorem}
\label{tMain}
Assume GRH. Then for all odd primes $l$ and all finitely generated, torsion $\Z_l[\zeta_l]$-modules $A$ the limit
\[
\lim_{X \rightarrow \infty} \frac{\left|\left\{K \textup{ cyclic of degree } l : \textup{rad}(D_K) < X \textup{ and } ((1 - \zeta_l)\textup{Cl}(K))\left[(1 - \zeta_l)^\infty\right] \cong A\right\}\right|}{\left|\left\{K \textup{ cyclic of  degree } l : \textup{rad}(D_K) < X\right\}\right|}
\]
exists and is equal to
\[
\frac{\prod_{i = 2}^\infty \left(1 - \frac{1}{l^i}\right)}{|A| \left|\textup{Aut}_{\Z_l[\zeta_l]}(A)\right|}.
\]
\end{theorem}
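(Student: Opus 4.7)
The plan is to port Smith's method \cite{Smith} from the $2$-part of class groups of quadratic fields to the $(1-\zeta_l)$-part of class groups of cyclic degree $l$ fields, systematically replacing the role of $2$ by $1 - \zeta_l$ and of $\FF_2$-vector spaces by $\FF_l$-vector spaces carrying additional $\Z_l[\zeta_l]$-module structure. First, I would parametrize the fields being counted: by class field theory, cyclic degree $l$ extensions of $\Q$ correspond to primitive characters $\chi \colon G_\Q \twoheadrightarrow \mu_l$ and are determined by their conductor, a squarefree product of primes $p \equiv 1 \pmod l$ possibly combined with $l$ or $l^2$. Since $\text{rad}(D_K)$ essentially coincides with this conductor, ordering by $\text{rad}(D_K)$ becomes an ordering on tuples of admissible ramified primes, turning the theorem into a counting problem over such tuples.

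Next, I would establish the base case via genus theory: $\text{Cl}(K)[1 - \zeta_l]$ is naturally an $\FF_l$-vector space whose dimension is one less than the number of ramified primes (with a small correction at $l$), and whose $\Z_l[\zeta_l]$-module structure is encoded by the R\'edei field of $K$. This is essentially the content of the Fouvry--Kl\"uners--Klys work \cite{FK, FK2, Klys} and already yields the expected distribution of $((1-\zeta_l)\text{Cl}(K))[1 - \zeta_l]$, providing the starting level of the descent.

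The main step is then to construct higher R\'edei-type symbols that detect the image of a class in $\text{Cl}(K)[1-\zeta_l]$ inside $((1-\zeta_l)^{k-1}\text{Cl}(K))[(1-\zeta_l)^k]$ for each $k$, expressible via the splitting behavior of the ramified primes in governing Galois extensions built recursively from the $(k-1)$-st level. Smith's combinatorial reduction—organizing ramified tuples into ``grids'' on which the symbols become approximately bilinear—paired with effective Chebotarev in these governing extensions under GRH, would then yield uniform distribution of these symbols as the ramification tuple varies. Summing over base $(1-\zeta_l)$-ranks with the correct probability weights produces the Cohen--Lenstra--Gerth density
\[
\frac{\prod_{i = 2}^\infty (1 - l^{-i})}{|A| \, |\text{Aut}_{\Z_l[\zeta_l]}(A)|}.
\]

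The hard part will be faithfully transcribing Smith's characteristic-two machinery—his additive systems, bilinearly enhanced structures, and Markov chain reduction—to the $\Z_l[\zeta_l]$ setting, where the Galois cohomology of $\Gal(K/\Q)$ acting on the relevant Selmer-type groups is richer, cocycle arithmetic loses the simplifications afforded by $\FF_2$, and one must track equivariance under $\Gal(\Q(\zeta_l)/\Q)$ throughout. A closely related obstacle is proving the required large-sieve style bounds for character sums over tuples of primes satisfying prescribed Frobenius conditions in the governing fields, which is precisely the point where the GRH hypothesis enters to furnish error terms sharper than what one can obtain unconditionally.
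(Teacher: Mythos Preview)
Your plan is essentially the paper's own strategy: parametrize cyclic degree $l$ fields by characters, use genus theory for the base level, build relative governing fields and raw cocycles to access the higher Artin pairings, organize everything into $l$-additive systems \`a la Smith, and apply effective Chebotarev under GRH to equidistribute the symbols and recover the Cohen--Lenstra--Gerth weights. Two points where your sketch diverges from what the paper actually does: GRH is invoked not to prove large-sieve bounds but to \emph{replace} the missing $l$-th power large sieve via effective Chebotarev, and a genuinely new obstacle you do not mention is that sums like $\sum_{X<p<Y}\chi_p(\text{Frob}(q))$ can be biased for a fixed choice of the characters $\chi_p$, forcing the paper to average over all such choices---this, together with the fact that the relation among ramified primes in $\text{Cl}(K)[1-\zeta_l]$ is random (making the situation analogous to \emph{real} rather than imaginary quadratic fields), accounts for most of the additional work beyond a direct transcription of Smith.
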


We order our fields by the radical of the discriminant for technical convenience. The interested reader should have no trouble proving Theorem \ref{tMain} when the fields are instead ordered by the absolute value of the discriminant. Let $\text{Field}(N, l)$ be the set of cyclic degree $l$ number fields $K$ over $\Q$ with $\text{rad}(D_K) \leq N$. For $0 \leq j \leq n$ let $P(j | n)$ be the probability that a uniformly chosen $n \times (n + 1)$ matrix with entries in $\mathbb{F}_l$ has rank $n - j$. Furthermore, for $k \geq 2$ and $n \geq 0$ let $D_{l, k}(n)$ be the set of cyclic degree $l$ fields $K$ satisfying 
\[
\dim_{\FF_l} (1 - \zeta_l)^{k - 1} \text{Cl}(K)\left[(1 - \zeta_l)^k\right] = n.
\]
Theorem \ref{tMain} will fall as a consequence of the following theorem that we prove in Section \ref{sMain}.

\begin{theorem}
\label{tCyclic}
Assume GRH and let $l$ be an odd prime. There are $c, A, N_0 > 0$ such that for all $N > N_0$, all integers $m \geq 2$ and all sequences $n_2 \geq \ldots \geq n_{m + 1} \geq 0$ of integers we have
\begin{multline*}
\left|\left|\textup{Field}(N, l) \cap \bigcap_{k = 2}^{m + 1} D_{l, k}(n_k)\right| - P(n_{m + 1} | n_m) \cdot \left|\textup{Field}(N, l) \cap \bigcap_{k = 2}^m D_{l, k}(n_k)\right|\right| \leq \\
\frac{AN}{\left(\log \log N\right)^{\frac{c}{m^2(l^2 + l)^m}}}.
\end{multline*}
\end{theorem}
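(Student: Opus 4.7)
The plan is to transpose Smith's additive-combinatorial framework \cite{Smith}, which handled the case $l = 2$, to cyclic extensions of prime degree $l$. First I would parameterize $\textup{Field}(N, l)$ via Kummer theory over $F = \Q(\zeta_l)$: cyclic degree $l$ extensions $K/\Q$ correspond, up to a controllable finite ambiguity, to squarefree (in $F$) products of primes $\pp$ above rational primes $p \equiv 1 \pmod{l}$, stable under $\Gal(F/\Q)$ and satisfying a character condition. Ordering by $\textup{rad}(D_K)$ is then essentially ordering by the product of such rational primes, which makes the combinatorics of a Smith-type sieve tractable. Classical genus theory yields $\dim_{\FF_l} \text{Cl}(K)[1 - \zeta_l] = t - 1$, where $t$ is the number of ramified rational primes, so conditioning on $n_2 = t - 1$ already cuts out a natural family.

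The heart of the argument is a recursive description of $V_k := (1 - \zeta_l)^{k-1}\text{Cl}(K)[(1 - \zeta_l)^k]$ as the kernel of an $\FF_l$-linear map on $V_{k-1}$ whose matrix is filled with higher reciprocity symbols. For $k = 2$ these are the degree $l$ Rédei symbols used by Klys \cite{Klys}; for larger $k$ one obtains iterated symbols evaluated in governing extensions $L_k/F$ built from successive $l$-th roots of elements whose existence depends on the vanishing of the previous layer of symbols. The transition $n_m \to n_{m+1}$ is then the rank drop of an $n_m \times (n_m + 1)$ matrix over $\FF_l$ whose entries are symbols indexed by pairs of ramified primes; so the theorem reduces to showing that, conditional on the lower-level matrices having prescribed rank profile, this new matrix equidistributes, since the rank distribution of a uniformly random $n \times (n+1)$ matrix over $\FF_l$ is exactly $P(j \mid n)$.

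To obtain equidistribution I would combine three ingredients: (i) GRH-conditional effective Chebotarev over the governing field $L_m$ (of degree $\ll l^{m-1}(l-1)$) to evaluate each individual symbol with a power-saving error; (ii) Smith's bilinear character-sum estimates, adapted from $\FF_2$ to $\FF_l$, showing cancellation in sums $\sum_{\pp,\qq}\chi(\pp)\chi(\qq)$ where $\chi$ ranges over characters attached to intermediate governing fields; and (iii) the combinatorial step converting matching moments of the symbol matrix into the random-matrix rank distribution $P(n_{m+1} \mid n_m)$. The main obstacle is (ii): the bilinear estimate must be proved with a uniform loss that is polynomial in the depth $m$ of the recursion, because the governing field grows at each step and one must reroll Smith's "Chebotarev for the grid" through every level. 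The explicit exponent $c/(m^2(l^2 + l)^m)$ in the error term should emerge by tracking, at each of the $m$ recursive stages, a loss of size roughly $\log\log$ to some power determined by $[L_k : \Q] = O(l^2(l^2+l)^{k-1})$. Over $\FF_l$ rather than $\FF_2$, the extra freedom in choosing non-trivial $l$-th power residues is compensated by the enlarged unit group of $F$ and more delicate norm-one relations in Kummer theory, and reworking the relevant cancellation lemmas in this setting is where I expect the real technical work to lie.
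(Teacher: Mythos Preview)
Your outline has the right shape — transport Smith's machinery from $\FF_2$ to $\FF_l$ — but it glosses over the one place where the transport genuinely breaks, and your ingredient (ii) is exactly where it hides. For $l=2$ each prime $p$ carries a \emph{canonical} quadratic character, so sums like $\sum_{X<p<Y}\chi_p(\mathrm{Frob}\,q)$ oscillate automatically via Chebotarev or the large sieve. For $l>2$ there are $l-1$ characters of conductor $p$, and for a bad (but perfectly legitimate) global choice of the $\chi_p$'s such a sum need not cancel at all. The paper's fix is to introduce the choice of characters as an extra variable (the amalgama $f$) and to \emph{average over all $f$}, proving equidistribution only for most $f$ (Theorem~\ref{t4r}); this averaging then has to be carried through every subsequent reduction (Propositions~\ref{p7.4}, \ref{pChar}, \ref{pFinal}) and is the reason the paper works with characters $\chi:G_\Q\to\langle\zeta_l\rangle$ directly rather than with Kummer data over $\Q(\zeta_l)$. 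Your final sentence gestures at ``extra freedom in choosing non-trivial $l$-th power residues'' but attributes the compensation to units and norm-one relations in Kummer theory; that is not the mechanism, and without the averaging step your bilinear estimate simply fails.

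Two further points. First, you implicitly model the situation on imaginary quadratic fields, but for cyclic degree $l$ the single relation among the ramified primes in $\mathrm{Cl}(K)[1-\zeta_l]$ is \emph{not} the explicit ``product of all ramified primes'' of Gauss genus theory; it comes from the unit group and behaves as a random datum (Corollary~\ref{main thm on ambiguous ideals}). The correct analogy is with \emph{real} quadratic fields, and this is what forces the $n_m\times(n_m+1)$ shape and requires extra care in setting up the variable indices. Second, the higher pairings $\mathrm{Art}_k$ are not ``higher reciprocity symbols'' admitting a closed formula; the paper accesses them only indirectly, via a reflection principle (Theorems~\ref{tMin} and \ref{tAgree}) stating that the \emph{sum} of $\mathrm{Art}_m$ over an $l^m$-cube of characters equals a Frobenius in a governing expansion, and then extracts equidistribution from this through the combinatorics of $l$-additive systems (Proposition~\ref{p4.4}) rather than by computing moments.
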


One could also wonder what happens without GRH. One of the first steps in Smith's method is to fix the Redei matrix of $K$. This is a rather complicated matter, and Smith proves a weak equidistribution statement with ingenious use of the large sieve. The large sieve for $l$-th power residue symbols is currently not as well-developed as the classical quadratic large sieve. It is for this reason only that we need GRH and it may very well be possible to remove this assumption if one obtains a suitable version of the large sieve. An additional benefit of GRH is that it makes several other proofs in this paper substantially easier and shorter.

If $K$ is a quadratic field, $(2\text{Cl}(K))[2^\infty]$ was traditionally studied from the viewpoint of \emph{governing fields}. Cohn and Lagarias \cite{CL} conjectured that for each integer $k \geq 1$ and each integer $d \not \equiv 2 \bmod 4$, there exists a normal field extension $M_{d, k}$ over $\Q$ such that the $2^k$-rank of $\text{Cl}(\Q(\sqrt{dp}))$ is determined by the splitting of $p$ in $M_{d, k}$. Such a field $M_{d, k}$ is called a governing field. Stevenhagen \cite{Stevenhagen2} proved their conjecture for $k \leq 3$. For $k > 3$ the Cohn and Lagarias conjecture is not known to be true or false for any value of $d$, but widely believed to be false with compelling evidence found by Milovic \cite{Milovic} and later by Koymans and Milovic \cite{KM, KM2, KM3}.

One of the major insights in Smith's work is the notion of a relative governing field. To explain this notion, let $\{p_{1, 0}, p_{1, 1}\}, \ldots, \{p_{k, 0}, p_{k, 1}\}$ be primes and let $d$ be a negative squarefree integer. For any function $f: \{1, \ldots, k\} \rightarrow \{0, 1\}$ define
\[
K(f) := \Q\left(\sqrt{d \prod_{i = 1}^k p_{i, f(i)}}\right).
\]
Choose any function $f': \{1, \ldots, k\} \rightarrow \{0, 1\}$. Under suitable conditions Smith shows that the $2^k$-ranks of $K(f)$ with $f \neq f'$ together with the splitting of $p_{k, 0}$ and $p_{k, 1}$ in a field depending only on $\{p_{1, 0}, p_{1, 1}\}, \ldots, \{p_{k - 1, 0}, p_{k - 1, 1}\}$ determine the $2^k$-rank of $K(f')$. This field can be thought of as a relative governing field, and the resulting theorem can be seen as an extremely general `reflection principle'. Amazingly enough, this is the only algebraic result about class groups used in Smith's paper. The rest of his paper is dedicated to rather ingenious combinatorial and analytical arguments that prove the desired equidistribution.

Our paper borrows heavily from the ideas introduced by Smith; and in particular his proof strategy. We start by generalizing his reflection principle. To do so, we introduce a generalized notion of Smith's relative governing fields. One has to be slightly careful, since Smith relies on the fact that $\zeta_2 \in \Q$. Furthermore, Smith uses that $\Gal(K/\Q) \cong \FF_2$ has trivial automorphism group to make several important identifications. However, if $K$ is cyclic of degree $l$ with $l > 2$, we have $\Gal(K/\Q) \cong \FF_l$, which does not have trivial automorphism group. To work around this, we need to work with characters $\chi: G_\Q \rightarrow \langle \zeta_l \rangle$ instead of fields. During our proofs, it will be very important to carefully keep track of the characters $\chi: G_\Q \rightarrow \langle \zeta_l \rangle$ that we have chosen, since we use these characters to make the necessary identifications in a canonical way.

Once we have generalized Smith's notion of relative governing fields, the proof is mostly a straightforward adaptation of Smith's work with the exception of three major changes. Suppose that we have chosen characters $\chi_p: G_\Q \rightarrow \langle \zeta_l \rangle$ of conductor $p$ for all $p \equiv 1 \bmod l$. We need to deal with sums of the type
\[
\sum_{X <  p < Y} \chi_p(\text{Frob}(q))
\] 
for fixed $q$. If $\chi$ runs over quadratic characters, one may prove cancellation of such sums by an application of Chebotarev or the large sieve. However, if $\chi$ runs over more general characters, such a sum may be biased for a bad choice of the characters $\chi_p$. To work around this issue, we average over all choices of characters $\chi_p$, and use a mixture of Chebotarev and combinatorial arguments to show that there is cancellation for most choices of characters $\chi_p$.

The second issue is the earlier mentioned lack of an appropriate large sieve, and we work around this by assuming GRH. Finally, there is one important point where the analogy between $\text{Cl}(K)[2^\infty]$ for $K$ imaginary quadratic and $\text{Cl}(K)[(1 - \zeta_l)^\infty]$ for $K$ degree $l$ cyclic breaks down. Indeed, the relation between the ramified prime ideals in $\text{Cl}(K)[2]$ is explicitly given by Gauss genus theory. On the other hand, the relation between the ramified prime ideals in $\text{Cl}(K)[1 - \zeta_l]$ should be thought of as being random. It is for this reason that $\text{Cl}(K)[(1 - \zeta_l)^\infty]$ is more similar to $\text{Cl}(K)[2^\infty]$ for $K$ real quadratic. We work around this by using techniques from an unpublished note from the first author, where he extends Smith's work to real quadratic fields.

\section*{Acknowledgements}
We are very grateful to Alexander Smith for answering many questions regarding his paper. We would also like to thank Stephanie Chan, Jan-Hendrik Evertse, Hendrik Lenstra, Djordjo Milovic and Peter Stevenhagen for various useful discussions. The first author greatly appreciates the hospitality of the Max Planck Institute for Mathematics during two visits.

\section{Setup} 
\label{set up}
In this section we introduce the most important objects and notation. Our first subsection defines the central objects in this paper. Once this is done, we devote the next subsection to the necessary notation and conventions.

\subsection{The Artin pairing}
Let $l$ be an odd prime number, which is treated as fixed throughout the paper. Whenever we use $O(\cdot)$ or $\ll$, the implicit constant may depend on $l$ and we shall not record this dependence. Fix once and for all $\overline{\mathbb{Q}}$, an algebraic closure of $\mathbb{Q}$. If $K \subseteq \overline{\mathbb{Q}}$ is a number field, we denote by $G_K := \text{Gal}(\overline{\mathbb{Q}}/K)$. Also fix an element $\zeta_l$ of $\overline{\mathbb{Q}}^\ast$ with multiplicative order equal to $l$; this is a generator of the group $\mu_l(\overline{\mathbb{Q}}) := \{\alpha \in \overline{\mathbb{Q}} : \alpha^l =1 \}$. We define
$$
\Gamma_{\mu_l}(\mathbb{Q}) := \text{Hom}_{\text{top.gr.}}(G_{\mathbb{Q}}, \mu_l(\overline{\mathbb{Q}})).
$$
Here $G_{\mathbb{Q}}$ has the Krull topology and $\mu_l(\overline{\mathbb{Q}})$ the discrete topology. For a character $\chi \in \Gamma_{\mu_l}(\mathbb{Q})$, we denote by 
$$
K_\chi := \overline{\mathbb{Q}}^{\text{ker}(\chi)}
$$
the corresponding extension of $\mathbb{Q}$. This is a cyclic extension with degree dividing $l$, and equal to $1$ if and only if $\chi$ is the trivial character. We denote by $\text{Cl}(K_\chi)$ the class group of $K_\chi$. Observe that $\text{Cl}(K_\chi)[l^\infty]$ is a $\mathbb{Z}_l[\text{Gal}(K_\chi/\mathbb{Q})]$-module. Since $\mathbb{Z}$ is a PID, the norm element
$$
N_{\text{Gal}(K_\chi/\mathbb{Q})} := \sum_{g \in \text{Gal}(K_\chi/\mathbb{Q})} g
$$ 
acts trivially on $\text{Cl}(K_\chi)$. From this, we deduce that $\text{Cl}(K_\chi)[l^\infty]$ has naturally the structure of a $\frac{\mathbb{Z}_l[\text{Gal}(K_\chi/\mathbb{Q})]}{N_{\text{Gal}(K_\chi /\mathbb{Q})}}$-module. Moreover, $\chi$ gives a natural isomorphism of $\mathbb{Z}_l$-algebras
$$
\chi:\frac{\mathbb{Z}_l[\text{Gal}(K_\chi/\mathbb{Q})]}{N_{\text{Gal}(K_\chi/\mathbb{Q})}} \to \mathbb{Z}_l[\zeta_l] := \mathbb{Z}[\zeta_l] \otimes_{\mathbb{Z}} \mathbb{Z}_l.
$$
In this manner $\text{Cl}(K_\chi)[l^\infty]$ is naturally equipped with the structure of a $\mathbb{Z}_l[\zeta_l]$-module. In what follows, it is \emph{always} with respect to this structure that we will talk about $\text{Cl}(K_\chi)[l^\infty]$ as a $\mathbb{Z}_l[\zeta_l]$-module.

The ring $\mathbb{Z}_l[\zeta_l]$ is a local PID with the unique maximal ideal generated by $1 - \zeta_l$. Therefore for every finite $\mathbb{Z}_l[\zeta_l]$-module $A$, there is a unique function $f_{A}:\mathbb{Z}_{\geq 1} \to \mathbb{Z}_{\geq 0}$ such that
$$
A \simeq_{\mathbb{Z}_l[\zeta_l]} \bigoplus_{i \in \mathbb{Z}_{\geq 1}} \left(\frac{\mathbb{Z}_l[\zeta_l]}{(1 - \zeta_l)^{i}\mathbb{Z}_l[\zeta_l]} \right)^{f_A(i)}.
$$
Since $A$ is finite, the map $f_A$ has finite support and it can be reconstructed from the decreasing sequence of numbers
$$
k \mapsto \text{rk}_{(1 - \zeta_l)^k} A := \text{dim}_{\mathbb{F}_l} (1 - \zeta_l)^{k - 1} A[(1 - \zeta_l)^k],
$$
defined for every positive integer $k$. Therefore the sequence 
$$
\left\{\text{rk}_{(1 - \zeta_l)^k} \text{Cl}(K_\chi) \right\}_{k \in \mathbb{Z}_{\geq 1}}
$$
determines completely the structure of the $\mathbb{Z}_l[\zeta_l]$-module $\text{Cl}(K_\chi)[l^\infty]$. Here, for brevity, $\text{rk}_{(1 - \zeta_l)^k} \text{Cl}(K_\chi)$ stands for $\text{rk}_{(1 - \zeta_l)^k}\text{Cl}(K_\chi)[l^\infty]$, which has been defined above. The following $\mathbb{Z}_l[\zeta_l]$-module will have a big role for us 
$$
N := \frac{\mathbb{Q}_l(\zeta_l)}{\mathbb{Z}_l[\zeta_l]}.
$$
For a finitely generated, torsion $\mathbb{Z}_l[\zeta_l]$-module $A$, we define
$$
A^{\vee} := \text{Hom}_{\mathbb{Z}_l[\zeta_l]}(A, N).
$$
The following is not hard to see.

\begin{prop} 
\label{A isomorphic to its dual}
The $\mathbb{Z}_l[\zeta_l]$-modules $A$ and $A^{\vee}$ are isomorphic. 
\end{prop}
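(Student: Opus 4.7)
The plan is to reduce to the cyclic case via the structure theorem and then use a direct identification of the dual of a cyclic module with a torsion subgroup of $N$.

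First, since $\Z_l[\zeta_l]$ is a local PID with uniformizer $1-\zeta_l$ and $A$ is finitely generated and torsion, the structure theorem gives, as already recalled in the excerpt,
\[
A \simeq_{\Z_l[\zeta_l]} \bigoplus_{i \geq 1} \left(\frac{\Z_l[\zeta_l]}{(1-\zeta_l)^i \Z_l[\zeta_l]}\right)^{f_A(i)}
\]
for a unique finitely supported function $f_A$. Because $\text{Hom}_{\Z_l[\zeta_l]}(-, N)$ sends finite direct sums to finite direct sums, it is enough to produce, for every $i \geq 1$, a $\Z_l[\zeta_l]$-module isomorphism
\[
\text{Hom}_{\Z_l[\zeta_l]}\!\left(\frac{\Z_l[\zeta_l]}{(1-\zeta_l)^i\Z_l[\zeta_l]},\, N\right) \;\simeq\; \frac{\Z_l[\zeta_l]}{(1-\zeta_l)^i\Z_l[\zeta_l]}.
\]
Once this is in place, taking the direct sum over $i$ with multiplicities $f_A(i)$ yields the desired isomorphism $A^\vee \simeq A$.

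For the cyclic case, I would use the standard adjunction: a $\Z_l[\zeta_l]$-homomorphism out of $\Z_l[\zeta_l]/(1-\zeta_l)^i \Z_l[\zeta_l]$ is determined by the image of $1$, which must be annihilated by $(1-\zeta_l)^i$. This gives a natural identification
\[
\text{Hom}_{\Z_l[\zeta_l]}\!\left(\frac{\Z_l[\zeta_l]}{(1-\zeta_l)^i\Z_l[\zeta_l]},\, N\right) \;=\; N\bigl[(1-\zeta_l)^i\bigr].
\]
Now I would compute this torsion submodule directly from the definition $N = \Q_l(\zeta_l)/\Z_l[\zeta_l]$: an element $x + \Z_l[\zeta_l] \in N$ is killed by $(1-\zeta_l)^i$ precisely when $(1-\zeta_l)^i x \in \Z_l[\zeta_l]$, i.e.\ when $x \in (1-\zeta_l)^{-i}\Z_l[\zeta_l]$. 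Hence
\[
N\bigl[(1-\zeta_l)^i\bigr] \;=\; \frac{(1-\zeta_l)^{-i}\Z_l[\zeta_l]}{\Z_l[\zeta_l]},
\]
and multiplication by $(1-\zeta_l)^i$ is a $\Z_l[\zeta_l]$-module isomorphism from this quotient onto $\Z_l[\zeta_l]/(1-\zeta_l)^i \Z_l[\zeta_l]$.

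There is no real obstacle here: the argument is the standard Matlis/Pontryagin duality for a discrete valuation ring, packaged via the residue module $N$. The only point to be careful about is that one should not expect a canonical isomorphism $A \simeq A^\vee$ (just as for finite abelian groups and their Pontryagin duals); the proposition asserts only the existence of some isomorphism, which is exactly what the reduction to cyclic summands plus the identification $N[(1-\zeta_l)^i] \simeq \Z_l[\zeta_l]/(1-\zeta_l)^i\Z_l[\zeta_l]$ provides.
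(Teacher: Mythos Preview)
Your argument is correct and is the standard one: decompose $A$ into cyclic pieces via the structure theorem over the DVR $\Z_l[\zeta_l]$, identify $\mathrm{Hom}_{\Z_l[\zeta_l]}(\Z_l[\zeta_l]/(1-\zeta_l)^i,N)$ with $N[(1-\zeta_l)^i]$, and then note $N[(1-\zeta_l)^i]=(1-\zeta_l)^{-i}\Z_l[\zeta_l]/\Z_l[\zeta_l]\cong \Z_l[\zeta_l]/(1-\zeta_l)^i$. The paper does not actually supply a proof (it only says the statement is ``not hard to see''), so your write-up is exactly the kind of routine verification the authors are leaving to the reader; your caveat that the isomorphism is non-canonical is also appropriate.
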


For every integer $k \geq 1$ we next define a pairing of $\mathbb{Z}_l[\zeta_l]$-modules
$$
\text{Art}_k(A) : (1 - \zeta_l)^{k - 1}A[(1 - \zeta_l)^k] \times (1 - \zeta_l)^{k - 1}A^{\vee}[(1 - \zeta_l)^k] \to N[1 - \zeta_l].
$$
Let $a \in (1 - \zeta_l)^{k - 1}A[(1 - \zeta_l)^k]$ and $\chi \in (1 - \zeta_l)^{k - 1}A^{\vee}[(1 - \zeta_l)^k]$. Let $\psi \in A^{\vee}$ be an element such that $(1 - \zeta_l)^{k - 1} \psi = \chi$. We put
$$
\text{Art}_k(A)(a,\chi) := \psi(a).
$$
Observe that, since $a \in (1 - \zeta_l)^{k - 1}A[(1 - \zeta_l)^k]$, the definition does not depend on the choice of $\psi$. The following fact is straightforward.

\begin{prop} 
\label{abstract Artin pairing}
The left-kernel of $\emph{Art}_k(A)$ is $(1 - \zeta_l)^k A[(1 - \zeta_l)^{k+1}]$ and the right-kernel is $(1 - \zeta_l)^k A^{\vee}[(1 - \zeta_l)^{k+1}]$.
\end{prop}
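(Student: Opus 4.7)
The plan is to prove the statement for the left-kernel; the right-kernel follows by the same argument with the roles of $A$ and $A^{\vee}$ swapped, which is legitimate thanks to Proposition \ref{A isomorphic to its dual}. First I would verify that the pairing is well-defined. Given two lifts $\psi_1,\psi_2 \in A^{\vee}$ of $\chi$, their difference lies in $A^{\vee}[(1-\zeta_l)^{k-1}]$; writing $a = (1-\zeta_l)^{k-1}c$ with $c \in A[(1-\zeta_l)^k]$, I get $(\psi_1 - \psi_2)(a) = (1-\zeta_l)^{k-1}(\psi_1 - \psi_2)(c) = 0$, which fixes $\psi(a)$ unambiguously. The same identity $(1-\zeta_l)\psi(a) = \psi((1-\zeta_l)^k c) = 0$ shows that the value lives in $N[1-\zeta_l]$, as claimed.

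For the easy inclusion $(1-\zeta_l)^k A[(1-\zeta_l)^{k+1}] \subseteq \ker_{\text{left}}$, I take $a = (1-\zeta_l)^k b$ with $(1-\zeta_l)^{k+1}b = 0$ and an arbitrary $\chi = (1-\zeta_l)^{k-1}\psi'$ with $\psi' \in A^{\vee}[(1-\zeta_l)^k]$. Using $\psi'$ itself as the lift (which is legitimate by the previous step), and noting that $\psi'$ takes values in $N[(1-\zeta_l)^k]$, a direct computation gives $\psi'(a) = (1-\zeta_l)^k \psi'(b) = 0$.

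For the reverse inclusion, suppose $a \in (1-\zeta_l)^{k-1}A[(1-\zeta_l)^k]$ pairs trivially with every $\chi \in (1-\zeta_l)^{k-1}A^{\vee}[(1-\zeta_l)^k]$; equivalently, $\psi'(a) = 0$ for every $\psi' \in A^{\vee}[(1-\zeta_l)^k]$. Every such $\psi'$ annihilates $(1-\zeta_l)^k A$ (its image lies in $N[(1-\zeta_l)^k]$) and therefore descends to a homomorphism from $M := A/(1-\zeta_l)^k A$ to $N$. The hypothesis thus asserts that the image $\bar a \in M$ is killed by every element of $M^{\vee}$. Applying Proposition \ref{A isomorphic to its dual} to the finite module $M$, combined with the canonical biduality $M \cong (M^{\vee})^{\vee}$ for finite $\mathbb{Z}_l[\zeta_l]$-modules (equivalently: decomposing $M$ into cyclic summands via the structure theorem for the PID $\mathbb{Z}_l[\zeta_l]$ and exhibiting an explicit separating homomorphism on each summand) forces $\bar a = 0$, i.e.\ $a = (1-\zeta_l)^k b$ for some $b \in A$. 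Since $(1-\zeta_l)a = (1-\zeta_l)^k c = 0$, this yields $(1-\zeta_l)^{k+1}b = 0$, placing $a$ in $(1-\zeta_l)^k A[(1-\zeta_l)^{k+1}]$. The only non-formal ingredient is the separation statement, which is the substantive (but standard) consequence of Proposition \ref{A isomorphic to its dual}; everything else is direct element manipulation.
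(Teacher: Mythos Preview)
Your proof is correct. The paper does not actually supply a proof of this proposition, merely calling it ``straightforward,'' so your write-up is a legitimate expansion of the standard argument. One minor remark: in your opening sentence you say the right-kernel follows by swapping $A$ and $A^{\vee}$ ``thanks to Proposition~\ref{A isomorphic to its dual},'' but what the swap actually requires (and what you correctly invoke later in the argument) is the \emph{canonical biduality} $A \cong A^{\vee\vee}$, under which $\mathrm{Art}_k(A)(a,\chi) = \mathrm{Art}_k(A^{\vee})(\chi,\mathrm{ev}_a)$; Proposition~\ref{A isomorphic to its dual} only gives an abstract isomorphism $A \cong A^{\vee}$, which is not quite the same thing. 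This is harmless, since biduality is equally standard and follows from the same structure-theorem reasoning you cite.
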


Hence, instead of directly dealing with
$$
\left\{\text{rk}_{(1 - \zeta_l)^k} \text{Cl}(K_\chi)\right\}_{k \in \mathbb{Z}_{\geq 1}},
$$
our goal is to control the sequence of pairings
$$
\left\{\text{Art}_k(\text{Cl}(K_\chi))\right\}_{k \in \mathbb{Z}_{\geq 1}}.
$$
Here $\text{Art}_k(\text{Cl}(K_\chi))$ is an abbreviation for $\text{Art}_k(\text{Cl}(K_\chi)[l^\infty])$, which has been defined above. Proving equidistribution for this sequence of pairings is the main goal of this paper. We start with some algebraic tools, which culminate in an extremely general reflection principle. In the next section we fix identifications between some cyclic groups of order $l$ that occur in this paper, as well as some other important conventions regarding notation. 

\subsection{Identifications and conventions} 
\label{conventions}
Throughout the paper we will encounter the groups $\mathbb{F}_l$, $\langle\zeta_l\rangle$ and $N[1 - \zeta_l]$. These three groups are isomorphic, but not in a canonical way. Working with each group has its own advantages. Kummer theory is most naturally stated using $\langle \zeta_l \rangle$, while $\mathbb{F}_l$ has a natural product structure that we will take advantage of. Finally, $N[1 - \zeta_l]$ is a subgroup of $N$, which is the image of the various Artin pairings. We need to identify these three groups at several points in the paper, and it is of utmost importance this is done in a consistent matter. We refer to the following diagram whenever such an identification is made.

\begin{center}
\begin{tikzpicture}
\tikzset{myptr/.style={decoration={markings,mark=at position 1 with %
    {\arrow[scale=2.5,>=stealth]{>}}},postaction={decorate}}}
\draw [->,>=stealth] (0, 0) -- (2, 2) node[anchor=south]{$\langle \zeta_l \rangle$};
\draw [myptr] (0,0) -- node[above = 0.25cm, left = 0.5cm]{$j_l(a) := \zeta_l^a$} (2,2);
\draw [->,>=stealth] (2, 2) -- (4, 0) node[anchor=north]{$N[1 - \zeta_l]$};
\draw [myptr] (2,2) -- node[above = 0.25cm, right = 0.5cm]{$i_l(\zeta_l^a) := \frac{a}{1 - \zeta_l}$} (4,0);
\draw [->,>=stealth] node[anchor=north]{$\mathbb{F}_l$} (0, 0) -- (4, 0);
\draw [myptr] (0,0) -- node[above]{$i_l \circ j_l$} (4,0);
\end{tikzpicture}
\end{center}

Any other identification is made by inverting the arrows and maps. The symbol $\mathbb{C}$ will denote, as usual, the complex numbers. The symbol $\text{i}$ denotes a fixed element of $\mathbb{C}^{\ast }$ of multiplicative order equal to $4$. The function $\exp : \mathbb{C} \to \mathbb{C}^\ast$ denotes the exponential map. The group $\mu_l(\mathbb{C})$ is generated by the element $\exp\left(\frac{2\pi \text{i}}{l}\right)$. We also fix the identification $h_l: \mu_l(\mathbb{C}) \to \langle \zeta_l \rangle$ given by
$$
h_l\left(\exp\left(\frac{2\pi \text{i}}{l}\right)\right) := \zeta_l.
$$
We denote by $\Gamma_{\mathbb{F}_l}(\mathbb{Q}):=\text{Hom}_{\text{top.gr.}}(G_{\mathbb{Q}},\mathbb{F}_l)$. The map $j_l$ induces an isomorphism $\Gamma_{\mathbb{F}_l}(\mathbb{Q}) \to \Gamma_{\mu_l}(\mathbb{Q})$. We also define $\Gamma_{\mu_l(\mathbb{C})}(\mathbb{Q}):=\text{Hom}_{\text{top.gr.}}(G_{\mathbb{Q}},\mu_l(\mathbb{C}))$, so $h_l$ induces an isomorphism $\Gamma_{\mu_l(\mathbb{C})}(\mathbb{Q}) \to \Gamma_{\mu_l}(\mathbb{Q})$.

If $q$ is either equal to $l$ or to a prime number that is congruent to $1$ modulo $l$, then there exists a unique cyclic degree $l$ extension of $\mathbb{Q}$ that is totally ramified at $q$ and unramified elsewhere. By class field theory, if $q \neq l$, this is the unique cyclic degree $l$ extension contained in $\mathbb{Q}(\zeta_q)/\mathbb{Q}$. If $q = l$ this is the unique cyclic degree $l$ extension contained in $\mathbb{Q}(\zeta_{l^2})/\mathbb{Q}$. Here $\zeta_q$ and $\zeta_{l^2}$ are elements of $\overline{\mathbb{Q}}$ of multiplicative order equal to $q$ and $l^2$ respectively. We denote these extensions by $L_q$, for $q \neq l$, and by $L_{l^2}$ in the case $q = l$. For each $q$ congruent $1$ modulo $l$ we fix a character
$$ 
\chi_q \in \Gamma_{\mu_l}(\mathbb{Q})
$$
such that $\text{ker}(\chi_q) = G_{L_q}$. There is no way to make such a choice in a canonical manner, and we fix one simply for notational purposes. Similarly, we fix a character
$$ 
\chi_l \in \Gamma_{\mu_l}(\mathbb{Q})
$$
such that $\text{ker}(\chi_l) = G_{L_{l^2}}$. All our algebraic results work for a fixed choice of characters, but later on we will have to vary the choice of characters to make our analytic results work.

The set $\{\chi_q\}_{q \equiv 1 \bmod l} \cup \{\chi_{l}\}$ is a \emph{basis} for $\Gamma_{\mu_l}(\mathbb{Q})$. In particular any cyclic degree $l$ extension ramifies only at primes congruent $1$ modulo $l$ or at $l$, see Proposition \ref{only p and 1 mod p} for a generalization of this fact. By the conductor-discriminant formula we see that a positive integer $D$ equals $\Delta_{K_\chi /\mathbb{Q}}$ for some $\chi \in \Gamma_{\mu_l}(\mathbb{Q})$ if and only if 
\[
D=(q_1 \cdot \ldots \cdot q_r)^{l-1}
\]
with $\{q_i\}_{1 \leq i \leq r}$ a set of $r$ distinct elements each a prime $1$ modulo $l$ or equal to $l^2$. In case $D$ admits such a factorization then $D=\Delta_{K_\chi /\mathbb{Q}}$ for $(l - 1)^r$ different choices of $\chi$, which amounts to a total of $(l - 1)^{r-  1}$ different fields. From now on we say that a positive integer $D$ is $l$-\emph{admissible} if it is the discriminant of a cyclic degree $l$ extension of $\mathbb{Q}$. For each $l$-admissible integer $D$ we call an \emph{amalgama for $D$} a map $\epsilon:\{q \mid D\}_{q \ \text{prime}} \to [l - 1]$. For an $l$-admissible integer $D$, the set of characters $\chi \in \Gamma_{\mu_l}(\mathbb{Q})$ such that $\Delta_{K_\chi /\mathbb{Q}}=D$ corresponds bijectively to the set of amalgamas for $D$, via the assignment
$$
\epsilon \mapsto \chi_{\epsilon}(D) := \prod_{q \mid D} \chi_q^{\epsilon(q)}.
$$
We denote by $\chi \mapsto \epsilon_{\chi}$ the inverse assignment. Let $\chi \in \Gamma_{\mu_l}(\mathbb{Q})$ and $q \mid \Delta_{K_\chi /\mathbb{Q}}$. Then there is a unique prime ideal in $O_{K_\chi}$ lying above $q$. We denote such a prime ideal with $\text{Up}_{K_\chi}(q)$. For a positive integer $b$ and for a prime number $q$ dividing $b$, we write 
$$
\epsilon_b(q)
$$
for the unique integer in $\{0, \ldots, l - 1\}$ with $\epsilon_b(q) \equiv v_{\mathbb{Q}_q}(b) \bmod l$. In particular we have that $\prod_{q \mid b}q^{\epsilon_b(q)}$ equals $b$ in $\frac{\mathbb{Q}^\ast}{\mathbb{Q}^{\ast l}}$. We also define $[d] := \{1, \ldots, d\}$ for any integer $d$.

We shall frequently encounter maps from some profinite group $G$ to some finite set $X$. Whenever we encounter such a map, it will be continuous with respect to the discrete topology on $X$.

\begin{lemma}
\label{lFod}
Let $G$ be a profinite group, let $X$ be a discrete topological space and $\phi: G \to X$ a continuous map. There exists a largest (by inclusion) open normal subgroup $N_\phi$ of $G$ such that the map $\phi$ factors through the canonical projection $G \to G/N_{\phi}$.
\end{lemma}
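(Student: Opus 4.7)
The plan is to extract $N_\phi$ as the union of the directed family
\[
\mathcal{N} := \{N \trianglelefteq G \text{ open} : \phi \text{ factors through } G \to G/N\},
\]
once I have shown this family is non-empty and closed under finite products.

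The only genuinely non-formal step is showing $\mathcal{N} \neq \emptyset$, where I would combine continuity with compactness. Since $X$ is discrete, each fibre $\phi^{-1}(\phi(g))$ is open, and since the open normal subgroups of $G$ form a neighbourhood basis of the identity in the profinite topology, for every $g \in G$ there exists an open normal $U_g \trianglelefteq G$ with $gU_g \subseteq \phi^{-1}(\phi(g))$. Compactness of $G$ then reduces the open cover $\{gU_g\}_{g \in G}$ to a finite subcover $\{g_i U_{g_i}\}_{i=1}^n$, and $U := \bigcap_{i=1}^n U_{g_i}$ is an open normal subgroup on whose cosets $\phi$ is constant: any coset $gU$ is contained in some $g_i U_{g_i} \subseteq \phi^{-1}(\phi(g_i))$. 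Hence $U \in \mathcal{N}$.

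Next, closure under products is immediate: for $N_1, N_2 \in \mathcal{N}$ and $n_i \in N_i$ one has $\phi(g n_1 n_2) = \phi(g n_1) = \phi(g)$, so $N_1 N_2$, which is open and normal, lies in $\mathcal{N}$. Define $N_\phi := \bigcup_{N \in \mathcal{N}} N$. The directedness of $\mathcal{N}$ under products makes $N_\phi$ a subgroup; it is normal as a directed union of normal subgroups, open since it contains a member of $\mathcal{N}$, and $\phi$ factors through $G/N_\phi$ because any $n \in N_\phi$ lies in some $N \in \mathcal{N}$. Maximality is then tautological.

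The compactness-plus-continuity step is the only real content; the rest is standard manipulation of directed families of open normal subgroups in a profinite group, and I would not expect any genuine obstacle.
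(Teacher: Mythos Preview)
Your argument is correct: the compactness step to show $\mathcal{N}\neq\emptyset$ and the directed-union construction of $N_\phi$ are exactly the standard way to prove this, and all details check out. The paper itself simply records the proof as ``straightforward'' without further elaboration, so your write-up is a complete expansion of what the authors left to the reader.
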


\begin{proof}
This is straightforward.
\end{proof}

\noindent Thanks to Lemma \ref{lFod} we can make the following definition.

\begin{mydef}
Let $\phi: G_\Q \rightarrow X$ be a continuous map, where $X$ is a discrete topological space. The group of definition of $\phi$ is the open normal subgroup $N_\phi$ in Lemma \ref{lFod}. Furthermore, we define $L(\phi)$ to be the fixed field of $N_\phi$, which will be called the field of definition of $\phi$.
\end{mydef}
\section{Ambiguous ideals and genus theory} 
In this section we study $\text{Cl}(K_\chi)[1 - \zeta_l]$ and $\text{Cl}(K_\chi)^\vee[1 - \zeta_l]$. The material collected here is well-known to experts and can be found in various forms in the literature, but we have decided to include it for the sake of completeness.

\subsection{Ambiguous ideals}
\label{invariants}
The material in this subsection is known as the theory of ambiguous ideals. Since in $\mathbb{Z}_l[\zeta_l]$ we have the equality of ideals $(1 - \zeta_l)^{l - 1} = (l)$, we in particular have that $\text{Cl}(K_\chi)[1 - \zeta_l]$ is an $\mathbb{F}_l$-vector space. From the definition of the structure of $\text{Cl}(K_\chi)[l^\infty]$ as a $\mathbb{Z}_l[\zeta_l]$-module, it is clear that $\text{Cl}(K_\chi)[1 - \zeta_l] = \text{Cl}(K_\chi)^{\text{Gal}(K_\chi /\mathbb{Q})}$. Thus we can obtain a description of $\text{Cl}(K_\chi)[1 - \zeta_l]$ by taking Galois invariants of the sequence
$$
1 \to \text{Pr}(K_\chi) \to \mathcal{I}_{K_\chi} \to \text{Cl}(K_\chi) \to 1,
$$ 
where $\mathcal{I}_{K_\chi}$ denotes the group of fractional ideals of $O_{K_\chi}$ and $\text{Pr}(K_\chi)$ denotes the group of principal fractional ideals of $O_{K_\chi}$. To take advantage of this sequence we shall begin with the following simple fact.

\begin{prop}
We have that 
$$
H^1(\emph{Gal}(K_\chi /\mathbb{Q}), \emph{Pr}(K_\chi))=0.
$$
\end{prop}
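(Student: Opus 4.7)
The plan is to use the standard cyclic-cohomology interpretation of $H^1$ together with Hilbert 90. When $\chi$ is the trivial character we have $K_\chi = \mathbb{Q}$ and $\text{Gal}(K_\chi/\mathbb{Q})$ is trivial, so the statement is vacuous; from here on suppose $\chi$ is nontrivial, so $G := \text{Gal}(K_\chi/\mathbb{Q})$ is cyclic of order $l$, generated by some $\sigma$. For cyclic groups the first cohomology has the well-known description
$$
H^1(G,\text{Pr}(K_\chi)) \;=\; \ker\bigl(N_G : \text{Pr}(K_\chi) \to \text{Pr}(K_\chi)\bigr) \big/ (1-\sigma)\,\text{Pr}(K_\chi),
$$
so it suffices to show that every principal ideal killed by $N_G$ has the form $(1-\sigma)\cdot (y)$ for some $y \in K_\chi^\ast$.

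Next I would use the exact sequence $1 \to O_{K_\chi}^\ast \to K_\chi^\ast \to \text{Pr}(K_\chi) \to 1$ to pull a typical element of $\text{Pr}(K_\chi)$ back to some $x \in K_\chi^\ast$. The condition $N_G((x)) = (1)$ in $\text{Pr}(K_\chi)$ says that $N_{K_\chi/\mathbb{Q}}(x)$ is a global unit in $O_{K_\chi}$; since $N_{K_\chi/\mathbb{Q}}(x) \in \mathbb{Q}^\ast$, this forces $N_{K_\chi/\mathbb{Q}}(x) \in \mathbb{Z}^\ast = \{\pm 1\}$. Because $l$ is odd, $N_{K_\chi/\mathbb{Q}}(-x) = -N_{K_\chi/\mathbb{Q}}(x)$, and $(-x) = (x)$ as ideals, so after possibly replacing $x$ by $-x$ we may assume $N_{K_\chi/\mathbb{Q}}(x) = 1$.

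Then Hilbert 90 applied to the cyclic extension $K_\chi/\mathbb{Q}$ produces $y \in K_\chi^\ast$ with $x = y/\sigma(y)$. Passing to principal ideals gives $(x) = (y)\cdot \sigma((y))^{-1} = (1-\sigma)\cdot (y)$, which exhibits the class of $(x)$ as trivial in $H^1(G,\text{Pr}(K_\chi))$, completing the argument.

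The proof is essentially routine; the only small pitfall is the reduction from $N_{K_\chi/\mathbb{Q}}(x) \in \{\pm 1\}$ to $N_{K_\chi/\mathbb{Q}}(x) = 1$, which is what actually uses the hypothesis that $l$ is odd and would fail for $l = 2$. After that step Hilbert 90 does all the work.
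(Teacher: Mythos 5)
Your proof is correct. The approach is essentially the same as the paper's in its ingredients — the short exact sequence $1 \to O_{K_\chi}^\ast \to K_\chi^\ast \to \text{Pr}(K_\chi) \to 1$, Hilbert 90, and the oddness of $l$ to dispose of the sign $-1$ — but the packaging differs: the paper runs the long exact sequence to identify $H^1(\text{Gal}(K_\chi/\Q), \text{Pr}(K_\chi))$ with $\ker(H^2(\text{Gal}(K_\chi/\Q), O_{K_\chi}^\ast) \to H^2(\text{Gal}(K_\chi/\Q), K_\chi^\ast))$ and then shows $H^2(\text{Gal}(K_\chi/\Q), O_{K_\chi}^\ast) = \mathbb{Z}^\ast/N_{K_\chi/\Q}(O_{K_\chi}^\ast) = 1$, whereas you stay in degree $1$, use the Herbrand description $\ker(N_G)/(1-\sigma)\text{Pr}(K_\chi)$, and manipulate representatives directly. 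Your element-level version is arguably more transparent about where each hypothesis enters (in particular you pinpoint precisely the step that uses $l$ odd), while the paper's version is a one-line dimension shift once Hilbert 90 is invoked; the two are equivalent in content.
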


\begin{proof}
We take the exact sequence
$$
1 \to O_{K_\chi}^{\ast } \to K_\chi^\ast \to \text{Pr}(K_\chi) \to 1.
$$
Thanks to Hilbert $90$, we can canonically identify the $H^1$ in the statement with
$$
\text{ker}\left(H^2\left(\text{Gal}(K_\chi /\mathbb{Q}), O_{K_\chi}^\ast\right) \to H^2\left(\text{Gal}(K_\chi /\mathbb{Q}),K_\chi ^\ast\right)\right).
$$
Hence it is \emph{sufficient} to show that $H^2\left(\text{Gal}(K_\chi /\mathbb{Q}),O_{K_\chi}^\ast\right) = 0$. Since $\text{Gal}(K_\chi /\mathbb{Q})$ is cyclic, it follows from \cite[Section 6.2]{Weibel} that this last $H^2$ is isomorphic to 
$$
\frac{\mathbb{Z}^{\ast }}{N_{K_\chi /\mathbb{Q}}(O_{K_\chi}^{\ast })} = \frac{\langle -1 \rangle}{\langle -1 \rangle}= \{1\},
$$
where in the first equality we have used that $l$ is odd. This concludes the proof.
\end{proof}

\noindent Therefore we have the following corollary.

\begin{corollary} 
\label{invariant ideals surjects on invariant class group}
The natural map $\mathcal{I}_{K_\chi}^{\emph{Gal}(K_\chi /\mathbb{Q})} \to \emph{Cl}(K_\chi)^{\emph{Gal}(K_\chi /\mathbb{Q})}$ induces an isomorphism
$$
\frac{\mathcal{I}_{K_\chi}^{\emph{Gal}(K_\chi /\mathbb{Q})}}{\emph{Pr}(K_\chi)^{\emph{Gal}(K_\chi /\mathbb{Q})}} \simeq \emph{Cl}(K_\chi)^{\emph{Gal}(K_\chi /\mathbb{Q})}.
$$
\end{corollary}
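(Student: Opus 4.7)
The plan is to deduce the corollary immediately from the long exact sequence in group cohomology associated to the short exact sequence
$$
1 \to \text{Pr}(K_\chi) \to \mathcal{I}_{K_\chi} \to \text{Cl}(K_\chi) \to 1
$$
of $\text{Gal}(K_\chi/\mathbb{Q})$-modules, combined with the vanishing established in the preceding proposition. Writing $G := \text{Gal}(K_\chi/\mathbb{Q})$, the first terms of the long exact sequence read
$$
1 \to \text{Pr}(K_\chi)^G \to \mathcal{I}_{K_\chi}^G \to \text{Cl}(K_\chi)^G \to H^1(G, \text{Pr}(K_\chi)) \to H^1(G, \mathcal{I}_{K_\chi}).
$$
The previous proposition gives $H^1(G, \text{Pr}(K_\chi)) = 0$, so the natural map $\mathcal{I}_{K_\chi}^G \to \text{Cl}(K_\chi)^G$ is surjective, and its kernel is exactly $\text{Pr}(K_\chi)^G$ by exactness of the first two terms. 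Passing to the quotient yields the claimed isomorphism.

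The proof is essentially mechanical once the cohomological input $H^1(G, \text{Pr}(K_\chi)) = 0$ is in hand; there is no real obstacle, so the write-up should simply make this short exact computation explicit and invoke the preceding proposition.
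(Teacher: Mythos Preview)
Your proposal is correct and matches the paper's approach exactly: the paper sets up the short exact sequence, proves $H^1(\text{Gal}(K_\chi/\mathbb{Q}), \text{Pr}(K_\chi)) = 0$ in the preceding proposition, and then states the corollary as an immediate consequence (without even writing out the long exact sequence explicitly). Your write-up simply makes the routine cohomological step explicit.
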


We next focus on the group $\mathcal{I}_{K_\chi}^{\text{Gal}(K_\chi /\mathbb{Q})}$. Recall from Section \ref{conventions} that for a prime $q$ with $q \mid \Delta_{K_\chi /\mathbb{Q}}$ the symbol $\text{Up}_{K_\chi}(q)$ denotes the unique prime ideal of $O_{K_\chi}$ lying above $q$. We have the following fact.

\begin{prop} 
\label{fixed ideals}
For any element $I \in \mathcal{I}_{K_\chi}^{\emph{Gal}(K_\chi /\mathbb{Q})}$ there is a unique pair $(\epsilon, n)$ where $\epsilon$ is a map $\epsilon:\{q \mid \Delta_{K_\chi /\mathbb{Q}}\}_{q \emph{ prime}} \to \{0,\ldots, l - 1\}$ and $n$ is a positive rational number, with the property
$$
I = (n) \prod_{q \mid \Delta_{K_\chi /\mathbb{Q}}}\emph{Up}_{K_\chi}(q)^{\epsilon(q)}.
$$
\end{prop}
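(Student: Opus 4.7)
The plan is to use unique factorization of fractional ideals in $O_{K_\chi}$ and then exploit Galois invariance prime by prime, applying the trichotomy (split / inert / ramified) that is available since $K_\chi/\mathbb{Q}$ is cyclic of prime degree $l$.

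First I would write $I = \prod_{\mathfrak{P}} \mathfrak{P}^{v_{\mathfrak{P}}(I)}$ where $\mathfrak{P}$ runs through the nonzero prime ideals of $O_{K_\chi}$. Group the primes $\mathfrak{P}$ according to the rational prime $q$ below them. Because $\text{Gal}(K_\chi/\mathbb{Q})$ permutes the primes above $q$ transitively and $I$ is $\text{Gal}(K_\chi/\mathbb{Q})$-invariant, the valuation $v_{\mathfrak{P}}(I)$ is constant on each such Galois orbit; call the common exponent $a_q \in \mathbb{Z}$.

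Now split on the behaviour of $q$ in $K_\chi/\mathbb{Q}$, which (since the degree is the prime $l$) is either totally split, inert, or totally ramified. For an unramified prime $q$ the product of the primes above $q$ equals $(q)$ in $O_{K_\chi}$ (whether there is one inert prime or $l$ split primes), so the total contribution of primes above $q$ to the factorization of $I$ is the rationally generated ideal $(q)^{a_q}$. For a ramified prime $q$, which by the conductor--discriminant setup of Section \ref{conventions} is precisely a prime $q \mid \Delta_{K_\chi/\mathbb{Q}}$, there is a unique prime $\text{Up}_{K_\chi}(q)$ above $q$ satisfying $\text{Up}_{K_\chi}(q)^l = (q)$, and the contribution is $\text{Up}_{K_\chi}(q)^{a_q}$. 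Putting these together yields
$$
I \;=\; \Bigl(\prod_{q\text{ unramified}} q^{a_q}\Bigr)\cdot \prod_{q \mid \Delta_{K_\chi/\mathbb{Q}}} \text{Up}_{K_\chi}(q)^{a_q}.
$$

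Next I would reduce each exponent at the ramified primes modulo $l$: writing $a_q = l\, m_q + \epsilon(q)$ with $\epsilon(q) \in \{0,\ldots,l-1\}$ and using $\text{Up}_{K_\chi}(q)^l = (q)$, the factor $\text{Up}_{K_\chi}(q)^{l m_q}$ is absorbed into the rational part. This produces a presentation $I = (n)\prod_{q \mid \Delta_{K_\chi/\mathbb{Q}}} \text{Up}_{K_\chi}(q)^{\epsilon(q)}$ with $n \in \mathbb{Q}^\ast$; replacing $n$ by $|n|$ we may and do take $n > 0$.

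For uniqueness, suppose $(n_1)\prod_q \text{Up}_{K_\chi}(q)^{\epsilon_1(q)} = (n_2)\prod_q \text{Up}_{K_\chi}(q)^{\epsilon_2(q)}$. For each ramified $q$, taking $\text{Up}_{K_\chi}(q)$-adic valuation gives $l\, v_q(n_1) + \epsilon_1(q) = l\, v_q(n_2) + \epsilon_2(q)$, and reducing modulo $l$ forces $\epsilon_1(q) = \epsilon_2(q)$. Hence $(n_1) = (n_2)$ as fractional $\mathbb{Z}$-ideals, so $n_1/n_2 \in \mathbb{Z}^\ast = \{\pm 1\}$, and positivity forces $n_1 = n_2$. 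There is no real obstacle here: the only point requiring care is ensuring the trichotomy and the identity $\text{Up}_{K_\chi}(q)^l=(q)$ at ramified primes, both of which are immediate from $[K_\chi:\mathbb{Q}] = l$ being prime.
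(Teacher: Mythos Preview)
Your proof is correct and follows essentially the same approach as the paper: existence via the split/inert/ramified trichotomy and reduction of the ramified exponents modulo $l$, and uniqueness by comparing valuations. The only cosmetic difference is that the paper takes norms to $\mathbb{Q}$ to compare valuations while you work directly with the $\mathrm{Up}_{K_\chi}(q)$-adic valuation upstairs; both lead immediately to $\epsilon_1=\epsilon_2$ and $n_1=n_2$.
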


\begin{proof}
Let $I$ be in $\mathcal{I}_{K_\chi}^{\text{Gal}(K_\chi /\mathbb{Q})}$ and factor $I$ as a product of prime ideals. Then inert primes can clearly be bunched together into a rational fractional ideal. For split primes, the Galois invariance and unique factorization of ideals imply that every exponent is invariant in each Galois orbit of split primes. Hence also the split primes can be bunched together to give a total contribution that is a rational fractional ideal. 

The remaining primes are exactly the ramified primes. For each ramified prime, we can always pick the largest multiple of $l$ smaller than the exponent, and throw this contribution into a rational fractional ideal. This shows the existence part of the proposition.

For the uniqueness, suppose that the pairs $(\epsilon_1,n_1)$ and $(\epsilon_2,n_2)$ give the same ideal. Observe that when we norm down to $\mathbb{Q}$, we obtain for $q$ a prime not dividing the discriminant that $l \cdot v_{\mathbb{Q}_q}(n_1) = l \cdot v_{\mathbb{Q}_q}(n_2)$ and hence $v_{\mathbb{Q}_q}(n_1) = v_{\mathbb{Q}_q}(n_2)$. Finally, if $q$ is ramified, we obtain 
\[
l \cdot v_{\mathbb{Q}_q}(n_1) + \epsilon_1(q) = l \cdot v_{\mathbb{Q}_q}(n_2) + \epsilon_2(q).
\]
Since $\epsilon_1(q)$ and $\epsilon_2(q)$ are in $\{0, \ldots, l - 1\}$, it must be that $\epsilon_1(q)=\epsilon_2(q)$ and $v_{\mathbb{Q}_q}(n_1) = v_{\mathbb{Q}_q}(n_2)$. So we have that $\epsilon_1=\epsilon_2$ and the two rational numbers $n_1$ and $n_2$ have the same valuation at all finite places and they are both positive, hence they coincide. 
\end{proof}

Since the surjection $\mathcal{I}_{K_\chi}^{\text{Gal}(K_\chi /\mathbb{Q})} \twoheadrightarrow \text{Cl}(K_\chi)[1 - \zeta_l]$ factors through $\mathcal{I}_{\mathbb{Q}}$, it induces a surjective map of $\mathbb{F}_l$-vector spaces
$$
\overline{\text{Cl}}(K_\chi) \twoheadrightarrow \text{Cl}(K_\chi)[1 - \zeta_l],
$$
where $\overline{\text{Cl}}(K_\chi)$ denotes the subgroup consisting of those $\alpha$ in $\frac{\mathbb{Q}^{\ast }}{\mathbb{Q}^{\ast l}}$ such that $v(\alpha)$ is divisible by $l$ for all places $v \in \Omega_{\mathbb{Q}}$ not dividing $\Delta_{K_\chi /\mathbb{Q}}$. It follows from Proposition \ref{fixed ideals} that we have an identification 
\[
\overline{\text{Cl}}(K_\chi) \simeq \frac{\mathcal{I}_{K_\chi}^{\text{Gal}(K_\chi /\mathbb{Q})}}{\mathcal{I}_{\mathbb{Q}}}
\]
via the norm map, which sends every invariant non-zero ideal to its norm in $\frac{\mathbb{Q}^{\ast}}{\mathbb{Q}^{\ast l}}$. By construction, the kernel of the map $\overline{\text{Cl}}(K_\chi) \twoheadrightarrow \text{Cl}(K_\chi)[1 - \zeta_l]$, equals 
$$
\frac{\text{Pr}(K_\chi)^{\text{Gal}(K_\chi /\mathbb{Q})}}{\mathcal{I}_{\mathbb{Q}}}.
$$ 
Due to Hilbert $90$ this group is canonically isomorphic to $H^1\left(\text{Gal}(K_\chi /\mathbb{Q}), O_{K_\chi}^\ast\right)$. 

\begin{prop} 
\label{units are locally free}
The group $H^1\left(\emph{Gal}(K_\chi /\mathbb{Q}),O_{K_\chi}^\ast\right)$ is an $1$-dimensional $\mathbb{F}_l$-vector space. 
\end{prop}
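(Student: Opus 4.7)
The plan is to use the exact sequence of $\mathbb{F}_l$-vector spaces established just above,
$$0 \longrightarrow H^1(G, O_{K_\chi}^*) \longrightarrow \overline{\text{Cl}}(K_\chi) \longrightarrow \text{Cl}(K_\chi)[1-\zeta_l] \longrightarrow 0,$$
with $G := \text{Gal}(K_\chi/\mathbb{Q})$, and to compute $\dim_{\mathbb{F}_l}$ of both outer terms.

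First I would show $\dim_{\mathbb{F}_l} \overline{\text{Cl}}(K_\chi) = r$, where $r$ is the number of rational primes ramifying in $K_\chi$. By Proposition \ref{fixed ideals}, every $G$-invariant fractional ideal of $O_{K_\chi}$ is uniquely of the form $(n) \prod_{q} \text{Up}_{K_\chi}(q)^{\epsilon(q)}$, so modulo $\mathcal{I}_{\mathbb{Q}}$ the classes $\text{Up}_{K_\chi}(q) \pmod{\mathcal{I}_{\mathbb{Q}}}$, for $q \mid \Delta_{K_\chi/\mathbb{Q}}$, form an $\mathbb{F}_l$-basis of $\mathcal{I}_{K_\chi}^G / \mathcal{I}_{\mathbb{Q}} \cong \overline{\text{Cl}}(K_\chi)$.

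Next I would apply Chevalley's ambiguous class number formula to the cyclic extension $K_\chi/\mathbb{Q}$:
$$|\text{Cl}(K_\chi)^G| = \frac{h(\mathbb{Q}) \cdot \prod_{v} e_v}{[K_\chi : \mathbb{Q}] \cdot [\mathbb{Z}^* : \mathbb{Z}^* \cap N_{K_\chi/\mathbb{Q}}(K_\chi^*)]}.$$
Here $h(\mathbb{Q}) = 1$; each ramified finite prime is totally ramified because $l$ is prime, contributing $e_v = l$; there is no archimedean ramification since $K_\chi$ is totally real, being of odd degree over $\mathbb{Q}$; and the unit index equals $1$ because $l$ odd gives $-1 = N_{K_\chi/\mathbb{Q}}(-1)$. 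Hence $|\text{Cl}(K_\chi)^G| = l^r/l = l^{r-1}$, so $\dim_{\mathbb{F}_l}\text{Cl}(K_\chi)[1-\zeta_l] = r-1$.

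From additivity of dimensions in the exact sequence, $\dim_{\mathbb{F}_l} H^1(G, O_{K_\chi}^*) = r - (r - 1) = 1$. The main obstacle is the invocation of Chevalley's formula with all local factors correctly accounted for; this can be sidestepped by a direct Herbrand quotient computation: $\hat{H}^0(G, O_{K_\chi}^*) = \mathbb{Z}^*/N_{K_\chi/\mathbb{Q}}(O_{K_\chi}^*)$ is trivial because $-1 = N_{K_\chi/\mathbb{Q}}(-1)$, while $q(O_{K_\chi}^*) = 1/l$ follows from multiplicativity applied to $0 \to \{\pm 1\} \to O_{K_\chi}^* \to L \to 0$, using that $L \otimes \mathbb{Q}$ is the augmentation representation via the Dirichlet logarithmic embedding and hence $q(L) = q(I_G) = 1/l$ by standard rational-equivalence invariance of the Herbrand quotient.
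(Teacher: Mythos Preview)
Your argument is correct, but it follows a different route from the paper's. The paper computes $H^1$ directly: writing it via cyclic cohomology as (norm-one units)/$\{\sigma(\beta)/\beta\}$, tensoring with $\mathbb{Z}_l$ (which kills the $\{\pm 1\}$-torsion since $l$ is odd), and then recognising $O_{K_\chi}^* \otimes_{\mathbb{Z}} \mathbb{Z}_l$ as a free $\mathbb{Z}_l[\zeta_l]$-module of rank $1$ via Dirichlet's unit theorem. This immediately gives $H^1 \cong \mathbb{Z}_l[\zeta_l]/(1-\zeta_l) \cong \mathbb{F}_l$. Your approach instead works backwards through the exact sequence, pinning down $\dim \text{Cl}(K_\chi)^G$ by an external appeal to Chevalley's ambiguous class number formula; your Herbrand-quotient alternative is essentially the paper's argument reformulated, since identifying the unit lattice rationally with the augmentation ideal is equivalent to the paper's observation that $O_{K_\chi}^* \otimes \mathbb{Z}_l \cong \mathbb{Z}_l[\zeta_l]$. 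The paper's direct approach has the advantage of being self-contained and yielding as a by-product the free rank-one $\mathbb{Z}_l[\zeta_l]$-structure on the units, which is invoked again later in the paper (Section~13); your approach is cleaner if one is happy to cite Chevalley, but loses this structural information.
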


\begin{proof}
Fix a non-trivial element $\sigma$ of $\text{Gal}(K_\chi /\mathbb{Q})$. Then $\sigma$ generates $\text{Gal}(K_\chi /\mathbb{Q})$. Since $\text{Gal}(K_\chi /\mathbb{Q})$ is a cyclic group, an elementary calculation with $1$-cocycles shows that the $H^1$ in the statement is isomorphic to 
\[
\frac{\left\{\alpha \in O_{K_\chi}^\ast : N_{K_\chi /\mathbb{Q}}(\alpha) = 1\right\}}{\left\{\frac{\sigma(\beta)}{\beta} :  \beta \in O_{K_\chi}^\ast\right\}}.
\]
Observe that this is an $\mathbb{F}_l$-vector space, which also follows from the size of the Galois group being $l$. Thus we can compute it also by first completing at $l$, i.e. considering 
\[
\left\{\alpha \in O_{K_\chi}^\ast : N_{K_\chi /\mathbb{Q}}(\alpha) = 1\right\} \otimes_{\mathbb{Z}} \mathbb{Z}_l = O_{K_\chi}^{\ast } \otimes_{\mathbb{Z}} \mathbb{Z}_l.
\]
Through $\chi$, this can be naturally viewed as a $\mathbb{Z}_l[\zeta_l]$-module. But the $\mathbb{Z}_l$-torsion of $O_{K_\chi}^{\ast } \otimes_{\mathbb{Z}} \mathbb{Z}_l$ is trivial, since $l$ is odd and the $\mathbb{Z}$-torsion of $O_{K_\chi}^{\ast }$ is equal to $\langle -1 \rangle$. Hence the $\mathbb{Z}_l[\zeta_l]$-torsion is also trivial. Therefore, using Dirichlet's Unit Theorem and the fact that $l$ is odd, it must be that
$$
O_{K_\chi}^\ast \otimes_{\mathbb{Z}} \mathbb{Z}_l \simeq_{\mathbb{Z}_l[\zeta_l]} \mathbb{Z}_l[\zeta_l].
$$
Therefore the $H^1$ we are after is isomorphic to $\frac{\mathbb{Z}_l[\zeta_l]}{(1 - \zeta_l)} \simeq \mathbb{F}_l$.
\end{proof}

\noindent Combining Corollary \ref{invariant ideals surjects on invariant class group} with Proposition \ref{units are locally free}, we conclude the following.

\begin{corollary} 
\label{main thm on ambiguous ideals}
The map
$$
\overline{\textup{Cl}}(K_\chi) \twoheadrightarrow \emph{Cl}(K_\chi)[1 - \zeta_l]
$$
has $1$-dimensional kernel. Moreover, a generator of the kernel can be obtained by taking $N_{K_\chi /\mathbb{Q}}(\gamma) \in \frac{\mathbb{Q}^\ast}{\mathbb{Q}^{\ast l}}$ for any $\gamma \in K_\chi ^\ast$ such that
\[
\frac{\sigma(\gamma)}{\gamma} \in O_{K_\chi}^\ast - \left\{\frac{\sigma(\beta)}{\beta}: \beta \in O_{K_\chi}^\ast\right\},
\]
where $\sigma$ is any non-trivial element of $\textup{Gal}(K_\chi /\mathbb{Q})$.
\end{corollary}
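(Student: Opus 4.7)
The first assertion follows directly from what has already been assembled. The norm map induces an identification $\overline{\text{Cl}}(K_\chi) \simeq \mathcal{I}_{K_\chi}^{\Gal(K_\chi/\Q)}/\mathcal{I}_\Q$, and by Corollary \ref{invariant ideals surjects on invariant class group} the kernel of the surjection $\overline{\text{Cl}}(K_\chi) \twoheadrightarrow \text{Cl}(K_\chi)[1 - \zeta_l]$ coincides with $\text{Pr}(K_\chi)^{\Gal(K_\chi/\Q)}/\mathcal{I}_\Q$. Plugging the short exact sequence $1 \to O_{K_\chi}^\ast \to K_\chi^\ast \to \text{Pr}(K_\chi) \to 1$ into Galois cohomology and invoking Hilbert 90 to kill $H^1$ of the middle term, the long exact sequence collapses to a canonical isomorphism
$$
\text{Pr}(K_\chi)^{\Gal(K_\chi/\Q)}/\mathcal{I}_\Q \;\xrightarrow{\sim}\; H^1\bigl(\Gal(K_\chi/\Q), O_{K_\chi}^\ast\bigr),
$$
and Proposition \ref{units are locally free} tells us the target is $1$-dimensional over $\mathbb{F}_l$. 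This gives the first claim.

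For the explicit generator, I would make the above connecting isomorphism concrete. It sends an invariant principal ideal $(\gamma)$ to the class of the $1$-cocycle $\tau \mapsto \tau(\gamma)/\gamma$. Under the identification of $H^1$ for a cyclic group with $\{u \in O_{K_\chi}^\ast : N_{K_\chi/\Q}(u) = 1\}/\{\sigma(\beta)/\beta : \beta \in O_{K_\chi}^\ast\}$ used in the proof of Proposition \ref{units are locally free}, this class is represented by the single value $\sigma(\gamma)/\gamma$. Hence the hypothesis that $\sigma(\gamma)/\gamma$ is a unit which is not a coboundary is precisely the statement that the cohomology class attached to $(\gamma)$ is nonzero, and since $H^1$ is $1$-dimensional any such class generates. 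Transporting $(\gamma)$ back across the norm isomorphism $\text{Pr}(K_\chi)^{\Gal(K_\chi/\Q)}/\mathcal{I}_\Q \hookrightarrow \mathcal{I}_{K_\chi}^{\Gal(K_\chi/\Q)}/\mathcal{I}_\Q \simeq \overline{\text{Cl}}(K_\chi)$, and using the identity $N_{K_\chi/\Q}\bigl((\gamma)\bigr) = \bigl(N_{K_\chi/\Q}(\gamma)\bigr)$ of ideals, the resulting generator of the kernel is exactly $N_{K_\chi/\Q}(\gamma) \bmod \Q^{\ast l}$.

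The proof is essentially a diagram chase, so there is no real obstacle beyond careful bookkeeping. The only step that warrants verification is matching the explicit form of the connecting homomorphism with the cyclic-cohomology identification of $H^1$ used in Proposition \ref{units are locally free}, so that the non-coboundary condition on $\sigma(\gamma)/\gamma$ genuinely detects nontriviality of the class of $(\gamma)$ in $H^1$. Once this identification is in place, the $1$-dimensionality of $H^1$ immediately upgrades ``nonzero'' to ``generator'', and the explicit form of the norm isomorphism $\mathcal{I}_{K_\chi}^{\Gal(K_\chi/\Q)}/\mathcal{I}_\Q \simeq \overline{\text{Cl}}(K_\chi)$ converts this into the concrete generator $N_{K_\chi/\Q}(\gamma)$.
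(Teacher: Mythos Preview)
Your proof is correct and follows the same approach as the paper, which simply states that the corollary follows by combining Corollary \ref{invariant ideals surjects on invariant class group} with Proposition \ref{units are locally free}. You have spelled out in detail the connecting homomorphism and the cyclic-cohomology identification that the paper leaves implicit, but the underlying argument is identical.
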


The second part of Corollary \ref{main thm on ambiguous ideals} suggests that we should not expect a simple description for the relation among the ramified prime ideals as is the case for the $2$-torsion of imaginary quadratic number fields. Instead we should expect it to be a genuine `random' piece of data. It is for this reason that cyclic degree $l$ fields are analogous to real quadratic number fields.

\subsection{Genus theory}
\label{dual invariants}
The material of the previous subsection is sufficient to determine the structure of $\text{Cl}(K_\chi)^{\vee}[1 - \zeta_l]$ right away. We remind the reader that we have fixed an identification $i_l$ between $\langle \zeta_l \rangle$ and $N[1 - \zeta_l]$ in Section \ref{conventions}. Also recall that we have a canonical identification $\text{Cl}(K_\chi) = \text{Gal}(H_{K_\chi}/K_\chi)$ via the Artin map, where $H_{K_\chi}$ is the Hilbert class field of $K_\chi$. Finally, $\chi_p$ denotes a fixed choice of an element in $\Gamma_{\mu_l}(\mathbb{Q})$ of conductor dividing $p^\infty$.

\begin{prop} 
\label{Genus theory}
Let $\chi$ be in $\Gamma_{\mu_l}(\mathbb{Q})$. The set $\{i_l \circ \chi_p|_{\emph{ker}(\chi)}\}_{p \mid \Delta_{K_\chi /\mathbb{Q}}}$ is a generating set for $\emph{Cl}(K_\chi)^{\vee}[1 - \zeta_l]$. Moreover, any relation among these characters is a multiple of the trivial relation $\chi|_{G_{K_\chi}}=1$.
\end{prop}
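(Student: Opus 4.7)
The plan is to verify directly that each $i_l \circ \chi_p|_{G_{K_\chi}}$ is a well-defined element of $\text{Cl}(K_\chi)^\vee[1-\zeta_l]$, to exhibit the trivial relation coming from the factorization $\chi = \prod_p \chi_p^{\epsilon_\chi(p)}$, to show every relation is a scalar multiple of it, and then to conclude the generation statement by a dimension count against Subsection \ref{invariants}.

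For well-definedness one needs three things. First, $\chi_p|_{G_{K_\chi}}$ must cut out an everywhere unramified extension of $K_\chi$, so that it factors through $\text{Cl}(K_\chi) = \text{Gal}(H_{K_\chi}/K_\chi)$. At a prime $\mathfrak{q}$ of $K_\chi$ above a ramified $q \neq p$, $\chi_p$ has $p$-power conductor and hence is unramified at $q$, so $\chi_p|_{I_\mathfrak{q}} = 1$ automatically. At a prime $\mathfrak{p}$ above $p$, the key observation is that $\chi|_{I_p}$ and $\chi_p|_{I_p}$ are non-trivial characters of order $l$ that factor through a quotient of $I_p$ isomorphic to $\mathbb{F}_l$: for $p \neq l$ this is the $l$-part of the tame quotient, while for $p = l$ one uses local class field theory together with the logarithm to identify $I_l^{\text{ab}}/l \cong \mathbb{Z}_l^\times/(\mathbb{Z}_l^\times)^l \cong \mathbb{F}_l$. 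Since two non-trivial $\mathbb{F}_l$-valued characters on a cyclic group of order $l$ share the same kernel, $\chi_p$ annihilates $I_\mathfrak{p} = I_p \cap G_{K_\chi} = \text{ker}(\chi|_{I_p})$. Unramification at archimedean places is automatic, as $K_\chi$ is totally real and $\langle \zeta_l \rangle$ has odd order. Second, $\mathbb{Z}_l[\zeta_l]$-equivariance follows because $\chi_p$ factors through the abelianization of $G_\mathbb{Q}$ and is hence invariant under conjugation, while $\chi(\sigma) \in \langle \zeta_l \rangle$ acts trivially on $N[1-\zeta_l]$. Third, the image lies in $N[1-\zeta_l]$ by construction of $i_l$.

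The trivial relation is then immediate: restricting $\chi = \prod_{p \mid \Delta_{K_\chi/\mathbb{Q}}} \chi_p^{\epsilon_\chi(p)}$ to $G_{K_\chi} = \text{ker}(\chi)$ yields $\sum_p \epsilon_\chi(p) \cdot (i_l \circ \chi_p|_{G_{K_\chi}}) = 0$. Conversely, any relation $\prod_p \chi_p^{a_p}|_{G_{K_\chi}} = 1$ means that $\prod_p \chi_p^{a_p}$ factors through $\text{Gal}(K_\chi/\mathbb{Q})$ and so equals $\chi^t$ for some $t \in \mathbb{F}_l$; comparing factorizations against the basis $\{\chi_q\}_q \cup \{\chi_l\}$ of $\Gamma_{\mu_l}(\mathbb{Q})$ forces $a_p = t \cdot \epsilon_\chi(p)$, making the relation a scalar multiple of the trivial one.

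To finish, Corollary \ref{main thm on ambiguous ideals} combined with Proposition \ref{A isomorphic to its dual} gives $\dim_{\mathbb{F}_l} \text{Cl}(K_\chi)^\vee[1-\zeta_l] = r - 1$, where $r$ is the number of primes dividing $\Delta_{K_\chi/\mathbb{Q}}$. Since our $r$ candidate generators have a one-dimensional space of relations, they span an $(r-1)$-dimensional subspace, which therefore exhausts the target. The main obstacle is the unramification check at $p$ itself; everything else is formal manipulation with the amalgama factorization together with the dimension count from the previous subsection.
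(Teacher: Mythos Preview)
Your proposal is correct and follows essentially the same approach as the paper's main proof: verify membership in $\text{Cl}(K_\chi)^{\vee}[1-\zeta_l]$, identify the single trivial relation, and conclude by the dimension count via Proposition \ref{A isomorphic to its dual} and Corollary \ref{main thm on ambiguous ideals}. The paper compresses your first two steps into the phrase ``It is clear that\ldots'', so your unramification check at $p$ (via the cyclicity of the order-$l$ quotient of inertia, tame or wild) simply spells out what the paper takes for granted.
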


\begin{proof}
It is clear that the set $\{i_l \circ \chi_p|_{\text{ker}(\chi)}\}_{p \mid \Delta_{K_\chi /\mathbb{Q}}}$ belongs to $\text{Cl}(K_\chi)^{\vee}[1 - \zeta_l]$, with any relation a multiple of the trivial one. This combined with Proposition \ref{A isomorphic to its dual} and Corollary \ref{main thm on ambiguous ideals} gives the conclusion by counting. 
\end{proof}

It is possible to give a more direct proof of Proposition \ref{Genus theory} by completely different considerations. This relies on the following fundamental fact that will anyway play a crucial role for us.

\begin{prop} 
\label{when abelian by cyclic splits}
Let $F/E$ be a cyclic degree $l$ extension of number fields. Let $v \in \Omega_E$ be a place of $E$ ramifying in $F/E$. Suppose moreover that $L/F$ is an abelian extension of $F$, Galois over $E$ and unramified at the unique place in $\Omega_F$ lying above $v$. Then the exact sequence
$$1 \to \emph{Gal}(L/F) \to \emph{Gal}(L/E) \to \emph{Gal}(F/E) \to 1
$$
splits, i.e. the surjection $\emph{Gal}(L/E) \twoheadrightarrow \emph{Gal}(F/E)$ admits a section.
\end{prop}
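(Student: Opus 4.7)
My plan is to exploit the fact that the inertia group at a ramified place provides a natural candidate for the section, using that $F/E$ has prime degree $l$.

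First I would choose a place $W$ of $L$ lying above $v$, and consider its decomposition group $D_W \subseteq \text{Gal}(L/E)$ and its inertia subgroup $I_W \subseteq D_W$. Let $w$ denote the (unique) place of $F$ lying below $W$ and above $v$; uniqueness follows because $v$ ramifies in $F/E$, a cyclic extension of prime degree $l$, which forces $v$ to be totally ramified and hence to have a single place above it in $F$. The inertia group of $W$ in the subextension $\text{Gal}(L/F)$ is precisely $I_W \cap \text{Gal}(L/F)$, and by hypothesis $L/F$ is unramified at $w$, so this intersection is trivial.

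Consequently the restriction homomorphism
\[
I_W \longrightarrow \text{Gal}(F/E)
\]
obtained by composing the inclusion $I_W \hookrightarrow \text{Gal}(L/E)$ with the quotient $\text{Gal}(L/E) \twoheadrightarrow \text{Gal}(F/E)$ is injective. Its image is, by the standard compatibility of inertia groups under quotients, the inertia group of $v$ in the extension $F/E$. Since $v$ is totally ramified in the cyclic extension $F/E$ of degree $l$, this inertia group is all of $\text{Gal}(F/E)$. Hence the displayed map is an isomorphism.

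The inverse of this isomorphism, followed by the inclusion $I_W \hookrightarrow \text{Gal}(L/E)$, is then the desired section of $\text{Gal}(L/E) \twoheadrightarrow \text{Gal}(F/E)$. No step here looks like an obstacle: the argument is a direct application of the elementary behaviour of decomposition and inertia groups under passage to subextensions; the crucial input is that ramification in a cyclic extension of prime degree is automatically total, so that the inertia group above $v$ in $F/E$ fills up the whole Galois group. The hypothesis that $L/E$ is Galois (not merely that $L/F$ is Galois with $L/E$ a tower) is needed to form the decomposition and inertia groups of $L/E$ globally and to make the restriction statement valid.
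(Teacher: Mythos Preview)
Your proof is correct and follows essentially the same approach as the paper: both use the inertia subgroup at a place above $v$ to furnish the section. Your argument is slightly more direct, computing $I_W \cap \text{Gal}(L/F)=\{1\}$ and the surjectivity onto $\text{Gal}(F/E)$ explicitly, whereas the paper first observes $|I_W|=l$ and then rules out $I_W \subseteq \text{Gal}(L/F)$ by contradiction; but the underlying idea is identical.
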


\begin{proof}
For a number field $F$, we let $\Omega_F$ be the set of places of $F$. Since $L/F$ is unramified at the unique place in $\Omega_F$ lying above $v$, and since $v$ is totally ramified in $F/E$, we deduce that for every place $\tilde{v} \in \Omega_{L}$ lying above $v$ in $L/E$, the inertia group $I_{\tilde{v}/v}$ has size exactly $l$. Therefore it is either fully contained in $\text{Gal}(L/F)$ or it intersects $\text{Gal}(L/F)$ trivially providing the claimed section. 

We assume that $I_{\tilde{v}/v}$ is fully contained in $\text{Gal}(L/F)$ and derive a contradiction. Since $\text{Gal}(L/F)$ is a normal subgroup, it follows that all conjugates of $I_{\tilde{v}/v}$ are also contained in $\text{Gal}(L/F)$. Then we conclude that
\[
\text{Gal}(L/F) \supseteq \prod_{w \in \Omega_L: w \mid v}I_{w/v},
\]
which is equivalent to 
\[
F \subseteq \bigcap_{w \in \Omega_L: w \mid v}L^{I_{w/v}}.
\]
This implies that $v$ is unramified in $F/E$, which is the desired contradiction.
\end{proof}

\emph{Alternative proof of Proposition \ref{Genus theory}}: Let $L/K_\chi $ be a degree $l$ extension coming from a character in $\text{Cl}(K_\chi)^{\vee}[1 - \zeta_l]$. Note that $L$ is a Galois extension of $\Q$ with degree $l^2$. By Proposition \ref{when abelian by cyclic splits} we conclude that 
\[
\text{Gal}(L/\mathbb{Q}) \simeq \mathbb{F}_l \times \mathbb{F}_l.
\]
Hence the extension is of the shape $KK_\chi/K_\chi$, where $K$ is a cyclic degree $l$ extension of $\mathbb{Q}$. Let $\chi' \in \Gamma_{\mu_l}(\mathbb{Q})$ be a character with $K_{\chi'} = K$. If $\chi'$ ramifies at any prime $q$ not dividing $\Delta_{K_\chi /\mathbb{Q}}$, then the resulting extension of $K_\chi $ will ramify at the primes of $K_\chi $ lying above $q$. Hence $\chi'$ must be one of the characters listed in Proposition \ref{Genus theory}, which clearly satisfy only the trivial relation stated there.

\section{Central extensions}
\label{central extensions}
In this section we prove several important facts about central $\mathbb{F}_l$-extensions that we will extensively use in the coming sections to deal with the first and higher Artin pairings. Let $E$ be a field and fix a separable closure $E^{\text{sep}}$ of $E$. We extend the notation from Subsection \ref{conventions} in the natural way to our more general setting; in particular we will use $G_E$ and $\Gamma_{\mathbb{F}_l}(E)$ without further introduction. Let $F \subseteq E^{\text{sep}}$ be a finite Galois extension of $E$. The most important object in this section is the set
$$
\text{Cent}_{\mathbb{F}_l}(F/E)
$$
consisting of degree dividing $l$ extensions $\tilde{F}/F$ in $E^{\text{sep}}$ that are Galois over $E$ and such that $\text{Gal}(\tilde{F}/F)$ is a central subgroup of $\text{Gal}(\tilde{F}/E)$. For $\tilde{F}_1, \tilde{F}_2$ in $\text{Cent}_{\mathbb{F}_l}(F/E)$ we say that $\tilde{F}_1$ is equivalent to $\tilde{F}_2$ if at least one of the following two statements is true
\begin{itemize}
\item we have $\tilde{F}_1 = \tilde{F}_2 = F$;
\item the compositum of $\tilde{F}_1\tilde{F}_2$ contains a non-trivial cyclic degree $l$ extension of $F$ that is obtained as $\tilde{E}F$, where $\tilde{E}$ is a cyclic degree $l$ extension of $E$.
\end{itemize}
One can easily see that this is an equivalence relation, and we use the symbol $\tilde{F}_1 \sim \tilde{F}_2$ to express the fact that $\tilde{F}_1$ is equivalent to $\tilde{F}_2$. 

Every cyclic extension of $F$ with degree dividing $l$ can be obtained as $(E^{\text{sep}})^{\text{ker}(\chi)}$ for some $\chi \in \Gamma_{\mathbb{F}_l}(F)$. This can be done only with the trivial character if the extension is trivial, and in all the other cases with precisely the $l - 1$ non-zero multiples of a non-trivial character. It can be easily seen that for this extension to be in $\text{Cent}_{\mathbb{F}_l}(F/E)$ it is necessary and sufficient that $\chi$ is fixed by the natural action of $\text{Gal}(F/E)$ on $\Gamma_{\mathbb{F}_l}(F)$. Therefore we have a natural surjective map
$$
\Gamma_{\mathbb{F}_l}(F)^{\text{Gal}(F/E)} \twoheadrightarrow \text{Cent}_{\mathbb{F}_l}(F/E),
$$
which descends to a map
$$
\frac{\Gamma_{\mathbb{F}_l}(F)^{\text{Gal}(F/E)}}{\Gamma_{\mathbb{F}_l}(E)} \twoheadrightarrow \text{Cent}_{\mathbb{F}_l}(F/E)/\sim.
$$
The map attains the class of $F/E$ precisely on the trivial element of $\frac{\Gamma_{\mathbb{F}_l}(F)^{\text{Gal}(F/E)}}{\Gamma_{\mathbb{F}_l}(E)}$ and on the remaining points is a $(l-1):1$ assignment.

Given a class $\chi \in \frac{\Gamma_{\mathbb{F}_l}(F)^{\text{Gal}(F/E)}}{\Gamma_{\mathbb{F}_l}(E)}$ we can naturally attach a class 
$$ 
r_1(\chi) \in H^2(\text{Gal}(F/E), \mathbb{F}_l),
$$
where the implicit action is declared to be trivial. This uses the group-theoretic interpretation of $H^2(\text{Gal}(F/E), \mathbb{F}_l)$, and goes as follows. Using $\chi$ we have an identification 
$$
\text{Gal}\left((E^{\text{sep}})^{\text{ker}(\chi)}/F\right) \simeq \mathbb{F}_l.
$$
Therefore this transforms the sequence
$$
1 \to \text{Gal}\left((E^{\text{sep}})^{\text{ker}(\chi)}/F\right) \to \text{Gal}\left((E^{\text{sep}})^{\text{ker}(\chi)}/E\right) \to \text{Gal}(F/E) \to 1
$$ 
into a sequence 
$$
0 \to \mathbb{F}_l \to \text{Gal}\left((E^{\text{sep}})^{\text{ker}(\chi)}/E\right) \to \text{Gal}(F/E) \to 1,
$$ 
which naturally provides us with a class $r_1(\chi) \in H^2(\text{Gal}(F/E), \mathbb{F}_l)$. It is not hard to show that the resulting map
$$
r_1 : \frac{\Gamma_{\mathbb{F}_l}(F)^{\text{Gal}(F/E)}}{\Gamma_{\mathbb{F}_l}(E)} \to H^2(\text{Gal}(F/E), \mathbb{F}_l)
$$
is an injective group homomorphism. Hence we can use $r_1$ to identify the group $\frac{\Gamma_{\mathbb{F}_l}(F)^{\text{Gal}(F/E)}}{\Gamma_{\mathbb{F}_l}(E)}$ with its image in $H^2(\text{Gal}(F/E), \mathbb{F}_l)$. We define 
$$
\widetilde{\text{Cent}}_{\mathbb{F}_l}(F/E) := \text{Im}(r_1).
$$
Our next goal is to characterize the elements of $H^2(\text{Gal}(F/E), \mathbb{F}_l)$ that belong to $\widetilde{\text{Cent}}_{\mathbb{F}_l}(F/E)$. To do so we write another natural map
$$
r_2: \frac{\Gamma_{\mathbb{F}_l}(F)^{\text{Gal}(F/E)}}{\Gamma_{\mathbb{F}_l}(E)} \to H^2(\text{Gal}(F/E), \mathbb{F}_l),
$$
this time using Galois cohomology in the following manner. Firstly observe that $\Gamma_{\mathbb{F}_l}(F) = H^1(G_F, \mathbb{F}_l)$ and $\Gamma_{\mathbb{F}_l}(E) = H^1(G_E, \mathbb{F}_l)$, since the action of $G_E$ on $\mathbb{F}_l$ has been declared to be trivial. In this manner the natural map coming from restriction $\Gamma_{\mathbb{F}_l}(E) \rightarrow \Gamma_{\mathbb{F}_l}(F)^{\text{Gal}(F/E)}$ becomes the natural restriction homomorphism
$$
\text{Res}:H^1(G_E, \mathbb{F}_l) \to H^1(G_F, \mathbb{F}_l)^{\text{Gal}(F/E)}.
$$
Therefore the generalized restriction-inflation long exact sequence gives us a connecting homomorphism $r_2$, which provides a canonical isomorphism
$$
r_2:\frac{H^1(G_F, \mathbb{F}_l)^{\text{Gal}(F/E)}}{H^1(G_E, \mathbb{F}_l)} \simeq \text{ker}\left(\text{Inf} : H^2(\text{Gal}(F/E), \mathbb{F}_l) \to H^2(G_E, \mathbb{F}_l)\right).
$$
The map $\text{Inf}$ maps a $2$-cocycle for $\text{Gal}(F/E)$ to a $2$-cocycle for $G_E$ simply by precomposing the $2$-cocycle with the projection of $G_E$ onto $\text{Gal}(F/E)$. So the kernel consists of the $2$-cocycles $\theta \in H^2(\text{Gal}(F/E), \mathbb{F}_l)$ for which there exists a continuous $1$-cochain $\phi:G_E \to \mathbb{F}_l$ with $d(\phi)=\theta$. 

We stress that this does not imply that $\theta$ is trivial as an element of $H^2(\text{Gal}(F/E), \mathbb{F}_l)$, since the field of definition of $\phi$ need not be a subfield of $F$. We claim that $r_1$ and $r_2$ are actually the same map. Indeed, by the general formula for the connecting homomorphism for the restriction-inflation exact sequence, one finds that the map $r_2$ can be written as follows. For each element $\sigma$ of $\text{Gal}(F/E)$ fix a lift $\tilde{\sigma} \in G_E$ and take $\chi \in \Gamma_{\mathbb{F}_l}(F)^{\text{Gal}(F/E)}$. Then $r_2(\chi)$ is represented in $H^2(\text{Gal}(F/E),\mathbb{F}_l)$ by the cocycle
$$
(\sigma_1,\sigma_2) \mapsto \chi\left(\widetilde{\sigma_1 \sigma_2} \widetilde{\sigma_1}^{-1} \widetilde{\sigma_2}^{-1}\right).
$$  
It is a pleasant exercise to show directly, using that $\chi \in \Gamma_{\mathbb{F}_l}(F)^{\text{Gal}(F/E)}$, that the choice of the lift only changes the expression by a coboundary. On the other hand one readily sees that this is the same class as $r_1(\chi)$. Therefore we conclude the following fundamental fact.

\begin{prop} 
\label{abstract criterion for realizing central extensions}
We have
$$
\widetilde{\emph{Cent}}_{\mathbb{F}_l}(F/E) = \emph{ker}\left(H^2(\emph{Gal}(F/E), \mathbb{F}_l) \to H^2(G_E, \mathbb{F}_l)\right).
$$
In other words, a class $\theta \in H^2(\emph{Gal}(F/E), \mathbb{F}_l)$ is equal to $r_1(\chi)$ for some $\chi \in \Gamma_{\mathbb{F}_l}(F)^{\emph{Gal}(F/E)}$ if and only if $\theta$ is trivial when viewed as a class in $ H^2(G_E, \mathbb{F}_l)$. In this case, the set ${r_1}^{-1}(\theta)$ consists precisely of a single coset for the group $\Gamma_{\mathbb{F}_l}(E)$.
\end{prop}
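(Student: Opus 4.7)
The strategy is to reduce the proposition to exactness of the inflation-restriction sequence, by showing that the map $r_1$ coincides with the map $r_2$ defined as the connecting homomorphism of the five-term exact sequence attached to $1 \to G_F \to G_E \to \text{Gal}(F/E) \to 1$ with trivial coefficients $\mathbb{F}_l$. Once $r_1 = r_2$ is established, the identity
\[
\widetilde{\text{Cent}}_{\mathbb{F}_l}(F/E) = \text{Im}(r_1) = \text{Im}(r_2) = \ker\bigl(H^2(\text{Gal}(F/E), \mathbb{F}_l) \to H^2(G_E, \mathbb{F}_l)\bigr)
\]
follows from exactness of the five-term sequence at $H^2(\text{Gal}(F/E), \mathbb{F}_l)$.

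To verify $r_1(\chi) = r_2(\chi)$ for a fixed $\chi \in \Gamma_{\mathbb{F}_l}(F)^{\text{Gal}(F/E)}$, I would compare explicit cocycle representatives. For each $\sigma \in \text{Gal}(F/E)$ fix a lift $\tilde{\sigma} \in G_E$. The standard formula for the connecting map shows that $r_2(\chi)$ is represented by
\[
(\sigma_1, \sigma_2) \mapsto \chi\bigl(\widetilde{\sigma_1 \sigma_2}\,\widetilde{\sigma_1}^{-1}\widetilde{\sigma_2}^{-1}\bigr),
\]
where the argument lies in $G_F$ since it projects trivially to $\text{Gal}(F/E)$; the $\text{Gal}(F/E)$-invariance of $\chi$ ensures that a different choice of lifts changes this cocycle only by a coboundary, so the class is well defined. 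To compute $r_1(\chi)$, recall that $\chi$ identifies $\text{Gal}((E^{\text{sep}})^{\ker \chi}/F)$ with $\mathbb{F}_l$; the images of the chosen $\tilde{\sigma}$ in $\text{Gal}((E^{\text{sep}})^{\ker\chi}/E)$ form a set-theoretic section of the projection to $\text{Gal}(F/E)$, and the failure of this section to be a homomorphism is measured precisely by the same expression $\chi(\widetilde{\sigma_1 \sigma_2}\widetilde{\sigma_1}^{-1}\widetilde{\sigma_2}^{-1})$. Hence both maps send $\chi$ to the same cohomology class.

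For the statement on fibers, recall that the map $r_1$ was already shown to descend to an injective homomorphism from $\Gamma_{\mathbb{F}_l}(F)^{\text{Gal}(F/E)}/\Gamma_{\mathbb{F}_l}(E)$. Therefore, once a preimage of $\theta$ exists in $\Gamma_{\mathbb{F}_l}(F)^{\text{Gal}(F/E)}$, the full preimage is exactly one coset modulo $\Gamma_{\mathbb{F}_l}(E)$, yielding the final assertion. The only step that is more than a formal invocation is the cocycle comparison establishing $r_1 = r_2$; this fiddly but standard verification is the main obstacle in the argument, and everything else is a mechanical application of inflation-restriction.
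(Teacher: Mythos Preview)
Your proposal is correct and follows essentially the same route as the paper: the paper also introduces $r_2$ as the connecting homomorphism of the inflation--restriction sequence, writes down the identical cocycle $(\sigma_1,\sigma_2)\mapsto \chi(\widetilde{\sigma_1\sigma_2}\,\widetilde{\sigma_1}^{-1}\widetilde{\sigma_2}^{-1})$, observes that this coincides with the class produced by $r_1$, and then reads off the proposition from exactness. The fiber statement is handled identically via the previously established injectivity of $r_1$ on the quotient by $\Gamma_{\mathbb{F}_l}(E)$.
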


Loosely speaking the above criterion tells us that the group theoretic $\mathbb{F}_l$-extensions of $\text{Gal}(F/E)$ that can be realized with a field extension are precisely equal to the classes of $H^2(\text{Gal}(F/E), \mathbb{F}_l)$ that become trivial in $H^2(G_E, \mathbb{F}_l)$. This criterion takes an even simpler form if we assume that $E$ contains an element $\zeta_l \in E^{\text{sep}}$ of multiplicative order exactly $l$. 

Recall from Section \ref{conventions} that we identified $\mathbb{F}_l$ and $\langle\zeta_l \rangle$ with the isomorphism $j_l(a)=\zeta_l^a$ for each $a \in \mathbb{F}_l$. Observe that if $\zeta_l \in E$, then $j_l$ is an identification of $G_E$-modules. In particular $j_l$ induces an isomorphism
$$
j_l : H^2(G_E, \mathbb{F}_l) \to \text{Br}_{E}[l].
$$
Also the group $\frac{\Gamma_{\mathbb{F}_l}(F)^{\text{Gal}(F/E)}}{\Gamma_{\mathbb{F}_l}(E)}$ can be identified via $j_l$ with 
$$
\frac{\left(\frac{F^\ast}{F^{\ast l}}\right)^{\text{Gal}(F/E)}}{E^\ast},
$$
and one can quickly re-obtain in this special case a proof of Proposition \ref{abstract criterion for realizing central extensions} by using Kummer sequences, which provide a natural identification
$$
\frac{\left(\frac{F^\ast}{F^{\ast l}}\right)^{\text{Gal}(F/E)}}{E^\ast} = \text{ker}(H^2(\text{Gal}(F/E), \mathbb{F}_l) \to \text{Br}_{E}[l]).
$$
It turns out that as soon as $\text{char}(E) \neq l$ we can always verify the realizability of a cohomology class $\theta$ in terms of the vanishing of a class attached to $\theta$ in a Brauer group.

\begin{prop} 
\label{criterion with Brauer groups}
Suppose $F/E$ is a finite Galois extension with $\emph{char}(E) \neq l$. Then the group $\widetilde{\emph{Cent}}_{\mathbb{F}_l}(F/E)$ consists of those classes $\theta \in H^2(\emph{Gal}(F/E), \mathbb{F}_l)$ such that $j_l \circ \emph{Res}_{G_{E(\zeta_l)}}(\theta)$ is trivial in $\emph{Br}_{E(\zeta_l)}$. 
\end{prop}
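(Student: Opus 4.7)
The plan is to reduce Proposition \ref{criterion with Brauer groups} to Proposition \ref{abstract criterion for realizing central extensions} by comparing the two natural targets $H^2(G_E, \mathbb{F}_l)$ and $\text{Br}_{E(\zeta_l)}[l]$ through the intermediate group $H^2(G_{E(\zeta_l)}, \mathbb{F}_l)$. The key observation is that, although $\mathbb{F}_l$ and $\mu_l$ need not be isomorphic as $G_E$-modules, the obstruction to applying Kummer theory disappears after base change to $E(\zeta_l)$, and this base change is essentially harmless on $l$-torsion.

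First, I would invoke Proposition \ref{abstract criterion for realizing central extensions} to reformulate the claim as: $\theta \in \widetilde{\text{Cent}}_{\mathbb{F}_l}(F/E)$ if and only if $\text{Inf}_{G_E}(\theta) = 0$ in $H^2(G_E, \mathbb{F}_l)$. So it suffices to prove that, for any $\eta \in H^2(G_E, \mathbb{F}_l)$, the class $\eta$ is trivial exactly when $j_l \circ \text{Res}_{G_{E(\zeta_l)}}(\eta)$ is trivial in $\text{Br}_{E(\zeta_l)}$; one then sets $\eta = \text{Inf}_{G_E}(\theta)$ and notes that its restriction to $G_{E(\zeta_l)}$ coincides (by transitivity of restriction/inflation) with what the statement calls $\text{Res}_{G_{E(\zeta_l)}}(\theta)$.

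Next I would handle the restriction step. Since $\text{char}(E) \neq l$, the extension $E(\zeta_l)/E$ is separable with degree $d \mid l-1$, in particular coprime to $l$. The standard restriction-corestriction identity $\text{Cor} \circ \text{Res} = d \cdot \text{id}$ on $H^2(G_E, \mathbb{F}_l)$ then forces $\text{Res}_{G_{E(\zeta_l)}}: H^2(G_E, \mathbb{F}_l) \to H^2(G_{E(\zeta_l)}, \mathbb{F}_l)$ to be injective, because the target coefficient module is $l$-torsion while $d$ is invertible modulo $l$. Hence $\eta = 0$ iff $\text{Res}_{G_{E(\zeta_l)}}(\eta) = 0$.

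For the final step, observe that over the field $E(\zeta_l)$ the map $j_l : \mathbb{F}_l \to \mu_l$ (defined as in Section \ref{conventions}) is now an isomorphism of $G_{E(\zeta_l)}$-modules, since $G_{E(\zeta_l)}$ acts trivially on $\mu_l$. It therefore induces an isomorphism $j_l : H^2(G_{E(\zeta_l)}, \mathbb{F}_l) \xrightarrow{\sim} H^2(G_{E(\zeta_l)}, \mu_l)$, and by the Kummer sequence $1 \to \mu_l \to E(\zeta_l)^{\text{sep},\ast} \xrightarrow{x \mapsto x^l} E(\zeta_l)^{\text{sep},\ast} \to 1$ combined with Hilbert 90, the latter is identified with $\text{Br}_{E(\zeta_l)}[l]$ inside $\text{Br}_{E(\zeta_l)}$. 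Chaining these identifications gives $\text{Res}_{G_{E(\zeta_l)}}(\eta) = 0$ iff $j_l \circ \text{Res}_{G_{E(\zeta_l)}}(\eta) = 0$ in $\text{Br}_{E(\zeta_l)}$, completing the proof.

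The only subtle point, and the one I would take most care over, is the bookkeeping showing that $\text{Res}_{G_{E(\zeta_l)}}$ applied to the inflation of $\theta$ from $\text{Gal}(F/E)$ to $G_E$ matches the restriction map used in the statement; this is routine functoriality of cohomology, but it is the place where a careless identification could obscure the argument. Everything else is standard homological algebra (restriction-corestriction plus Kummer theory).
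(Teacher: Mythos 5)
Your proof is correct and follows essentially the same route as the paper: reduce via Proposition \ref{abstract criterion for realizing central extensions} to triviality in $H^2(G_E, \mathbb{F}_l)$, then use restriction-corestriction together with $[E(\zeta_l):E] \mid l-1$ to pass to $G_{E(\zeta_l)}$, then apply the Kummer isomorphism. The only difference is cosmetic: you spell out the Kummer-sequence-plus-Hilbert-90 identification of $H^2(G_{E(\zeta_l)},\mu_l)$ with $\text{Br}_{E(\zeta_l)}[l]$, which the paper treats as implicit when it states that $j_l$ is an isomorphism.
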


\begin{proof}
By Proposition \ref{abstract criterion for realizing central extensions} we know that $\theta$ is realizable if and only if $\theta$ becomes trivial in $H^2(G_E, \mathbb{F}_l)$. There is a map $\text{Co-Res} : H^2(G_{E(\zeta_l)}, \mathbb{F}_l) \to H^2(G_E, \mathbb{F}_l)$ with the property 
\[
\text{Co-Res} \circ \text{Res}(\theta)=[E(\zeta_l):E]\cdot \theta.
\]
But $l \cdot \theta = 0$ and $[E(\zeta_l) : E]$ divides $\left|\text{Aut}_{\text{gr}}(\langle \zeta_l \rangle)\right| = l - 1$. Hence $\theta$ becomes trivial in $H^2(G_E, \mathbb{F}_l)$ if and only if $\text{Res}_{G_{E(\zeta_l)}}(\theta)$ is trivial in $H^2(G_{E(\zeta_l)}, \mathbb{F}_l)$. Finally, since $j_l$ is an isomorphism, the triviality of $\theta$ in $H^2(G_{E(\zeta_l)}, \mathbb{F}_l)$ is equivalent to the triviality of $j_l \circ \theta$ in $\text{Br}_{E(\zeta_l)}$.
\end{proof}

In particular for $E = \mathbb{Q}$ we derive the following criterion. The group $\mathbb{F}_l$ will be implicitly considered as a trivial $G$-module whenever a symbol suggests an action of a group $G$ on $\mathbb{F}_l$.

\begin{prop} 
\label{criterion for Q}
Let $F/\mathbb{Q}$ be a finite Galois extension. Then the following are equivalent for $\theta \in H^2(\emph{Gal}(F/\mathbb{Q}), \mathbb{F}_l)$
\begin{enumerate}
\item[(1)] we have that $\theta \in \widetilde{\emph{Cent}}_{\mathbb{F}_l}(F/\mathbb{Q})$, i.e. $\theta = r_1(\chi)$ for some $\chi \in \Gamma_{\mathbb{F}_l}(F)^{\emph{Gal}(F/\mathbb{Q})}$;
\item[(2)] there exists a continuous $1$-cochain 
$$
\phi : G_{\mathbb{Q}} \to \mathbb{F}_l,
$$ 
such that $d(\phi) = \theta$;
\item[(3)] for every $v \in \Omega_{\mathbb{Q}(\zeta_l)}$ we have that $\emph{inv}_v(j_l \circ \theta) = 0$. 
\end{enumerate}

\begin{proof}
This is an immediate consequence of Proposition \ref{abstract criterion for realizing central extensions}, Proposition \ref{criterion with Brauer groups} and the fact that $\text{Br}_{\mathbb{Q}(\zeta_l)}$ embeds in the direct sum $\bigoplus_{v \in \Omega_{\mathbb{Q}(\zeta_l)}}\text{Br}_{\mathbb{Q}(\zeta_l)_v}$. 
\end{proof}
\end{prop}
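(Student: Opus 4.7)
The plan is to deduce the three equivalences as direct consequences of the two preceding propositions, together with the Hasse--Brauer--Noether exact sequence for the Brauer group of $\mathbb{Q}(\zeta_l)$. No new cohomological machinery is required; the work lies in carefully chaining together the reformulations already collected in the section.

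For the equivalence $(1) \Leftrightarrow (2)$, I would invoke Proposition \ref{abstract criterion for realizing central extensions}, which asserts that $\theta$ lies in $\widetilde{\text{Cent}}_{\mathbb{F}_l}(F/\mathbb{Q})$ exactly when its inflation to $H^2(G_\mathbb{Q}, \mathbb{F}_l)$ vanishes. Unwinding what inflation does concretely, this inflation is represented by the continuous $2$-cocycle obtained by precomposing a representative of $\theta$ with the projection $G_\mathbb{Q} \twoheadrightarrow \text{Gal}(F/\mathbb{Q})$, and its vanishing in continuous cohomology is by definition the existence of a continuous $1$-cochain $\phi : G_\mathbb{Q} \to \mathbb{F}_l$ with $d(\phi) = \theta$. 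This is precisely (2); indeed the discussion preceding Proposition \ref{abstract criterion for realizing central extensions} already records exactly this interpretation of $\ker(\text{Inf})$.

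For the equivalence $(1) \Leftrightarrow (3)$, I would specialize Proposition \ref{criterion with Brauer groups} to $E = \mathbb{Q}$: condition (1) is then equivalent to the vanishing of $j_l \circ \text{Res}_{G_{\mathbb{Q}(\zeta_l)}}(\theta)$ in $\text{Br}_{\mathbb{Q}(\zeta_l)}$. Global class field theory provides the Hasse--Brauer--Noether exact sequence
\[
0 \to \text{Br}_{\mathbb{Q}(\zeta_l)} \to \bigoplus_{v \in \Omega_{\mathbb{Q}(\zeta_l)}} \text{Br}_{\mathbb{Q}(\zeta_l)_v} \xrightarrow{\,\sum_v \text{inv}_v\,} \mathbb{Q}/\mathbb{Z} \to 0,
\]
whose left-exactness is precisely the statement that a class in the global Brauer group is trivial if and only if all of its local invariants vanish. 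Applying this to $j_l \circ \text{Res}_{G_{\mathbb{Q}(\zeta_l)}}(\theta)$ immediately yields condition (3).

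The only delicate point is the bookkeeping around $j_l$: the identification $j_l : \mathbb{F}_l \to \langle \zeta_l \rangle$ is $G$-equivariant only over a field containing $\zeta_l$, so the symbol $j_l \circ \theta$ appearing in (3) implicitly refers to first restricting the cocycle to $G_{\mathbb{Q}(\zeta_l)}$ and then identifying the coefficients via $j_l$ to obtain an honest Brauer class over $\mathbb{Q}(\zeta_l)$. Once this convention is spelled out, the proof is essentially a formal chain of already-proven equivalences, and I do not anticipate any substantive obstacle beyond keeping the coefficient identifications straight.
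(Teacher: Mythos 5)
Your proposal is correct and matches the paper's argument: the paper's proof cites exactly Proposition \ref{abstract criterion for realizing central extensions} for $(1)\Leftrightarrow(2)$, Proposition \ref{criterion with Brauer groups} for reducing $(1)$ to a Brauer-group condition over $\mathbb{Q}(\zeta_l)$, and the injectivity of the local-global map for $\text{Br}_{\mathbb{Q}(\zeta_l)}$ (the Hasse--Brauer--Noether sequence) to pass to $(3)$. Your clarification about the meaning of $j_l \circ \theta$ (restricting to $G_{\mathbb{Q}(\zeta_l)}$ before identifying coefficients) is a sound reading of the paper's notational conventions and does not change the substance of the argument.
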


The following fact will help us to cut down in practice the set of places one needs to check in part $(3)$ of Proposition \ref{criterion for Q}. The proposition could easily be derived from local class field theory. Instead we give an elementary proof here based on Proposition \ref{criterion with Brauer groups}. 

\begin{prop}
\label{unramified extensions trivial inv}
Let $E$ be a local field and $F/E$ a finite unramified extension. Then $\widetilde{\emph{Cent}}_{\mathbb{F}_l}(F/E)=H^2(\emph{Gal}(F/E), \mathbb{F}_l)$, i.e. for each $\theta \in H^2(\emph{Gal}(F/E), \mathbb{F}_l)$, we have that $\theta$ is trivial in $H^2(G_E, \mathbb{F}_l)$. In particular, if $\mu_l(E) \neq \{1\}$, we have $\emph{inv}_{E}(j_l \circ \theta) = 0$ for each $\theta \in H^2(\emph{Gal}(F/E), \mathbb{F}_l)$.
\end{prop}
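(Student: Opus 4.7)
The plan is to factor the inflation map $H^2(\text{Gal}(F/E), \mathbb{F}_l) \to H^2(G_E, \mathbb{F}_l)$ that appears in Proposition \ref{abstract criterion for realizing central extensions} through the cohomology of the maximal unramified extension of $E$, and then to use the vanishing of $H^2(\hat{\mathbb{Z}}, \mathbb{F}_l)$. The \emph{in particular} part then drops out from Kummer theory.

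Concretely, since $F/E$ is finite unramified, $F$ is contained in $E^{\text{un}}$, the maximal unramified extension of $E$ inside $E^{\text{sep}}$. The surjection $\text{Gal}(E^{\text{un}}/E) \twoheadrightarrow \text{Gal}(F/E)$ induces an inflation
\[
\text{Inf}_1 : H^2(\text{Gal}(F/E), \mathbb{F}_l) \to H^2(\text{Gal}(E^{\text{un}}/E), \mathbb{F}_l),
\]
and by functoriality of inflation, the natural map $H^2(\text{Gal}(F/E), \mathbb{F}_l) \to H^2(G_E, \mathbb{F}_l)$ factors as $\text{Inf}_2 \circ \text{Inf}_1$, where $\text{Inf}_2 : H^2(\text{Gal}(E^{\text{un}}/E), \mathbb{F}_l) \to H^2(G_E, \mathbb{F}_l)$.

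Next, $\text{Gal}(E^{\text{un}}/E)$ is procyclic, topologically generated by Frobenius, hence isomorphic to $\hat{\mathbb{Z}}$. Since $\hat{\mathbb{Z}}$ is a free profinite group of rank one, it has cohomological dimension one, so $H^2(\text{Gal}(E^{\text{un}}/E), \mathbb{F}_l) = 0$. Hence $\text{Inf}_1(\theta) = 0$ for every $\theta$, and \emph{a fortiori} the image of $\theta$ in $H^2(G_E, \mathbb{F}_l)$ is trivial. Proposition \ref{abstract criterion for realizing central extensions} then gives the first claim.

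For the \emph{in particular} part, assume $\mu_l(E) \neq \{1\}$, so that $\zeta_l \in E$ and $j_l : \mathbb{F}_l \to \langle \zeta_l \rangle$ is $G_E$-equivariant. Via the Kummer sequence $j_l$ induces an embedding $H^2(G_E, \mathbb{F}_l) \hookrightarrow \text{Br}_E$, so the vanishing of the image of $\theta$ in $H^2(G_E, \mathbb{F}_l)$ translates into the triviality of $j_l \circ \theta$ in $\text{Br}_E$, and in particular into $\text{inv}_E(j_l \circ \theta) = 0$. The only nontrivial input is the vanishing $H^2(\hat{\mathbb{Z}}, \mathbb{F}_l) = 0$, which is standard but worth spelling out (e.g.\ by computing the continuous cohomology of $\hat{\mathbb{Z}}$ through a two-term resolution, or by writing $\hat{\mathbb{Z}}$ as a filtered limit of finite cyclic groups and checking that the relevant direct limit of inflations vanishes). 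I expect this to be the only real point to verify carefully.
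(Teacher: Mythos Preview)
Your argument is correct, but it takes a different route from the paper. The paper argues directly at the level of the finite extension $F/E$: since $\text{Gal}(F/E)$ is cyclic, every central extension is abelian, so $H^2(\text{Gal}(F/E),\mathbb{F}_l)=\text{Ext}(\text{Gal}(F/E),\mathbb{F}_l)$, which is either trivial or cyclic of order $l$; in the nontrivial case the unramified degree $l$ extension of $F$ visibly supplies a nontrivial element of $\text{Cent}_{\mathbb{F}_l}(F/E)$, so $r_1$ is surjective. Your approach instead factors the inflation through $H^2(\text{Gal}(E^{\text{un}}/E),\mathbb{F}_l)\cong H^2(\hat{\mathbb{Z}},\mathbb{F}_l)=0$, which is slicker and avoids the case split on whether $l\mid[F:E]$. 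The trade-off is that the paper's proof records the explicit fact that the nontrivial class in $H^2(\text{Gal}(F/E),\mathbb{F}_l)$ is represented by the unramified degree $l$ character of $G_F$, and this is invoked again in the proof of Proposition~\ref{little ramification}; if you adopt your argument, you should note this identification separately.
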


\begin{proof}
The group $\text{Gal}(F/E)$ is cyclic, hence the group $H^2(\text{Gal}(F/E), \mathbb{F}_l)$ equals the group $\text{Ext}(\text{Gal}(F/E), \mathbb{F}_l)$. Indeed, if $G$ is a group with a central subgroup $H$ and cyclic quotient $G/H$, then $G$ is abelian. We can assume that $l$ divides $[F:E]$ otherwise the $H^2$ collapses and the statement becomes a triviality. In this case $\text{Ext}(\text{Gal}(F/E), \mathbb{F}_l)$ is cyclic of order $l$. Therefore it is enough to provide one non-trivial element of $\text{Cent}_{\mathbb{F}_l}(F/E)$ for which we take the unramified degree $l$ extension of $F$. The other statements now follow from Proposition \ref{abstract criterion for realizing central extensions} and Proposition \ref{criterion with Brauer groups}.
\end{proof}

Thus, in practice, when we use Proposition \ref{criterion for Q}, it is enough to check at the places $v \in \Omega_{\mathbb{Q}(\zeta_l)}$ that ramify in $F(\zeta_l)/\mathbb{Q}(\zeta_l)$. The following general fact explains why we need only deal with elements of $\Omega_{\mathbb{Q}(\zeta_l)}$ with residue field degree $1$. 

\begin{prop} 
\label{only p and 1 mod p}
Let $F/\mathbb{Q}$ a finite Galois extension of degree a power of $l$. Then if a prime $q$ divides $\Delta_{F/\mathbb{Q}}$ then $q = l$ or $q \equiv 1 \bmod l$. 
\end{prop}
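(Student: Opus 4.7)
The plan is to reduce the statement to a short local argument based on tame inertia. Assume $q \mid \Delta_{F/\mathbb{Q}}$ with $q \neq l$; the goal is to show $q \equiv 1 \bmod l$. Fix a prime $\mathfrak{q}$ of $F$ above $q$ and consider the inertia subgroup $I \subseteq \text{Gal}(F/\mathbb{Q})$ at $\mathfrak{q}$. Since $q$ ramifies in $F/\mathbb{Q}$, we have $I \neq \{1\}$, and since $I$ is a subgroup of an $l$-group, $|I| = l^k$ for some $k \geq 1$.

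The crucial observation is that $l \neq q$, so $|I|$ is coprime to the residue characteristic. Consequently the wild inertia subgroup of $I$, being a pro-$q$ group, must be trivial, so $I$ coincides with its tame quotient. I would then invoke the standard tame inertia character $\sigma \mapsto \sigma(\pi)/\pi \bmod \mathfrak{q}$ (for $\pi$ any uniformizer at $\mathfrak{q}$), which embeds $I$ injectively into the multiplicative group of the residue field $k(\mathfrak{q})$. Writing $f$ for the residue degree of $\mathfrak{q}$ over $q$, this injection yields $l^k \mid |k(\mathfrak{q})^\ast| = q^f - 1$. Moreover $f$ divides $[F:\mathbb{Q}]$, so $f$ is itself a power of $l$.

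To close the argument, let $d$ be the multiplicative order of $q$ in $(\mathbb{Z}/l\mathbb{Z})^\ast$. From $l \mid q^f - 1$ we have $d \mid f$, so $d$ is a power of $l$; but $d$ also divides $|(\mathbb{Z}/l\mathbb{Z})^\ast| = l - 1$, which is coprime to $l$. Hence $d = 1$, that is, $q \equiv 1 \bmod l$, as required. The only piece of algebraic input is the injectivity of the tame inertia character into $k(\mathfrak{q})^\ast$, which is a classical fact from local number theory; I do not expect any real obstacle beyond invoking it cleanly.
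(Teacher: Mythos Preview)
Your argument is correct and is essentially the paper's first proof: both take the inertia subgroup at $q$, use tameness (since $q\neq l$) to get $|I_q|\mid q^f-1$ with $f$ a power of $l$, and then conclude $q\equiv 1\bmod l$. The only cosmetic differences are that you spell out the tame inertia character explicitly and phrase the final step via the order of $q$ in $(\mathbb{Z}/l\mathbb{Z})^\ast$, whereas the paper writes $q^{l^j}-1\equiv q-1\bmod l$ directly.
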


\begin{proof}[First proof] Let $q$ be a prime different from $l$ that ramifies in $F/\mathbb{Q}$ and choose some inertia subgroup $I_q \leq \text{Gal}(F/\mathbb{Q})$ at $q$. Observe that, since $q \neq l$ it must be that $\gcd(\left|I_q\right|, q) = 1$. Hence $\left|I_q\right|$ divides $q^{f_q(F/\mathbb{Q})} - 1$. On the other hand $f_q(F/\mathbb{Q})$ divides $[F : \mathbb{Q}]$ which is a power of $l$. We conclude that $q^{f_q(F/\mathbb{Q})} - 1 \equiv q - 1 \bmod l$. Because $\left|I_q\right|$ is a non-trivial power of $l$, this implies that $q$ is $1$ modulo $l$. 
\end{proof}

\begin{proof}[Second proof] Filter $F/\mathbb{Q}$ with a sequence $F=F_n \supseteq F_{n-1} \supseteq\ldots\supseteq F_0=\mathbb{Q}$ such that each $F_i/\mathbb{Q}$ has degree $l^i$ and is Galois. Take an $i$ such that there exists a prime $\mathfrak{q}$ of $F_i$ above $q$ that ramifies in $F_{i + 1}$; such an $i$ exists otherwise the extension $F/\mathbb{Q}$ would be unramified at $q$. 

Let $\tilde{\mathfrak{q}}$ be the unique prime above $\mathfrak{q}$ in $F_{i+1}$. The local field extension ${F_{i+1}}_{\tilde{\mathfrak{q}}}/{F_i}_{\mathfrak{q}}$ is totally ramified cyclic of degree $l$. The local Artin reciprocity law implies that the group $O_{{F_i}_{\mathfrak{q}}}^{\ast }$ has a cyclic quotient of size $l$. If $q \neq l$, the group $U_1({F_i}_{\mathfrak{q}})$ is a pro-$q$ group with trivial image in any such quotient. Hence 
the group $\left(O_{{F_i}_{\mathfrak{q}}}/\mathfrak{q}\right)^\ast$ has a cyclic quotient of size $l$. On the other hand
$$
\left|\left(\frac{O_{{F_i}_{\mathfrak{q}}}}{\mathfrak{q}}\right)^\ast\right| = \left|\mathbb{F}_{q^{l^j}}^\ast\right|
$$
for some $j \leq i$. But we have
\[
\left|\mathbb{F}_{q^{l^j}}^\ast\right| = q^{l^j} - 1 \equiv q - 1 \bmod l,
\]
thereby concluding our proof.
\end{proof}

Recall that if we have two conjugate subgroups $H_1$, $H_2$ of $G_{\mathbb{Q}}$ and a class $\theta \in H^2(G_{\mathbb{Q}},\mathbb{F}_l)$ then $\text{Res}_{H_1}(\theta)=0$ if and only if $\text{Res}_{H_2}(\theta)=0$. Thus the following definition makes sense.

\begin{mydef} 
\label{locally trivial classes}
Let $v \in \Omega_{\mathbb{Q}}$ be a place and $\theta$ be a class in $H^2(G_{\mathbb{Q}},\mathbb{F}_l)$. We say that $\theta$ is \emph{locally trivial} at $v$ in case $\textup{Res}_{i^{\ast }(G_{\mathbb{Q}_v})}(\theta)=0$ for a (equivalently any) choice of an embedding $i : \overline{\mathbb{Q}} \to \overline{\mathbb{Q}_v}$.
\end{mydef}

\begin{prop}
\label{local-global stuff}
Let $F/\mathbb{Q}$ be a finite Galois extension. 
Let $\theta$ be a class in $H^2(\emph{Gal}(F/\mathbb{Q}), \mathbb{F}_l)$. Then the following are equivalent
\begin{enumerate}
\item[(1)] we have that $\theta \in \widetilde{\emph{Cent}}_{\mathbb{F}_l}(F/\mathbb{Q})$;
\item[(2)] the inflation of $\theta$ to $G_{\mathbb{Q}}$ is locally trivial at all places $v \in \Omega_{\mathbb{Q}}$;
\item[(3)] the inflation of $\theta$ to $G_{\mathbb{Q}}$ is locally trivial at all places $v \in \Omega_{\mathbb{Q}}$ that ramify in $F/\mathbb{Q}$;
\item[(4)] for any $v \in \Omega_{\mathbb{Q}}$ and $\tilde{v} \in \Omega_F$ lying above $v$, we have that $\theta \in \widetilde{\emph{Cent}}_{\mathbb{F}_l}(F_{\tilde{v}}/\mathbb{Q}_v)$.
\end{enumerate}
\end{prop}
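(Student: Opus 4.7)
The plan is to deduce all four equivalences from the tools already established, namely Proposition \ref{abstract criterion for realizing central extensions} for the intrinsic characterization of $\widetilde{\text{Cent}}_{\mathbb{F}_l}$, Proposition \ref{criterion with Brauer groups} for the passage to $\Q(\zeta_l)$, and Proposition \ref{unramified extensions trivial inv} for the unramified reduction. The logical structure I have in mind is $(1) \Leftrightarrow (2) \Leftrightarrow (4)$, with $(3) \Leftrightarrow (4)$ as an easy consequence of the unramified criterion.

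For $(1) \Leftrightarrow (2)$ the forward direction is immediate: by Proposition \ref{abstract criterion for realizing central extensions}, condition $(1)$ says precisely that $\text{Inf}(\theta)$ vanishes in $H^2(G_\Q, \mathbb{F}_l)$, and a globally trivial class is locally trivial everywhere. For $(2) \Rightarrow (1)$, I would reuse the corestriction argument from Proposition \ref{criterion with Brauer groups}: since $l \cdot \text{Inf}(\theta) = 0$ and $[\Q(\zeta_l):\Q]$ divides $l-1$, triviality of $\text{Inf}(\theta)$ in $H^2(G_\Q, \mathbb{F}_l)$ is equivalent to triviality of its restriction to $G_{\Q(\zeta_l)}$. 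Via $j_l$ the latter sits inside $\text{Br}(\Q(\zeta_l))[l]$, which by the Albert--Brauer--Hasse--Noether exact sequence embeds into $\bigoplus_{w \in \Omega_{\Q(\zeta_l)}} \text{Br}(\Q(\zeta_l)_w)$. For any place $w$ of $\Q(\zeta_l)$ above $v \in \Omega_\Q$ one has $G_{\Q(\zeta_l)_w} \subseteq G_{\Q_v}$, so local triviality of $\text{Inf}(\theta)$ at $v$ forces its further restriction at $w$ to vanish; this is exactly what local-global in the Brauer group demands.

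Next, $(2) \Leftrightarrow (4)$ is the local version of $(1) \Leftrightarrow (2)$ and should be proved by a parallel argument. Fix $v \in \Omega_\Q$ and $\tilde{v} \in \Omega_F$ above $v$; the restriction of $\text{Inf}_{G_\Q}(\theta)$ to $G_{\Q_v}$ factors through the decomposition group $\text{Gal}(F_{\tilde{v}}/\Q_v)$, so it equals the inflation to $G_{\Q_v}$ of the image of $\theta$ in $H^2(\text{Gal}(F_{\tilde{v}}/\Q_v), \mathbb{F}_l)$. Applying Proposition \ref{abstract criterion for realizing central extensions} to the local extension $F_{\tilde{v}}/\Q_v$, local triviality at $v$ of this inflation is exactly the statement $\theta \in \widetilde{\text{Cent}}_{\mathbb{F}_l}(F_{\tilde{v}}/\Q_v)$; the independence of the choice of $\tilde{v}$ above $v$ is the conjugation remark preceding Definition \ref{locally trivial classes}.

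Finally, $(2) \Rightarrow (3)$ is tautological and $(3) \Rightarrow (2)$ follows from Proposition \ref{unramified extensions trivial inv}: at every unramified place $v$ one has $\widetilde{\text{Cent}}_{\mathbb{F}_l}(F_{\tilde{v}}/\Q_v) = H^2(\text{Gal}(F_{\tilde{v}}/\Q_v), \mathbb{F}_l)$, so condition $(4)$ holds vacuously at unramified places, and hence, via the already-established $(2) \Leftrightarrow (4)$, so does local triviality of the inflation at unramified places. There is no real obstacle beyond careful bookkeeping of the restriction and inflation maps used to translate between global cohomology of $\text{Gal}(F/\Q)$, global cohomology of $G_\Q$ and local cohomology at each place; the genuinely arithmetic inputs, namely Albert--Brauer--Hasse--Noether for $\Q(\zeta_l)$ and the triviality of $H^2$ of an unramified local extension, are already packaged in the cited propositions.
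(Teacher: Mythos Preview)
Your proof is correct and follows essentially the same route as the paper. The only cosmetic difference is that for $(2)\Rightarrow(1)$ the paper cites Proposition~\ref{criterion for Q} directly (which already packages the corestriction-plus-Brauer-group argument over $\Q(\zeta_l)$), whereas you inline that argument; the content is identical.
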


\begin{proof}
Part (2) and part (4) are equivalent in view of Proposition \ref{abstract criterion for realizing central extensions}. Thanks to the same proposition, we have that part (1) certainly implies part (2). Furthermore, part (2) trivially implies part (3). On the other hand, thanks to Proposition \ref{unramified extensions trivial inv}, we see that part (3) implies part (2) as well. It remains to show that part (2) implies part (1). But part (2) of this proposition implies that part (3) of Proposition \ref{criterion for Q} holds. Now use Proposition \ref{criterion for Q}.
\end{proof}

The equivalence between $(1)$ and $(4)$ in Proposition \ref{local-global stuff} tells us that a class $\theta$ is realizable if and only if it is realizable locally everywhere, and moreover it is sufficient to check that it is realizable locally at the places of $\mathbb{Q}$ that are ramified in $F$.

In our applications, we will not merely be interested in writing the relevant $\theta$ as $r_1(\chi)$ for some $\chi \in \Gamma_{\mathbb{F}_l}(F)^{\text{Gal}(F/\mathbb{Q})}$, but it will also be important for us to find a representative $\chi$ in the $\Gamma_{\mathbb{F}_l}(\mathbb{Q})$-coset $r_1^{-1}(\theta)$ such that $\overline{\mathbb{Q}}^{\text{ker}(\chi)}/\mathbb{Q}$ has as little ramification as possible.  

\begin{prop} 
\label{little ramification}
Let $F/\mathbb{Q}$ a finite Galois extension and let $\theta \in \widetilde{\emph{Cent}}_{\mathbb{F}_l}(F/\mathbb{Q})$. Then there exists $\chi \in \Gamma_{\mathbb{F}_l}(F)^{\emph{Gal}(F/\mathbb{Q})}$ such that $r_1(\chi) = \theta$ and $\overline{\mathbb{Q}}^{\emph{ker}(\chi)}/\mathbb{Q}$ is unramified at all primes $q$ not dividing $\Delta_{F/\mathbb{Q}}$.  
\end{prop}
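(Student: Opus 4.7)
The plan is to start from an arbitrary representative $\chi_0 \in r_1^{-1}(\theta)$, whose existence is given by Proposition \ref{abstract criterion for realizing central extensions}, and then modify $\chi_0$ by a suitable $\phi \in \Gamma_{\mathbb{F}_l}(\mathbb{Q})$ so as to kill all ramification of $\overline{\mathbb{Q}}^{\text{ker}(\chi_0)}/\mathbb{Q}$ outside $\Delta_{F/\mathbb{Q}}$. By that same proposition every element of $r_1^{-1}(\theta)$ has the form $\chi_0 + \phi|_{G_F}$ for some $\phi \in \Gamma_{\mathbb{F}_l}(\mathbb{Q})$, so producing such a $\phi$ immediately yields the desired $\chi := \chi_0 + \phi|_{G_F}$.

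First I would localise the problem. Let $S$ be the finite set of primes $q \nmid \Delta_{F/\mathbb{Q}}$ at which $\overline{\mathbb{Q}}^{\text{ker}(\chi_0)}/\mathbb{Q}$ is ramified. Since $F/\mathbb{Q}$ is unramified at such $q$, any inertia subgroup $I_q \subseteq G_\mathbb{Q}$ above $q$ is contained in $G_F$, and ramification at $q$ amounts to $\chi_0|_{I_q} \neq 0$; this condition is independent of the chosen prime above $q$ because $\chi_0$ is $\text{Gal}(F/\mathbb{Q})$-invariant. The same invariance together with the normality of $I_q$ inside the decomposition group $D_q$ forces $\chi_0|_{I_q}$ to lie in $\text{Hom}(I_q, \mathbb{F}_l)^{D_q/I_q}$.

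Next I would pin down this invariant space locally. For $q \neq l$, the pro-$l$ tame quotient of $I_q$ is $\mathbb{Z}_l(1)$ with Frobenius acting by multiplication by $q$, so $\text{Hom}(I_q, \mathbb{F}_l)^{D_q/I_q}$ vanishes unless $q \equiv 1 \bmod l$, in which case it is a one-dimensional $\mathbb{F}_l$-line. Since $\chi_q$ is totally ramified at $q$, its restriction $\chi_q|_{I_q}$ is a generator of this line. The analogous analysis at $q = l$ yields a one-dimensional space generated by $\chi_l|_{I_l}$. Consequently $S \subseteq \{q \equiv 1 \bmod l\} \cup \{l\}$, and for each $q \in S$ there is a unique $a_q \in \mathbb{F}_l$ with $\chi_0|_{I_q} = a_q \chi_q|_{I_q}$.

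Finally, setting $\phi := -\sum_{q \in S} a_q \chi_q \in \Gamma_{\mathbb{F}_l}(\mathbb{Q})$ and $\chi := \chi_0 + \phi|_{G_F}$, the fact that each $\chi_q$ is unramified outside $q$ gives $\phi|_{I_{q'}} = -a_{q'} \chi_{q'}|_{I_{q'}}$ when $q' \in S$ and $\phi|_{I_{q'}} = 0$ when $q' \notin S$. Either way $\chi|_{I_{q'}} = \chi_0|_{I_{q'}} + \phi|_{I_{q'}} = 0$ for every $q' \nmid \Delta_{F/\mathbb{Q}}$, so $\overline{\mathbb{Q}}^{\text{ker}(\chi)}/\mathbb{Q}$ is unramified outside $\Delta_{F/\mathbb{Q}}$, while $r_1(\chi) = r_1(\chi_0) = \theta$ since $\phi|_{G_F}$ lies in the kernel of $r_1$. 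The main obstacle I anticipate is the local rigidity step: cleanly verifying the one-dimensionality of $\text{Hom}(I_q, \mathbb{F}_l)^{D_q/I_q}$ and that $\chi_q|_{I_q}$ generates this line; once this local fact is secured, the rest of the proof is pure inertia-level bookkeeping.
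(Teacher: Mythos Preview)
Your proposal is correct and follows essentially the same strategy as the paper: start from an arbitrary $\chi_0 \in r_1^{-1}(\theta)$ and twist by a suitable $\mathbb{F}_l$-combination of the global characters $\chi_q$ to kill ramification at primes $q \nmid \Delta_{F/\mathbb{Q}}$. The only real difference is in the local step: the paper argues via the structure of $H^2(\text{Gal}(F_{\mathfrak{q}}/\mathbb{Q}_q),\mathbb{F}_l)$ from Proposition~\ref{unramified extensions trivial inv} (namely that this group is generated by the unramified degree-$l$ character, so $\chi_0|_{G_{F_{\mathfrak{q}}}}$ differs from an unramified character by something in $\Gamma_{\mathbb{F}_l}(\mathbb{Q}_q)$, which can be taken to be a multiple of $\chi_q$), whereas you work directly at the inertia level, computing $\text{Hom}(I_q,\mathbb{F}_l)^{D_q/I_q}$ explicitly. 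Your route has the minor advantage of making transparent why $S \subseteq \{q \equiv 1 \bmod l\} \cup \{l\}$ (a fact the paper leaves implicit in its appeal to the shape of $\Gamma_{\mathbb{F}_l}(\mathbb{Q}_q)$), while the paper's route reuses machinery already set up earlier in the section; otherwise the two arguments are interchangeable.
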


\begin{proof}
Take any $\chi \in \Gamma_{\mathbb{F}_l}(F)^{\text{Gal}(F/\mathbb{Q})}$ with $r_1(\chi) = \theta$. If $\overline{\mathbb{Q}}^{\text{ker}(\chi)}/\mathbb{Q}$ is unramified at all primes $q$ not dividing $\Delta_{F/\mathbb{Q}}$, we are done. So suppose that there is a rational prime $q$ that does not ramify in $F$ but does ramify in $\overline{\mathbb{Q}}^{\text{ker}(\chi)}$. Then there is a prime $\mathfrak{q}$ of $O_F$ above $q$ that ramifies in $\overline{\mathbb{Q}}^{\text{ker}(\chi)}$. 

As observed in the proof of Proposition \ref{unramified extensions trivial inv}, the group $H^2(\text{Gal}(F_{\mathfrak{q}}/\mathbb{Q}_q), \mathbb{F}_l)$ is cyclic of order $l$, generated by an unramified character of $G_{F_{\mathfrak{q}}}$ of order $l$. This means that we can always find $\chi' \in G_{\mathbb{Q}_q}$ of order $l$ such that $\chi + \chi'$ is an unramified character for $G_{F_\mathfrak{q}}$. Moreover, we can take $\chi'$ to be a multiple of $\chi_q$, see Subsection \ref{conventions} for the notation. So we can find $\chi' \in \Gamma_{\mathbb{F}_l}(\mathbb{Q})$ such that $\mathfrak{q}$ does not ramify in $\overline{\mathbb{Q}}^{\text{ker}(\chi+\chi')}$. 

We claim that this implies that $\chi + \chi'$ does not ramify at \emph{any} prime above $q$. Indeed we certainly have that for each $\sigma \in G_{\mathbb{Q}}$, the character $\sigma \cdot (\chi+\chi')$ is unramified at $\sigma(\mathfrak{q})$. On the other hand $\sigma \cdot (\chi+\chi')=\chi+\chi'$, since by assumption $\chi \in \Gamma_{\mathbb{F}_l}(F)^{\text{Gal}(F/\mathbb{Q})}$. This proves our claim. Finally observe that $\overline{\mathbb{Q}}^{\text{ker}(\chi+\chi')}/F$ does not ramify at any new prime, since $\chi_q$ ramifies only at $q$. Hence continuing in this manner we get rid of all such $q$ and we have proved the proposition.
\end{proof}

A stronger control on the ramification can be achieved at the cost of having a stronger notion of local triviality, which will be given in the next definition. Recall again that if $H_1,H_2$ are conjugate subgroups of a finite group $G$ and if $\theta \in H^2(G,\mathbb{F}_l)$, then $\text{Res}_{H_1}(\theta)=0$ if and only if $\text{Res}_{H_2}(\theta)=0$. This shows that the following definition makes sense. 

\begin{mydef} 
\label{locally split classes}
Let $F/\mathbb{Q}$ be a finite Galois extension, $\theta \in H^2(\emph{Gal}(F/\mathbb{Q}), \mathbb{F}_l)$ and $q$ a prime number. We say that $\theta$ is \emph{locally split at} $q$ if the restriction of $\theta$ to one (equivalently any) subgroup $D_{\mathfrak{q}/q}$ is trivial, where $\mathfrak{q}$ is a prime above $q$ in $F$ and  $D_{\mathfrak{q}/q}$ is the corresponding decomposition group. Moreover we say that $\theta$ is \emph{locally split at inertia at} $q$ in case the restriction of $\theta$ to one (equivalently any) subgroup $I_{\mathfrak{q}/q}$ is trivial, where $I_{\mathfrak{q}/q}$ denotes the inertia subgroup relative to $\mathfrak{q}$. 
\end{mydef}

\begin{prop}
\label{no ramification}
$\emph{(1)}$ Let $F/\mathbb{Q}$ be a finite Galois extension. If $\theta \in H^2(\emph{Gal}(F/\mathbb{Q}), \mathbb{F}_l)$ is locally split at all primes dividing $\Delta_{F/\mathbb{Q}}$, then we have $\theta \in \widetilde{\emph{Cent}}_{\mathbb{F}_l}(F/\mathbb{Q})$. Moreover, there exists $\chi \in \Gamma_{\mathbb{F}_l}(F)^{\emph{Gal}(F/\mathbb{Q})}$ such that $r_1(\chi)=\theta$ and $\overline{\mathbb{Q}}^{\emph{ker}(\chi)}/F$ is unramified.

\noindent $\emph{(2)}$ Suppose that all the primes dividing $\Delta_{F/\mathbb{Q}}$ are $1$ modulo $l$. Then the same conclusion as in $\emph{(a)}$ can be reached assuming only that $\theta$ is locally trivial and locally split at inertia at all primes dividing $\Delta_{F/\mathbb{Q}}$. 

\noindent $\emph{(3)}$ Suppose that $[F:\mathbb{Q}]$ is of degree a power of $l$ and does not ramify at $l$. Then the same conclusion as in $\emph{(a)}$ can be reached assuming only that $\theta$ is locally trivial and locally split at inertia at all primes dividing $\Delta_{F/\mathbb{Q}}$.
\end{prop}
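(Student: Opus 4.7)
The plan is to prove all three parts by a common strategy: first establish $\theta \in \widetilde{\text{Cent}}_{\mathbb{F}_l}(F/\mathbb{Q})$ via Proposition \ref{local-global stuff}, then produce an initial realization $\chi_0$ of $\theta$ using Proposition \ref{little ramification}, and finally modify $\chi_0$ within its $\Gamma_{\mathbb{F}_l}(\mathbb{Q})$-coset to kill ramification above each $q \mid \Delta_{F/\mathbb{Q}}$, one prime at a time. The modifications decouple across primes because each elementary modifier $\chi_q$ (or $\chi_l$) is ramified at only one prime, hence has trivial inertia elsewhere.

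For part (1), "locally split at $q$" implies "locally trivial at $q$" via the inflation $H^2(D_\mathfrak{q}, \mathbb{F}_l) \to H^2(G_{\mathbb{Q}_q}, \mathbb{F}_l)$, and combined with Proposition \ref{unramified extensions trivial inv} for unramified primes this supplies the hypothesis of Proposition \ref{local-global stuff}. For the ramification removal, the local analog of Proposition \ref{abstract criterion for realizing central extensions} applied to $F_\mathfrak{q}/\mathbb{Q}_q$ converts "locally split at $q$" into the statement $\chi_0|_{G_{F_\mathfrak{q}}} = \chi_q^{\textup{loc}}|_{G_{F_\mathfrak{q}}}$ for some $\chi_q^{\textup{loc}} \in \Gamma_{\mathbb{F}_l}(\mathbb{Q}_q)$. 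If $q \neq l$ and $q \not\equiv 1 \bmod l$, an elementary analysis of $\mathbb{Q}_q^\ast/\mathbb{Q}_q^{\ast l}$ shows $\Gamma_{\mathbb{F}_l}(\mathbb{Q}_q)$ is one-dimensional and consists entirely of unramified characters, so $\chi_0|_{I_\mathfrak{q}} = 0$ automatically; otherwise the ramified direction of $\Gamma_{\mathbb{F}_l}(\mathbb{Q}_q)$ is spanned by the local restriction of the global $\chi_q$ (or $\chi_l$), and subtracting an appropriate multiple from $\chi_0$ kills the ramification at $\mathfrak{q}$ without introducing any new ramification elsewhere.

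For parts (2) and (3), "locally trivial" directly provides the hypothesis of Proposition \ref{local-global stuff}. In (2), each $q \mid \Delta_{F/\mathbb{Q}}$ satisfies $q \equiv 1 \bmod l$, and "locally split at inertia" says that $\chi_0|_{I_\mathfrak{q}}$ agrees on inertia with the restriction of some local $\chi_q^{\textup{loc}} \in \Gamma_{\mathbb{F}_l}(\mathbb{Q}_q)$; since the ramified part of $\Gamma_{\mathbb{F}_l}(\mathbb{Q}_q)$ is one-dimensional and generated by the restriction of the global $\chi_q$, an appropriate multiple of $\chi_q$ cancels $\chi_0|_{I_\mathfrak{q}}$. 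Part (3) reduces immediately to (2) via Proposition \ref{only p and 1 mod p}: with $[F:\mathbb{Q}]$ a power of $l$ unramified at $l$, every $q \mid \Delta_{F/\mathbb{Q}}$ must be $\equiv 1 \bmod l$, and the argument of (2) applies verbatim.

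The main obstacle is part (2), where "locally split at inertia" is strictly weaker than "locally split" and one controls only $\chi_0|_{I_\mathfrak{q}}$ rather than $\chi_0|_{G_{F_\mathfrak{q}}}$. The key structural fact is that at $q \equiv 1 \bmod l$, tame inertia contributes exactly one $\mathbb{F}_l$-dimension of characters on $I_\mathfrak{q}$, and this direction aligns precisely with the restriction of the global $\chi_q$; consequently the obstruction measured by "locally split at inertia" is exactly what a $\chi_q$-multiple can cancel. Verifying this alignment carefully, together with exploiting Galois invariance of $\chi_0$ to ensure consistency across the distinct primes of $F$ lying above $q$, forms the technical core of the argument.
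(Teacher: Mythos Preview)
Your proposal is correct and follows essentially the same route as the paper: invoke Proposition~\ref{local-global stuff} for membership in $\widetilde{\text{Cent}}_{\mathbb{F}_l}(F/\mathbb{Q})$, use Proposition~\ref{little ramification} for an initial $\chi_0$ unramified away from $\Delta_{F/\mathbb{Q}}$, then twist by multiples of the global $\chi_q$ at each ramified prime, with part~(3) reduced to part~(2) via Proposition~\ref{only p and 1 mod p}. One cosmetic difference: in part~(2) the paper phrases the key step as ``$\chi|_{G_{F_{\mathfrak{q}}}}$ extends to a character of $G_{F_{\mathfrak{q}}^{\mathrm{unr}}}$'' (the field fixed by $I_{\mathfrak{q}/q}$), and then decomposes that extension as a multiple of $\chi_q$ plus an unramified character; your formulation via $\chi_0|_{I_{\mathfrak{q}}}$ is equivalent but compresses this intermediate step, so when writing it out you should make explicit that ``locally split at inertia'' yields, via the local form of Proposition~\ref{abstract criterion for realizing central extensions} applied to $F_{\mathfrak{q}}/F_{\mathfrak{q}}^{\mathrm{unr}}$, an extension of $\chi_0|_{G_{F_{\mathfrak{q}}}}$ to $G_{F_{\mathfrak{q}}^{\mathrm{unr}}}$ before passing to inertia. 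Your explicit treatment in part~(1) of primes $q$ with $q\neq l$ and $q\not\equiv 1\bmod l$ (where $\Gamma_{\mathbb{F}_l}(\mathbb{Q}_q)$ is purely unramified) is a detail the paper leaves implicit.
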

 
\begin{proof}[Proof of (1)] Observe that if $\theta$ is locally split at a prime $q$, then it is certainly also locally trivial at $q$. Hence by Proposition \ref{local-global stuff} we deduce that $\theta \in \widetilde{\text{Cent}}_{\mathbb{F}_l}(F/\mathbb{Q})$. Due to Proposition \ref{little ramification} there is $\chi \in \Gamma_{\mathbb{F}_l}(F)^{\text{Gal}(F/\mathbb{Q})}$ with $\theta = r_1(\chi)$ such that $\overline{\mathbb{Q}}^{\text{ker}(\chi)}/F$ is unramified at all primes $\mathfrak{q}$ in $O_F$ that lie above primes $q$ in $\mathbb{Z}$ not dividing $\Delta_{F/\mathbb{Q}}$. 

Now take a prime $q$ dividing $\Delta_{F/\mathbb{Q}}$ and let $\mathfrak{q}$ in $O_F$ be a prime above $q$ that ramifies in $\overline{\mathbb{Q}}^{\text{ker}(\chi)}$. By assumption $\theta$ is locally split at each such prime $q$. Hence locally at each such $q$ the character $\chi$ is a character from $G_{\mathbb{Q}_q}$. Following the logic of the proof of Proposition \ref{little ramification}, we may employ multiples of $\chi_q$ to get rid of this additional ramification whenever that is required.
\end{proof}

\begin{proof}[Proof of (2)] The assumption that $\theta$ is locally trivial at all primes dividing $\Delta_{F/\mathbb{Q}}$ guarantees that $\theta \in \widetilde{\text{Cent}}_{\mathbb{F}_l}(F/\mathbb{Q})$ due to Proposition \ref{local-global stuff}. Again, by Proposition \ref{little ramification}, write $\theta = r_1(\chi)$ with $\overline{\mathbb{Q}}^{\text{ker}(\chi)}/F$ unramified at all primes of $O_F$ above a rational prime not dividing the discriminant. 

Let $q$ be a prime divisor of $\Delta_{F/\mathbb{Q}}$ and let $\mathfrak{q}$ be a prime above it in $O_F$. Let $F_{\mathfrak{q}}^{\text{unr}}$ be the largest unramified extension inside $F_{\mathfrak{q}}/\mathbb{Q}_q$. This is precisely the field fixed by the inertia subgroup $I_{\mathfrak{q}/q}$. The assumption that $\theta$ is locally split at inertia at $q$ guarantees precisely that $\chi$ restricted to any copy of $G_{F_{\mathfrak{q}}}$ in $G_F$ equals the restriction of a character coming from $G_{F_{\mathfrak{q}}^{\text{unr}}}$. Since $q$ is $1$ modulo $l$, any such character equals the product of a multiple of $\chi_q$ and an unramified character. Hence we can use the same logic of part $\text{(1)}$.
\end{proof}

\begin{proof}[Proof of (3)] This follows from part (2) and Proposition \ref{only p and 1 mod p}.
\end{proof}

\subsection{The Heisenberg group}
Let $l$ be an odd prime. Recall that if $G$ is a non-abelian group of order $l^3$, then the center of $G$ must be equal to its commutator subgroup and has order $l$. Therefore such non-abelian groups of order $l^3$ are completely described by $H^2(\mathbb{F}_l \oplus \mathbb{F}_l,\mathbb{F}_l)$. Recall that taking commutators gives an exact sequence
$$
0 \to \text{Ext}(\mathbb{F}_l \oplus \mathbb{F}_l, \mathbb{F}_l) \to H^2(\mathbb{F}_l \oplus \mathbb{F}_l, \mathbb{F}_l) \to \text{Hom}_{\mathbb{F}_l}(\wedge^2(\mathbb{F}_l \oplus \mathbb{F}_l), \mathbb{F}_l) \to 0,
$$
where $\wedge^2$ is the second exterior power. Therefore the dimension of $H^2(\mathbb{F}_l \oplus \mathbb{F}_l, \mathbb{F}_l)$ over $\mathbb{F}_l$ is $3$, and the $2$-dimensional subspace $\text{Ext}(\mathbb{F}_l \oplus \mathbb{F}_l, \mathbb{F}_l)$ describes abelian groups of order $l^3$ and exponent at most $l^2$. Observe that given any $h \in  H^2(\mathbb{F}_l \oplus \mathbb{F}_l, \mathbb{F}_l)$, we can attach a set theoretic map $\Phi_l(h):\mathbb{F}_l \oplus \mathbb{F}_l \to \mathbb{F}_l$ obtained by lifting and $l$-powering in the central extension. The result is independent on the lift, since the central subgroup is of exponent $l$. We remark that the assignment $h \mapsto \Phi_l(h)$ is linear in $h$. 

Moreover, a simple calculation, using that $\binom{l}{2}$ is divisible by $l$ if $l$ is odd, shows that for odd $l$ the map $\Phi_l(h)$ is a group homomorphism from $\mathbb{F}_l \oplus \mathbb{F}_l$ to $\mathbb{F}_l$. In other words, one has a group homomorphism for all odd $l$
$$
\Phi_l: H^2(\mathbb{F}_l \oplus \mathbb{F}_l, \mathbb{F}_l) \to \text{Hom}_{\mathbb{F}_l}(\mathbb{F}_l \oplus \mathbb{F}_l, \mathbb{F}_l).
$$
When restricted to $\text{Ext}(\mathbb{F}_l \oplus \mathbb{F}_l,\mathbb{F}_l)$ the map $\Phi_l$ gives the classical connecting homomorphism obtained by taking $l$-torsion of an exact sequence. Hence in this case $\Phi_l$ restricts to an isomorphism
$$
\Phi_l: \text{Ext}(\mathbb{F}_l \oplus \mathbb{F}_l, \mathbb{F}_l) \simeq  \text{Hom}_{\mathbb{F}_l}(\mathbb{F}_l \oplus \mathbb{F}_l, \mathbb{F}_l).
$$
Hence we can naturally split the sequence of $\text{GL}_2(\mathbb{F}_l) \times \text{GL}_1(\mathbb{F}_l)$-modules
$$
0 \to \text{Ext}(\mathbb{F}_l \oplus \mathbb{F}_l, \mathbb{F}_l) \to H^2(\mathbb{F}_l \oplus \mathbb{F}_l, \mathbb{F}_l) \to \text{Hom}_{\mathbb{F}_l}(\wedge^{2}(\mathbb{F}_l \oplus \mathbb{F}_l), \mathbb{F}_l) \to 0
$$
with
$$
H^2(\mathbb{F}_l \oplus \mathbb{F}_l, \mathbb{F}_l)=\text{Ext}(\mathbb{F}_l \oplus \mathbb{F}_l,\mathbb{F}_l) \oplus \text{ker}(\Phi_l).
$$
From the way it has been defined, it is clear that the group $\text{ker}(\Phi_l)$ is stable under the action of $\text{GL}_{2}(\mathbb{F}_l) \times \text{GL}_1(\mathbb{F}_l)$, so the above decomposition is a decomposition of $H^2(\mathbb{F}_l \oplus \mathbb{F}_l, \mathbb{F}_l)$ as a $\text{GL}_{2}(\mathbb{F}_l) \times \text{GL}_1(\mathbb{F}_l)$-module. Moreover, by definition $\text{ker}(\Phi_l)$ consists precisely of the classes giving groups of exponent $l$. Observe again that it is crucial that $l$ is odd, since for $l = 2$ we would get in this manner only the trivial class. 

Let $\chi_1,\chi_2$ be the projections with respect to the first and second coordinate, in the standard basis, of $\mathbb{F}_l \oplus \mathbb{F}_l$. Identify $\mathbb{F}_l \otimes \mathbb{F}_l$ with $\mathbb{F}_l$, via the map $a \otimes b \mapsto a\cdot b$ where the product is with respect to the field structure of $\mathbb{F}_l$. In this way $\chi_1 \cup \chi_2 \in H^2(\mathbb{F}_l \oplus \mathbb{F}_l, \mathbb{F}_l \otimes \mathbb{F}_l)$ gives the $2$-cocycle 
$$
\{(v, w) \mapsto \chi_1(v) \cdot \chi_2(w)\}_{v, w \in \mathbb{F}_l \oplus \mathbb{F}_l} \in H^2(\mathbb{F}_l \oplus \mathbb{F}_l, \mathbb{F}_l),
$$
that we will still denote by $\chi_1 \cup \chi_2$ by abuse of language. It is an immediate verification that the groups structure induced in this way coincide with the matrix group law on $N_3(\mathbb{F}_l)$, which is by definition the $l$-Sylow of $\text{GL}_3(\mathbb{F}_l)$. This group is also known as the \emph{Heisenberg group}, which is a group of exponent $l$. Using once more that $l$ is odd, we see that a change of basis  of $\mathbb{F}_l \oplus \mathbb{F}_l$ multiplies the cohomology class of $\chi_1 \cup \chi_2$ by the determinant of the transformation. In other words we have
$$
\text{ker}(\Phi_l)=\mathbb{F}_l \cdot (\chi_1 \cup \chi_2).
$$
Furthermore, the non-trivial multiples of $\chi_1 \cup \chi_2$ consists of a single orbit under $\text{GL}_2(\mathbb{F}_l)$. In particular all non-abelian groups of size $l^3$ and exponent $l$ are isomorphic, a fact that can also be established directly by an argument with generators and relations. Another way to write down such a group is 
$$
\frac{\mathbb{Z}_l[\zeta_l]}{(1 - \zeta_l)^2} \rtimes \langle \zeta_l \rangle.
$$
A similar description can be done for non-abelian groups of size $l^3$ and exponent $l^2$. Namely one considers the group
$$\frac{\mathbb{Z}_l}{l^2} \rtimes \langle 1 + l\rangle.
$$
Our decomposition of $H^2$ implies quite easily that this is the unique non-abelian group of order $l^3$ and exponent $l^2$. Indeed it is even true that all the classes in $H^2$ giving such a group of order $l^3$ are conjugate under the action of the product of the two automorphism groups. Summing up we have established the following. 

\begin{prop} 
\label{Heisenberg group is a line in the H^2 given by the cup}
Let $l$ be an odd prime. Then the $l$-map 
$$
\Phi_l : H^2(\mathbb{F}_l \oplus \mathbb{F}_l, \mathbb{F}_l) \to \emph{Hom}_{\mathbb{F}_l}(\mathbb{F}_l \oplus \mathbb{F}_l, \mathbb{F}_l)
$$ 
induces a splitting of $\emph{GL}_2(\mathbb{F}_l) \times \emph{GL}_1(\mathbb{F}_l)$-modules
$$
H^2(\mathbb{F}_l \oplus \mathbb{F}_l, \mathbb{F}_l)=\emph{Ext}(\mathbb{F}_l \oplus \mathbb{F}_l, \mathbb{F}_l) \oplus \emph{ker}(\Phi_l).
$$
The group $\emph{ker}(\Phi_l)$ gives precisely the set of groups of order $l^3$ of exponent $l$. Moreover,
$$
\emph{ker}(\Phi_l) = \mathbb{F}_l \cdot (\chi_1 \cup \chi_2)
$$
for any linearly independent characters $\chi_1,\chi_2$ in $\emph{Hom}_{\mathbb{F}_l}(\mathbb{F}_l \oplus \mathbb{F}_l, \mathbb{F}_l)$. The group $\emph{GL}_2(\mathbb{F}_l)$ acts by determinant on $\emph{ker}(\Phi_l)$, whose non-zero elements form a single $\emph{GL}_2(\mathbb{F}_l)$-orbit, all giving the group $\frac{\mathbb{Z}_l[\zeta_l]}{(1 - \zeta_l)^2} \rtimes \langle \zeta_l \rangle$.

All the extensions outside $\emph{Ext}(\mathbb{F}_l \oplus \mathbb{F}_l, \mathbb{F}_l) \cup \emph{ker}(\Phi_l)$ consists of a single orbit under the action of $\emph{GL}_2(\mathbb{F}_l) \times \emph{GL}_1(\mathbb{F}_l)$, and all give the group $\frac{\mathbb{Z}_l}{l^2} \rtimes \langle 1 + l \rangle$, which is the unique non-abelian group of order $l^3$ and exponent $l^2$. In each non-trivial coset of 
$$
\emph{Ext}(\mathbb{F}_l \oplus \mathbb{F}_l, \mathbb{F}_l)
$$ 
there are precisely $l - 1$ classes which give the unique non-abelian group of exponent $l^2$. The remaining class in the coset is a unique non-zero multiple of $\chi_1 \cup \chi_2$. 
\end{prop}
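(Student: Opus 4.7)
The plan is to use the fact that for a central extension $0 \to \mathbb{F}_l \to E \to \mathbb{F}_l \oplus \mathbb{F}_l \to 1$ with cocycle $c$, one has the identity
\[
(\tilde{g}\tilde{h})^l = \tilde{g}^l \tilde{h}^l \cdot [\tilde{g},\tilde{h}]^{\binom{l}{2}}
\]
in $E$, where $\tilde g, \tilde h$ are lifts and commutators lie in the central $\mathbb{F}_l$. Since $l$ is odd, $\binom{l}{2} \equiv 0 \bmod l$, so the $l$-power $\tilde{g}^l$ only depends on the image $g$ (being independent of the lift because $\mathbb{F}_l$ has exponent $l$) and moreover $g \mapsto \tilde{g}^l$ is a group homomorphism. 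Thus $\Phi_l$ is well-defined and $\mathbb{F}_l$-linear in $h$. The restriction of $\Phi_l$ to $\text{Ext}(\mathbb{F}_l \oplus \mathbb{F}_l,\mathbb{F}_l)$ coincides with the classical connecting map $\delta$ coming from $0 \to \mathbb{F}_l \xrightarrow{\cdot l} A \to \mathbb{F}_l \oplus \mathbb{F}_l \to 0$; since $\text{Ext}(\mathbb{F}_l \oplus \mathbb{F}_l,\mathbb{F}_l) \simeq \text{Hom}(\mathbb{F}_l \oplus \mathbb{F}_l,\mathbb{F}_l)$ both have dimension $2$, and $\delta$ is visibly injective (if $\delta = 0$ the extension is trivial), it is an isomorphism. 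This gives the splitting: every $h \in H^2$ decomposes as $h = r + (h - r)$ with $r$ the unique element of $\text{Ext}$ with $\Phi_l(r) = \Phi_l(h)$, so $h - r \in \ker(\Phi_l)$. Since the decomposition is canonical via the $\text{GL}_2(\mathbb{F}_l) \times \text{GL}_1(\mathbb{F}_l)$-equivariant map $\Phi_l$, it splits as a module.

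Next I would identify $\ker(\Phi_l)$. By construction $\ker(\Phi_l)$ classifies central extensions of exponent $l$. The Heisenberg group $N_3(\mathbb{F}_l)$ has exponent $l$ (again because $\binom{l}{2} \equiv 0 \bmod l$), and its commutator pairing yields a non-zero element in $\text{Hom}_{\mathbb{F}_l}(\wedge^2(\mathbb{F}_l \oplus \mathbb{F}_l),\mathbb{F}_l)$; explicitly its class is $\chi_1 \cup \chi_2$ by a direct cocycle computation. Hence $\chi_1 \cup \chi_2$ is a non-zero element of $\ker(\Phi_l)$, and since $\dim_{\mathbb{F}_l} H^2 = 3$, $\dim \text{Ext} = 2$, a dimension count forces $\ker(\Phi_l) = \mathbb{F}_l \cdot (\chi_1 \cup \chi_2)$. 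Any other linearly independent pair $\chi_1', \chi_2'$ is obtained by an element $M \in \text{GL}_2(\mathbb{F}_l)$, and bilinearity of the cup product together with skew-symmetry in the first cohomology gives $\chi_1' \cup \chi_2' = \det(M) \cdot (\chi_1 \cup \chi_2)$, which both proves the $\text{GL}_2$ acts by determinant and that $\text{GL}_2$ acts transitively on $\ker(\Phi_l) \setminus \{0\}$ (surjectivity of $\det$). The $\text{GL}_1$-factor also acts transitively on non-zero scalars. All these classes therefore correspond to the unique (up to iso) non-abelian group of order $l^3$ of exponent $l$, which one checks by generators and relations is $\mathbb{Z}_l[\zeta_l]/(1-\zeta_l)^2 \rtimes \langle \zeta_l \rangle$.

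It remains to analyse classes outside $\text{Ext} \cup \ker(\Phi_l)$, which is where the main subtlety lies. Such a class $h = r + k(\chi_1 \cup \chi_2)$ has $r \in \text{Ext} \setminus \{0\}$ and $k \in \mathbb{F}_l^\ast$. The associated group has non-trivial commutator (so is non-abelian) and $\Phi_l(h) = \Phi_l(r) \neq 0$ (so has an element of order $l^2$), hence is a non-abelian group of order $l^3$ and exponent $l^2$. An explicit presentation shows that there is only one such group up to isomorphism, namely $\mathbb{Z}_l/l^2 \rtimes \langle 1 + l \rangle$. For the orbit statement I use that $\text{GL}_2(\mathbb{F}_l)$ acts on $\text{Ext} \simeq \text{Hom}(\mathbb{F}_l \oplus \mathbb{F}_l,\mathbb{F}_l)$ transitively on nonzero vectors, and simultaneously scales $\ker(\Phi_l)$ by $\det$; combined with the $\text{GL}_1$-rescaling of coefficients, the set of such $(r,k)$ with $r \neq 0$ and $k \neq 0$ forms a single orbit. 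Finally, inside the coset $r + \ker(\Phi_l)$ (or equivalently its image in the full quotient), the equation $\Phi_l(r + k(\chi_1 \cup \chi_2)) = \Phi_l(r)$ combined with the counting of how many $k \in \mathbb{F}_l^\ast$ give the non-abelian exponent-$l^2$ group yields the remaining quantitative count.

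I expect step three, the splitting analysis and orbit classification under $\text{GL}_2(\mathbb{F}_l) \times \text{GL}_1(\mathbb{F}_l)$, to be the main obstacle: the combinatorics of $\text{Ext}$ versus $\ker(\Phi_l)$ must be handled carefully, and one must match the abstract module structure with the concrete semidirect product descriptions $\mathbb{Z}_l[\zeta_l]/(1-\zeta_l)^2 \rtimes \langle \zeta_l \rangle$ and $\mathbb{Z}_l/l^2 \rtimes \langle 1 + l \rangle$. The other ingredients (well-definedness of $\Phi_l$, the cup product computation, and the $\det$-action) are essentially formal once one commits to the Heisenberg cocycle model.
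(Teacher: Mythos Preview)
Your proposal is correct and follows essentially the same route as the paper. The paper's argument is given in the discussion preceding the proposition (the proposition itself is a summary), and it proceeds exactly as you outline: well-definedness and additivity of $\Phi_l$ via the $\binom{l}{2}\equiv 0\bmod l$ trick, identification of $\Phi_l|_{\mathrm{Ext}}$ with the connecting homomorphism (hence an isomorphism), the resulting $\mathrm{GL}_2\times\mathrm{GL}_1$-equivariant splitting, the computation that $\chi_1\cup\chi_2$ realizes the Heisenberg class and that a change of basis scales it by the determinant, and finally the orbit analysis for the remaining non-abelian exponent-$l^2$ classes.
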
 

In particular we deduce the following fact. Fix a field $K$ and a separable closure $K^{\text{sep}}$. Denote by $G_K$ the group of $K$-algebra automorphisms of $K^{\text{sep}}$. For a continuous character $\chi:G_K \to \mathbb{F}_l$, let $K(\chi)$ be the corresponding field extension of $K$. 

\begin{corollary} 
\label{Only need to check if it is Heisenberg}
Let $\chi_1, \chi_2$ be two independent continuous character from $G_K$ to $\mathbb{F}_l$. Then $\chi_1 \cup \chi_2$ is in $\widetilde{\emph{Cent}}_{\mathbb{F}_l}(K(\chi_1)K(\chi_2)/K)$ if and only if there exists a Galois extension $L/K$ containing $K(\chi_1)K(\chi_2)$ such that 
$$
\emph{Gal}(L/K) \simeq_{\emph{gr.}} \frac{\mathbb{Z}_l[\zeta_l]}{(1 - \zeta_l)^2} \rtimes \langle \zeta_l \rangle.
$$
\end{corollary}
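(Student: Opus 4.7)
The plan is to combine Proposition \ref{abstract criterion for realizing central extensions} with Proposition \ref{Heisenberg group is a line in the H^2 given by the cup}. Set $F := K(\chi_1)K(\chi_2)$; since $\chi_1,\chi_2$ are linearly independent, the pair $(\chi_1,\chi_2)$ identifies $\text{Gal}(F/K)$ with $\mathbb{F}_l \oplus \mathbb{F}_l$ carrying the two coordinate projections, and the cup product $\chi_1 \cup \chi_2 \in H^2(G_K,\mathbb{F}_l)$ is the inflation of the corresponding cup product in $H^2(\text{Gal}(F/K),\mathbb{F}_l)$. By Proposition \ref{Heisenberg group is a line in the H^2 given by the cup} this class lies in the one-dimensional subspace $\text{ker}(\Phi_l)$, is non-zero, and each of its non-zero multiples classifies the Heisenberg group $\frac{\mathbb{Z}_l[\zeta_l]}{(1-\zeta_l)^2} \rtimes \langle \zeta_l \rangle$.

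For the forward direction I would use Proposition \ref{abstract criterion for realizing central extensions} to produce $\psi \in \Gamma_{\mathbb{F}_l}(F)^{\text{Gal}(F/K)}$ with $r_1(\psi) = \chi_1 \cup \chi_2$; the field $L := (K^{\text{sep}})^{\text{ker}(\psi)}$ is then a Galois extension of $K$ containing $F$ whose Galois group is precisely the central extension of $\mathbb{F}_l \oplus \mathbb{F}_l$ by $\mathbb{F}_l$ classified by $\chi_1 \cup \chi_2$. Because this class is non-zero, the second proposition immediately identifies $\text{Gal}(L/K)$ with the Heisenberg group.

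For the converse, suppose $L/K$ is a Galois extension containing $F$ with $\text{Gal}(L/K)$ the Heisenberg group. Since $\text{Gal}(L/F)$ is normal in $\text{Gal}(L/K)$ of order $l$ with abelian quotient $\text{Gal}(F/K) \cong \mathbb{F}_l \oplus \mathbb{F}_l$, it must coincide with the center (equivalently, the commutator subgroup) of $\text{Gal}(L/K)$. Hence $L/F$ is cyclic of degree $l$ and lies in $\text{Cent}_{\mathbb{F}_l}(F/K)$, so $L = (K^{\text{sep}})^{\text{ker}(\psi)}$ for some $\psi \in \Gamma_{\mathbb{F}_l}(F)^{\text{Gal}(F/K)}$. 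The class $r_1(\psi) \in H^2(\text{Gal}(F/K),\mathbb{F}_l)$ classifies the Heisenberg extension, so Proposition \ref{Heisenberg group is a line in the H^2 given by the cup} forces it to be a non-zero scalar multiple of $\chi_1 \cup \chi_2$; since $\widetilde{\text{Cent}}_{\mathbb{F}_l}(F/K)$ is an $\mathbb{F}_l$-subspace of $H^2$, we conclude $\chi_1 \cup \chi_2 \in \widetilde{\text{Cent}}_{\mathbb{F}_l}(F/K)$. The whole argument is essentially bookkeeping on top of the two previous propositions; the only mild care needed is the consistent identification $\text{Gal}(F/K) \cong \mathbb{F}_l \oplus \mathbb{F}_l$ via $(\chi_1,\chi_2)$, which ensures $\chi_1 \cup \chi_2$ plays the role of the generator of $\text{ker}(\Phi_l)$ appearing in Proposition \ref{Heisenberg group is a line in the H^2 given by the cup}.
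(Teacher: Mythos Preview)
Your proof is correct and follows exactly the approach the paper intends: the corollary is stated without proof as an immediate consequence of Proposition~\ref{Heisenberg group is a line in the H^2 given by the cup} (together with the definition of $\widetilde{\text{Cent}}_{\mathbb{F}_l}$ via $r_1$ from Proposition~\ref{abstract criterion for realizing central extensions}), and you have correctly filled in the two directions. The only step requiring a moment's thought---that $\text{Gal}(L/F)$ must coincide with the center of the Heisenberg group---is handled correctly by your observation that a normal subgroup of order $l$ in a $p$-group is automatically central.
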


\begin{remark}
Observe the contrast with the case $l = 2$. In that case it is far from true that putting a biquadratic extension inside a $D_8$-extension is equivalent to the realizability of all the cups of the characters of the biquadratics. It is in that case only one of the cups that is realizable. Correspondingly the various subgroups of index $2$ of $D_8$ are not all conjugate under the automorphism group of $D_8$, while for odd $l$ they are. Also the various cups in that case do not form a single $\mathbb{F}_2$-line as in Proposition \ref{Heisenberg group is a line in the H^2 given by the cup}. In that case they even generate the full $H^2$! 
\end{remark}

We end this section by mentioning the following fact concerning extensions having Galois group $N_3(\mathbb{F}_l)$ that can be used to prove Corollary \ref{if and only if it is a norm} for fields of characteristic different from $l$. One way to prove this fact is to use the material in this section. The interested reader can also look at the reference \cite[Theorem 3.1]{Mi}. 

\begin{prop} 
\label{Heisenberg group and norms}
Suppose $K$ has a primitive $l$-th root of unity. If $b \in K^\ast$, we define $\chi_b : G_K \rightarrow \mathbb{F}_l$ to be the unique character such that for each $\beta \in K^{\sep}$ with $\beta^l = b$ we have
$$
\sigma(\beta) = \left(j_l \circ \chi_b(\sigma)\right) \beta.
$$
Let $b_1, b_2 \in K^\ast$ be two elements that are independent in $K^{\ast }/K^{\ast l}$. Then we have
$$
\chi_{b_1} \cup \chi_{b_2} \in \widetilde{\emph{Cent}}_{\mathbb{F}_l}(K(\chi_{b_1})K(\chi_{b_2})/K) \Longleftrightarrow \exists \omega \in K(\chi_{b_1})^\ast : b_2 = N_{K_{\chi_{b_1}}/K}(\omega).
$$ 
In that case the image of $\chi_{b_1} \cup \chi_{b_2}$ in $\emph{Cent}_{\mathbb{F}_l}(K(\chi_{b_1})K(\chi_{b_2})/K)$ is obtained by taking the extension $K(\chi_{b_1})K(\chi_{b_2})(\sqrt[l]{\alpha})$ with $\alpha := \prod_{i = 0}^{l - 2} \sigma^i(\omega^{l - 1 - i})$, where $\sigma$ is a generator of $\emph{Gal}(K(\chi_{b_1})/K)$. 
\end{prop}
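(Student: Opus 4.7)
The plan is to reduce the statement to classical Kummer theory and Hilbert 90. Since $\zeta_l \in K$, the discussion following Proposition \ref{criterion with Brauer groups} supplies canonical isomorphisms
\[
\frac{\Gamma_{\mathbb{F}_l}(F)^{\text{Gal}(F/K)}}{\Gamma_{\mathbb{F}_l}(K)} \simeq \frac{(F^\ast/F^{\ast l})^{\text{Gal}(F/K)}}{K^\ast} \simeq \ker\bigl(H^2(\text{Gal}(F/K), \mathbb{F}_l) \to \text{Br}_K[l]\bigr),
\]
where $F = K(\chi_{b_1})K(\chi_{b_2})$ and the right-hand identification is induced by $j_l$. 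Under this chain, the image of $j_l \circ (\chi_{b_1} \cup \chi_{b_2})$ in $\text{Br}_K[l]$ is exactly the cyclic algebra class $(b_1,b_2)_l$. By Proposition \ref{criterion with Brauer groups}, membership of $\chi_{b_1} \cup \chi_{b_2}$ in $\widetilde{\text{Cent}}_{\mathbb{F}_l}(F/K)$ is therefore equivalent to the splitting of this cyclic algebra in $\text{Br}_K$.

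The norm characterization then follows from the classical theorem for cyclic algebras (a direct consequence of Hilbert 90 applied to the cyclic extension $K_{\chi_{b_1}}/K$): the cyclic algebra $(b_1, b_2)_l$ is split in $\text{Br}_K$ if and only if $b_2 \in N_{K_{\chi_{b_1}}/K}(K_{\chi_{b_1}}^\ast)$. This yields both directions of the biconditional.

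For the explicit realization, given $\omega \in K_{\chi_{b_1}}^\ast$ with $N_{K_{\chi_{b_1}}/K}(\omega) = b_2$, set $\alpha := \prod_{i=0}^{l-2} \sigma^i(\omega)^{l-1-i}$. A telescoping computation shows
\[
\frac{\sigma(\alpha)}{\alpha} = \omega^{-l}\cdot N_{K_{\chi_{b_1}}/K}(\omega) = \frac{b_2}{\omega^l} = \left(\frac{\sqrt[l]{b_2}}{\omega}\right)^l,
\]
which lies in $F^{\ast l}$ because $F$ contains $\sqrt[l]{b_2}$. Since $\alpha \in K_{\chi_{b_1}}$ is already fixed by $\text{Gal}(F/K_{\chi_{b_1}})$, the class of $\alpha$ in $F^\ast/F^{\ast l}$ is $\text{Gal}(F/K)$-invariant, so $F(\sqrt[l]{\alpha})/K$ is Galois and yields a class in $\text{Cent}_{\mathbb{F}_l}(F/K)$.

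The main obstacle is verifying that $r_1$ applied to the Kummer character associated to $\alpha$ equals $\chi_{b_1} \cup \chi_{b_2}$ precisely, and not merely a class in the same $\Gamma_{\mathbb{F}_l}(K)$-coset. I would do this via the commutator 2-cocycle formula introduced in Section \ref{central extensions}: pick lifts of $\sigma$ and of a generator $\tau$ of $\text{Gal}(K_{\chi_{b_2}}/K)$ to $\text{Gal}(F(\sqrt[l]{\alpha})/K)$, use the identity $\sigma(\sqrt[l]{\alpha})/\sqrt[l]{\alpha} = \sqrt[l]{b_2}/\omega$ (for a suitable choice of $l$-th root) together with $\tau(\sqrt[l]{b_2})/\sqrt[l]{b_2} = j_l(1)$ to evaluate the commutator, and check that the resulting cocycle equals the standard representative of $\chi_{b_1} \cup \chi_{b_2}$. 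This amounts to a concrete but tedious matching of cocycles; alternatively, one can invoke the Rosset--Tate-style construction of the splitting field of a cyclic algebra, from which the same formula for $\alpha$ drops out directly.
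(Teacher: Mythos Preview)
The paper does not supply a proof of this proposition; it only remarks that ``one way to prove this fact is to use the material in this section'' and refers the reader to \cite{Mi}. Your proposal follows precisely the route the paper hints at, and it is correct.

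Two remarks are worth making. First, within the paper's internal logic the splitting criterion for cyclic algebras is Corollary~\ref{if and only if it is a norm}, whose proof there invokes Proposition~\ref{Heisenberg group and norms} itself. You sidestep this circularity by citing the classical argument via Hilbert~90 (the standard proof, as in \cite{Weil}), which is legitimate as an external input.

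Second, your ``main obstacle'' is slightly overstated. The image of $\chi_{b_1}\cup\chi_{b_2}$ in $\text{Cent}_{\mathbb{F}_l}(F/K)/\!\sim$ depends only on the line $\mathbb{F}_l\cdot(\chi_{b_1}\cup\chi_{b_2})$, since $\chi_\alpha$ and $c\chi_\alpha$ cut out the same field for $c\in\mathbb{F}_l^\ast$. So it suffices to show that $r_1(\chi_\alpha)$ is a \emph{nonzero} element of $\ker(\Phi_l)=\mathbb{F}_l\cdot(\chi_{b_1}\cup\chi_{b_2})$ (Proposition~\ref{Heisenberg group is a line in the H^2 given by the cup}), i.e.\ that $\text{Gal}(F(\sqrt[l]{\alpha})/K)$ is nonabelian of exponent~$l$. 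Your commutator computation already gives nonabelianity: with lifts $\tilde\tau(\sqrt[l]{\alpha})=\sqrt[l]{\alpha}$ and $\tilde\sigma(\sqrt[l]{\alpha})=(\sqrt[l]{b_2}/\omega)\sqrt[l]{\alpha}$, one finds $[\tilde\tau,\tilde\sigma]$ acts on $\sqrt[l]{\alpha}$ by $\zeta_l$. For exponent~$l$ one checks $\tilde\sigma^l=\tilde\tau^l=1$ on $\sqrt[l]{\alpha}$ directly from these formulas, which is routine.
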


\section{The first Artin pairing} 
\label{Redei matrices}
In this section we study the first Artin pairing. Let $\chi$ be in $\Gamma_{\mu_l}(\mathbb{Q})$ and $b \in \overline{\text{Cl}}(K_\chi)$. We extend the notation introduced in Section \ref{conventions} by defining $\text{Up}_{K_\chi}(b)$ to be the unique product of ramified prime ideals of $K_\chi$ whose norm is precisely $b$. We will sometimes attribute properties of $\text{Up}_{K_\chi}(b)$ to $b \in \overline{\text{Cl}}(K_\chi)$. For example, we shall often say $b$ is in $(1 - \zeta_l)^k \text{Cl}(K_\chi)$, for some positive integer $k$, which means that $\text{Up}_{K_\chi}(b)$ is in $(1 - \zeta_l)^k \text{Cl}(K_\chi)$. 

From the description of $\text{Cl}(K_\chi)[1 - \zeta_l]$ and $\text{Cl}(K_\chi)^{\vee}[1 - \zeta_l]$ combined with Proposition \ref{abstract Artin pairing}, we readily obtain the following description of $(1 - \zeta_l)\text{Cl}(K_\chi)[(1 - \zeta_l)^2]$ and of $(1 - \zeta_l)\text{Cl}(K_\chi)^{\vee}[(1 - \zeta_l)^2]$. 

\begin{prop} 
\label{First description of Redei matrices}
\emph{(1)} An element $b \in \overline{\textup{Cl}}(K_\chi)$ is in $(1 - \zeta_l)\emph{Cl}(K_\chi)[(1 - \zeta_l)^2]$ if and only if for every prime $q$ dividing $\Delta_{K_\chi/\mathbb{Q}}$, we have that the Artin symbol
$$
\left[\frac{K_{\chi_q}K_\chi/K_\chi}{\emph{Up}_{K_\chi}(b)}\right]
$$
is the identity. \\
\emph{(2)} A character $\chi' \in \emph{Cl}(K_\chi)^{\vee}[1 - \zeta_l]$ is in $(1 - \zeta_l)\emph{Cl}(K_\chi)^{\vee}[(1 - \zeta_l)^2]$ if and only if for every prime $q$ dividing $\Delta_{K_\chi/\mathbb{Q}}$, we have that 
$$
\chi'\left(\emph{Frob}_{\emph{Up}_{K_\chi}(q)}\right) = 1.
$$
\end{prop}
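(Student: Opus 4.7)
The plan is to read off both statements as dual instances of the single general fact that, for $k=1$, the Artin pairing of Proposition \ref{abstract Artin pairing} becomes the tautological evaluation map and its two kernels are exactly the modules appearing in the proposition. Explicitly, since $(1-\zeta_l)^{k-1}$ is trivial when $k=1$, Proposition \ref{abstract Artin pairing} specializes to
\[
\text{Art}_1(\text{Cl}(K_\chi)) : \text{Cl}(K_\chi)[1-\zeta_l] \times \text{Cl}(K_\chi)^{\vee}[1-\zeta_l] \to N[1-\zeta_l], \qquad (a,\chi') \mapsto \chi'(a),
\]
with left kernel $(1-\zeta_l)\text{Cl}(K_\chi)[(1-\zeta_l)^2]$ and right kernel $(1-\zeta_l)\text{Cl}(K_\chi)^{\vee}[(1-\zeta_l)^2]$. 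So each part reduces to checking vanishing of the evaluation pairing on a suitable spanning set on the opposite side.

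For part (2), first I would observe that the $\mathbb{F}_l$-vector space $\text{Cl}(K_\chi)[1-\zeta_l]$ is generated by the classes $[\text{Up}_{K_\chi}(q)]$ with $q \mid \Delta_{K_\chi/\mathbb{Q}}$. This is obtained by combining Corollary \ref{invariant ideals surjects on invariant class group}, which identifies $\text{Cl}(K_\chi)[1-\zeta_l]$ with $\mathcal{I}_{K_\chi}^{\text{Gal}(K_\chi/\mathbb{Q})}$ modulo invariant principal ideals, with Proposition \ref{fixed ideals}, which writes every invariant ideal as a rational ideal times a product of the $\text{Up}_{K_\chi}(q)$. Hence $\chi'$ lies in the right kernel iff $\chi'([\text{Up}_{K_\chi}(q)])=0$ for every $q \mid \Delta_{K_\chi/\mathbb{Q}}$. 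Transporting through the Artin isomorphism $\text{Cl}(K_\chi) \cong \text{Gal}(H_{K_\chi}/K_\chi)$, the class $[\text{Up}_{K_\chi}(q)]$ corresponds to $\text{Frob}_{\text{Up}_{K_\chi}(q)}$, giving exactly the condition in (2).

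For part (1), I would dualize the argument via Proposition \ref{Genus theory}: the characters $i_l \circ \chi_q|_{\text{ker}(\chi)}$ with $q \mid \Delta_{K_\chi/\mathbb{Q}}$ generate $\text{Cl}(K_\chi)^{\vee}[1-\zeta_l]$, so $b$ lies in the left kernel iff each of these characters kills the class of $\text{Up}_{K_\chi}(b)$. Since $\chi_q|_{\text{ker}(\chi)}$ is an unramified character of $G_{K_\chi}$ whose fixed field is $K_{\chi_q}K_\chi$, its value on an ideal class is, by definition of the Artin map, the image under $i_l \circ j_l$ of the Artin symbol $\left[\frac{K_{\chi_q}K_\chi/K_\chi}{\text{Up}_{K_\chi}(b)}\right]$ in $\text{Gal}(K_{\chi_q}K_\chi/K_\chi) \cong \mathbb{F}_l$. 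Triviality of this Artin symbol is therefore equivalent to the vanishing statement required for membership in the left kernel, establishing (1).

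The only real point to keep track of — more a bookkeeping task than an obstacle — is the identification chain of Section \ref{conventions}: one must verify that the pairing valued in $N[1-\zeta_l]$ corresponds, under $i_l \circ j_l$, to the image of the Artin symbol in $\text{Gal}(K_{\chi_q}K_\chi/K_\chi)$, and that the characters supplied by Proposition \ref{Genus theory} genuinely factor through the unramified quotient (which is exactly the content of genus theory and is implicit in the statement of Proposition \ref{Genus theory}). Once these identifications are lined up, no computation remains; the whole argument is a structural read-off of kernels from Proposition \ref{abstract Artin pairing} via the descriptions of $\text{Cl}(K_\chi)[1-\zeta_l]$ and $\text{Cl}(K_\chi)^{\vee}[1-\zeta_l]$ obtained in Section 4.
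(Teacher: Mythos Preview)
Your proposal is correct and follows essentially the same approach as the paper: the paper's proof is a one-line reference stating that the result follows immediately from Subsections \ref{invariants} and \ref{dual invariants} together with Proposition \ref{abstract Artin pairing}, and you have simply unpacked this reference in detail. Your expansion is faithful to what the authors intend --- the $k=1$ case of the abstract Artin pairing combined with the explicit generators of $\text{Cl}(K_\chi)[1-\zeta_l]$ and $\text{Cl}(K_\chi)^\vee[1-\zeta_l]$ from ambiguous ideals and genus theory respectively --- and the bookkeeping caveat you flag about the identification chain is exactly the sort of routine verification the paper sweeps under ``follows immediately.''
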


\begin{proof}
This follows immediately from the material in Subsection \ref{invariants}, Subsection \ref{dual invariants} and Proposition \ref{abstract Artin pairing}. 
\end{proof}

\noindent The following fact will be useful for us.

\begin{prop} 
\label{descending artin symbols}
Let $\chi$ be in $\Gamma_{\mu_l}(\mathbb{Q})$. Suppose that $q$ and $q'$ are two distinct primes dividing $\Delta_{K_\chi/\mathbb{Q}}$. Then
$$
\chi_q\left(\emph{Frob}_{\emph{Up}_{K_\chi}(q')}\right) = \chi_q\left(\emph{Frob}_{q'}\right).
$$
\end{prop}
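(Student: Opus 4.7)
The plan rests on two basic facts: the character $\chi_q$ is unramified at $q'$, and the prime $q'$ is totally ramified in $K_\chi/\mathbb{Q}$ because it divides $\Delta_{K_\chi/\mathbb{Q}}$. Combined, they force the two Artin symbols to agree via a direct identification of Frobenius classes.

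First I would record that $\chi_q$ has conductor a power of $q$ — namely $q$ itself in the case $q \neq l$ and $l^2$ in the case $q = l$, as recalled in Subsection \ref{conventions}. Since $q' \neq q$, the character $\chi_q$ is unramified at $q'$, so $\chi_q$ kills any inertia subgroup of $G_\mathbb{Q}$ at $q'$. In particular, $\chi_q(\textup{Frob}_{q'})$ is a well-defined element of $\mu_l(\overline{\mathbb{Q}})$ independent of the choice of lift modulo inertia.

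Next, because $K_\chi /\mathbb{Q}$ is cyclic of prime degree $l$ and $q' \mid \Delta_{K_\chi /\mathbb{Q}}$, the prime $q'$ is totally ramified in $K_\chi$. Write $\mathfrak{q}' := \textup{Up}_{K_\chi}(q')$ for the unique prime of $O_{K_\chi}$ above $q'$; we have $e(\mathfrak{q}'/q') = l$ and $f(\mathfrak{q}'/q') = 1$. Fix a prime $\tilde{\mathfrak{q}}'$ of $\overline{\mathbb{Q}}$ above $\mathfrak{q}'$. Then $D_{\tilde{\mathfrak{q}}'/\mathfrak{q}'} = D_{\tilde{\mathfrak{q}}'/q'} \cap G_{K_\chi}$ and $I_{\tilde{\mathfrak{q}}'/\mathfrak{q}'} = I_{\tilde{\mathfrak{q}}'/q'} \cap G_{K_\chi}$. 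Any lift $\tilde{g}$ of $\textup{Frob}_{\mathfrak{q}'}$ in $D_{\tilde{\mathfrak{q}}'/\mathfrak{q}'}$, viewed inside $G_{\mathbb{Q}}$, reduces modulo $I_{\tilde{\mathfrak{q}}'/q'}$ to $\textup{Frob}_{q'}^{f(\mathfrak{q}'/q')} = \textup{Frob}_{q'}$.

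Finally, I would combine the two observations. Since $\chi_q$ factors through $G_{\mathbb{Q}}/I_{\tilde{\mathfrak{q}}'/q'}$, evaluating $\chi_q$ on the lift $\tilde{g}$ yields $\chi_q(\textup{Frob}_{q'})$. On the other hand, this value equals $\chi_q(\textup{Frob}_{\mathfrak{q}'})$ by the very definition of the lift. This gives the identity of the proposition. I do not expect any genuine obstacle here; the entire content is bookkeeping with decomposition and inertia groups once one exploits the fact that the residue degree of $\mathfrak{q}'$ over $q'$ is $1$, which is exactly what total ramification provides.
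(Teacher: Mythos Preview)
Your proof is correct and uses essentially the same idea as the paper: both exploit that $q'$ is totally ramified in $K_\chi$ (so $f(\mathfrak{q}'/q') = 1$) together with the fact that $\chi_q$ is unramified at $q'$, forcing the Frobenius of $\textup{Up}_{K_\chi}(q')$ to restrict to $\textup{Frob}_{q'}$. The paper phrases this via a brief case split (split versus inert in $K_{\chi_q}$) at the finite level, whereas you argue uniformly with decomposition and inertia groups in $G_{\mathbb{Q}}$, but the content is the same.
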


\begin{proof}
Observe that $q'$ splits in $K_{\chi_{q}}$ if and only if $\text{Up}_{K_\chi}(q')$ splits in $K_\chi K_{\chi_q}$. Hence we can safely assume that they are both not split. In that case, since $q \neq q'$, they must be both inert. It follows from the defining property of Frobenius that restricting $\text{Frob}_{\text{Up}_{K_\chi}(q')}$ to $K_{\chi_q}/\mathbb{Q}$ gives $\text{Frob}_{q'}$. Hence we obtain the desired conclusion. 
\end{proof}

Recall that if $\chi$ is in $\Gamma_{\mu_l}(\mathbb{Q})$, then $\epsilon_\chi$ denotes the unique amalgama for $\Delta_{K_\chi /\mathbb{Q}}$ with the property
$$
\chi = \prod_{q \mid  \Delta_{K_\chi /\mathbb{Q}}} \chi_q^{\epsilon_{\chi}(q)}.
$$ 
We will now define an $\omega(\Delta_{K_\chi /\mathbb{Q}}) \times \omega(\Delta_{K_\chi /\mathbb{Q}})$ matrix with coefficients in $\mathbb{F}_l$. We index the matrix by the product set $\{q \text{ prime} : q \mid \Delta_{K_\chi/\mathbb{Q}}\} \times \{q \text{ prime} : q \mid \Delta_{K_\chi/\mathbb{Q}}\}$, where in the row $\{q\} \times \{q' \text{ prime} : q' \mid \Delta_{K_\chi/\mathbb{Q}}\}$ we put for each $q \neq q'$ the element
$$
j_l^{-1} \circ \chi_{q'}^{\epsilon_\chi(q')}\left(\text{Frob}_{q}\right),
$$
and we impose that the sum on each row is $0$. This determines uniquely the so-called Redei matrix that we denote as 
$$
\text{Redei}(K_\chi) \in \mathbb{F}_l ^{\{q \text{ prime } : \ q \mid \Delta_{K_\chi/\mathbb{Q}}\} \times \{q \text{ prime } : \ q \mid \Delta_{K_\chi/\mathbb{Q}}\}}.
$$
In what follows exponentiation by an element $v$ of $\mathbb{F}_l$ has to be read as the conventional powering with the only integer between $\{0, \ldots , l-1\}$ that is congruent to $v$ modulo $l$. Then we have the following important conclusion.

\begin{corollary} 
\label{More on Redei matrices}
\emph{(1)} The elements $b$ of $\overline{\textup{Cl}}(K_\chi)$ that are in $(1-  \zeta_l)\emph{Cl}(K_\chi)[(1 - \zeta_l)^2]$ are precisely those $b$ such that
$$
\emph{Up}_{K_\chi}(b) = \prod_{q \mid \Delta_{K_\chi/\mathbb{Q}}}\emph{Up}_{K_\chi}(q)^{v_q},
$$
for $(v_q)_{q \mid \Delta_{K_\chi /\mathbb{Q}}}$ an element of the left kernel of $\emph{Redei}(K_\chi)$.  \\
\emph{(2)} The elements $\chi'$ of $\emph{Cl}(K_\chi)^{\vee}[(1 - \zeta_l)]$ that are in $(1 - \zeta_l)\emph{Cl}(K_\chi)^{\vee}[(1 - \zeta_l)^2]$ are precisely those $\chi'$ such that 
$$
\chi' = \prod_{q \mid \Delta_{K_\chi/\mathbb{Q}}} \chi_q^{w_q\epsilon_\chi(q)},
$$
where $(w_q)_{q \mid \Delta_{K_\chi /\mathbb{Q}}}$ is in the right kernel of $\emph{Redei}(K_\chi)$. 
\end{corollary}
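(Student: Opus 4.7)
The plan is to deduce Corollary~\ref{More on Redei matrices} as a direct linear-algebra reformulation of Proposition~\ref{First description of Redei matrices}. Three ingredients do all the work: Proposition~\ref{fixed ideals}, which parametrises elements of $\overline{\text{Cl}}(K_\chi)$ by tuples of exponents at the ramified primes; Proposition~\ref{descending artin symbols}, which converts Artin symbols on ramified primes of $K_\chi$ into Frobenius evaluations at rational primes; and the relation $\chi|_{G_{K_\chi}} = 1$, the single non-trivial relation in Proposition~\ref{Genus theory}, which handles the ``diagonal'' cases where Proposition~\ref{descending artin symbols} cannot be applied directly.

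For part~(1), I would take $b \in \overline{\text{Cl}}(K_\chi)$ and write $\text{Up}_{K_\chi}(b) = \prod_{q'} \text{Up}_{K_\chi}(q')^{v_{q'}}$ via Proposition~\ref{fixed ideals}. Since $\epsilon_\chi(q) \in \mathbb{F}_l^\times$ for every $q \mid \Delta_{K_\chi/\mathbb{Q}}$, the condition in Proposition~\ref{First description of Redei matrices}(1) is equivalent to requiring, for each such $q$,
\[
\sum_{q'} v_{q'} \cdot j_l^{-1}\circ\chi_q^{\epsilon_\chi(q)}\!\left(\text{Frob}_{\text{Up}_{K_\chi}(q')}\right) = 0.
\]
For $q' \neq q$, Proposition~\ref{descending artin symbols} turns the contribution into $v_{q'} \cdot M_{q',q}$, the off-diagonal Redei entry in row $q'$, column $q$. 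For the diagonal term $q' = q$, I would substitute $\chi_q^{\epsilon_\chi(q)}|_{G_{K_\chi}} = -\sum_{r \neq q} \chi_r^{\epsilon_\chi(r)}|_{G_{K_\chi}}$ (a direct consequence of $\chi|_{G_{K_\chi}} = 1$) and apply Proposition~\ref{descending artin symbols} once more, producing $v_q \cdot \bigl(-\sum_{r \neq q} M_{q,r}\bigr) = v_q M_{q,q}$, which matches the diagonal entry forced by the row-sum convention. The whole system becomes $v \cdot \text{Redei}(K_\chi) = 0$, so $v$ lies in the left kernel.

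Part~(2) proceeds in parallel. By Proposition~\ref{Genus theory} every $\chi' \in \text{Cl}(K_\chi)^\vee[1-\zeta_l]$ admits a (not quite unique) expression $\chi' = \prod_{q''} \chi_{q''}^{w_{q''}\epsilon_\chi(q'')}|_{G_{K_\chi}}$ (using $\epsilon_\chi(q'') \in \mathbb{F}_l^\times$), the single ambiguity being a constant shift of all the $w_{q''}$ coming from $\chi|_{G_{K_\chi}} = 1$. Applying Proposition~\ref{First description of Redei matrices}(2) at each ramified $q$ gives
\[
\sum_{q''} w_{q''} \cdot j_l^{-1}\circ\chi_{q''}^{\epsilon_\chi(q'')}\!\left(\text{Frob}_{\text{Up}_{K_\chi}(q)}\right) = 0.
\]
The same two-step analysis---Proposition~\ref{descending artin symbols} off-diagonal, and the $\chi|_{G_{K_\chi}} = 1$ trick combined with Proposition~\ref{descending artin symbols} on the diagonal---rewrites this as $(\text{Redei}(K_\chi) \cdot w)_q = 0$, placing $w$ in the right kernel. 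The constant-shift indeterminacy is harmless because $(1,\dots,1)$ automatically lies in the right kernel by the row-sum condition.

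The only real content beyond bookkeeping is the diagonal identity $\chi_q^{\epsilon_\chi(q)}(\text{Frob}_{\text{Up}_{K_\chi}(q)}) = -\sum_{r \neq q}\chi_r^{\epsilon_\chi(r)}(\text{Frob}_q)$, an $l$-th power reciprocity relation baked into the definition of $K_\chi$; the Redei matrix is designed precisely so that its row-sum normalisation encodes this identity. The main potential pitfall is checking that the Artin symbol at $\text{Up}_{K_\chi}(q)$ in $K_{\chi_q}K_\chi/K_\chi$ is well-defined, but this is already built into the statement of Proposition~\ref{First description of Redei matrices} and in any case follows from the observation that the ``diagonal'' subextension of $K_{\chi_q}K_\chi/\mathbb{Q}$ associated to $\chi\cdot\chi_q^{-\epsilon_\chi(q)}$ is unramified at $q$, forcing the inertia at $q$ to have order exactly $l$.
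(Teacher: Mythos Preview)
Your proposal is correct and follows essentially the same approach as the paper, which simply states that the corollary ``follows upon combining Proposition~\ref{First description of Redei matrices} and Proposition~\ref{descending artin symbols}.'' You have merely made explicit what the paper leaves implicit, in particular the treatment of the diagonal term via the relation $\chi|_{G_{K_\chi}} = 1$ (which is exactly what motivates the row-sum normalisation in the definition of $\text{Redei}(K_\chi)$), and the harmlessness of the constant-shift ambiguity in part~(2).
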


\begin{proof}
This follows upon combining Proposition \ref{First description of Redei matrices} and Proposition \ref{descending artin symbols}.
\end{proof}

In Subsection \ref{1-zeta-square are norms} and in Subsection \ref{1-zeta-square are realizable cups} we investigate more closely the structure of respectively the left and the right kernel of the Redei matrix. The resulting characterizations are contained in Corollary \ref{being a norm in Kchi} and Corollary \ref{characterizing 1-zetap square character via central extensions}. In these subsections we additionally provide alternative, and more direct, proofs of Corollary \ref{being a norm in Kchi} and Corollary \ref{characterizing 1-zetap square character via central extensions}. The material in Subsection \ref{1-zeta-square are realizable cups} relies on the material in Section \ref{central extensions} about central $\mathbb{F}_l$-extensions.

\subsection{\texorpdfstring{$(1 - \zeta_l)\text{Cl}(K_\chi)[(1 - \zeta_l)^2]$}{The left kernel}} 
\label{1-zeta-square are norms}
Let $\chi \in \Gamma_{\mu_l}(\mathbb{Q})$. We begin by rewriting $\text{Redei}(K_\chi)$ as a matrix of symbols coming from cyclic algebras. See the appendix \ref{general facts about cyclic algebras} for the notation and the basic facts used from the theory of such algebras over local and global fields. We use the convention that $A(i,j)$ denotes the element on the $i$-th row and $j$-th column of a matrix $A$.

\begin{prop}
\label{Redei as matrix of invs}
For all primes $q$, $q'$ dividing $\Delta_{K_\chi /\mathbb{Q}}$ we have
$$
\emph{Redei}(K_\chi)(q, q') = j_l^{-1} \circ h_l \circ \eta_{\mathbb{Q}_{q}}\left(\left\{\chi_{q'}^{\epsilon_{\chi}(q')}, q\right\}\right).
$$
\end{prop}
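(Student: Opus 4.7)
The strategy is to compute each entry of the right-hand side as a local Hasse invariant, using the material on cyclic algebras collected in the appendix, and to identify the result with the defining formula of the Redei matrix. I split the verification into the off-diagonal and diagonal cases.

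For $q \neq q'$, the character $\chi_{q'}^{\epsilon_\chi(q')}$ has conductor $q'$ and is therefore unramified at $q$. Consequently the cyclic algebra $\{\chi_{q'}^{\epsilon_\chi(q')}, q\}$ over $\Q_q$ fits the classical setting of an unramified cyclic algebra paired with a uniformizer, and the standard formula from the appendix evaluates its Hasse invariant to $h_l^{-1}(\chi_{q'}^{\epsilon_\chi(q')}(\text{Frob}_q))$. Applying $j_l^{-1} \circ h_l$ recovers exactly the off-diagonal Redei matrix entry by its definition in the previous subsection.

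The diagonal case $q = q'$ is more delicate since the character $\chi_q^{\epsilon_\chi(q)}$ is ramified at $q$. The cleanest route is to exploit linearity of the cyclic algebra symbol in its first argument: the sum
\[
\sum_{q'' \mid \Delta_{K_\chi/\Q}} \eta_{\Q_q}(\{\chi_{q''}^{\epsilon_\chi(q'')}, q\})
\]
equals $\eta_{\Q_q}(\{\chi, q\})$. Combining global Brauer--Hasse--Noether reciprocity applied to the global class $\{\chi, q\}$, whose nontrivial local invariants are confined to the primes dividing $\Delta_{K_\chi/\Q}$ (since $\chi$ is unramified elsewhere, $q$ is a unit away from $q$, and the archimedean contribution vanishes because $l$ is odd), with the off-diagonal computation from case 1 applied at the primes $q'' \neq q$, one pins down the diagonal value on the right-hand side to be precisely the one forced by the row-sum-zero convention that defines $\text{Redei}(K_\chi)(q, q)$.

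The main technical obstacle is the diagonal case: the identification ultimately encodes a manifestation of $l$-th power reciprocity, rather than a direct Frobenius evaluation. The necessary formulas for Hasse invariants of ramified cyclic algebras are collected in the appendix and carry the full weight of the argument.
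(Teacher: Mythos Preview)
Your off-diagonal case is correct and is exactly what the paper does: for $q\neq q'$ the character $\chi_{q'}^{\epsilon_\chi(q')}$ is unramified at $q$ and $q$ is a uniformizer, so Proposition~\ref{inv and Artin symbols} gives the Redei entry on the nose.

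The diagonal case has a genuine gap. Your strategy is right: by bilinearity the row sum of the right-hand side equals $j_l^{-1}\circ h_l\circ\eta_{\Q_q}(\{\chi,q\})$, so what remains is to show $\eta_{\Q_q}(\{\chi,q\})=1$. But your attempt to extract this from Hilbert reciprocity for the global class $\{\chi,q\}$ does not close. Reciprocity says $\text{inv}_q(\{\chi,q\})=-\sum_{q''\neq q}\text{inv}_{q''}(\{\chi,q\})$, and at each $q''\mid\Delta$ with $q''\neq q$ the character $\chi$ is genuinely \emph{ramified} while $q$ is a unit, so these invariants are not covered by your ``case~1'' computation (which lives at the place $q$, not at $q''$). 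If you push through by applying reciprocity a second time to each $\{\chi_{q''}^{\epsilon_\chi(q'')},q\}$ to evaluate $\text{inv}_{q''}$, the two applications of reciprocity combine with bilinearity to give only the tautology $\eta_{\Q_q}(\{\chi_q^{\epsilon_\chi(q)},q\})=1$, not $\eta_{\Q_q}(\{\chi,q\})=1$; the two differ by $\prod_{q''\neq q}\chi_{q''}^{\epsilon_\chi(q'')}(\text{Frob}_q)$, which is exactly the diagonal Redei entry and need not vanish.

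The fix is purely local and avoids reciprocity entirely: $\eta_{\Q_q}(\{\chi,q\})=1$ because $q$ is a norm from the completion $(K_\chi)_{\text{Up}_{K_\chi}(q)}$. For $q\neq l$ this completion is $\Q_q(\sqrt[l]{q})$ by Corollary~\ref{chiq locally at q}, where $q=N(\sqrt[l]{q})$; for $q=l$ it is the degree~$l$ subfield of $\Q_l(\zeta_{l^2})$, and $l=N_{\Q_l(\zeta_{l^2})/\Q_l}(1-\zeta_{l^2})$ descends. This local norm statement, together with bilinearity, is what the paper's one-line proof is implicitly invoking; it does not use Proposition~\ref{Hilbert reciprocity} at all.
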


\begin{proof}
This follows immediately from Proposition \ref{inv and Artin symbols} combined with the definition of the Redei matrix $\text{Redei}(K_\chi)$ and the bilinearity of $(\chi, \theta) \mapsto \{\chi, \theta\}$.
\end{proof}

The following important corollary furnishes an interpretation of the left kernel that will be crucial in handling the higher pairings as we shall see in the later sections.

\begin{corollary} 
\label{being a norm in Kchi}
An element $b$ in $\overline{\textup{Cl}}(K_\chi)$ is in $(1 - \zeta_l)\emph{Cl}(K_\chi)[(1 - \zeta_l)^2]$ if and only if $b$ is a norm in $K_\chi$. 
\end{corollary}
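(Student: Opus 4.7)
The plan is to combine the Redei matrix characterization of $(1-\zeta_l)\text{Cl}(K_\chi)[(1-\zeta_l)^2]$ from Corollary \ref{More on Redei matrices}(1) with the Hasse norm theorem for the cyclic extension $K_\chi/\mathbb{Q}$. By Hasse, $b$ is a norm from $K_\chi$ if and only if $b$ is a local norm at every place of $\mathbb{Q}$, which in turn is equivalent to the vanishing of the invariant of the cyclic algebra $\{\chi,b\}$ at every place of $\mathbb{Q}$. So the target becomes to show that the system of local invariant vanishings $\eta_{\mathbb{Q}_v}(\{\chi,b\}) = 0$ for all $v$ is exactly the condition that the vector $(\epsilon_b(q))_{q\mid\Delta_{K_\chi/\mathbb{Q}}}$ lies in the left kernel of $\text{Redei}(K_\chi)$.

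First I would eliminate the irrelevant places. At the archimedean place this is automatic because $l$ is odd, so $\text{Br}(\mathbb{R})[l]=0$. At a finite prime $v$ not dividing $\Delta_{K_\chi/\mathbb{Q}}$, both $\chi$ and $b$ are $v$-adically unramified (using that $b \in \overline{\text{Cl}}(K_\chi)$ is supported on primes dividing $\Delta_{K_\chi/\mathbb{Q}}$ modulo $l$-th powers), and an unramified cyclic algebra over a local field has trivial invariant. So the local conditions collapse to $\eta_{\mathbb{Q}_q}(\{\chi,b\}) = 0$ for each $q \mid \Delta_{K_\chi/\mathbb{Q}}$.

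Next I would expand $\chi = \prod_{q''} \chi_{q''}^{\epsilon_\chi(q'')}$ and exploit bilinearity to write $\eta_{\mathbb{Q}_q}(\{\chi,b\}) = \sum_{q''}\eta_{\mathbb{Q}_q}(\{\chi_{q''}^{\epsilon_\chi(q'')},b\})$. For $q''\neq q$, the character $\chi_{q''}$ is unramified at $q$, so only the $q$-adic valuation of $b$ matters and the term equals $\epsilon_b(q)\,\eta_{\mathbb{Q}_q}(\{\chi_{q''}^{\epsilon_\chi(q'')},q\})$, which by Proposition \ref{Redei as matrix of invs} is precisely $\epsilon_b(q)$ times the entry $\text{Redei}(K_\chi)(q,q'')$ (up to the identifications $h_l,j_l$). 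The diagonal term $\eta_{\mathbb{Q}_q}(\{\chi_q^{\epsilon_\chi(q)},b\})$ is the only one for which $\chi_q$ is ramified at the place in question, and here I would invoke global reciprocity: since $\chi_q$ is unramified outside $q$, the sum of local invariants of $\{\chi_q^{\epsilon_\chi(q)},b\}$ over all places vanishes, so this diagonal term equals $-\sum_{q'\neq q}\epsilon_b(q')\,\eta_{\mathbb{Q}_{q'}}(\{\chi_q^{\epsilon_\chi(q)},q'\})$, which again by Proposition \ref{Redei as matrix of invs} equals $-\sum_{q'\neq q}\epsilon_b(q')\,\text{Redei}(K_\chi)(q',q)$ (up to identifications).

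Putting the pieces together and using the row-sum-zero normalization of $\text{Redei}(K_\chi)$ to handle the remaining diagonal convention, one gets that $j_l^{-1}h_l(\eta_{\mathbb{Q}_q}(\{\chi,b\}))$ equals $\sum_{q'}\epsilon_b(q')\,\text{Redei}(K_\chi)(q',q)$, i.e.\ the $q$-coordinate of $(\epsilon_b)\cdot\text{Redei}(K_\chi)$. Its vanishing for every $q\mid \Delta_{K_\chi/\mathbb{Q}}$ is precisely the left-kernel condition, so the corollary follows from Corollary \ref{More on Redei matrices}(1). The main obstacle is bookkeeping: carefully matching the sign and identification conventions between the cyclic-algebra invariants and the normalized Redei matrix, and in particular ensuring that the diagonal term produced by global reciprocity combines with the off-diagonal terms according to exactly the row-sum-zero convention used to define $\text{Redei}(K_\chi)$.
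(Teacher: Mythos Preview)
Your proposal is correct and follows essentially the same route as the paper's first proof: reduce to local invariants of the cyclic algebra $\{\chi,b\}$ at the ramified primes, use Proposition~\ref{Redei as matrix of invs} to identify the off-diagonal contributions with Redei-matrix entries, handle the diagonal term via Hilbert reciprocity (Proposition~\ref{Hilbert reciprocity}), and conclude by the norm criterion for cyclic algebras (Corollary~\ref{if and only if it is a norm}, which for cyclic extensions is the Hasse norm theorem). The only cosmetic difference is that the paper organizes the computation by splitting on whether $q\mid b$, whereas you expand $\chi$ and treat the diagonal term uniformly; the bookkeeping you flag as the main obstacle is indeed the only delicate point, and the row-sum-zero convention resolves it exactly as you indicate (up to an overall sign that is irrelevant for the vanishing).
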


\begin{proof}
Fix a prime divisor $q$ of $\Delta_{K_\chi/\mathbb{Q}}$. For now assume that $q$ does not divide $b$. Then the pairing of $\chi_q$ and $b$ is trivial if and only if
$$
\prod_{p \mid b} \eta_{\mathbb{Q}_p}\left(\left\{\chi_q, b\right\}\right) = 1.
$$
Indeed, this follows from Proposition \ref{Redei as matrix of invs} and the fact that the result in Proposition \ref{inv and Artin symbols} is independent of the choice of uniformizer; the latter observation also follows from a combination of Proposition \ref{unramified extensions trivial inv} and Proposition \ref{cup products and cyclic algebras}. From Proposition \ref{unramified extensions trivial inv} and Proposition \ref{cup products and cyclic algebras}, we see that the only other place in $\Omega_{\mathbb{Q}}$ where the cyclic algebra $\{\chi_q, b\}$ could possibly be non-trivial is $q$. Therefore by Proposition \ref{Hilbert reciprocity} (Hilbert reciprocity), we learn that 
$$
\eta_{\mathbb{Q}_q}\left(\left\{\chi_q, b\right\}\right) = 1.
$$
This implies that
$$
\eta_{\mathbb{Q}_q}\left(\left\{\chi, b\right\}\right) = 1
$$
for each $q$ that does not divide $b$ by Proposition \ref{unramified extensions trivial inv}. Next assume that $q$ divides $b$. Denote
$$
b' := \frac{b}{q^{v_{\mathbb{Q}_q}(b)}}. 
$$
From the definition of the pairing we see that $\chi_q$ and $b$ have trivial pairing if and only if $\chi_q^{\epsilon_{\chi}(q)}$ and $b$ have trivial pairing. This is equivalent to
$$
\prod_{p \mid b'} \eta_{\mathbb{Q}_p}\left(\left\{\chi_q^{\epsilon_{\chi}(q)}, b\right\}\right) \cdot \left(\prod_{\substack{p \mid \Delta_{K_\chi /\mathbb{Q}} \\ p \neq q}} \eta_{\mathbb{Q}_q}\left(\left\{\chi_p^{\epsilon_{\chi}(p)}, b\right\}\right)\right)^{-1} = 1.
$$
Applying Proposition \ref{Hilbert reciprocity} (Hilbert reciprocity) to the first factor, this happens if and only if
$$
\eta_{\mathbb{Q}_q}\left(\left\{\chi, b\right\}\right) = 1.
$$
Hence the statement that $b$ pairs trivially with all the $\chi_q$ is equivalent to
$$
\eta_{\mathbb{Q}_q}\left(\left\{\chi, b\right\}\right) = 1
$$
for each prime divisor $q$ of $\Delta_{K_\chi/\mathbb{Q}}$, which is in turn equivalent to
$$
\eta_{\mathbb{Q}_v}\left(\left\{\chi, b\right\}\right) = 1
$$ 
for all $v \in \Omega_{\mathbb{Q}}$. Therefore, from Proposition \ref{Hilbert reciprocity} again, we see that $\text{Up}_{K_\chi}(b)$ is a multiple of $1 - \zeta_l$ in the class group if and only if the cyclic algebra $\{\chi, b\}$ is trivial. Thanks to Corollary \ref{if and only if it is a norm}, this is equivalent to $b$ being a norm in $K_\chi$, which is precisely the desired statement. 
\end{proof}

Corollary \ref{being a norm in Kchi} can be proved directly without the detour through Redei matrices and class field theory. We devote the rest of this section to explain this different proof. 

\begin{proof}[Alternative proof of Corollary \ref{being a norm in Kchi}] 
Let $b$ in $\overline{\text{Cl}}(K_\chi)$ be such that $\text{Up}_{K_\chi}(b) \in (1 - \zeta_l)\text{Cl}(K_\chi)$. That means that there exists an ideal $I$ in $O_{K_\chi}$ and an element $\alpha$ in $K_\chi^\ast$ such that the following equality of fractional ideals holds
$$
(\alpha)(1 - \sigma)(I) = \text{Up}_{K_\chi}(b),
$$
where $\sigma$ is a non-trivial element of $\text{Gal}(K_\chi/\mathbb{Q})$. Taking the norm, as ideals, we find that
$$
\left(N_{K_\chi/\mathbb{Q}}(\alpha)\right) = (b)
$$
as equality of fractional ideals in $\mathbb{Q}$. Since $l$ is odd, we may change the sign of $\alpha$, if necessary, to find $\alpha' \in K_\chi$ satisfying
$$
N_{K_\chi/\mathbb{Q}}(\alpha') = b.
$$
For the converse, suppose that there exists an $\alpha \in K_\chi^\ast$ with $N_{K_\chi/\mathbb{Q}}(\alpha) = b$. By changing $\alpha$ we may assume that $b$ is free of $l$-th powers. We call an element of $O_{K_\chi}$ primitive if it is not divisible by any integer greater than $1$. Then we claim that there exists a primitive element $\widetilde{\alpha} \in O_{K_\chi}$ such that
\[
N_{K_\chi/\mathbb{Q}}(\widetilde{\alpha}) = bt^l
\]
for some non-zero integer $t$. Indeed, by scaling $\alpha$ we can ensure that $\alpha \in O_{K_\chi}$. If the resulting $\alpha$ is not primitive, we can take out common factors of $\alpha$ and $t$, since $b$ is free of $l$-th powers. This establishes the existence of $\widetilde{\alpha}$.

Now one sees that the factorization of $\widetilde{\alpha}$ has the form $\text{Up}_{K_\chi}(b)I$, with $I$ an integral ideal of norm $t^l$. We can discard the inert primes from $I$, because they are principal and their norm is also a $l$-th power. So we obtain an ideal $I'$ that is a product of split primes and whose norm is equal to ${t'}^l$ for a non-zero integer $t'$. 

Let $\sigma_1$ be the linear automorphism of $\mathbb{F}_l^l$ that sends $e_i$ to $e_{i + 1 \bmod l}$, where the vectors $e_i$ are the standard basis vectors. Then there is a representation of $\mathbb{F}_l$ on $\mathbb{F}_l^l$ given by sending $a$ to $\sigma_1^a$. In this way $\mathbb{F}_l^l$ becomes an $\mathbb{F}_l[\mathbb{F}_l]$-module, and is even isomorphic to $\mathbb{F}_l[\mathbb{F}_l]$ as an $\mathbb{F}_l[\mathbb{F}_l]$-module. We see that the sum zero vectors of $\mathbb{F}_l^l$ are precisely the image of the augmentation ideal under this isomorphism.

But note that the group ring $\mathbb{F}_l[\mathbb{F}_l]$ is isomorphic to the ring $\mathbb{F}_l[x]/x^l$. In the latter ring it is clear that the norm operator is $x^{l - 1}$ and hence the elements killed by the norm are precisely the multiples of $x$, which is exactly the augmentation ideal. Hence we conclude that $I'$ can be written as $\frac{\sigma(J)}{J} {J'}^l$. Projecting this onto the class group and remembering that $l$ is a multiple of $(1 - \zeta_l)^{l - 1}$, we find that $\text{Up}_{K_\chi}(b)$ is in $(1 - \zeta_l)\text{Cl}(K_\chi)[(1 - \zeta_l)^2]$.
\end{proof}

\subsection{\texorpdfstring{$(1 - \zeta_l)\text{Cl}(K_\chi)^{\vee}[(1 - \zeta_l)^2]$}{The right kernel}} 
\label{1-zeta-square are realizable cups}
Let $\chi \in \Gamma_{\mu_l}(\mathbb{Q})$. The following characterization of the right kernel of $\text{Redei}(K_\chi)$ follows easily from Proposition \ref{Redei as matrix of invs}; it can be proved with an argument analogous to the one given in the proof of Proposition \ref{being a norm in Kchi}. Instead, in the rest of this section, we shall opt for a different argument relying on central $\mathbb{F}_l$-extensions. Recall that we identify $\mathbb{F}_l \otimes \mathbb{F}_l$ with $\mathbb{F}_l$ through the map $a \otimes b \mapsto a \cdot b$, where the product is in $\mathbb{F}_l$. This allows us to view the cup of two $1$-cocycles in $\mathbb{F}_l$ as a $2$-cocycle with values in $\mathbb{F}_l$.

\begin{corollary} 
\label{characterizing 1-zetap square character via central extensions}
A character $\chi' \in \emph{Cl}(K_\chi)^{\vee}[1 - \zeta_l]$ is in $(1 - \zeta_l)\emph{Cl}(K_\chi)^{\vee}[(1 - \zeta_l)^2]$ if and only if $((i_l \circ j_l)^{-1} \circ \chi') \cup ({j_l}^{-1} \circ \chi)$ is trivial in $H^2(G_{\mathbb{Q}}, \mathbb{F}_l)$. 
\end{corollary}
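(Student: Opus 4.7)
Set $\chi'' := (i_l \circ j_l)^{-1} \circ \chi'$ and $\chi''' := j_l^{-1} \circ \chi$. By Proposition \ref{Genus theory}, $\chi'' \in \Gamma_{\mathbb{F}_l}(K_\chi)^{\text{Gal}(K_\chi/\mathbb{Q})}$ is the restriction to $G_{K_\chi}$ of some $\widehat{\chi}'' \in \Gamma_{\mathbb{F}_l}(\mathbb{Q})$, uniquely determined modulo $\mathbb{F}_l \cdot \chi'''$. Because $l$ is odd we have $\chi''' \cup \chi''' = 0$, so the class $\widehat{\chi}'' \cup \chi''' \in H^2(G_\mathbb{Q}, \mathbb{F}_l)$ is independent of the choice of lift; this is what $\chi'' \cup \chi'''$ abbreviates. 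We may assume $\widehat{\chi}''$ and $\chi'''$ are linearly independent, otherwise $\chi' = 0$ and the claim is vacuous. The central tool is Corollary \ref{Only need to check if it is Heisenberg}: $\widehat{\chi}'' \cup \chi''' = 0$ in $H^2(G_\mathbb{Q}, \mathbb{F}_l)$ if and only if there exists a Galois extension $L/\mathbb{Q}$ containing $L_{\widehat{\chi}''}K_\chi$ with $\text{Gal}(L/\mathbb{Q}) \cong \frac{\mathbb{Z}_l[\zeta_l]}{(1-\zeta_l)^2} \rtimes \langle \zeta_l \rangle$. The task is to show that such a Heisenberg realization exists exactly when $\chi' \in (1-\zeta_l)\text{Cl}(K_\chi)^{\vee}[(1-\zeta_l)^2]$.

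For the forward direction, given $\psi \in \text{Cl}(K_\chi)^{\vee}[(1-\zeta_l)^2]$ with $(1-\zeta_l)\psi = \chi'$, the field of definition $L_\psi/K_\chi$ is unramified abelian with $\text{Gal}(L_\psi/K_\chi) \cong \mathbb{Z}_l[\zeta_l]/(1-\zeta_l)^2$ as $\mathbb{Z}_l[\zeta_l]$-modules, and $L_\psi/\mathbb{Q}$ is Galois by the $\mathbb{Z}_l[\zeta_l]$-linearity of $\psi$. Proposition \ref{when abelian by cyclic splits} applies since $L_\psi/K_\chi$ is unramified at the primes ramifying in $K_\chi/\mathbb{Q}$, yielding a splitting of $1 \to \text{Gal}(L_\psi/K_\chi) \to \text{Gal}(L_\psi/\mathbb{Q}) \to \text{Gal}(K_\chi/\mathbb{Q}) \to 1$. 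With $\sigma \in \text{Gal}(K_\chi/\mathbb{Q})$ acting as $\zeta_l$ on $\text{Gal}(L_\psi/K_\chi)$, this realizes the Heisenberg group. The unique Galois-over-$\mathbb{Q}$ sub-$l$-extension of $L_\psi/K_\chi$ corresponds to the $\sigma$-invariant submodule $(1-\zeta_l)\mathbb{Z}_l[\zeta_l]/(1-\zeta_l)^2$ of $\text{Gal}(L_\psi/K_\chi)$, which dually is cut out by $(1-\zeta_l)\psi = \chi'$; hence it equals $L_{\chi'} = L_{\widehat{\chi}''}K_\chi$. Thus $L_\psi$ is the required Heisenberg witness.

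Conversely, if $\widehat{\chi}'' \cup \chi''' = 0$, Corollary \ref{Only need to check if it is Heisenberg} produces some Heisenberg $L/\mathbb{Q}$ containing $L_{\widehat{\chi}''}K_\chi$. Since the class is globally trivial it is locally split at every prime, so Proposition \ref{no ramification}(1) allows us to choose $L$ unramified over $L_{\widehat{\chi}''}K_\chi$. As $\chi''$ is a class group character, $L_{\widehat{\chi}''}K_\chi/K_\chi = L_{\chi''}/K_\chi$ is unramified, and therefore so is $L/K_\chi$, of degree $l^2$. Class field theory furnishes a $\mathbb{Z}_l[\zeta_l]$-linear surjection $\psi : \text{Cl}(K_\chi) \twoheadrightarrow \text{Gal}(L/K_\chi)$; the Heisenberg structure identifies the codomain with $\mathbb{Z}_l[\zeta_l]/(1-\zeta_l)^2 \cong N[(1-\zeta_l)^2]$, promoting $\psi$ to an element of $\text{Cl}(K_\chi)^{\vee}[(1-\zeta_l)^2]$. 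Since $L_{\widehat{\chi}''}K_\chi$ is precisely the $\sigma$-stable sub-$l$-extension of $L/K_\chi$, it corresponds under the identifications to $(1-\zeta_l)\psi$, so $(1-\zeta_l)\psi = \chi'$. The main obstacle is consistently matching the $\mathbb{Z}_l[\zeta_l]$-module structures on $\text{Gal}(L/K_\chi)$ (via Heisenberg conjugation), on $\text{Cl}(K_\chi)$ (via $\chi$), and on $N[(1-\zeta_l)^2]$ (intrinsically), and verifying that under these matches the Galois-over-$\mathbb{Q}$ sub-$l$-extension of $L/K_\chi$ corresponds exactly to the $(1-\zeta_l)$-image of $\psi$.
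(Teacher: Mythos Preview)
Your overall strategy coincides with the paper's: both directions hinge on Corollary~\ref{Only need to check if it is Heisenberg} and Proposition~\ref{when abelian by cyclic splits}, and your forward direction (building the Heisenberg extension $L_\psi$ from a $(1-\zeta_l)$-lift $\psi$) is correct and in fact spells out details the paper leaves implicit.

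The gap is in your converse. You assert that ``since the class is globally trivial it is locally split at every prime,'' and then invoke Proposition~\ref{no ramification}(1). But global triviality only gives \emph{local triviality} (Definition~\ref{locally trivial classes}): the inflation of $\widehat{\chi}'' \cup \chi'''$ to $H^2(G_{\mathbb{Q}_q},\mathbb{F}_l)$ vanishes. \emph{Local splitness} (Definition~\ref{locally split classes}) asks that the restriction to the finite decomposition group $D_{\mathfrak{q}/q}\subseteq \text{Gal}(K_\chi K_{\widehat{\chi}''}/\mathbb{Q})$ vanish in $H^2(D_{\mathfrak{q}/q},\mathbb{F}_l)$, and the inflation map $H^2(D_{\mathfrak{q}/q},\mathbb{F}_l)\to H^2(G_{\mathbb{Q}_q},\mathbb{F}_l)$ is not injective. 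Concretely, if $\widehat{\chi}''$ and $\chi'''$ are locally linearly independent at some $q\mid\Delta_{K_\chi/\mathbb{Q}}$, then $D_{\mathfrak{q}/q}=\mathbb{F}_l^2$ and the restriction of the cup is the nonzero Heisenberg class, so the hypothesis of Proposition~\ref{no ramification}(1) fails even though the class may be globally trivial.

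The paper closes this gap with the local computation you omitted: at each $q\mid\Delta_{K_\chi/\mathbb{Q}}$ one checks that the cup is locally trivial \emph{if and only if} $\widehat{\chi}''$ and $\chi'''$ are locally linearly dependent there. Granting this, the decomposition group at every ramified prime is cyclic, the cup of two proportional characters on a cyclic group vanishes (using that $l$ is odd), and \emph{then} local splitness holds and Proposition~\ref{no ramification} applies. You need to supply this step.
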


\begin{proof}
Observe that, thanks to Proposition \ref{abstract criterion for realizing central extensions}, we have that 
$$
((i_l \circ j_l)^{-1} \circ \chi') \cup (j_l^{-1} \circ \chi) = 0
$$
in $H^2(G_{\mathbb{Q}}, \mathbb{F}_l)$ if and only if
$$
\left[\left(\left(i_l \circ j_l\right)^{-1} \circ \chi'\right) \cup (j_l^{-1} \circ \chi)\right]_{H^2(\text{Gal}(K_\chi K_{\chi'}/\mathbb{Q}), \mathbb{F}_l)} \in \widetilde{\text{Cent}}_{\mathbb{F}_l}\left(K_\chi K_{\chi'}/\mathbb{Q}\right).
$$
A straightforward local computation shows that for the primes  $q$ dividing $\Delta_{K_\chi /\mathbb{Q}}$ the $2$-cocycle 
\[
h(\chi, \chi') := \left(\left(i_l \circ j_l\right)^{-1} \circ \chi'\right) \cup (j_l^{-1} \circ \chi)
\]
is locally trivial if and only if the two characters $(i_l \circ j_l)^{-1} \circ \chi'$ and $j_l^{-1} \circ \chi$ are locally linearly dependent at each such $q$. In other words, $h(\chi, \chi')$ is locally trivial if and only if it is locally split at all primes dividing the discriminant. Therefore, we conclude by Proposition \ref{no ramification} that 
$$
\left[h(\chi, \chi')\right]_{H^2(G_{\mathbb{Q}}, \mathbb{F}_l)} = 0
$$
implies that $\chi' \in (1 - \zeta_l)\text{Cl}(K_\chi)[(1 - \zeta_l)^2]$. The converse follows immediately from a combination of Proposition \ref{Only need to check if it is Heisenberg} and Proposition \ref{when abelian by cyclic splits}.
\end{proof}

\section{Raw cocycles}
Let $A$ be a finite $\mathbb{Z}_l[\zeta_l]$-module and let the pair $(G, \chi)$ consist respectively of a finite cyclic group of size $l$ and of an isomorphism $\chi : G \to \langle \zeta_l \rangle$. Using $\chi$ we turn $A$ into a $G$-module killed by the norm operator $N_G := \sum_{g \in G} g \in \mathbb{Z}_l[G]$. We write $A \rtimes G$ for the semidirect product with respect to this action. Similarly, through $\chi$, we can view $N$, introduced in Section \ref{set up}, as a $G$-module and we use the symbol
$$
N(\chi)
$$
to denote the implicit $G$-module structure, which will play a central role for us. 

Whenever we have a quotient map $\widetilde{G} \to G$, we will talk by abuse of language of $N(\chi)$ as a $\widetilde{G}$-module with the induced action. For any profinite group $H$ and any discrete $H$-module $B$ we denote by
$$
\text{Cocy}(H, B)
$$
the group of continuous $1$-cocycles from $H$ to $B$. Recall that if $K \leq H$ is a subgroup and $\psi : H \to B$ is an element of $\text{Cocy}(H, B)$, then we call the element $\psi|_{K} \in \text{Cocy}(K,B)$ the \emph{restriction} of $\psi$ to $K$. If we have a surjective homomorphism $\pi : \widetilde{H} \twoheadrightarrow H$, then we call the element $\psi \circ \pi \in \text{Cocy}(\widetilde{H}, B)$ the \emph{inflation} of $\psi$ to $\widetilde{H}$. 

\begin{prop} 
\label{exact sequence of cocycles}
Let $(G, \chi)$ and $A$ be as above and $k$ a positive integer. Inflation and restriction of $1$-cocycles induce a split exact sequence of $\mathbb{Z}_l[\zeta_l]$-modules
$$
0 \to \emph{Cocy}(G, N(\chi))[(1 - \zeta_l)^k] \to \emph{Cocy}(A \rtimes G, N(\chi))[(1 - \zeta_l)^k] \to A^{\vee}[(1 - \zeta_l)^k] \to 0.
$$
Moreover, $\emph{Cocy}(G, N(\chi))[(1 - \zeta_l)^k] \simeq_{\mathbb{Z}_l[\zeta_l]} N[(1 - \zeta_l)^k]$, with such an isomorphism arising from the evaluation map $\psi \mapsto \psi(\sigma)$ for $\sigma$ a fixed non-trivial element of $G$.
\end{prop}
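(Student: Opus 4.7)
\emph{Plan.} The proof will be an explicit version of inflation--restriction for the split exact sequence $1 \to A \to A \rtimes G \to G \to 1$, carried out at the level of cocycles rather than cohomology, which automatically produces the splitting. Throughout, $A \rtimes G$ acts on $N(\chi)$ through the projection to $G$, so the normal subgroup $A$ acts trivially on $N(\chi)$.

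First I would check that the two named maps land where claimed. Inflation along $A \rtimes G \twoheadrightarrow G$ sends a cocycle $\psi : G \to N(\chi)$ to $\psi \circ \pi$; this is clearly injective (apply the section $G \hookrightarrow A\rtimes G$). For the restriction map, take $\psi \in \text{Cocy}(A \rtimes G, N(\chi))$ and consider $\psi|_A$. Because $A$ acts trivially on $N(\chi)$, the cocycle identity on $A$ collapses to $\psi|_A(a_1 a_2) = \psi|_A(a_1) + \psi|_A(a_2)$, so $\psi|_A$ is a homomorphism. To see it is $G$-equivariant I would expand $\psi(gag^{-1})$ using the cocycle identity twice and use $\psi(g) + g\psi(g^{-1}) = \psi(1) = 0$ to arrive at $\psi|_A(gag^{-1}) = g \cdot \psi|_A(a)$. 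Since the $\mathbb{Z}_l[\zeta_l]$-module structures on both $A$ and $N(\chi)$ are induced from the $G$-action via $\chi$, $G$-equivariance is the same as $\mathbb{Z}_l[\zeta_l]$-linearity, hence $\psi|_A \in A^\vee$.

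Next I would exhibit a splitting. Given $\phi \in A^\vee$, define $s(\phi) : A \rtimes G \to N(\chi)$ by $s(\phi)(ag) := \phi(a)$. A short computation using $(a_1 g_1)(a_2 g_2) = a_1(g_1 a_2 g_1^{-1}) g_1 g_2$ and the trivial action of $A$ on $N(\chi)$ verifies that $s(\phi)$ is a cocycle and that $s(\phi)|_A = \phi$. This shows restriction is surjective and splits. Exactness in the middle is immediate: if $\psi|_A = 0$ then the cocycle identity $\psi(ag) = \psi(a) + a\psi(g) = \psi(g)$ shows $\psi$ factors through $G$, so it is inflated. The splitting $s$ preserves annihilation by $(1-\zeta_l)^k$ (since $s$ is $\mathbb{Z}_l[\zeta_l]$-linear in $\phi$), so taking $(1-\zeta_l)^k$-torsion yields the claimed split exact sequence.

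Finally, for the evaluation isomorphism, fix a generator $\sigma \in G$. A continuous $1$-cocycle $\psi: G \to N(\chi)$ is determined by $\psi(\sigma)$, with the only constraint being $\psi(\sigma^l) = \psi(1) = 0$. The cocycle identity expands this to $N_G \cdot \psi(\sigma) = 0$, where $N_G = 1 + \sigma + \cdots + \sigma^{l-1}$ acts on $N(\chi)$ as $1 + \zeta_l + \cdots + \zeta_l^{l-1} = 0$ in $\mathbb{Z}_l[\zeta_l]$. Thus the constraint is vacuous, and $\psi \mapsto \psi(\sigma)$ gives a $\mathbb{Z}_l[\zeta_l]$-linear isomorphism $\text{Cocy}(G, N(\chi)) \xrightarrow{\sim} N$; restricting to $(1-\zeta_l)^k$-torsion completes the proof. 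I do not expect a serious obstacle: the only delicate point is verifying the $G$-equivariance of $\psi|_A$ via the cocycle identity, which is a careful but routine manipulation.
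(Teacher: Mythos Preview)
Your proof is correct and follows essentially the same approach as the paper's own argument. The paper also constructs the explicit section $s(\phi)(a,g) = \phi(a)$ to obtain surjectivity; the only minor difference is that the paper additionally remarks that the splitting can be seen abstractly (since $\text{Cocy}(G,N(\chi))[(1-\zeta_l)^k] \simeq N[(1-\zeta_l)^k]$ and the middle term is finite and killed by $(1-\zeta_l)^k$), whereas you rely solely on the explicit section, which is equally valid.
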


\begin{proof}
Let $\sigma$ denote a generator of $G$. Since $G$ is cyclic, we have that the map from $\text{Cocy}(G, N(\chi))[(1 - \zeta_l)^k]$ to $N[(1 - \zeta_l)^k]$ sending $\psi$ to $\psi(\sigma)$ induces an isomorphism between $\text{Cocy}(G, N(\chi))[(1 - \zeta_l)^k]$ and the kernel of the norm $\text{Norm}_G$ operating on $N(\chi)[(1 - \zeta_l)^k]$, which is the full $N(\chi)[(1 - \zeta_l)^k]$. Hence the evaluation map $\psi \mapsto \psi(\sigma)$ induces an isomorphism of $\mathbb{Z}_l[\zeta_l]$-modules between $\text{Cocy}(G, N(\chi))[(1 - \zeta_l)^k]$ and $N(\chi)[(1 - \zeta_l)^k]$ as claimed. 

This implies that the natural inclusion 
$$
0 \to \text{Cocy}(G, N(\chi))[(1 - \zeta_l)^k] \to \text{Cocy}(A \rtimes G, N(\chi))[(1 - \zeta_l)^k],
$$
induced by inflation of cocycles, is split, since $\text{Cocy}(A \rtimes G, N(\chi))[(1 - \zeta_l)^k]$ is finite and certainly killed by $(1 - \zeta_l)^k$. Therefore we are left with showing that the natural map, induced by restriction of cocycles,
$$
\text{Cocy}(A \rtimes G, N(\chi))[(1 - \zeta_l)^k] \to A^{\vee}[(1 - \zeta_l)^k]
$$
is surjective. Let $\phi \in A^{\vee}[(1 - \zeta_l)^k]$. Consider the map
$$
\psi : A \rtimes G \to N(\chi)[(1 - \zeta_l)^k],
$$
which sends $(a, g)$ to $\phi(a)$. By construction $\psi$ restricts to $\phi$. Hence we are left with checking that $\psi$ is a $1$-cocycle. We have by definition of semidirect product and of the map $\psi$
$$
\psi((a_1, g_1) (a_2, g_2)) = \psi(a_1 + \chi(g_1)a_2, g_1g_2) = \phi(a_1) + \chi(g_1)\phi(a_2).
$$
On the other hand, in order for $\psi$ to be a $1$-cocycle for the action of $G$ on $A$ it must satisfy
$$
\psi((a_1, g_1) (a_2,g_2)) = \chi(g_1) \psi(a_2, g_2) + \psi(a_1, g_1) = \chi(g_1)\phi(a_2) + \phi(a_1),
$$
precisely the same equation as above. This concludes our proof. 
\end{proof}

The next proposition provides, roughly speaking, a converse to Proposition \ref{exact sequence of cocycles}. This will be useful later: it will tell us that the Galois group of the field of definition of a $1$-cocycle in $N(\chi)$ always splits as a semidirect product.

\begin{prop} 
\label{Galois groups of cocycles are always semidirect}
Let $(G, \chi)$ be as above. Let $\pi : \widetilde{G} \twoheadrightarrow G$ be a surjective homomorphism and let moreover $\psi \in \emph{Cocy}(\widetilde{G}, N(\chi))[(1 - \zeta_l)^{k}]$ be a cocycle such that the image of the character $\psi|_{\emph{ker}(\pi)}$ is $N(\chi)[(1 - \zeta_l)^{k}]$. Set $H := \{g \in \emph{ker}(\pi) : \psi(g) = 0\}$. We have the following facts.\\
$(1)$ The set $H$ is a normal subgroup of $\widetilde{G}$. \\ 
$(2)$ The assignment 
$$
f : \frac{\widetilde{G}}{H} \to N(\chi)[(1 - \zeta_l)^{k}] \rtimes \langle \zeta_l \rangle,
$$
defined by the formula $f(g) = (\psi(g), \chi(g))$, is a group isomorphism. 
\end{prop}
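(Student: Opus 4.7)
The plan is to upgrade the recipe for $f$ to a map $F$ defined on all of $\widetilde G$ (rather than only on $\widetilde G/H$), check that $F$ is a group homomorphism, and then read off both assertions from its kernel and image. This route has the advantage that the normality of $H$, which is part (1), is not something one needs to verify directly; it will automatically be the kernel of $F$.

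First I would set $F: \widetilde G \to N(\chi)[(1-\zeta_l)^{k}] \rtimes \langle \zeta_l \rangle$ by $F(g) := (\psi(g), \chi(\pi(g)))$, and verify that $F$ is a homomorphism. This is the only calculation in the argument. Unwinding the semidirect product law in the target, and using that the $\widetilde G$-action on $N(\chi)$ is by definition the inflation along $\pi$ of the $G$-action (so $g \cdot x = \chi(\pi(g)) x$ for $x \in N$), one finds
\[
F(g_1) F(g_2) = \bigl(\psi(g_1) + \chi(\pi(g_1))\psi(g_2),\ \chi(\pi(g_1 g_2))\bigr),
\]
and the first coordinate is exactly $\psi(g_1 g_2)$ by the $1$-cocycle identity for $\psi$. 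Hence $F(g_1 g_2) = F(g_1) F(g_2)$.

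Next I would compute $\ker(F)$. Since $\chi$ is an isomorphism onto $\langle \zeta_l \rangle$, the condition $\chi(\pi(g)) = 1$ is equivalent to $g \in \ker(\pi)$; together with $\psi(g) = 0$ this exhibits $\ker(F) = H$. In particular $H$ is a normal subgroup of $\widetilde G$, which settles (1), and $F$ descends to an injective homomorphism $f: \widetilde G/H \hookrightarrow N(\chi)[(1-\zeta_l)^k] \rtimes \langle \zeta_l \rangle$ given by the formula of (2).

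Finally, for the surjectivity in (2), the restriction of $F$ to $\ker(\pi)$ lands in $N(\chi)[(1-\zeta_l)^k] \times \{1\}$ and equals $g \mapsto (\psi(g),1)$; by the standing hypothesis that $\psi|_{\ker(\pi)}$ has image all of $N(\chi)[(1-\zeta_l)^k]$, the image of $F$ contains the whole normal subgroup $N(\chi)[(1-\zeta_l)^k] \times \{1\}$. On the other hand, because $\pi$ is surjective and $\chi$ is an isomorphism, the second coordinate $\chi \circ \pi$ is surjective onto $\langle \zeta_l \rangle$, so the image of $F$ meets every coset of $N(\chi)[(1-\zeta_l)^k] \times \{1\}$. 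Combining the two, the image of $F$ is the full semidirect product, so $f$ is an isomorphism. The argument is entirely formal, and no step presents a genuine obstacle; the only thing one has to be careful about is recognizing that the cocycle identity matches, term by term, the multiplication rule in $N(\chi)[(1-\zeta_l)^k] \rtimes \langle \zeta_l \rangle$ once the action on $N(\chi)$ is correctly identified with the inflation of the $G$-action along $\pi$.
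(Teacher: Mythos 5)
Your argument is correct and is in essence the same as the paper's: the heart of both proofs is the observation that the $1$-cocycle identity for $\psi$ matches the multiplication law of the semidirect product exactly. The only difference is organizational --- the paper first verifies the normality of $H$ by a direct conjugation computation $\psi(ghg^{-1}) = \chi(g)\psi(h)$, because it wants to define $f$ on the quotient $\widetilde G/H$ from the outset, whereas you sidestep that by defining $F$ on all of $\widetilde G$ and reading off normality from $\ker F = H$; this is a mild streamlining, and your explicit surjectivity argument (image contains the normal subgroup $N(\chi)[(1-\zeta_l)^k]\times\{1\}$ and hits every coset) spells out a step the paper leaves implicit.
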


\begin{proof}
Let us begin by verifying $(1)$. The restriction of $\psi$ to $\text{ker}(\pi)$ is a group homomorphism. Moreover, for each $g \in \widetilde{G}$ and $h \in \text{ker}(\pi)$ we have that 
\begin{align*}
\psi(ghg^{-1}) &= \chi(g)\psi(hg^{-1}) + \psi(g) = \chi(g)\psi(g^{-1}) + \chi(g)\psi(h) + \psi(g) \\
&= \psi(gg^{-1}) + \chi(g)\psi(h) = \chi(g)\psi(h).
\end{align*} 
This immediately implies $(1)$. Next take $g_1,g_2 \in \widetilde{G}$, then
\begin{align*}
(\psi(g_1), \chi(g_1)) (\psi(g_2), \chi(g_2)) &= (\psi(g_1), 1) (0, \chi(g_1)) (\psi(g_2), 1) (0, \chi(g_1)^{-1}) (0, \chi(g_1g_2)) \\
&= (\psi(g_1) + \chi(g_1)\psi(g_2), \chi(g_1g_2)) = (\psi(g_1g_2), \chi(g_1g_2)).
\end{align*} 
Hence $f$ is a group homomorphism and it is zero precisely for the $g$ with $\psi(g) = 0$ and $\chi(g) = 1$. This is the definition of $H$. 
\end{proof}

Let now $\chi \in \Gamma_{\mu_l}(\mathbb{Q})$. We set $A := \text{Cl}(K_\chi)[(1 - \zeta_l)^{\infty}]$ and $G := \text{Gal}(K_\chi/\mathbb{Q})$. Denote by $H_{\chi, l}$ the largest subextension of the Hilbert class field of $K_\chi$, within $\overline{\mathbb{Q}}$, having degree a power of $l$. The Artin map gives a canonical identification $\text{Cl}(K_\chi)[(1 - \zeta_l)^{\infty}] = \text{Gal}(H_{\chi, l}/K_\chi)$. 

\begin{prop} 
\label{Hilbert class field splits}
The surjection $\emph{Gal}(H_{\chi, l}/\mathbb{Q}) \twoheadrightarrow \emph{Gal}(K_\chi/\mathbb{Q})$ admits a section, yielding an isomorphism
$$
\emph{Gal}(H_{\chi, l}/\mathbb{Q}) \simeq_{\emph{gr}} \emph{Cl}(K_\chi)[(1 - \zeta_l)^{\infty}] \rtimes \emph{Gal}(K_\chi/\mathbb{Q})
$$
inducing the Artin identification between $\emph{Gal}(H_{\chi, l}/K_\chi)$ and $\emph{Cl}(K_\chi)[(1 - \zeta_l)^{\infty}]$ when restricted to $\emph{Gal}(H_{\chi, l}/K_\chi)$.
\end{prop}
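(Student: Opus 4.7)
The plan is to reduce the statement to a direct application of Proposition \ref{when abelian by cyclic splits}. Start by disposing of the trivial case $\chi = 1$: then $K_\chi = \mathbb{Q}$, $H_{\chi,l} = \mathbb{Q}$, and the assertion is vacuous. From now on assume $\chi$ is non-trivial, so that $K_\chi/\mathbb{Q}$ is a non-trivial cyclic extension of degree $l$.

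Next I would verify the three hypotheses needed to apply Proposition \ref{when abelian by cyclic splits} with $E = \mathbb{Q}$, $F = K_\chi$, and $L = H_{\chi,l}$. First, by Minkowski's theorem, a non-trivial number field extension of $\mathbb{Q}$ must ramify somewhere, so there exists a rational prime $v$ that ramifies in $K_\chi/\mathbb{Q}$; since $[K_\chi:\mathbb{Q}] = l$ is prime, $v$ is totally ramified. Second, $H_{\chi,l}/K_\chi$ is unramified at every prime of $K_\chi$ (being contained in the Hilbert class field of $K_\chi$), in particular at the unique prime above $v$. Finally, $H_{\chi,l}/K_\chi$ is abelian and the extension $H_{\chi,l}$ is intrinsically characterized as the maximal unramified abelian subextension of $\overline{\mathbb{Q}}/K_\chi$ of $l$-power degree, so it is stable under every $\mathbb{Q}$-automorphism of $\overline{\mathbb{Q}}$ and is therefore Galois over $\mathbb{Q}$.

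With these three conditions in place, Proposition \ref{when abelian by cyclic splits} produces a section
$$
s : \text{Gal}(K_\chi/\mathbb{Q}) \to \text{Gal}(H_{\chi,l}/\mathbb{Q})
$$
of the projection $\text{Gal}(H_{\chi,l}/\mathbb{Q}) \twoheadrightarrow \text{Gal}(K_\chi/\mathbb{Q})$. Standard group theory then yields the internal semidirect product decomposition
$$
\text{Gal}(H_{\chi,l}/\mathbb{Q}) = \text{Gal}(H_{\chi,l}/K_\chi) \rtimes s\bigl(\text{Gal}(K_\chi/\mathbb{Q})\bigr),
$$
and the Artin isomorphism $\text{Gal}(H_{\chi,l}/K_\chi) \simeq \text{Cl}(K_\chi)[(1 - \zeta_l)^\infty]$ (using that $l = u \cdot (1-\zeta_l)^{l-1}$ for a unit $u \in \mathbb{Z}_l[\zeta_l]$, so that the $l$-primary and $(1-\zeta_l)$-primary parts coincide) delivers the claimed isomorphism. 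The compatibility with the Artin identification on $\text{Gal}(H_{\chi,l}/K_\chi)$ is automatic because we have only altered the complement, not the kernel.

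There is no real obstacle here: the whole content is packaged in Proposition \ref{when abelian by cyclic splits}, and the only mild subtlety is making sure that $H_{\chi,l}$ itself (and not just the full Hilbert class field) is Galois over $\mathbb{Q}$, which follows from the intrinsic description just mentioned.
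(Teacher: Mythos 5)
Your proof is correct and takes the same route as the paper, which simply cites Proposition \ref{when abelian by cyclic splits}; you fill in the routine verifications (existence of a ramified prime via Minkowski, $H_{\chi,l}$ being Galois over $\mathbb{Q}$, and the identification $\text{Cl}(K_\chi)[l^\infty] = \text{Cl}(K_\chi)[(1-\zeta_l)^\infty]$) that the paper leaves implicit.
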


\begin{proof}
This follows at once from Proposition \ref{when abelian by cyclic splits}. 
\end{proof}

\noindent Hence we deduce the following.

\begin{prop} 
\label{direct sum of cocycle groups}
Let $k$ be a positive integer. The natural map
$$
\emph{Cocy}(\emph{Gal}(H_{\chi, l}/\mathbb{Q}), N(\chi))[(1 - \zeta_l)^k] \to \emph{Cl}(K_\chi)^{\vee}[(1 - \zeta_l)^k]
$$
is a split surjection of $\mathbb{Z}_l[\zeta_l]$-modules yielding an isomorphism
$$
\emph{Cocy}(\emph{Gal}(H_{\chi, l}/\mathbb{Q}), N(\chi))[(1 - \zeta_l)^k] \simeq \emph{Cl}(K_\chi)^{\vee}[(1 - \zeta_l)^k] \oplus N[(1 - \zeta_l)^k].
$$
\end{prop}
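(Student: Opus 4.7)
The plan is to reduce Proposition \ref{direct sum of cocycle groups} to a direct application of Proposition \ref{exact sequence of cocycles} via the identification provided by Proposition \ref{Hilbert class field splits}. First, I would fix once and for all the splitting $\text{Gal}(H_{\chi, l}/\mathbb{Q}) \simeq A \rtimes G$ supplied by Proposition \ref{Hilbert class field splits}, where $A = \text{Cl}(K_\chi)[(1 - \zeta_l)^\infty]$ and $G = \text{Gal}(K_\chi/\mathbb{Q})$, with the action of $G$ on $A$ given by $\chi$ (this is exactly the $\mathbb{Z}_l[\zeta_l]$-module structure from Section \ref{set up}). Note that $A$ is finite because class groups are finite, so the hypotheses of Proposition \ref{exact sequence of cocycles} are met.

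Next, I would apply Proposition \ref{exact sequence of cocycles} to the group $A \rtimes G$ at level $k$, which yields a split short exact sequence of $\mathbb{Z}_l[\zeta_l]$-modules
$$
0 \to \text{Cocy}(G, N(\chi))[(1 - \zeta_l)^k] \to \text{Cocy}(A \rtimes G, N(\chi))[(1 - \zeta_l)^k] \to A^{\vee}[(1 - \zeta_l)^k] \to 0,
$$
whose left-hand term is canonically identified with $N[(1 - \zeta_l)^k]$ via evaluation at a generator of $G$. The right-hand term is precisely $\text{Cl}(K_\chi)^\vee[(1 - \zeta_l)^k]$ by the definition of $A$.

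I would then check that, under the identification from Proposition \ref{Hilbert class field splits}, the restriction-of-cocycles map to $A^{\vee}[(1 - \zeta_l)^k]$ agrees with the ``natural map'' appearing in the statement; this is essentially a tautology once one recalls that the Artin isomorphism identifies $\text{Gal}(H_{\chi, l}/K_\chi)$ with $\text{Cl}(K_\chi)[(1 - \zeta_l)^\infty]$ as $\mathbb{Z}_l[\zeta_l]$-modules, so restricting a cocycle to the normal subgroup $A$ (where cocycles become homomorphisms) produces exactly an element of $\text{Cl}(K_\chi)^\vee$ with annihilator dividing $(1 - \zeta_l)^k$. Finally, splitting the short exact sequence yields the claimed isomorphism
$$
\text{Cocy}(\text{Gal}(H_{\chi, l}/\mathbb{Q}), N(\chi))[(1 - \zeta_l)^k] \simeq \text{Cl}(K_\chi)^\vee[(1 - \zeta_l)^k] \oplus N[(1 - \zeta_l)^k].
$$

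There is no serious obstacle: every ingredient has been prepared. The only mild care needed is the compatibility check in the previous paragraph (that the abstract restriction map matches the geometrically defined ``natural map''), and the verification that the $\mathbb{Z}_l[\zeta_l]$-module structure on $A$ coming from $\chi$ matches the one used in forming $N(\chi)$ — both follow directly from the conventions set up in Section \ref{set up} and Subsection \ref{conventions}.
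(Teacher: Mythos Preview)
Your proposal is correct and is exactly the paper's approach: the paper's proof reads ``This follows immediately upon combining Proposition \ref{Hilbert class field splits} and Proposition \ref{exact sequence of cocycles}.'' The compatibility checks you flag are indeed tautological given the conventions already fixed in Section \ref{set up}.
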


\begin{proof}
This follows immediately upon combining Proposition \ref{Hilbert class field splits} and Proposition \ref{exact sequence of cocycles}.
\end{proof}

\noindent In particular we conclude the following important fact.

\begin{corollary} 
\label{lifting character is the same as lifting unramified cocycles}
Let $k$ be a positive integer and let $\psi \in \emph{Cl}(K_\chi)^{\vee}[(1 - \zeta_l)^k]$. The following are equivalent
\begin{enumerate}
\item[(1)] we have that $\psi \in (1 - \zeta_l)\emph{Cl}(K_\chi)^{\vee}[(1 - \zeta_l)^{k + 1}]$;
\item[(2)] there is a $\widetilde{\psi} \in (1 - \zeta_l)\emph{Cocy}(\emph{Gal}(H_{\chi, l}/\mathbb{Q}), N(\chi))[(1 - \zeta_l)^{k + 1}]$ restricting to $\psi$;
\item[(3)] for any $\widetilde{\psi} \in \emph{Cocy}(\emph{Gal}(H_{\chi, l}/\mathbb{Q}), N(\chi))[(1 - \zeta_l)^k]$ such that $\widetilde{\psi}$ restricts to $\psi$  we have that $\widetilde{\psi} \in (1 - \zeta_l)\emph{Cocy}(\emph{Gal}(H_{\chi, l}/\mathbb{Q}), N(\chi))[(1 - \zeta_l)^{k + 1}]$.
\end{enumerate}
\end{corollary}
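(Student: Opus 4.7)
\medskip

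\noindent\textbf{Proof proposal.} The plan is to exploit Proposition \ref{direct sum of cocycle groups}, which asserts that restriction
\[
\text{res}: \text{Cocy}(\text{Gal}(H_{\chi,l}/\mathbb{Q}), N(\chi))[(1 - \zeta_l)^k] \twoheadrightarrow \text{Cl}(K_\chi)^{\vee}[(1 - \zeta_l)^k]
\]
is a split surjection of $\mathbb{Z}_l[\zeta_l]$-modules with kernel identified (via inflation) with $N[(1 - \zeta_l)^k]$. The key algebraic observation I will use is that $N = \mathbb{Q}_l(\zeta_l)/\mathbb{Z}_l[\zeta_l]$ is divisible as a $\mathbb{Z}_l[\zeta_l]$-module, so multiplication by $(1-\zeta_l)$ gives an equality $N[(1-\zeta_l)^k] = (1-\zeta_l) \cdot N[(1-\zeta_l)^{k+1}]$; any element $\frac{a}{(1-\zeta_l)^j} \in N[(1-\zeta_l)^k]$ equals $(1-\zeta_l)\cdot\frac{a}{(1-\zeta_l)^{j+1}}$. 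This fact is the engine of the proof.

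For the implication $(3) \Rightarrow (2)$, I would simply note that the restriction map in Proposition \ref{direct sum of cocycle groups} is surjective, so at least one $\widetilde{\psi} \in \text{Cocy}(\text{Gal}(H_{\chi,l}/\mathbb{Q}), N(\chi))[(1 - \zeta_l)^k]$ restricting to $\psi$ exists; applying (3) to it yields (2). For $(2) \Rightarrow (1)$, I would write $\widetilde{\psi} = (1-\zeta_l)\widetilde{\psi}'$ with $\widetilde{\psi}' \in \text{Cocy}(\text{Gal}(H_{\chi,l}/\mathbb{Q}), N(\chi))[(1 - \zeta_l)^{k+1}]$; restricting and using that restriction is a $\mathbb{Z}_l[\zeta_l]$-module homomorphism gives $\psi = (1-\zeta_l)\,\text{res}(\widetilde{\psi}')$, establishing (1).

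The main content is $(1) \Rightarrow (3)$. Suppose $\psi = (1-\zeta_l)\psi'$ for some $\psi' \in \text{Cl}(K_\chi)^{\vee}[(1 - \zeta_l)^{k+1}]$. By surjectivity of restriction (applied to the $(1-\zeta_l)^{k+1}$-torsion version of Proposition \ref{direct sum of cocycle groups}), pick $\widetilde{\psi'} \in \text{Cocy}(\text{Gal}(H_{\chi,l}/\mathbb{Q}), N(\chi))[(1 - \zeta_l)^{k+1}]$ restricting to $\psi'$. Then $(1-\zeta_l)\widetilde{\psi'}$ lives in $(1-\zeta_l)\text{Cocy}(\text{Gal}(H_{\chi,l}/\mathbb{Q}), N(\chi))[(1 - \zeta_l)^{k+1}]$ and restricts to $\psi$. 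For an arbitrary $\widetilde{\psi}$ as in (3), the difference $\widetilde{\psi} - (1-\zeta_l)\widetilde{\psi'}$ lies in $\text{Cocy}(\text{Gal}(H_{\chi,l}/\mathbb{Q}), N(\chi))[(1 - \zeta_l)^k]$ and restricts to $0$, so by the split exact sequence it comes by inflation from an element of $N[(1-\zeta_l)^k]$. By the divisibility observation above, every such element is $(1-\zeta_l)$ times an element of $N[(1-\zeta_l)^{k+1}]$, and inflation sends the latter into $\text{Cocy}(\text{Gal}(H_{\chi,l}/\mathbb{Q}), N(\chi))[(1-\zeta_l)^{k+1}]$. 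Hence $\widetilde{\psi} - (1-\zeta_l)\widetilde{\psi'} \in (1-\zeta_l)\text{Cocy}(\text{Gal}(H_{\chi,l}/\mathbb{Q}), N(\chi))[(1-\zeta_l)^{k+1}]$, and adding back $(1-\zeta_l)\widetilde{\psi'}$ yields (3).

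No step looks truly hard; the only thing to get right is matching the torsion indices when invoking Proposition \ref{direct sum of cocycle groups} in both levels $k$ and $k+1$, and making sure the $(1-\zeta_l)$-divisibility of $N[(1-\zeta_l)^k]$ lands inside $N[(1-\zeta_l)^{k+1}]$ rather than overshooting. Once these bookkeeping points are handled, the three implications are immediate.
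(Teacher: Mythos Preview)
Your proposal is correct and follows the same approach as the paper, which merely records the corollary as ``a trivial consequence of Proposition \ref{direct sum of cocycle groups}.'' You have simply unpacked that sentence: the direct-sum decomposition $\text{Cocy}(\ldots)[(1-\zeta_l)^k] \simeq \text{Cl}(K_\chi)^\vee[(1-\zeta_l)^k] \oplus N[(1-\zeta_l)^k]$ together with the divisibility identity $(1-\zeta_l)N[(1-\zeta_l)^{k+1}] = N[(1-\zeta_l)^k]$ makes the three conditions visibly equivalent, exactly as you argue.
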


\begin{proof}
This is a trivial consequence of Proposition \ref{direct sum of cocycle groups}.
\end{proof}

Corollary \ref{lifting character is the same as lifting unramified cocycles} tells us that finding an $(1 - \zeta_l)$-lift for an unramified character is the same as finding an $(1 - \zeta_l)$-lift for an unramified cocycle representing our character. In turn we now show that finding an $(1 - \zeta_l)$-lift for an unramified cocycle is often the same as finding an $(1 - \zeta_l)$-lift of the cocycle inflated to the absolute Galois group, providing in total a very convenient criterion for the existence of an $(1 - \zeta_l)$-lift of a character in terms of cocycles from $G_{\mathbb{Q}}$ to $N(\chi)$. This is given in the following important proposition of which we will give two completely different proofs; one based on the facts about central extension established in Section \ref{central extensions}, while the other proof is based on class field theory. 

\begin{prop} 
\label{liftable unramified cocycle can always be liftable unramified}
Let $\psi \in \emph{Cocy}(\emph{Gal}(H_{\chi, l}/\mathbb{Q}), N(\chi))[(1 - \zeta_l)^k]$ for some integer $k \geq 1$. In case $l$ divides $\Delta_{K_\chi/\mathbb{Q}}$ we assume that $\emph{Up}_{K_\chi}(l)$ splits completely in the extension $L(\psi)K_\chi/K_\chi$, where we recall that $L(\psi)$ denotes the field of definition of $\psi$. Then the following are equivalent
\begin{enumerate}
\item[(1)] $\psi \in (1 - \zeta_l)\emph{Cocy}(\emph{Gal}(H_{\chi, l}/\mathbb{Q}), N(\chi))[(1 - \zeta_l)^{k + 1}]$;
\item[(2)] $\psi \in (1 - \zeta_l)\emph{Cocy}(G_{\mathbb{Q}}, N(\chi))[(1 - \zeta_l)^{k + 1}]$.
\end{enumerate}
\end{prop}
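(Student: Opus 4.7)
The direction (1) $\Rightarrow$ (2) is immediate: inflating a cocycle $\widetilde{\psi}$ on $\text{Gal}(H_{\chi,l}/\mathbb{Q})$ with $(1-\zeta_l)\widetilde{\psi} = \psi$ to $G_{\mathbb{Q}}$ produces the required lift. For the substantive direction (2) $\Rightarrow$ (1), let $\phi : G_{\mathbb{Q}} \rightarrow N(\chi)[(1-\zeta_l)^{k+1}]$ satisfy $(1-\zeta_l)\phi = \psi$ (after inflation). The plan is to find a character $\xi \in \Gamma_{\mathbb{F}_l}(\mathbb{Q})$, viewed as a cocycle with values in $N(\chi)[1-\zeta_l]$, such that $\phi - \xi$ factors through $\text{Gal}(H_{\chi,l}/\mathbb{Q})$. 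This is a valid modification because $(1-\zeta_l)\xi = 0$, so $\phi - \xi$ still lifts $\psi$; and the factoring condition is equivalent to $(\phi - \xi)|_{G_{K_\chi}}$ being an unramified character of $K_\chi$, i.e.\ vanishing on every inertia subgroup $I_\mathfrak{q}$.

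By Proposition \ref{Galois groups of cocycles are always semidirect} the field of definition of $\phi$ is an $l$-power Galois extension of $\mathbb{Q}$, hence by Proposition \ref{only p and 1 mod p} ramifies only at $l$ and primes $q \equiv 1 \bmod l$. For each such $q$ not dividing $\Delta_{K_\chi/\mathbb{Q}}$, the inertia $I_q$ lies in $G_{K_\chi}$ and $\phi|_{I_q}$ has image in $N(\chi)[1-\zeta_l] = \mathbb{F}_l$ because $\psi|_{I_q} = 0$; since $\chi_q|_{I_q}$ (respectively $\chi_l|_{I_l}$) surjects onto $\mathbb{F}_l$, absorbing a suitable scalar multiple into $\xi$ cancels $\phi|_{I_q}$. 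After this reduction, the ramification of $\phi$ is confined to primes dividing $\Delta_{K_\chi/\mathbb{Q}}$.

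At such a ramified prime $\mathfrak{q} = \text{Up}_{K_\chi}(q)$, a direct computation shows that every element of $\Gamma_{\mathbb{F}_l}(\mathbb{Q})$ restricts trivially to $I_\mathfrak{q} = I_q \cap G_{K_\chi} = \ker(\chi_q|_{I_q})$, so further modification cannot alter $\phi|_{I_\mathfrak{q}}$ and we must show it vanishes automatically. Class field theory provides the idelic character $\eta : \mathbb{A}_{K_\chi}^*/K_\chi^* \rightarrow N(\chi)[(1-\zeta_l)^{k+1}]$ attached to $\phi|_{G_{K_\chi}}$; the cocycle condition forces the twisted Galois equivariance $\eta(\sigma \cdot x) = \chi(\sigma)\eta(x)$ for $\sigma \in \text{Gal}(K_\chi/\mathbb{Q})$. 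When $q \neq l$, the pro-$q$ structure of $1 + \mathfrak{m}_\mathfrak{q}$ combined with the decomposition $O_\mathfrak{q}^* = \mu_{q-1}(K_{\chi,\mathfrak{q}}) \cdot (1 + \mathfrak{m}_\mathfrak{q})$ and $\mu_{q-1}(K_{\chi,\mathfrak{q}}) \subseteq \mathbb{Z}_q^*$ reduces matters to a local computation which, via the equivariance, yields $\eta_\mathfrak{q}|_{O_\mathfrak{q}^*} = 0$. When $q = l$ and $l \mid \Delta_{K_\chi/\mathbb{Q}}$, the splitting hypothesis becomes essential: it forces $\psi$ to be locally trivial at $\mathfrak{l}$, so $\eta_\mathfrak{l}$ takes values in $\mathbb{F}_l$ throughout $K_{\chi,\mathfrak{l}}^*$, and Hilbert $90$ for the cyclic extension $K_{\chi,\mathfrak{l}}/\mathbb{Q}_l$, combined with the Galois invariance of $\eta_\mathfrak{l}$ and its vanishing on $\mathbb{Q}_l^*$, forces $\eta_\mathfrak{l} = 0$.

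The principal obstacle is this final step: modifications cannot touch ramification at primes above divisors of $\Delta_{K_\chi/\mathbb{Q}}$, so one must exploit the twisted Galois invariance of the associated idelic character through careful local analysis at each ramified prime. The hypothesis on splitting at $l$ is precisely what replaces the pro-$q$ versus $\mathbb{F}_l$ argument in the one case where that argument is blocked by the pro-$l$ nature of the principal units at $\mathfrak{l}$. An alternative route via central extensions reinterprets conditions (1) and (2) as the vanishing of the obstruction class $\theta \in H^2(\text{Gal}(H_{\chi,l}/\mathbb{Q}), \mathbb{F}_l)$ versus its inflation to $H^2(G_{\mathbb{Q}}, \mathbb{F}_l)$, and uses Proposition \ref{abstract criterion for realizing central extensions} together with the local-global analysis of Section \ref{central extensions}.
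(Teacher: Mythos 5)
Your high-level strategy — inflate for (1)$\Rightarrow$(2), then for (2)$\Rightarrow$(1) subtract a character $\xi$ so that $\phi-\xi$ is unramified over $K_\chi$, using multiples of $\chi_q$ for $q\nmid\Delta_{K_\chi/\mathbb{Q}}$ and arguing that nothing needs fixing at the ramified primes — is the right shape, and you correctly observe that at $\mathfrak{q}=\textup{Up}_{K_\chi}(q)$ with $q\mid\Delta_{K_\chi/\mathbb{Q}}$ no global character can alter $\phi|_{I_\mathfrak{q}}$, so the vanishing there must be automatic. The gap is in your proof of that automatic vanishing for $q\neq l$. From the twisted Galois equivariance $\eta(\sigma x)=\chi(\sigma)\eta(x)$ and the fact that $\mu_{q-1}\subseteq\mathbb{Q}_q^\ast$ is Galois-fixed, one obtains only $(1-\zeta_l)\eta_\mathfrak{q}(\mu_{q-1})=0$, i.e.\ $\eta_\mathfrak{q}(O_\mathfrak{q}^\ast)\subseteq N[1-\zeta_l]$ — not $\eta_\mathfrak{q}|_{O_\mathfrak{q}^\ast}=0$. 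Since $l\mid q-1$, the $l$-part of $\mu_{q-1}$ is nontrivial and the equivariant character $\eta_\mathfrak{q}$ could in principle be nonzero on it; the equivariant character alone does not determine the cocycle, and the constraint you need is not visible at that level. What forces the vanishing is the cocycle structure of $\phi$ on the \emph{full} inertia $I_q\subseteq G_{\mathbb{Q}_q}$ (not just on $I_\mathfrak{q}$): the pro-$l$ part of tame inertia is procyclic, so one may pick a topological generator $\sigma$ with $\chi(\sigma)=\zeta_l$, and then the cocycle identity gives $\phi(\sigma^l)=(1+\zeta_l+\cdots+\zeta_l^{l-1})\phi(\sigma)=0$; since $\sigma^l$ topologically generates the pro-$l$ tame part of $I_\mathfrak{q}$ and the remaining parts of $I_\mathfrak{q}$ die in the $l$-group target, $\phi(I_\mathfrak{q})=0$. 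This is precisely the paper's observation that every element of $N[(1-\zeta_l)^{k+1}]\rtimes\langle\zeta_l\rangle$ outside $N$ has order $l$, applied to the (necessarily cyclic) image of $I_q$ in $\textup{Gal}(L(\phi)/\mathbb{Q})$.

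Your treatment of $q=l$ is also not convincing: the assertion that $\eta_\mathfrak{l}$ ``vanishes on $\mathbb{Q}_l^\ast$'' is not justified (global reciprocity does not hand this to you), and it is unclear what role Hilbert~90 is playing; you are also aiming for the full $\eta_\mathfrak{l}=0$ when only $\eta_\mathfrak{l}|_{O_\mathfrak{l}^\ast}=0$ is required. The paper instead uses the hypothesis directly: the splitting of $\textup{Up}_{K_\chi}(l)$ in $L(\psi)K_\chi/K_\chi$ forces the decomposition group $D_l$ in $\textup{Gal}(L(\psi)K_\chi/\mathbb{Q})$ to equal inertia and have order $l$, whence its preimage in $\textup{Gal}(L(\phi)/\mathbb{Q})$ splits by the same ``order $l$'' fact, making the class locally split at $l$. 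In short, your outline is in the right spirit and your identification of the main obstacle is accurate, but the crucial local vanishing at the ramified primes is asserted rather than proved, and the argument offered (equivariance of the idelic character plus the $\mu_{q-1}$ decomposition) is genuinely insufficient.
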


\begin{proof}[Proof using central $\mathbb{F}_l$-extensions]
It is a triviality that $(1)$ implies $(2)$. We now show that $(2)$ implies $(1)$. We can assume without loss of generality that $\psi$ restricted to $G_{K_\chi}$ surjects onto $N[(1 - \zeta_l)^k]$. Let $L := L(\psi) \cdot K_\chi$ and let $\widetilde{\psi}$ be a lift of $\psi$. Thanks to Proposition \ref{Galois groups of cocycles are always semidirect} we find that the map
\begin{align}
\label{ePsiIsom}
\text{Gal}\left(L(\widetilde{\psi})/\mathbb{Q}\right) \to N[(1 - \zeta_l)^{k + 1}] \rtimes \langle \zeta_l \rangle
\end{align}
defined by the formula $g \mapsto (\widetilde{\psi}(g), \chi(g))$ is an isomorphism. We see that this map sends $\text{Gal}(L(\widetilde{\psi})/L)$ into $N[1 - \zeta_l]$. Therefore we deduce 
$$
j_l^{-1} \circ i_l^{-1} \circ \widetilde{\psi} \in \Gamma_{\mathbb{F}_l}(L)^{\text{Gal}(L/\mathbb{Q})}.
$$
The only primes that ramify in the extension $L/\mathbb{Q}$ are those dividing $\Delta_{K_\chi /\mathbb{Q}}$, since by assumption the extension $L/K_\chi $ is contained in $H_{K_\chi}$ and hence is unramified. For each $q$ different from $l$ dividing $\Delta_{K_\chi /\mathbb{Q}}$ we have that $\text{Up}_{K_\chi}(q)$ splits completely up to $K_\chi L((1 - \zeta_l)\psi)$. 

As we next explain, from this we conclude that if we restrict $f := (\widetilde{\psi}, \chi)$ to $I_q$, an inertia subgroup of $q$, the image of (\ref{ePsiIsom}) is a group of the form $\langle N[1 - \zeta_l], \sigma \rangle$, where $\chi(\sigma) \neq 1$. Indeed, since $\chi(I_q) \neq \{1\}$, we have that $f(I_q) \cap N$ is a $\mathbb{Z}_l[\zeta_l]$-submodule; we have $g \in I_q$ with $\chi(g) = \zeta_l$ and conjugation by $g$ on $f(I_q) \cap N$ equals precisely multiplication by $\zeta_l$. If $f(I_q) \cap N$ is trivial we are certainly done. On the other hand it can not have size more than $l$, since $\chi(I_q) \neq \{1\}$ and $I_q$ is of size at most $l^2$. The only non-trivial $\mathbb{Z}_l[\zeta_l]$-submodule of size $l$ is $N[1 - \zeta_l]$. This shows our claim. 

Recall that all $\sigma \in N[(1 - \zeta_l)^{k+1}] \rtimes \langle \zeta_l \rangle$ with $\chi(\sigma) \neq 1$ have order $l$. From this, we conclude that the extension is locally split at inertia at $q$. In particular, in case $l$ divides $\Delta_{K_\chi /\mathbb{Q}}$, we have by assumption that inertia at $l$ equals the decomposition group, hence the extension is locally split at $l$. Hence we conclude, by Proposition \ref{no ramification}, that we can find $\chi' \in \Gamma_{\mathbb{F}_l}(\mathbb{Q})$ such that $L(\widetilde{\psi}+\chi')/L$ is unramified. This ends the proof.
\end{proof}

\begin{remark}
If $k \geq 2$, we claim that $L = L(\psi)$. This fact will be important throughout the paper. Indeed, if $n$ denotes an element of $G_{\mathbb{Q}}$ with $(1 - \zeta_l)\psi(n) \neq 0$, then for each $g \in G_{\psi}$, the group of definition of $\psi$, we have that 
$$
\psi(g \cdot n) = \chi(g) \cdot n + \psi(g) \quad \text{  and  } \quad \psi(g) = \psi(\textup{id}) = 0,
$$
where each equality is justified by the fact that $\psi$ is a cocycle. Therefore we find that $\chi(g) = 1$, thanks to the fact that $(1 - \zeta_l)\psi(n) \neq 0$. That means that $G_{\psi} \subseteq G_{K_\chi}$, which is equivalent to $L(\psi) \supseteq K_\chi$.
\end{remark}

\begin{proof}[Proof using class field theory]
Let $\widetilde{\psi}$ be a lift of $\psi$. Observe that, using the Artin map, $\widetilde{\psi}|_{G_{K_\chi}}$ can be naturally identified with an element of 
$$
\left(\frac{\text{Cl}(K_\chi,c)}{\text{Norm}_{\text{Gal}(K_\chi /\mathbb{Q})} \text{Cl}(K_\chi, c)}\right)^{\vee}
$$ 
for some positive integer $c$, where $\text{Cl}(K_\chi, c)$ denotes the ray class group of modulus $c$. Here 
$$
\frac{\text{Cl}(K_\chi, c)}{\text{Norm}_{\text{Gal}(K_\chi/\mathbb{Q})} \text{Cl}(K_\chi, c)}
$$
is the largest quotient of $\text{Cl}(K, c)$ where the action of $\mathbb{Z}[\text{Gal}(K_\chi/\mathbb{Q})]$ factors through $\mathbb{Z}[\zeta_l]$. Moreover, thanks to Proposition \ref{little ramification} if $l$ does not divide $\Delta_{K_\chi /\mathbb{Q}}$ and thanks to Proposition \ref{no ramification} if $l$ divides $\Delta_{K_\chi /\mathbb{Q}}$, we can reduce to the case that $c$ is coprime to $l$. 

Our next step is to reduce to the case that $c$ is even coprime to $\Delta_{K_{\chi}/\mathbb{Q}}$. Indeed, from Proposition \ref{Galois groups of cocycles are always semidirect} we have an isomorphism
$$
\text{Gal}(L(\widetilde{\psi})/\mathbb{Q}) \simeq N[(1-\zeta_l)^{k+1}] \rtimes \langle \zeta_l \rangle
$$
given by the map $(\widetilde{\psi}, \chi)$. Hence if the extension $L(\widetilde{\psi})/K_{\chi}$ were to ramify at $\text{Up}_{K_{\chi}}(q)$ for some $q \mid \Delta_{K_{\chi}/\mathbb{Q}}$ different from $l$, then we would obtain a totally ramified $\mathbb{F}_l \times \mathbb{F}_l$-extension of $\mathbb{Q}_q$. This is not possible for $q \neq l$. This completes our reduction step.

Thanks to Corollary \ref{lifting character is the same as lifting unramified cocycles} we see that the statement would certainly follow if we prove that the inclusion
$$
\text{Cl}(K_\chi)^{\vee} \subseteq \left(\frac{\text{Cl}(K_\chi, c)}{\text{Norm}_{\text{Gal}(K_\chi /\mathbb{Q})} \text{Cl}(K_\chi, c)}\right)^{\vee}
$$
remains injective after tensoring with $\frac{\mathbb{Z}_l[\zeta_l]}{(1 - \zeta_l)}$. We claim that
\begin{multline}
\label{eClKcDim}
\text{dim}_{\mathbb{F}_l} \left(\left(\frac{\text{Cl}(K_\chi, c)}{\text{Norm}_{\text{Gal}(K_\chi /\mathbb{Q})} \text{Cl}(K_\chi, c)}\right)^{\vee}[1 - \zeta_l]\right) = \\
\text{dim}_{\mathbb{F}_l} (\text{Cl}(K_\chi)^{\vee}[1 - \zeta_l]) + \text{dim}_{\mathbb{F}_l} \left({(O_{K_\chi}/c)^\ast}^{\text{Gal}(K_\chi/\mathbb{Q})}[l]\right).
\end{multline}
To prove equation (\ref{eClKcDim}), it is enough to show the inequality 
\begin{multline*}
\text{dim}_{\mathbb{F}_l} \left(\left(\frac{\text{Cl}(K_\chi, c)}{\text{Norm}_{\text{Gal}(K_\chi /\mathbb{Q})} \text{Cl}(K_\chi, c)}\right)^{\vee}[1 - \zeta_l]\right) \geq \\
\text{dim}_{\mathbb{F}_l} (\text{Cl}(K_\chi)^{\vee}[1 - \zeta_l]) + \text{dim}_{\mathbb{F}_l} \left({(O_{K_\chi}/c)^\ast}^{\text{Gal}(K_\chi/\mathbb{Q})}[l]\right),
\end{multline*}
since the other inequality is trivially true. To prove the above inequality just consider the characters $\chi_q$ for $q \mid c \Delta_{K_\chi/\mathbb{Q}}$. 
\end{proof}

\begin{remark}
An important feature of the second proof is that the image of the restriction map from $\textup{Cocy}(G_{\mathbb{Q}}, N(\chi))$ to $\textup{Cl}(K_{\chi}, c)^\vee$ are characters with two additional properties. Firstly, they must be killed by the norm. Secondly, Proposition \ref{Galois groups of cocycles are always semidirect} implies that the resulting Galois group is semidirect over $\mathbb{Q}$. Both properties are crucially used in the proof. 

The second property is necessary to get rid of the $c$ not coprime to $\Delta_{K_{\chi}/\mathbb{Q}}$. Only then one proceeds to show that the sequence of $(1 - \zeta_l)$-torsion remains exact. We remark that this is not true in general. Indeed, it should be possible to construct an example where the sequence does not remain exact after taking $(1-\zeta_l)$-torsion. In such an example one would need characters that are $(1-\zeta_l)$-liftable only in the dual ray class group and not in the dual class group. However, the Galois group of such a character will never be semidirect over $\mathbb{Q}$. Hence they will never arise as restrictions of cocycles from $N(\chi)$. 
\end{remark}

\noindent We next define raw cocycles for $\chi$. 

\begin{mydef} 
\label{raw cocycles}
A raw cocycle for $\chi$ is a finite sequence 
$$
\{\psi_i\}_{0 \leq i \leq j}
$$
with $\psi_i \in \emph{Cocy}(\emph{Gal}(H_{\chi, l}/\mathbb{Q}), N(\chi))[(1 - \zeta_l)^i]$, with $(1 - \zeta_l)\psi_i = \psi_{i - 1}$ for each $1 \leq i \leq j$. The integer $j$ is called the rank of the raw cocycle.
\end{mydef}

\section{A reflection principle}
\label{sRel}
\subsection{The differential of sums of cocycles} 
\label{differential of the sum}
In this section if $M$ denotes a multiset, then we let $\text{Set}(M)$ be the corresponding set and call them the elements of $M$. If $x_0 \in \text{Set}(M)$, we extend the usual operations $-$, $=$, $\neq$ and $|\cdot|$ of sets to multisets in the natural way. Let $m$ be a positive integer and let
$$
Y := \prod_{i = 1}^m Y_i \times \{d\},
$$
be a product multiset, where each $Y_i$ is a multiset of $l$ primes that are all $1$ modulo $l$. We further assume that the $Y_i$ are all distinct multisets as $i$ varies in $[m]$, and that the elements in each $Y_i$ are coprime to the integer $d$, which is also a product of distinct primes $q$ with $q = l$ or $q$ equal to $1$ modulo $l$.

We will identify points in this product space $x \in Y$ with the integer $D_x := \left(\prod_{i = 1}^m \pi_i(x)\right)d$ whenever convenient. A subset $C \subseteq Y$ is called a sub-cube in case it is the inverse image of a singleton under the projection of $Y$ on $\prod_{i \in T} Y_i$ for a subset $T$ of $\{1, \ldots, m\}$. The non-negative integer $m - |T|$ is called the \emph{dimension} of $C$ and denoted by $\text{dim}(C)$. One has that $l^{\text{dim}(C)} = \left|C\right|$. 

Call an \emph{amalgama} for $Y$ a map 
$$
\epsilon_Y : \bigcup_{i = 1}^m \text{Set}(Y_i) \cup \{q \mid d\}_{q \text{ prime}} \to [l - 1].
$$
If $\epsilon_Y$ is an amalgama for $Y$ we obtain for each $D \in Y$ an amalgama $\epsilon_Y(D)$ for $D$. Conversely, giving an amalgama for $Y$ is the same as assigning an amalgama at each $D \in Y$ in a consistent manner, i.e. taking the same value at a prime whenever that prime divides two different point of $Y$. In this manner given $\epsilon_Y$, an amalgama for $Y$, we have for each $D \in Y$ a character
$$
\chi_{\epsilon_Y}(D) := \chi_{\epsilon_Y(D)}(D) = \prod_{p \mid D} \chi_p^{\epsilon_Y(p)}.
$$
Next, for $\epsilon_Y$ an amalgama for $Y$, a raw cocycle for $(Y, \epsilon_Y)$ is an assignment that gives to each point $D \in Y$ a raw cocycle for $\chi_{\epsilon_Y}(D)$ (see Definition \ref{raw cocycles}) with each point having rank at least $m - 1$. We will write $\psi_k(\mathfrak{R}, \chi_{\epsilon_Y}(D))$ for the $\psi_k$ in the raw cocycle of $\chi_{\epsilon_Y}(D)$.

Let $x_0 \in \text{Set}(Y)$. We say that a raw cocycle $\mathfrak{R}$ for $Y$ is \emph{promising} with respect to $x_0$, if $\mathfrak{R}(\chi_{\epsilon_Y}(x))$ has rank at least $m$ at each $x \in Y - \{x_0\}$ and 
$$
\sum_{x \in H} \psi_j(\mathfrak{R}, \chi_{\epsilon_Y}(x)) \in N[1 - \zeta_l]
$$
for each proper sub-cube of $Y$ having dimension $j$ not containing $x_0$. The rest of this subsection is devoted to computing for each $\sigma, \tau \in G_{\mathbb{Q}}$ the value of
$$
d_{x_0}\left(\sum_{x \in Y: x \neq x_0} \psi_m(\mathfrak{R}, \chi_{\epsilon_Y}(x))\right)(\sigma, \tau),
$$
where $\mathfrak{R}$ is a promising raw cocycle with respect to $x_0$ and the differential $d_{x_0}$ is taken by considering the sum as a $1$-cochain in the $G_{\mathbb{Q}}$-module $N(\chi_{\epsilon_Y}(x_0))$. We remind the reader that $Y$ is a multiset, so that the above sum has to be computed with the corresponding multiplicities. To state the result of this calculation, we need some additional definitions. 

Firstly, we assume that each $Y_i$ is written as $\{p_{i0}, \ldots, p_{i(l - 1)}\}$ with the convention that the coordinates of $x_0$ occupy the first $s(i)$ indexes for each $i$. In this manner the multiset $Y$ corresponds bijectively with the set 
$$
\mathcal{F} := \text{Map}([m], \{0, \ldots, l - 1\})
$$
with the points of $Y$ giving $x_0$ in $\text{Set}(Y)$ corresponding to the functions $f$ with $f(i) \leq s(i) - 1$ for each $i$ in $[m]$. We call $\mathcal{F}_{x_0}$ the set of such functions.

For each $j \in [m], i \in \{0, \ldots, l - 1\}$ we define the character
$$
\chi_{j, i, \epsilon_Y} := -j_l^{-1} \circ \chi_{p_{j0}}^{\epsilon_Y(p_{j0})} + j_l^{-1} \circ \chi_{p_{ji}}^{\epsilon_Y(p_{ji})}.
$$
The following function attached to each $\tilde{f} \in \mathcal{F} - \{0\}$ will play an important role
$$
\chi_{\tilde{f}, \epsilon_Y}(\sigma) := \prod_{j \in [m] : \tilde{f}(j) \neq 0} \chi_{j, \tilde{f}(j), \epsilon_Y}(\sigma).
$$
Here the product takes place in $\mathbb{F}_l$ and the map is a continuous $1$-cochain from $G_{\mathbb{Q}}$ to $\mathbb{F}_l$. For each $\tilde{f} \in \mathcal{F}- \mathcal{F}_{x_0}$ we denote by $H_{\tilde{f}}$ the set of $f \in \mathcal{F}$ such that for all $j \in [m]$, we have that $\tilde{f}(j) \neq 0$ implies $\tilde{f}(j) = f(j)$. We can now state the following fact.

\begin{prop} 
\label{combinatorial formula quite in general}
Let $Y$, $\epsilon_Y$, $x_0$ be as above. Let $\mathfrak{R}$ be a raw cocycle for $(Y, \epsilon_Y)$ that is promising for $x_0$. Then
\begin{multline*}
d_{x_0}\left(\sum_{x \in Y: x \neq x_0} \psi_m(\mathfrak{R}, \chi_{\epsilon_Y}(x))\right) = \\ \sum_{\tilde{f} \in \mathcal{F} - \{0\}} (-1)^{\left|\{j : \tilde{f}(j) \neq 0\}\right| + 1} \chi_{\tilde{f}}(\sigma) \left(\sum_{f \in H_{\tilde{f}}} \psi_{m - \left|\{j: \tilde{f}(j) \neq 0\}\right|}(f)(\tau)\right).
\end{multline*}
\end{prop}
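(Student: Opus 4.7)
The plan is to compute, for each $x \in Y$, the coboundary $d_{x_0}\psi_m(\mathfrak{R}, \chi_{\epsilon_Y}(x))$ by exploiting that $\psi_m(\mathfrak{R}, \chi_{\epsilon_Y}(x))$ is a $1$-cocycle with respect to its \emph{own} character $\chi_{\epsilon_Y}(x)$ rather than $\chi_{\epsilon_Y}(x_0)$, then rearrange the total sum via an inclusion--exclusion identity indexed by subsets $S \subseteq [m]$, and finally invoke the promising hypothesis to kill every correction beyond the leading contribution.

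For the first step, since $\psi := \psi_m(\mathfrak{R}, \chi_{\epsilon_Y}(x))$ satisfies $\chi_{\epsilon_Y}(x)(\sigma)\psi(\tau) - \psi(\sigma\tau) + \psi(\sigma) = 0$, computing the coboundary for the action of $\chi_{\epsilon_Y}(x_0)$ instead yields
\[
d_{x_0}\psi(\sigma, \tau) = \bigl(\chi_{\epsilon_Y}(x_0)(\sigma) - \chi_{\epsilon_Y}(x)(\sigma)\bigr)\psi(\tau).
\]
Parametrizing $Y$ by $\mathcal{F}$, the $x_0$-fiber $\mathcal{F}_{x_0}$ contributes zero, and the elementary identity $\prod_i(1 + a_i) - 1 = \sum_{S \neq \emptyset}\prod_{i \in S} a_i$ applied to $\chi_{\epsilon_Y}(x(f))/\chi_{\epsilon_Y}(x_0) = \prod_i \zeta_l^{\chi_{i,f(i),\epsilon_Y}(\sigma)}$ gives
\[
\chi_{\epsilon_Y}(x_0)(\sigma) - \chi_{\epsilon_Y}(x(f))(\sigma) = -\chi_{\epsilon_Y}(x_0)(\sigma)\sum_{\emptyset \neq S \subseteq [m]}\prod_{i \in S}\bigl(\zeta_l^{\chi_{i,f(i),\epsilon_Y}(\sigma)} - 1\bigr).
\]
After interchanging the sums over $f$ and $S$, the inner product only depends on $f|_S$; any term with $f(i) = 0$ for some $i \in S$ vanishes; and fixing $f|_S$ is the same as choosing a $\tilde f$ with $\{j : \tilde f(j) \neq 0\} = S$, the remaining freedom in $f$ producing precisely the subcube $H_{\tilde f}$. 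Regrouping, the sum becomes
\[
-\chi_{\epsilon_Y}(x_0)(\sigma)\sum_{\tilde f \in \mathcal{F} - \{0\}}\prod_{i : \tilde f(i) \neq 0}\bigl(\zeta_l^{\chi_{i,\tilde f(i),\epsilon_Y}(\sigma)} - 1\bigr)\sum_{f \in H_{\tilde f}}\psi_m(f)(\tau).
\]

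Writing $\zeta_l^c - 1 = -(1-\zeta_l)(1 + \zeta_l + \cdots + \zeta_l^{c-1})$, each factor yields $-(1-\zeta_l)$ times a unit of $\mathbb{Z}_l[\zeta_l]$ congruent to $c$ modulo $(1-\zeta_l)$. For $k = |\{j : \tilde f(j) \neq 0\}|$ the product therefore equals $(-1)^k(1-\zeta_l)^k \chi_{\tilde f, \epsilon_Y}(\sigma)$ plus corrections in $(1-\zeta_l)^{k+1}\mathbb{Z}_l[\zeta_l]$. The raw cocycle relation $(1 - \zeta_l)^j \psi_m = \psi_{m - j}$ converts $(1-\zeta_l)^k \sum_{f \in H_{\tilde f}}\psi_m(f)(\tau)$ into $\sum_{f \in H_{\tilde f}}\psi_{m-k}(f)(\tau)$, so the leading term contributes exactly
\[
(-1)^{k+1}\chi_{\epsilon_Y}(x_0)(\sigma)\chi_{\tilde f, \epsilon_Y}(\sigma)\sum_{f \in H_{\tilde f}}\psi_{m-k}(f)(\tau).
\]

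It remains to deal with the prefactor $\chi_{\epsilon_Y}(x_0)(\sigma)$ and with the $(1-\zeta_l)$-corrections, and here the promising hypothesis enters. The subcube $H_{\tilde f}$ is proper of dimension $m-k$; moreover on every non-vanishing summand one has $\chi_{i,\tilde f(i),\epsilon_Y} \neq 0$ for $i$ in the support of $\tilde f$, which forces $\tilde f(i) \geq s(i)$ there, and hence $x_0 \notin H_{\tilde f}$. The promising hypothesis thus guarantees $\sum_{f \in H_{\tilde f}}\psi_{m-k}(f)(\tau) \in N[1-\zeta_l]$. Every correction in the expansion carries an additional factor of $1-\zeta_l$, which annihilates this sum, and $\chi_{\epsilon_Y}(x_0)(\sigma) \in \langle \zeta_l \rangle$ acts trivially on $N[1-\zeta_l]$ since $(\zeta_l - 1)$ kills $N[1-\zeta_l]$; both simplifications therefore hold simultaneously, producing precisely the stated formula. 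The main technical hurdle is the careful bookkeeping in the expansions of $\zeta_l^c - 1$ and of the product of the unit factors $1 + \zeta_l + \cdots + \zeta_l^{c-1}$, verifying that every correction carries enough $(1-\zeta_l)$-depth to be annihilated by the $N[1-\zeta_l]$-containment furnished by the promising condition.
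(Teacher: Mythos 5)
Your proof is correct and proceeds along essentially the same lines as the paper's. The one organizational difference is in the inclusion--exclusion step: you apply the identity $\prod_{i}(1 + a_i) - 1 = \sum_{\emptyset\neq S\subseteq[m]}\prod_{i\in S}a_i$ directly with $a_i = \zeta_l^{\chi_{i,f(i),\epsilon_Y}(\sigma)} - 1$ and index the expansion by nonempty subsets $S\subseteq[m]$, whereas the paper first partitions $[m]$ by the $\mathbb{F}_l$-value of $\chi_{j,f(j),\epsilon_Y}(\sigma)$ into sets $T^i_{\sigma,f}$, binomially expands each factor $(1 + (\zeta_l^i - 1))^{|T^i_{\sigma,f}|}$, and indexes by tuples $(T_1,\ldots,T_{l-1})$ of disjoint subsets, requiring the notion of $(T_\bullet,\sigma)$-good functions and Remark \ref{silly remark} to reassemble the $\tilde f$-sum. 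The two decompositions are in bijection once vanishing terms are stripped out, so the content is identical; your single-layer indexing arguably reads more transparently. You also correctly handle the point the paper compresses: verifying that the promising hypothesis applies to $H_{\tilde f}$ requires noting that the nonvanishing terms force $\tilde f(j) \geq s(j)$ on the support of $\tilde f$, hence $x_0 \notin H_{\tilde f}$, after which the $(1-\zeta_l)$-depth argument simultaneously absorbs the prefactor $\chi_{\epsilon_Y}(x_0)(\sigma)$ and kills all unit corrections as you describe.
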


\begin{proof}
We start with a simple computation
\begin{align*}
d_{x_0}\left(\psi_m(x)\right) &= \chi_{\epsilon_Y}(x_0)\psi_m(x)(\tau) + \psi_m(x)(\sigma) - \psi_m(x)(\sigma \tau) \\
&= -\chi_{\epsilon_Y}(x)(\sigma) \psi_m(x)(\tau) + \chi_{\epsilon_Y}(x_0)(\sigma)\psi_m(x)(\tau) \\
&= \chi_{\epsilon_Y}(x_0)(\sigma) \left(1 - \frac{\chi_{\epsilon_Y}(x)(\sigma)}{\chi_{\epsilon_Y}(x_0)(\sigma)} \right) \psi_m(x)(\tau).
\end{align*}
Put $\chi := \chi_{\epsilon_Y}(x_0)$. Summing over all the $\psi_m$ yields
\begin{align}
\label{eSumPsi}
\chi(\sigma) \sum_{f \in \mathcal{F} - \mathcal{F}_{x_0}} \left(1 - \frac{\chi_{\epsilon_Y}(f)(\sigma)}{\chi(\sigma)}\right) \psi_{m}(f)(\tau),
\end{align}
where $\chi_{\epsilon_Y}(f)$ is defined to be $\chi_{\epsilon_Y}(x)$ for the unique $x$ that corresponds to $f$ under our bijection. Note that this is not the same as $\chi_{f, \epsilon_Y}$. For $f \in \mathcal{F}, \sigma \in G_{\mathbb{Q}}$ define 
$$
\Sigma(f, \sigma) := \sum_{j \in [m]} \chi_{j, f(j), \epsilon_Y}(\sigma).
$$
Observe that for $f \in \mathcal{F}_{x_0}$ the symbol $\psi_m(f)$ is not defined; however we use the convention that $0 \cdot \psi_m(f)$ is always defined and it is equal to $0$, moreover we also assume that the symbol $(1 - \zeta_l)^i \psi_m(f)$ is defined to be $\psi_{m - i}(f)$ for each integer $0 \leq i \leq m$. This will simplify the notation in the coming calculations. With these conventions, equation (\ref{eSumPsi}) is equal to
\begin{align}
\label{eDx01}
\chi(\sigma) \sum_{f \in \mathcal{F}} \left(1 - \zeta_l^{\Sigma(f,\sigma)}\right) \psi_m(f)(\tau).
\end{align}
Define subsets
$$
T_{\sigma, f}^i := \left\{j \in [m]: \chi_{j, f(j), \epsilon_Y}(\sigma) = i\right\}.
$$
We obviously have $T_{\sigma, f}^i \cap T_{\sigma, f}^j = \emptyset$ for $i \neq j$. We can now rewrite equation (\ref{eDx01}) as
\begin{multline}
\label{eDx02}
\chi(\sigma) \sum_{f \in \mathcal{F}} \left(1 - \prod_{i = 1}^{l - 1} \zeta_l^{i \left|T_{\sigma,f}^i\right|}\right) \psi_m(f)(\tau) = \\
\chi(\sigma) \sum_{f \in \mathcal{F}} \left(1 - \prod_{i = 1}^{l - 1} (1 + (\zeta_l^i - 1))^{\left|T_{\sigma,f}^i\right|}\right) \psi_m(f)(\tau) = \\
\chi(\sigma) \sum_{f \in \mathcal{F}} a(f, \sigma)\psi_m(f)(\tau),
\end{multline}
where
$$
a(f, \sigma) := 1 - \sum_{(T_1, \ldots, T_{l - 1}) : T_i \subseteq T_{\sigma,f}^i} (-1)^{\sum_{i = 1}^{l - 1} \left|T_i\right|} \prod_{i = 1}^{l - 1}\left(\sum_{j = 0}^{i - 1} \zeta_l^j\right)^{\left|T_i\right|} (1 - \zeta_l)^{\sum_{i = 1}^{l - 1} \left|T_i\right|}.
$$
We can expand equation (\ref{eDx02}) as
\begin{multline}
\label{eDx03}
-\chi(\sigma) \sum_{f \in \mathcal{F}} \sum_{\substack{(T_1, \ldots, T_{l - 1}) \neq (\emptyset, \ldots, \emptyset) \\ T_i \subseteq T_{\sigma,f}^i}} (-1)^{\sum_{i = 1}^{l - 1} \left|T_i\right|} \prod_{i = 1}^{l - 1} \left(\sum_{j = 0}^{i - 1} \zeta_l^j\right)^{\left|T_i\right|} \psi_{m - \sum_{i = 1}^{l - 1}\left|T_i\right|}(f)(\tau) = \\
-\chi(\sigma) \sum_{\substack{(T_1, \ldots, T_{l - 1}) \neq (\emptyset, \ldots, \emptyset) \\ \text{all disjoint}}} (-1)^{\sum_{i = 1}^{l - 1}\left|T_i\right|} \prod_{i = 1}^{l - 1} \left(\sum_{j = 0}^{i - 1} \zeta_l^j\right)^{\left|T_i\right|} \cdot \\
\sum_{f \in \mathcal{F}} \mathbf{1}_{\forall i : T_i \subseteq T_{\sigma,f}^i}(\sigma) \psi_{m - \sum_{i = 1}^{l - 1}\left|T_i\right|}(f)(\tau).
\end{multline}
We next make a definition. For $T_\bullet := (T_1, \ldots, T_{l - 1})$ we say that $\tilde{f}$ is $(T_\bullet, \sigma)$-good in case we have the following two conditions
\begin{itemize}
\item for each $i \in [l - 1]$ we have that $T_i \subseteq T_{\sigma,\tilde{f}}^i$;
\item for each $j \not \in \bigcup_{i = 1}^{l - 1} T_i$ we have that $\tilde{f}(j) = 0$.  
\end{itemize}
The following remark will be useful. 

\begin{remark} 
\label{silly remark}
Given $T_\bullet$ there can be \emph{many} functions $\tilde{f}$ that are $(T_\bullet, \sigma)$-good. But for each $\tilde{f} \in \mathcal{F}$ there is at most one ordered choice of $l - 1$ disjoint sets $T_\bullet$ such that $\tilde{f}$ is $(T_\bullet, \sigma)$-good; this partition exists if and only if $\chi_{\widetilde{f}, \epsilon_Y}(\sigma) \neq 0$. It is simply obtained by declaring that $j \in T_i$ if and only if $\chi_{j, \tilde{f}(j), \epsilon_Y}(\sigma) = i$. 
\end{remark}

We now use the definition of $(T_\bullet, \sigma)$-good to rearrange the sum in equation (\ref{eDx03}) as a sum over sub-cubes
\begin{multline}
\label{eDx04}
-\chi(\sigma) \sum_{\substack{(T_1, \ldots, T_{l - 1}) \neq (\emptyset, \ldots, \emptyset) \\ \text{all disjoint}}} (-1)^{\sum_{i = 1}^{l - 1} \left|T_i\right|} \prod_{i = 1}^{l - 1} \left(\sum_{j = 0}^{i - 1} \zeta_l^j\right)^{\left|T_i\right|} 
\cdot \\
\sum_{\tilde{f} \in \mathcal{F} - \mathcal{F}_{x_0}} \mathbf{1}_{\tilde{f} \text{ is } (T_\bullet, \sigma)-\text{good}} 
\cdot \sum_{f \in H_{\tilde{f}}}  \psi_{m-\sum_{i=1}^{l - 1}\left|T_i\right|}(f)(\tau).
\end{multline}
Here $H_{\tilde{f}}$ denotes the set of $f \in \mathcal{F}$ such that $\tilde{f}(j) \neq 0$ implies $\tilde{f}(j) = f(j)$ for all $j \in [m]$. Thanks to the fact that the raw cocycle is promising we can rewrite equation (\ref{eDx04}) simply as
$$
-\sum_{\substack{(T_1, \ldots, T_{l - 1}) \neq (\emptyset, \ldots, \emptyset) \\ \text{all disjoint}}}\sum_{\tilde{f} \in \mathcal{F} - \{0\}} (-1)^{\left|\{j : \tilde{f}(j) \neq 0 \}\right|} \chi_{\widetilde{f}, \epsilon_Y}(\sigma) \mathbf{1}_{\tilde{f} \text{ is } (T_\bullet, \sigma)-\text{good}} \cdot \sum_{f \in H_{\tilde{f}}} \psi_{m - \left|\{j : \tilde{f}(j) \neq 0\}\right|}(f)(\tau).
$$
We now swap the first two sums and apply Remark \ref{silly remark} to obtain
$$ 
\sum_{\tilde{f} \in \mathcal{F} - \{0\}} (-1)^{\left|\{j : \tilde{f}(j) \neq 0\}\right| +1 } \chi_{\widetilde{f}, \epsilon_Y}(\sigma) \left(\sum_{f \in H_{\tilde{f}}} \psi_{m - \left\{j : \tilde{f}(j) \neq 0\}\right|}(f)(\tau)\right),
$$
which is the desired expression.
\end{proof}

We can further simplify the result of Proposition \ref{combinatorial formula quite in general} under the following more restrictive assumption. We say that a raw cocycle $\mathfrak{R}$ for $(Y, \epsilon_Y)$ is \emph{very promising} if it is promising and moreover 
$$
\sum_{f \in H_{\tilde{f}_1}} \psi_{m - \left|\{j : \tilde{f}_1(j) \neq 0\}\right|}(f)(\tau) = \sum_{f \in H_{\tilde{f}_2}} \psi_{m - \left|\{j : \tilde{f}_2(j) \neq 0\}\right|}(f)(\tau)
$$
whenever $\tilde{f}_1$ and $\tilde{f}_2$ share the same zero set. In this case for each subset $T \subseteq [m]$ we put 
$$
\chi_{T, \epsilon_Y} := \prod_{j \in T} j_l^{-1} \left(\prod_{p \in Y_j} \chi_p^{\epsilon_Y(p)}\right)
$$ 
and 
$$
\psi(\mathfrak{R}([m] - T))(\tau) := \sum_{f \in H_{\tilde{f}}} \psi_{m - \left|\{j : \tilde{f}(j) \neq 0\}\right|}(f)(\tau)
$$
for any $\tilde{f} \in \mathcal{F}$ such that the set of $j$ with $\tilde{f}(j) = 0$ is the set $T$. The inner product is a product of characters, while the outer product takes place in $\mathbb{F}_l$ yielding a continuous $1$-cochain from $G_{\mathbb{Q}}$ to $\mathbb{F}_l$. We have the following fact.

\begin{prop} 
\label{combinatorial formula in more special case}
Let $Y$, $\epsilon_Y$, $x_0$ be as above. Let $\mathfrak{R}$ be a raw cocycle for $(Y, \epsilon_Y)$ that is very promising for $x_0$. Then
$$
d_{x_0} \left(\sum_{x \in Y: x \neq x_0} \psi_m(\mathfrak{R}, \chi_{\epsilon_Y}(x))(\sigma,\tau)\right) = \sum_{\emptyset \neq T \subseteq [m]} (-1)^{\left|T\right| + 1} \chi_{T, \epsilon_Y}(\sigma) \psi(\mathfrak{R}([m] - T))(\tau).
$$
\end{prop}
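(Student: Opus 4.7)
My plan is to obtain this proposition as a direct specialization of Proposition \ref{combinatorial formula quite in general}, whose right-hand side already expresses $d_{x_0}$ of the sum as a sum over $\tilde{f} \in \mathcal{F} - \{0\}$. The key observation is that the very promising hypothesis is exactly what is needed to regroup that sum according to the zero-set of $\tilde{f}$: since the inner sum $\sum_{f \in H_{\tilde{f}}} \psi_{m - |\{j : \tilde{f}(j) \neq 0\}|}(f)(\tau)$ depends only on the set $T := \{j : \tilde{f}(j) \neq 0\}$, the quantity $\psi(\mathfrak{R}([m]-T))(\tau)$ is well defined and we may factor it out of a sum over all $\tilde{f}$ with prescribed support $T$.

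Concretely, I would rewrite the right-hand side of Proposition \ref{combinatorial formula quite in general} as
\[
\sum_{\emptyset \neq T \subseteq [m]} (-1)^{|T|+1} \left(\sum_{\tilde{f} \in \mathcal{F}: \, \mathrm{supp}(\tilde{f}) = T} \chi_{\tilde{f}, \epsilon_Y}(\sigma)\right) \psi(\mathfrak{R}([m]-T))(\tau),
\]
and then the only remaining task is to verify the identity of continuous $1$-cochains
\[
\sum_{\tilde{f}: \, \mathrm{supp}(\tilde{f}) = T} \chi_{\tilde{f}, \epsilon_Y}(\sigma) = \chi_{T, \epsilon_Y}(\sigma)
\]
for every nonempty $T \subseteq [m]$.

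For this identity, I would use the product definition $\chi_{\tilde{f}, \epsilon_Y}(\sigma) = \prod_{j \in T} \chi_{j, \tilde{f}(j), \epsilon_Y}(\sigma)$ to distribute the sum into a product, obtaining
\[
\sum_{\tilde{f}: \, \mathrm{supp}(\tilde{f}) = T} \chi_{\tilde{f}, \epsilon_Y}(\sigma) = \prod_{j \in T} \left(\sum_{i = 1}^{l - 1} \chi_{j, i, \epsilon_Y}(\sigma)\right).
\]
Expanding $\chi_{j, i, \epsilon_Y} = -j_l^{-1} \circ \chi_{p_{j0}}^{\epsilon_Y(p_{j0})} + j_l^{-1} \circ \chi_{p_{ji}}^{\epsilon_Y(p_{ji})}$ and working in $\mathbb{F}_l$, the constant term contributes $-(l-1) = 1$ copy of $j_l^{-1} \circ \chi_{p_{j0}}^{\epsilon_Y(p_{j0})}(\sigma)$, which combines with $\sum_{i=1}^{l-1} j_l^{-1} \circ \chi_{p_{ji}}^{\epsilon_Y(p_{ji})}(\sigma)$ to give $\sum_{i=0}^{l-1} j_l^{-1} \circ \chi_{p_{ji}}^{\epsilon_Y(p_{ji})}(\sigma) = j_l^{-1}\!\left(\prod_{p \in Y_j} \chi_p^{\epsilon_Y(p)}\right)(\sigma)$, which is precisely the $j$-th factor of $\chi_{T, \epsilon_Y}(\sigma)$.

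There is no real obstacle here: once one trusts Proposition \ref{combinatorial formula quite in general}, the result is a short combinatorial rearrangement together with the elementary $\mathbb{F}_l$-identity $-(l-1) \equiv 1 \pmod l$. The mildly delicate point, which I would state explicitly, is that the very promising hypothesis is used only to pull the $\psi$-factor out of the sum over $\tilde{f}$ with fixed support; without it one cannot collapse the double sum to a single sum indexed by $T$.
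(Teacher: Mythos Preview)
Your proposal is correct and is precisely the argument the paper has in mind: the paper states Proposition~\ref{combinatorial formula in more special case} without proof, immediately after introducing the ``very promising'' hypothesis and the definitions of $\chi_{T,\epsilon_Y}$ and $\psi(\mathfrak{R}([m]-T))$, leaving it as the obvious regrouping of Proposition~\ref{combinatorial formula quite in general}. Your verification of the identity $\sum_{\tilde f:\,\mathrm{supp}(\tilde f)=T}\chi_{\tilde f,\epsilon_Y}=\chi_{T,\epsilon_Y}$ via the distributive expansion and $-(l-1)\equiv 1\pmod l$ is exactly the missing computation.
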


\subsection{Expansion maps} 
\label{governing maps}
Let $m$, $Y$ be as in the previous subsection. Let $\epsilon_Y$ be an amalgama for $Y$. The next step is to construct a collection of $1$-cochains that have precisely the same recursive formula as in Proposition \ref{combinatorial formula in more special case}. This leads to the following definition. We call a \emph{pre-expansion} for $(Y, \epsilon_Y)$ a sequence parametrized by the proper subsets of $[m]$
$$
\{\phi_T(Y, \epsilon_Y)\}_{T \subsetneq [m]}
$$
consisting of continuous $1$-cochains from $G_{\mathbb{Q}}$ to $\mathbb{F}_l$ satisfying the following equation
$$
\left(d\phi_T(Y, \epsilon_Y)\right)(\sigma,\tau) = \sum_{\emptyset \neq U \subseteq T} (-1)^{\left|U\right| + 1} \chi_{U, \epsilon_Y}(\sigma) \phi_{T - U}(Y, \epsilon_Y)(\tau)
$$
for each $\sigma, \tau \in G_{\mathbb{Q}}$ and each proper subset $T$ of $[m]$. Here we consider $\mathbb{F}_l$ as a $G_{\mathbb{Q}}$-module with the trivial action. We will assume that $\phi_{\emptyset}$ is linearly independent from the space of characters spanned by the set $\left\{\chi_{\{i\}, \epsilon_Y}\right\}_{i \in [m]}$.

A pre-expansion is said to be \emph{promising} if for every $i \in [m]$, one has that every prime $p \in Y_i$ splits completely in the field of definition of $\phi_{[m] - \{i\}}$. Next we define an \emph{expansion} for $Y$ to be
$$
\{\phi_T(Y, \epsilon_Y)\}_{T \subsetneq [m]} \cup \{\phi_{[m]}(Y, \epsilon_Y)\},
$$
where $\{\phi_T(Y, \epsilon_Y)\}_{T \subsetneq [m]}$ is a pre-expansion for $(Y, \epsilon_Y)$ and $\phi_{[m]}(Y, \epsilon_Y)$ is a continuous $1$-cochain from $G_{\mathbb{Q}}$ to $\mathbb{F}_l$ satisfying
$$
\left(d\phi_{[m]}(Y, \epsilon_Y)\right)(\sigma,\tau) = \sum_{\emptyset \neq U \subseteq [m]} (-1)^{\left|U\right| + 1} \chi_{U, \epsilon_Y}(\sigma) \phi_{[m] - U}(Y, \epsilon_Y)(\tau).
$$ 
The maps composing a pre-expansion or an expansion are said to be \emph{good} if their field of definition is unramified above the maximal elementary abelian $\mathbb{F}_l$-extension, which is the field of definition of the map $\chi_{[m], \epsilon_Y} \cdot \phi_{\emptyset}(Y, \epsilon_Y)$. A pre-expansion or an expansion are said themselves to be good if all their maps are good. The pair $(Y, \epsilon_Y)$ is said to be \emph{cooperative} if for each distinct $i, j \in [m]$ we have that the character $\chi_{\{i\}, \epsilon_Y}$ is locally trivial at each prime appearing in $Y_j$ and at $l$.
 
\begin{prop} 
\label{existence of expansions}
Suppose that $(Y, \epsilon_Y)$ is cooperative. If $\{\phi_T(Y, \epsilon_Y)\}_{T \subsetneq [m]}$ is a promising good pre-expansion for $(Y, \epsilon_Y)$, then it can be completed to a good expansion for $(Y, \epsilon_Y)$.
\end{prop}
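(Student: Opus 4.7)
I will construct $\phi_{[m]}(Y,\epsilon_Y)$ by first producing the required $2$-cochain differential, realizing it as a coboundary via the cohomological criterion of Proposition \ref{criterion for Q}, and finally modifying the resulting cochain by characters of $G_\Q$ to enforce goodness. Set
$$
\theta(\sigma,\tau) := \sum_{\emptyset \neq U \subseteq [m]} (-1)^{|U|+1} \chi_{U,\epsilon_Y}(\sigma)\, \phi_{[m]-U}(Y,\epsilon_Y)(\tau).
$$
The goal becomes exhibiting a continuous $1$-cochain $\phi_{[m]}(Y,\epsilon_Y) : G_\Q \to \mathbb{F}_l$ with $d\phi_{[m]}(Y,\epsilon_Y) = \theta$ whose field of definition is unramified above $L_0 := L(\chi_{[m],\epsilon_Y} \cdot \phi_\emptyset(Y,\epsilon_Y))$.

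The cocycle verification $d\theta = 0$ uses that each $\chi_{U,\epsilon_Y}$ has trivial differential as a $1$-cocycle in the trivial $G_\Q$-module $\mathbb{F}_l$, together with the pre-expansion relation $d\phi_T(Y,\epsilon_Y) = \sum_{\emptyset \neq V \subseteq T} (-1)^{|V|+1} \chi_{V,\epsilon_Y} \cup \phi_{T-V}(Y,\epsilon_Y)$ applied for each $T = [m]-U \subsetneq [m]$; the resulting triple sum indexed by ordered pairs $(U,V)$ of disjoint nonempty subsets of $[m]$ collapses in pairs under the involution $(U,V) \leftrightarrow (V,U)$ thanks to the pointwise multiplicativity $\chi_{U,\epsilon_Y}\chi_{V,\epsilon_Y} = \chi_{U \sqcup V,\epsilon_Y}$ in $\mathbb{F}_l$ and the alternating signs. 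For local triviality, fix a finite Galois extension $F/\Q$ through which $\theta$ factors; by Proposition \ref{local-global stuff} it suffices to check triviality at each ramified prime of $F$. At a prime $p \in Y_i$: cooperativeness forces $\chi_{\{j\},\epsilon_Y}|_{G_{\Q_p}} = 0$ for every $j \neq i$, so every $\chi_{U,\epsilon_Y}$ with $U \not\subseteq \{i\}$ vanishes pointwise on $G_{\Q_p}$, leaving only the term $\chi_{\{i\},\epsilon_Y}(\sigma)\phi_{[m]-\{i\}}(Y,\epsilon_Y)(\tau)$, which vanishes identically because $p$ splits completely in the field of definition of $\phi_{[m]-\{i\}}(Y,\epsilon_Y)$ by promising-ness. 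At $p = l$: cooperativeness kills every $\chi_{\{j\},\epsilon_Y}$, so $\theta|_{G_{\Q_l}} = 0$. At any remaining ramified prime $p$ (contributed by the ramification of $\phi_\emptyset$), every $\chi_{U,\epsilon_Y}$ is unramified at $p$, so $\theta$ vanishes on $I_p \times G_{\Q_p}$; hence $\theta$ is locally split at inertia, and Proposition \ref{unramified extensions trivial inv} upgrades this to local triviality.

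Proposition \ref{criterion for Q} now furnishes a continuous $1$-cochain $\phi^0 : G_\Q \to \mathbb{F}_l$ with $d\phi^0 = \theta$, unique up to adding an element of $\Gamma_{\mathbb{F}_l}(\Q)$. Following the ramification-clearing mechanism in Proposition \ref{little ramification} and Proposition \ref{no ramification}, I adjust $\phi^0$ by successively adding multiples of $\chi_q$ at every prime $q$ where $L(\phi^0)/L_0$ is ramified: such corrections preserve $d\phi^0 = \theta$ (since $d\chi_q = 0$) and, by the local analysis above, each bad ramification is locally split at inertia and can therefore be absorbed, yielding the desired good $\phi_{[m]}(Y,\epsilon_Y)$ after finitely many steps. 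The main obstacle is this final adjustment: verifying that at each prime $q$ where $L(\phi^0)$ ramifies above $L_0$ the bad ramification is genuinely absorbable by a multiple of $\chi_q$, which requires the cooperativeness hypothesis to ensure that the characters we add do not re-introduce ramification outside $L_0$ at primes lying in the other $Y_j$'s, so that the ramification bookkeeping closes up.
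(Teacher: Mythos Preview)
Your overall strategy matches the paper's: define $\theta$, verify it is a $2$-cocycle that is locally trivial at every ramified prime of $M = \prod_{T \subsetneq [m]} L(\phi_T)$, invoke Proposition~\ref{criterion for Q}, and then adjust to enforce goodness. Two differences deserve comment.

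First, your case (3) is not correctly justified. Proposition~\ref{unramified extensions trivial inv} concerns classes inflated from an \emph{unramified} local extension $F/E$ and does not say that ``locally split at inertia implies locally trivial'' for a ramified extension; vanishing of $\theta$ on $I_p \times G_{\Q_p}$ alone is not enough, since $\theta(\sigma,\tau)$ need not factor through $G_{\Q_p}/I_p$ in the second variable. The paper does not isolate this case: it simply asserts that $\theta$ is \emph{identically zero} on $G_{\Q_p}$ at every prime ramifying in $L_0$, using cooperativeness and promising-ness for the primes in $\bigcup_j Y_j$ and for $l$. In the paper's applications (governing expansions, Section~\ref{sGov}) the character $\phi_\emptyset$ never ramifies outside this set, so no further case arises.

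Second, your final ramification-clearing step is handled less precisely than in the paper. Rather than iteratively adding characters $\chi_q$ to $\phi^0$ and tracking ramification, the paper first exploits that $\theta$ is locally \emph{split} (indeed identically zero) at each ramified prime to apply Proposition~\ref{no ramification}(1) and obtain a character $\chi \in \Gamma_{\mathbb{F}_l}(M)^{\Gal(M/\Q)}$ with $r_1(\chi)=\theta$ and $L(\chi)/M$ unramified. Only then does it write $\theta = d\phi$ via Proposition~\ref{criterion for Q}, twist $\phi$ by an element of $\Gamma_{\mathbb{F}_l}(\Q)$ so that $\phi|_{G_M}=\chi$, and check by a short direct calculation (using the explicit formula for $\theta$) that $L(\phi)=L(\chi)$. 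This two-step route bypasses the bookkeeping concern you flag at the end.
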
 

\begin{proof}
Consider the map
$$
\theta : G_{\mathbb{Q}} \times G_{\mathbb{Q}} \to \mathbb{F}_l,
$$
defined by the formula
$$
\theta(\sigma,\tau) := \sum_{\emptyset \neq U \subseteq [m]} (-1)^{\left|U\right| + 1} \chi_{U, \epsilon_Y}(\sigma) \phi_{[m] - U}(Y, \epsilon_Y)(\tau).
$$
Simply from the assumption that $\{\phi_T(Y, \epsilon_Y)\}_{T \subsetneq [m]}$ is a pre-expansion, it follows that $\theta$ is a $2$-cocycle. We will also write $\theta$ for the resulting class in $H^2(G_\Q, \mathbb{F}_l)$. Observe that $\theta$ factors through 
$$
M := \prod_{T \subsetneq [m]} L(\phi_T(Y, \epsilon_Y))
$$ 
and hence defines an element of $H^2(\text{Gal}(M/\Q), \mathbb{F}_l)$. We next show that the $\mathbb{F}_l$-extension of the group $\text{Gal}(M/\mathbb{Q})$ given by the class of $\theta$ is actually in $\widetilde{\text{Cent}}_{\mathbb{F}_l}(M/\mathbb{Q})$ and that it can be realized by an \emph{unramified} $\mathbb{F}_l$-extension. We do so by applying Proposition \ref{no ramification} part $(1)$. 

Since the expansion is good, we only have to check the primes ramifying in the field of definition of $\chi_{[m], \epsilon_Y} \cdot \phi_{\emptyset}(Y, \epsilon_Y)$. Locally at these primes the expression defining $\theta$ becomes identically zero, because the pre-expansion is good and $(Y,\epsilon_Y)$ is cooperative. Hence, by Proposition \ref{no ramification}, we conclude that indeed $\theta \in \widetilde{\text{Cent}}_{\mathbb{F}_l}(M/\mathbb{Q})$ and that we can realize it as $r_1(\chi)$ for some $\chi \in \Gamma_{\mathbb{F}_l}(M)$ with $L(\chi)/M$ unramified. Thanks to Proposition \ref{criterion for Q} we also find that we can write
$$
\theta = d(\phi)
$$
for some $1$-cochain $\phi:G_{\mathbb{Q}} \to \mathbb{F}_l$. One can show with a direct calculation that $\phi$ restricted to $G_M$ is an element of $\Gamma_{\mathbb{F}_l}(M)^{\text{Gal}(M/\mathbb{Q})}$ with $r_1(\phi) = \theta$. After twisting with an element of $\Gamma_{\mathbb{F}_l}(\mathbb{Q})$ we can assume that $\phi$ restricts to $\chi$. Finally, a simple calculation (based on the formula of $\theta$) shows that the field of definition $L(\phi)$ is actually equal to $L(\chi)$. Setting $\phi_{[m]}(Y,\epsilon_Y) := \phi$ establishes the proposition.
\end{proof}

Our next goal is to determine structural information about the Galois group
$$
\text{Gal}(L(\phi_{[m]}(Y, \epsilon_Y))/\mathbb{Q}).
$$
To simplify the notation and to enhance generality, in the remainder of this section we assume to have a collection of characters
$$
\{\chi_{\{i\}}\}_{i \in [m]} \cup \{\chi_{\emptyset}\}
$$
living in $\Gamma_{\mathbb{F}_l}(\mathbb{Q})$ and forming altogether an $\mathbb{F}_l$-linearly independent set. For each non-empty subset $T \subseteq [m]$ we denote by $\chi_T$ the continuous $1$-cochain from $G_{\mathbb{Q}}$ to $\mathbb{F}_l$ defined by the formula
$$
\chi_T := \prod_{i \in T}\chi_{\{i\}},
$$
where the product is just multiplication in $\mathbb{F}_l$. We assume moreover to have a collection of maps $\{\phi_T\}_{T \subseteq [m]}$ satisfying the above equation of expansion maps
$$
(d\phi_T)(\sigma,\tau) = \sum_{\emptyset \neq U \subseteq T} (-1)^{\left|U\right| + 1} \chi_{U}(\sigma) \phi_{T - U}(\tau)
$$
for each $T \subseteq [m]$, with furthermore $\phi_{\emptyset}=\chi_{\emptyset}$. We proceed to determine the structure of
$$
\text{Gal}(L(\phi_{[m]})/\mathbb{Q}).
$$ 
For each non-empty $T \subseteq [m]$ we put $K_T := L(\chi_T)$, i.e. the $\mathbb{F}_l$-elementary extension obtained by adding all the characters $\chi_{\{i\}}$ with $i \in T$. We denote by
$$
\{\sigma_i\}_{i \in [m]}
$$
the dual basis of $\chi_{\{i\}}$ in $\text{Gal}(K_{[m]}/\mathbb{Q})$. Recall that $\text{Gal}(K_{[m]}/\mathbb{Q})$ acts on $\Gamma_{\mathbb{F}_l}(K_{[m]})$ by conjugation. Observe that one clearly has $\phi_T|_{G_{K_T}} \in \Gamma_{\mathbb{F}_l}(K_T)$. Take $i \in [m]$ and $T \subseteq [m]$. Let by abuse of notation $\sigma_i$ denote also any lift of $\sigma_i$ to $G_{\mathbb{Q}}$; the choice is relevant only to write symbolically meaningful formulas, but, since we are going to examine the effect on conjugation on a character, it will be irrelevant for the end result. Observe that for any $\tau \in G_{K_{[m]}}$ we have
\begin{align}
\label{ePhiConj1}
\phi_T(\sigma_i \tau \sigma_{i}^{-1}) &= \phi_T([\sigma_i, \tau] \tau) = \phi_T([\sigma_i, \tau]) + \phi_T(\tau) - (d\phi_T)([\sigma_i, \tau], \tau) \nonumber \\
&= \phi_T([\sigma_i, \tau]) + \phi_T(\tau).
\end{align}
Since
$$
\phi_T([\sigma_i, \tau]) + \phi_T(\tau \sigma_i) - \phi_T(\sigma_i \tau) = (d\phi_T)([\sigma_i, \tau], \tau \sigma_i) = 0,
$$
one finds
\begin{align}
\label{ePhiConj2}
\phi_T([\sigma_i, \tau]) = \phi_T(\sigma_i \tau) - \phi_T(\tau \sigma_i).
\end{align}
This can in turn be rewritten as
\begin{align}
\label{ePhiConj3}
\phi_T(\sigma_i \tau) - \phi_T(\tau \sigma_i) = (d\phi_T)(\tau, \sigma_i) - (d\phi_T)(\sigma_i, \tau) = -(d\phi_T)(\sigma_i, \tau),
\end{align}
where in the last identity we made use of the fact that $\tau \in G_{K_{[m]}}$. From equation (\ref{ePhiConj1}), (\ref{ePhiConj2}) and (\ref{ePhiConj3}) we find that
$$
\phi_T(\sigma_i\tau\sigma_i^{-1}) =
\left\{
	\begin{array}{ll}
		\phi_T(\tau) - \phi_{T - \{i\}}(\tau)  & \mbox{if } i \in T \\
		\phi_T(\tau) & \mbox{if } i \not \in T.
	\end{array}
\right.
$$
Therefore the action is given by the formula
$$
\sigma_i \cdot \phi_T =
\left\{
	\begin{array}{ll}
		\phi_T - \phi_{T - \{i\}}  & \mbox{if } i \in T \\
		\phi_T & \mbox{if } i \not \in T.
	\end{array}
\right.
$$
From this formula it follows immediately that $\text{Gal}(L(\phi_{[m]})/K_{[m]})$ is an $\mathbb{F}_l$-vector space whose dual is generated by all the maps $\phi_T$. Moreover, we deduce from the above formula that the natural action of $\mathbb{F}_l[\text{Gal}(K_{[m]}/\mathbb{Q})]$ factors through the ideal generated by $\{(\sigma_i-1)^2\}_{i \in [m]}$. The change of variables $t_i := \sigma_i-1$ shows that the group ring $\mathbb{F}_l[\text{Gal}(K_{[m]}/\mathbb{Q})]$ is isomorphic to the polynomial ring $\frac{\mathbb{F}_l[t_1, \ldots ,t_m]}{(t_1^l, \ldots ,t_m^l)}$. Hence we conclude that $\text{Gal}(L(\phi_{[m]})/K_{[m]})$ is a module over the ring
$$
R := \frac{\mathbb{F}_l[t_1, \ldots ,t_m]}{(t_1^2, \ldots ,t_m^2)}.
$$
We next prove that the dual $\text{Gal}(L(\phi_{[m]})/K_{[m]})^{\vee}$ is a free module of rank $1$ over $R$. It is clear that $\phi_{[m]}$ is a generator. Hence we only need to show that the annihilator ideal of $\phi_{[m]}$ is the zero ideal. Since $R$ is Gorenstein, we see that if the annihilator is not the zero ideal, then it must contain $t_1 \cdot \ldots \cdot t_m$. But we have, still thanks to the formula, that $t_1 \cdot \ldots \cdot t_m$ sends $\phi_{[m]}$ to $\chi_{\emptyset}$. Since the character $\chi_{\emptyset}$ is independent of the characters $\chi_{\{i\}}$, we obtain the desired conclusion.

Denote by $G$ the group
$$\frac{\mathbb{F}_l[t_1, \ldots ,t_m]}{(t_1^2, \ldots ,t_m^2)} \rtimes \text{Gal}(K_{[m]}/\mathbb{Q}),
$$
where the implicit action is the natural action of $\text{Gal}(K_{[m]}/\mathbb{Q})$ on $\frac{\mathbb{F}_l[t_1, \ldots ,t_m]}{(t_1^2, \ldots ,t_m^2)} $. 
With some little extra effort one can show that $\text{Gal}(L(\phi_{[m]})/\mathbb{Q})$ is actually isomorphic to $G$. 

We next examine the map
$$
\beta_{m + 1}(\phi_{[m]}) : G_{\mathbb{Q}}^{m+1} \to \mathbb{F}_l,
$$
that sends a vector $(\tau_1, \ldots, \tau_{m+1})$ to
$$
\phi_{[m]}([\tau_1, [\tau_2, [\ldots [\tau_m, \tau_{m+1}] \ldots]]]).
$$
Using the structure of the group $\text{Gal}(L(\phi_{[m]})/\mathbb{Q})$ one can establish quite easily the formula 
\begin{multline*}
\beta_{m + 1}(\phi_{[m]})(\tau_1, \ldots, \tau_{m + 1}) = \sum_{\rho \in \text{Sym}\{1, \ldots, m\}}\chi_{\emptyset}(\tau_{m + 1}) \prod_{1 \leq i \leq m} \chi_{\rho(i)}(\tau_i) - \\
\sum_{\rho \in \text{Sym}(i \in \{1, \ldots, m + 1\} - \{m\})} \chi_{\emptyset}(\tau_m) \prod_{1 \leq i \leq m + 1, i \neq m} \chi_{\rho(i)}(\tau_i).
\end{multline*} 
For an alternative way to arrive at the same formula one can use the identity
$$
\phi_{[m]}([\sigma, \tau]) = \phi_{[m]}(\sigma \tau) - \phi_{[m]}(\tau \sigma) = (d\phi_{[m]})(\tau, \sigma) - (d\phi_{[m]})(\sigma, \tau)
$$
and proceed by induction as explained in \cite[p.\ 12]{Smith}. The formula for $\beta_{m + 1}$ will be of utmost importance in Section \ref{sGov}, since it reduces the task of finding all relations among certain collections of expansion maps, called \emph{governing expansions}, to the task of finding all relations among (suitable functions of) the characters at the base of the expansion. 

\subsection{Creating unramified cocycles}
Let $Y$ and $\epsilon_Y$ be as in the previous two subsections and let $x_0 \in \text{Set}(Y)$. In this subsection we build on the previous two subsections to prove that under suitable assumptions $\psi_1(\mathfrak{R}, \chi_{\epsilon_Y}(x_0)) \in (1 - \zeta_l)^{m - 1}\text{Cl}(K_{\chi_{\epsilon_Y}(x_0)})^{\vee}[(1 - \zeta_l)^m]$. These assumptions come in two flavors, and we devote a subsection for each.

\subsubsection{Minimality}
Let $\mathfrak{R}$ be a raw cocycle on $(Y, \epsilon_Y)$ that is promising at $x_0$. Moreover, we assume that $Y$ is non-degenerate; there are no $Y_i$ containing all equal entries. We say that $\mathfrak{R}$ is \emph{minimal} with respect to $x_0$ if for any sub-cube $H$ of $Y$ not containing $x_0$ we have that
$$
\sum_{x \in H} \psi_{\text{dim}(H)}(\mathfrak{R}, \chi_{\epsilon_Y}(x)) = 0.
$$
We have the following fact.

\begin{prop} 
\label{obtaining x_0 with minim.}
Let $\mathfrak{R}$ be a promising minimal raw cocycle at $(Y, \epsilon_Y)$. Suppose that $\psi_1(\mathfrak{R}, \chi_{\epsilon_Y}(x))$ is constant as $x$ varies in $Y$. Then there exists $\chi' : G_\Q \rightarrow N[1 - \zeta_l]$ such that
$$
\psi := -\sum_{x \in Y: x \neq x_0} \psi_m(\mathfrak{R}, \chi_{\epsilon_Y}(x)) + \chi' \in \emph{Cocy}(\emph{Gal}(H_{\chi, l}/\mathbb{Q}), N(\chi_{\epsilon_Y}(x_0))).
$$
One has that $(1 - \zeta_l)^{m - 1} \psi = \left|\{x \in Y : x = x_0\}\right| \cdot \psi_1(\mathfrak{R}, \chi_{\epsilon_Y}(x_0))$ yielding in particular that
$$
\psi_1(\mathfrak{R}, \chi_{\epsilon_Y}(x_0)) \in (1 - \zeta_l)^{m - 1}\emph{Cl}(K_{\chi_{\epsilon_Y}(x_0)})^{\vee}[(1 - \zeta_l)^m].
$$
\end{prop}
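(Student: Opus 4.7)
The plan is to set $\widetilde{\psi} := -\sum_{x \in Y : x \neq x_0} \psi_m(\mathfrak{R}, \chi_{\epsilon_Y}(x))$, viewed as a continuous $1$-cochain $G_\Q \to N$ targeting the $G_\Q$-module $N(\chi_{\epsilon_Y}(x_0))$, and to find a $1$-cochain $\chi' : G_\Q \to N[1-\zeta_l]$ so that $\psi := \widetilde{\psi} + \chi'$ is an unramified cocycle in $\mathrm{Cocy}(\mathrm{Gal}(H_{\chi_{\epsilon_Y}(x_0), l}/\Q), N(\chi_{\epsilon_Y}(x_0)))$. First I would apply Proposition \ref{combinatorial formula quite in general} to write $d\widetilde{\psi}(\sigma, \tau) = \sum_{\tilde{f} \in \mathcal{F} - \{0\}} (-1)^{|T(\tilde{f})|+1} \chi_{\tilde{f}}(\sigma) \alpha_{\tilde{f}}(\tau)$ with $\alpha_{\tilde{f}}(\tau) = \sum_{f \in H_{\tilde{f}}} \psi_{m - |T(\tilde{f})|}(f)(\tau)$. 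For every $\tilde{f} \notin \mathcal{F}_{x_0}$ the sub-cube $H_{\tilde{f}}$ omits $x_0$ and has dimension $m - |T(\tilde{f})|$, matching the cocycle level in $\alpha_{\tilde{f}}$, so minimality forces $\alpha_{\tilde{f}} = 0$ directly.

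The combinatorial heart of the argument is to use constancy of $\psi_1$ together with iterated minimality to show that the residual contribution, indexed by $\tilde{f} \in \mathcal{F}_{x_0} - \{0\}$, assembles into a $2$-cochain valued in $N[1-\zeta_l]$. Concretely, one slices each surviving $H_{\tilde{f}}$ along a free coordinate; slices avoiding $x_0$ contribute nothing by minimality at the relevant dimension, while slices through $x_0$ are controlled by the identity $\sum_{f \in H'}\psi_1(f) = |H'| \cdot \psi_1(\mathfrak{R}, \chi_{\epsilon_Y}(x_0)) = 0$ since $l$ annihilates $N[1-\zeta_l]$. A careful induction on sub-cube dimension, paired with the pulling-up relation $(1-\zeta_l)\psi_k = \psi_{k-1}$, then forces $d\widetilde{\psi}$ to take values in $N[1-\zeta_l]$. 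Combined with the fact that $d\widetilde{\psi}$ is by construction a coboundary of $1$-cochains into $N(\chi_{\epsilon_Y}(x_0))$, the connecting-map analysis of the short exact sequence $0 \to N[1-\zeta_l] \to N(\chi_{\epsilon_Y}(x_0)) \to N(\chi_{\epsilon_Y}(x_0))/N[1-\zeta_l] \to 0$ then produces $\chi' : G_\Q \to N[1-\zeta_l]$ with $d\chi' = d\widetilde{\psi}$. I expect this step, and in particular the torsion-bound for the inner sums $\alpha_{\tilde{f}}$ over sub-cubes through $x_0$, to be the main obstacle.

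With $\chi'$ in hand, $\psi = \widetilde{\psi} + \chi'$ is a cocycle on $G_\Q$ valued in $N(\chi_{\epsilon_Y}(x_0))[(1-\zeta_l)^m]$. To upgrade $\psi$ to a cocycle of $\mathrm{Gal}(H_{\chi_{\epsilon_Y}(x_0), l}/\Q)$ one argues as in the proofs of Propositions \ref{little ramification}, \ref{no ramification}, and \ref{liftable unramified cocycle can always be liftable unramified}: any prime $q$ at which $\psi$ ramifies outside the discriminant of $K_{\chi_{\epsilon_Y}(x_0)}$ can be absorbed by twisting $\chi'$ by an $\mathbb{F}_l$-multiple of $\chi_q \in \Gamma_{\mathbb{F}_l}(\Q) \cong N[1-\zeta_l]$, a modification that preserves the condition $\chi' \in N[1-\zeta_l]$ and does not disturb the identity computed below.

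Finally, $(1-\zeta_l)^{m-1}\chi' = 0$ because $\chi' \in N[1-\zeta_l]$ and $m \geq 2$, while $(1-\zeta_l)^{m-1}\psi_m(\mathfrak{R}, \chi_{\epsilon_Y}(x)) = \psi_1(\mathfrak{R}, \chi_{\epsilon_Y}(x)) = \psi_1(\mathfrak{R}, \chi_{\epsilon_Y}(x_0))$ for every $x \in Y$ by constancy. Setting $n_0 := |\{x \in Y : x = x_0\}|$, summing over $x \neq x_0$ and using $l \cdot \psi_1(\mathfrak{R}, \chi_{\epsilon_Y}(x_0)) = 0$ yields $(1-\zeta_l)^{m-1}\psi = -(l^m - n_0)\psi_1(\mathfrak{R}, \chi_{\epsilon_Y}(x_0)) = n_0 \psi_1(\mathfrak{R}, \chi_{\epsilon_Y}(x_0))$. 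Non-degeneracy of $Y$ ensures $n_0 = \prod_i s(i)$ with each $s(i) \in [l-1]$, so $n_0$ is a unit in $\mathbb{F}_l$, and Corollary \ref{lifting character is the same as lifting unramified cocycles} gives $\psi_1(\mathfrak{R}, \chi_{\epsilon_Y}(x_0)) \in (1-\zeta_l)^{m-1}\mathrm{Cl}(K_{\chi_{\epsilon_Y}(x_0)})^\vee[(1-\zeta_l)^m]$.
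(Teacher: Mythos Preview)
Your main difficulty is self-inflicted: under minimality the $1$-cochain $\widetilde{\psi}$ is already a genuine cocycle, i.e.\ $d_{x_0}\widetilde{\psi}=0$, not merely a cochain with differential landing in $N[1-\zeta_l]$. In the formula of Proposition~\ref{combinatorial formula quite in general} the terms indexed by $\tilde f\in\mathcal{F}-\mathcal{F}_{x_0}$ vanish because minimality kills $\alpha_{\tilde f}$, as you note. For the remaining terms with $\tilde f\in\mathcal{F}_{x_0}-\{0\}$ you do not need any induction on sub-cubes: the \emph{coefficient} $\chi_{\tilde f,\epsilon_Y}$ is identically zero. Indeed, $\tilde f\in\mathcal{F}_{x_0}$ means $\tilde f(j)\le s(j)-1$ for every $j$; picking any $j_0$ with $\tilde f(j_0)\neq 0$ gives $p_{j_0,\tilde f(j_0)}=p_{j_0,0}$ and hence $\chi_{j_0,\tilde f(j_0),\epsilon_Y}=0$, which annihilates the product $\chi_{\tilde f,\epsilon_Y}$. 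So the whole differential vanishes and your ``main obstacle'' evaporates.

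This matters, because your connecting-map step is actually a gap. Knowing only that $d\widetilde{\psi}$ takes values in $N[1-\zeta_l]$ and that $d\widetilde{\psi}$ is a coboundary in the larger module $N(\chi_{\epsilon_Y}(x_0))$ tells you that the class $[d\widetilde{\psi}]$ dies under $H^2(G_{\mathbb{Q}},N[1-\zeta_l])\to H^2(G_{\mathbb{Q}},N)$, but this map is not injective, so you cannot conclude that $[d\widetilde{\psi}]=0$ in $H^2(G_{\mathbb{Q}},N[1-\zeta_l])$; without that, no $\chi'$ with $d\chi'=d\widetilde{\psi}$ need exist. In the paper, $\chi'$ is a \emph{character} produced by a completely different mechanism: once $\widetilde{\psi}$ is a $G_{\mathbb{Q}}$-cocycle, one shows that $(1-\zeta_l)\widetilde{\psi}$ is already unramified above $K_{\chi_{\epsilon_Y}(x_0)}$ (the key trick being that for any prime $q$ appearing in some $Y_i$ but not in $x_0$, minimality lets you rewrite $(1-\zeta_l)\widetilde{\psi}=\sum_{x\neq x_0}\psi_{m-1}(x)$ as a sum over sub-cubes whose $i$-th coordinate equals $\pi_i(x_0)$, so $q$ never occurs), and then Proposition~\ref{liftable unramified cocycle can always be liftable unramified} supplies the character $\chi'$ making $-\widetilde{\psi}+\chi'$ unramified. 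Your ramification-twisting paragraph is close in spirit to this last step, but it presupposes that $\psi$ is already a cocycle, which you have not established. Your final $(1-\zeta_l)^{m-1}$ computation and the appeal to Corollary~\ref{lifting character is the same as lifting unramified cocycles} are correct.
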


\begin{proof}
We surely have that
$$
\tilde{\psi} := \sum_{x \in Y : x \neq x_0} \psi_m(\mathfrak{R}, \chi_{\epsilon_Y}(x)) \in \text{Cocy}(G_{\mathbb{Q}}, N(\chi_{\epsilon_Y}(x_0)))
$$ 
thanks to our minimality assumption and Proposition \ref{combinatorial formula quite in general}. Next observe that the proposition is trivially true if $m = 1$. So we can safely assume $m \geq 2$. We claim that 
$$
(1 - \zeta_l) \tilde{\psi} \in \text{Cocy}(\text{Gal}(H_{\chi, l}/\mathbb{Q}), N(\chi_{\epsilon_Y}(x_0))).
$$
Once we show this, the existence of $\chi'$ follows from Proposition \ref{liftable unramified cocycle can always be liftable unramified}. Indeed, if $l$ divides $x$ for some $x \in Y$, we have that $l$ divides $d$. In this case we see that $\text{Up}_{K_{\chi_{\epsilon_Y}(x_0)}}(l)$ splits completely in $L((1-\zeta_l)\tilde{\psi})K_{\chi_{\epsilon_Y}(x_0)}$, which places us in the position to use Proposition \ref{liftable unramified cocycle can always be liftable unramified}. We are now going to prove the claim. Define
\[
L := \prod_{x \in Y : x \neq x_0} L(\psi_m(\mathfrak{R}, \chi_{\epsilon_Y}(x))),
\]
where we recall that $L(\psi_m)$ is the field of definition of $\psi_m$. From the fact that $m \geq 2$ we see that $L$ contains $K_{\chi_{\epsilon_Y}(x_0)}$ and ramifies with inertia degree at most $l$ at each prime. Moreover, its ramification locus is contained in the set of primes appearing as coordinates of $Y$, since all the $\psi_m$ are unramified above their corresponding degree $l$ cyclic extension.

This already implies that the primes dividing $\Delta_{K_{\chi_{\epsilon_Y}(x_0)}/\mathbb{Q}}$ can not ramify in $L/K_{\chi_{\epsilon_Y}(x_0)}$, since the ramification is already eaten up by $K_{\chi_{\epsilon_Y}(x_0)}/\mathbb{Q}$. We are left with the other primes. For such a prime $q$, observe that thanks to the minimality assumption, we can always rewrite $(1 - \zeta_l)\tilde{\psi}$ as a sum only over $x$ where the prime $q$ is never used. Therefore the ramification locus of $L((1 - \zeta_l)\tilde{\psi})$ over $\mathbb{Q}$ does not contain any of those $q$ as well. Hence we conclude that $(1 - \zeta_l)\tilde{\psi}$ has indeed unramified field of definition above $K_{\chi_{\epsilon_Y}(x_0)}$. 

The claim gives $\chi'$ satisfying
$$
-\tilde{\psi} + \chi' = \psi \in \text{Cocy}(\text{Gal}(H_{\chi, l}/\mathbb{Q}), N(\chi_{\epsilon_Y}(x_0))).
$$ 
Observe that 
\begin{align*}
(1 - \zeta_l)^{m - 1} \psi &= \left(l^m - \left|\{x \in Y : x = x_0\}\right|\right) \cdot -\psi_1(\mathfrak{R}, \chi_{\epsilon_Y}(x_0)) \\
&= \left|\{x \in Y : x = x_0\}\right| \cdot \psi_1(\mathfrak{R}, \chi_{\epsilon_Y}(x_0)),
\end{align*}
where in the first equality we make use of the fact that all the $\psi_1$ are identical, and in the second we make use of the fact that $1 - \zeta_l$ kills each $\psi_1$ and thus the same is true for $l$. 
\end{proof}

\begin{remark}
If we have the stronger assumption that the sum over \emph{any} proper sub-cube is trivial for $\mathfrak{R}$, then we can directly conclude that the rank of $\mathfrak{R}$ at $x_0$ is at least $m$, since the expression in Proposition \ref{obtaining x_0 with minim.} would be a lift of $\psi_{m - 1}(\mathfrak{R}, \chi_{\epsilon_Y}(x_0))$. That said, in our application this will be irrelevant, since raw cocycles are merely a tool to access the pairing $\emph{Art}_{m}(\emph{Cl}(K_{\chi_{\epsilon_Y}(x_0)}))$.
\end{remark}

\subsubsection{Agreement} 
Let $i_a$ be in $[m]$. We will refer to $i_a$ as the \emph{index of agreement} in $[m]$. Moreover from now on we shall use the notation
$$Y-\{i_a\}:=\prod_{i \neq i_a}Y_{i}.
$$
Next we assume to have a good expansion $\{\phi_T(Y - \{i_a\}, \epsilon_Y)\}_{T \subseteq [m] - \{i_a\}}$ for $(Y - \{i_a\}, \epsilon_Y)$, where, by abuse of notation, $\epsilon_Y$ denotes also the restriction of $\epsilon_Y$ to $Y - \{i_a\}$. 

Let $\mathfrak{R}$ be a raw cocycle on $(Y, \epsilon_Y)$ that is very promising at $x_0$. Moreover, we assume that $Y$ is not degenerate, i.e. no $Y_i$ consists of all equal entries. Recall in this case that for $T \subsetneq [m]$ we introduced the notation $\psi(\mathfrak{R}(T))$ at the end of Section \ref{differential of the sum}. We say that $\mathfrak{R}$ \emph{agrees} with a good expansion $\{\phi_T(Y - \{i_a\},\epsilon_Y)\}_{T \subseteq [m] - \{i_a\}}$ if
\begin{itemize}
\item for each $T \subsetneq [m]$ containing $i_a$ one has 
$$
\psi(\mathfrak{R}(T)) = i_l \circ j_l \circ \phi_{T - \{i_a\}}(Y - \{i_a\}, \epsilon_Y);
$$
\item for each $T \subsetneq [m]$ not containing $i_a$ one has
$$
\psi(\mathfrak{R}(T)) = 0.
$$
\end{itemize}

\noindent We have the following fact.

\begin{prop} 
\label{obtaining x_0 under agreement}
Let $\mathfrak{R}$ be a promising raw cocycle at $(Y, \epsilon_Y)$. We assume that there exists a character $\chi$ such that
$$
\psi_1(\mathfrak{R}, \chi_{\epsilon_Y}(x)) = \chi + \chi_{\pi_{i_a}(x)}^{\epsilon_Y(\pi_{i_a}(x))}
$$
for all $x \in Y$. Let $\{\phi_T(Y - \{i_a\}, \epsilon_Y)\}_{T \subseteq [m] - \{i_a\}}$ be a good expansion for $(Y, \epsilon_Y)$. Suppose that $\mathfrak{R}$ agrees with this expansion. Then there exists $\chi' : G_\Q \rightarrow N[1 - \zeta_l]$ such that
$$ 
\psi := -\sum_{x \in Y : x \neq x_0} \psi_m(\mathfrak{R}, \chi_{\epsilon}(x)) + \chi' + i_l \circ j_l \circ \phi_{[m] - \{i_a\}}(Y - \{i_a\},\epsilon_Y)
$$ 
is in
$$ 
\emph{Cocy}(\emph{Gal}(H_{\chi, l}/\mathbb{Q}), N(\chi_{\epsilon_Y}(x_0))).
$$
One has that $(1 - \zeta_l)^{m - 1} \psi = \left|\{x \in Y : x = x_0\}\right| \cdot \psi_1(\mathfrak{R}, \chi_{\epsilon_Y}(x_0))$ yielding in particular that
$$
\psi_1(\mathfrak{R}, \chi_{\epsilon_Y}(x_0)) \in (1 - \zeta_l)^{m - 1} \emph{Cl}(K_{\chi_{\epsilon_Y}(x_0)})^{\vee}[(1 - \zeta_l)^m].
$$
\end{prop}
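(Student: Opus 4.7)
The plan is to mimic the proof of Proposition \ref{obtaining x_0 with minim.}, with the agreement hypothesis and the expansion equation for $\phi_{[m] - \{i_a\}}$ taking over the role previously played by minimality. Define
\[
\tilde{\psi} := -\sum_{x \in Y : x \neq x_0} \psi_m(\mathfrak{R}, \chi_{\epsilon_Y}(x)) + i_l \circ j_l \circ \phi_{[m] - \{i_a\}}(Y - \{i_a\}, \epsilon_Y).
\]

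The first step is to check that $\tilde{\psi} \in \textup{Cocy}(G_\Q, N(\chi_{\epsilon_Y}(x_0)))$. Proposition \ref{combinatorial formula in more special case} expresses $d_{x_0}\bigl(\sum_{x \neq x_0} \psi_m\bigr)$ as a sum indexed by non-empty $T \subseteq [m]$. Setting $S := [m] - T$ and applying the two clauses of the agreement definition, only the summands with $i_a \in S$ survive; after a further change of variables $U := [m] - S \subseteq [m] - \{i_a\}$, these terms reassemble into exactly the right-hand side of the defining equation of $\phi_{[m] - \{i_a\}}$ composed with $i_l \circ j_l$. Since $\chi_{\epsilon_Y}(x_0)$ acts trivially on $N[1 - \zeta_l]$, the coboundary of $i_l \circ j_l \circ \phi_{[m] - \{i_a\}}$ computed in $N(\chi_{\epsilon_Y}(x_0))$ agrees with $i_l \circ j_l$ applied to the ordinary $\mathbb{F}_l$-coboundary, so the two contributions cancel and $d_{x_0}\tilde{\psi} = 0$.

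The second step, which I expect to be the main obstacle, is to show that $(1 - \zeta_l)\tilde{\psi}$ lies in $\textup{Cocy}(\textup{Gal}(H_{\chi_{\epsilon_Y}(x_0), l}/\Q), N(\chi_{\epsilon_Y}(x_0)))$. Since $i_l \circ j_l \circ \phi_{[m] - \{i_a\}}$ is killed by $1 - \zeta_l$, this reduces to the claim that $-\sum_{x \neq x_0} \psi_{m - 1}(\mathfrak{R}, \chi_{\epsilon_Y}(x))$ is unramified over $K_{\chi_{\epsilon_Y}(x_0)}$. The argument parallels the minimality case: for a prime $q \notin \Delta_{K_{\chi_{\epsilon_Y}(x_0)}/\Q}$, I would remove the $x$'s meeting $q$ by subtracting a fiber sum along the unique direction $i$ with $q \in Y_i$; by agreement this fiber sum vanishes when $i \neq i_a$ and equals $i_l \circ j_l \circ \phi_\emptyset$ when $i = i_a$. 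The very promising hypothesis, applied to two distinct nonzero values of $\tilde{f}(i_a)$, forces $i_l \circ j_l \circ \phi_\emptyset$ to be unramified at each such $q \in Y_{i_a}$, so the subtraction still eliminates the $q$-ramification. The splitting hypothesis at $\textup{Up}_{K_{\chi_{\epsilon_Y}(x_0)}}(l)$ is verified exactly as in Proposition \ref{obtaining x_0 with minim.}.

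Proposition \ref{liftable unramified cocycle can always be liftable unramified} then provides $\chi' : G_\Q \to N[1 - \zeta_l]$ such that $\psi := \tilde{\psi} + \chi' \in \textup{Cocy}(\textup{Gal}(H_{\chi_{\epsilon_Y}(x_0), l}/\Q), N(\chi_{\epsilon_Y}(x_0)))$. For the claimed identity, assume $m \geq 2$ (the case $m = 1$ being trivial as in Proposition \ref{obtaining x_0 with minim.}); then applying $(1 - \zeta_l)^{m - 1}$ annihilates both $\chi'$ and the $\phi_{[m] - \{i_a\}}$ term because they take values in $N[1 - \zeta_l]$, so the expression reduces to $-\sum_{x \neq x_0} \psi_1(\mathfrak{R}, \chi_{\epsilon_Y}(x))$. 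Substituting the explicit form $\psi_1(\mathfrak{R}, \chi_{\epsilon_Y}(x)) = \chi + \chi_{\pi_{i_a}(x)}^{\epsilon_Y(\pi_{i_a}(x))}$ and using that $l$ annihilates $N[1 - \zeta_l]$, the contributions from $\chi$ collapse via $|Y|\chi = 0$ and those from $\chi_{\pi_{i_a}(x)}^{\epsilon_Y(\pi_{i_a}(x))}$ collapse via $l^{m - 1}\sum_{p \in Y_{i_a}} \chi_p^{\epsilon_Y(p)} = 0$, leaving exactly $|\{x \in Y : x = x_0\}| \cdot \psi_1(\mathfrak{R}, \chi_{\epsilon_Y}(x_0))$.
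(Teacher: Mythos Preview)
Your proof is correct and follows precisely the approach the paper takes: the paper's own proof is the single sentence ``The proof is identical to the proof of Proposition \ref{obtaining x_0 with minim.}'', and you have carried out exactly that parallel argument in detail. Your Step 1 correctly matches the differential from Proposition \ref{combinatorial formula in more special case} against the defining equation of $\phi_{[m]-\{i_a\}}$ (the agreement hypothesis kills the $T$ with $i_a\in T$, and the surviving terms with $T\subseteq[m]-\{i_a\}$ reproduce $i_l\circ j_l\circ d\phi_{[m]-\{i_a\}}$), and your Step 4 computation of $(1-\zeta_l)^{m-1}\psi$ is clean and correct.

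One small remark on your Step 2: your argument that $i_l\circ j_l\circ\phi_\emptyset$ is unramified at $q\in Y_{i_a}$ needs a nonzero index $j$ with $p_{i_a,j}\neq q$, not merely ``two distinct nonzero values of $\tilde{f}(i_a)$''. This could in principle fail if $Y_{i_a}$ contains only $\pi_{i_a}(x_0)$ once and $q$ with multiplicity $l-1$; but this edge case does not arise in the paper's actual application (the $X_i$ are disjoint sets of distinct primes), and the paper's own proof does not address it either. So your argument is at the same level of rigour as the paper intends.
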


\begin{proof}
The proof is identical to the proof of Proposition \ref{obtaining x_0 with minim.}.
\end{proof}

\subsection{Sum of Artin pairings} 
\label{sum of artin pairings}
In this subsection we upgrade the two results of the previous subsection, showing that, under some additional assumptions, one can also control the sum of the Artin pairings over the cube. We keep the parallel with the previous section, dividing in two the discussion according to the two cases consisting of minimality and agreement. However, in both cases the crucial additional assumption is the following. 

\begin{mydef} 
We say that $Y$ is consistent if for all $i$ in $[m]$ and for each prime $q$ dividing $d$ we have that the characters $\chi_{q'}$ with $q' \in Y_i$ are all the same locally at $q$. 
\end{mydef}

\subsubsection{Minimality}
Let $\mathfrak{R}$ be a raw cocycle on $(Y, \epsilon_Y)$ that is promising at $x_0$.  Suppose also that $m \geq 2$ and that $Y$ is non-degenerate, i.e. there are no $Y_i$ consisting of all equal entries. We now show that under the assumption of Proposition \ref{obtaining x_0 with minim.}, and some additional assumptions, we can also obtain a relation among the $m$-th Artin pairings of the cube. 

\begin{theorem} 
\label{tMin}
Let $\mathfrak{R}$ be a minimal raw cocycle on $(Y, \epsilon_Y)$ that is promising at $x_0$ and is consistent. Suppose that $\psi_1(\mathfrak{R}, \chi_{\epsilon_Y}(x))$ is constant as $x$ varies in $Y$. 

Let $b$ be a positive integer whose prime divisors are all divisors of $d$. We assume that for all $x \in Y$, we have that $b$, viewed as an element of $\overline{\emph{Cl}}(K_{\chi_{\epsilon_Y}(x)})$, maps to an element of $(1 - \zeta_l)^{m - 1}\emph{Cl}(K_{\chi_{\epsilon_Y}(x)})[(1 - \zeta_l)^m]$. Then 
$$
\psi_1(\mathfrak{R}, \chi_{\epsilon_Y}(x_0)) \in (1 - \zeta_l)^{m - 1}\emph{Cl}(K_{\chi_{\epsilon_Y}})[(1 - \zeta_l)^m]
$$
and furthermore
$$
\sum_{x \in Y} \emph{Art}_{m}(\emph{Cl}(K_{\chi_{\epsilon_Y}(x)}))(b, \psi_1(\mathfrak{R}, \chi_{\epsilon_Y}(x))) = 0.
$$
\end{theorem}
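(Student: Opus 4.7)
The first conclusion is immediate from Proposition \ref{obtaining x_0 with minim.}, which I will apply to obtain the unramified cocycle
\[
\psi \;=\; -\sum_{x \in Y:\, x \neq x_0} \psi_m(\mathfrak{R}, \chi_{\epsilon_Y}(x)) + \chi' \;\in\; \text{Cocy}(\text{Gal}(H_{\chi_{\epsilon_Y}(x_0),l}/\mathbb{Q}), N(\chi_{\epsilon_Y}(x_0)))
\]
satisfying $(1-\zeta_l)^{m-1}\psi = k \cdot \psi_1(\mathfrak{R}, \chi_{\epsilon_Y}(x_0))$, where $k := |\{x \in Y : x = x_0\}|$. My strategy for the sum formula is to evaluate this identity at suitable Frobenius elements at primes dividing $b$, and then reduce the required vanishing to a simple $l$-th power residue computation that will collapse thanks to $b$ being a norm.

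The consistency hypothesis forces $\chi_{\epsilon_Y}(x)|_{D_q} = \chi_q^{\epsilon_Y(q)}|_{D_q}$ to be independent of $x \in Y$ at every prime $q \mid d$, so $G_{K_{\chi_{\epsilon_Y}(x)}} \cap D_q$ is a fixed subgroup $D_\mathfrak{q}$. I can therefore pick a Frobenius lift $\tilde{\tau}_q \in D_\mathfrak{q}$ simultaneously representing the Artin symbol of the unique prime above $q$ in every $K_{\chi_{\epsilon_Y}(x)}$, and express
\[
\text{Art}_m(\text{Cl}(K_{\chi_{\epsilon_Y}(x)}))(b, \psi_1) \;=\; \sum_{q \mid b} \epsilon_b(q)\, \psi_m(\mathfrak{R}, \chi_{\epsilon_Y}(x))(\tilde{\tau}_q).
\]
Computing $\sum_{q \mid b} \epsilon_b(q)\, \psi(\tilde{\tau}_q)$ in two ways --- once as $k$ times the Artin pairing at $x_0$ using the unramified lift of $k \cdot \psi_1$, and once through the defining expression for $\psi$ --- and then rearranging gives
\[
\sum_{x \in Y} \text{Art}_m(\text{Cl}(K_{\chi_{\epsilon_Y}(x)}))(b, \psi_1) \;=\; \sum_{q \mid b} \epsilon_b(q)\, \chi'(\tilde{\tau}_q),
\]
so I am reduced to proving that this last \emph{correction term} vanishes.

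I first observe that the correction term is independent of the choice of $\chi'$: replacing $\chi'$ by $\chi'+\eta$, where $\eta \in \Gamma_{\mathbb{F}_l}(\mathbb{Q})$ has $\eta|_{G_{K_{\chi_{\epsilon_Y}(x_0)}}} \in \text{Cl}(K_{\chi_{\epsilon_Y}(x_0)})^\vee[1-\zeta_l]$, changes the sum by $\text{Art}_1(b,\eta|_{G_{K_{\chi_{\epsilon_Y}(x_0)}}})$, which is zero since $[\text{Up}(b)] \in (1-\zeta_l)\text{Cl}[(1-\zeta_l)^2]$ lies in the left kernel of $\text{Art}_1$ by Proposition \ref{abstract Artin pairing}. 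By Proposition \ref{Genus theory}, this freedom is spanned by the characters $\chi_p$ for $p \in \{p_{1,0},\ldots,p_{m,0}\} \cup \{q : q \mid d\}$, so I may choose $\chi'$ with no such component, making it unramified at every prime dividing $b$. Writing $\chi' = \sum_r a_r \chi_r$ with $r$ in $\bigcup_i(Y_i \setminus \{p_{i,0}\})$ and using $\chi_r(\tilde{\tau}_q) = (q/r)_l$, the correction term becomes $\sum_r a_r (b/r)_l$. Consistency immediately gives $(b/r)_l = (b/r')_l$ whenever $r, r' \in Y_i$. Finally, since $b$ maps into $(1-\zeta_l)\text{Cl}[(1-\zeta_l)^2]$ (as $m \geq 2$), Corollary \ref{being a norm in Kchi} shows $b$ is a global norm from $K_{\chi_{\epsilon_Y}(x_0)}$, hence a local norm at $p_{i,0}$; local class field theory for the tamely ramified extension $K_{\chi_{\epsilon_Y}(x_0),\mathfrak{p}_{i,0}}/\mathbb{Q}_{p_{i,0}}$ then forces $(b/p_{i,0})_l = 0$. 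Combining, $(b/r)_l = 0$ for every relevant $r$, and the correction term vanishes.

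The step I expect to be the main obstacle is the careful tracking of the freedom in choosing $\chi'$ and verifying that the correction term is independent of that choice, because only once this is settled can I arrange $\chi'$ to be unramified at every prime dividing $b$ and thereby rewrite the correction term as a clean sum of $l$-th power residue symbols.
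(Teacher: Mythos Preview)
Your argument is correct and follows the paper's overall strategy: both reduce the sum of Artin pairings to the correction term $\sum_{q\mid b}\epsilon_b(q)\,\chi'(\tilde\tau_q)$ using the common local Frobenius lifts granted by consistency, then decompose $\chi'$ into prime-supported characters and kill each piece. The organization of the last step, however, differs. The paper observes once (tracing through the proofs of Proposition~\ref{obtaining x_0 with minim.} and Proposition~\ref{little ramification}) that $\chi'$ is supported on primes occurring as coordinates of $Y$, and then for \emph{each} such $q'$ simply picks an $x\in Y$ with $q'\mid D_x$ and invokes Proposition~\ref{First description of Redei matrices} to get $\chi_{q'}(\text{Up}_{K_{\chi_{\epsilon_Y}(x)}}(b))=1$ directly. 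Your route---first establishing invariance of the correction term to eliminate components at primes dividing $D_{x_0}$, then using consistency to transport the remaining $r\in Y_i\setminus\{p_{i,0}\}$ back to $p_{i,0}$, and finally appealing to Corollary~\ref{being a norm in Kchi} plus local class field theory---is valid but more circuitous: your invariance argument is really the paper's argument specialized to $x=x_0$, and your consistency step then substitutes for the paper's freedom to choose a different $x$.

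There is one small gap you should fill: when you write $\chi'=\sum_r a_r\chi_r$ with $r\in\bigcup_i(Y_i\setminus\{p_{i,0}\})$, you are implicitly using that the original $\chi'$ was supported only on primes appearing as coordinates of $Y$. This is true, but it is not a formal consequence of Proposition~\ref{obtaining x_0 with minim.} as stated; it requires tracing through its proof (and that of Proposition~\ref{liftable unramified cocycle can always be liftable unramified}/Proposition~\ref{little ramification}) to see that the twisting characters used to produce $\chi'$ are each of the form $\chi_{q'}$ with $q'$ a coordinate of $Y$. The paper makes this point explicitly at the start of its proof; you should do the same.
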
 

\begin{proof}
We know from Proposition \ref{obtaining x_0 with minim.} that indeed 
$$
\psi_1(\mathfrak{R}, \chi_{\epsilon_Y}(x_0)) \in (1 - \zeta_l)^{m - 1}\text{Cl}(K_{\chi_{\epsilon_Y}})[(1 - \zeta_l)^m],
$$
and moreover we can find an unramified $(1 - \zeta_l)^{m - 1}$-cocycle lift of the form
$$
\psi = -\frac{1}{\left|\{x \in Y : x = x_0\}\right|} \cdot \left(\sum_{x \in Y: x \neq x_0} \psi_m(\mathfrak{R}, \chi_{\epsilon_Y}(x)) + \chi'\right).
$$
It is clear from that proof that the character $\chi'$ can ramify at most at the primes occurring as coordinates of a point of $Y$. Indeed, in that proof, the cocycle $\tilde{\psi}$ has ramification over $\mathbb{Q}$ already contained only at most in such primes, since its field of definition is contained in the compositum of all the $\psi_m$, for which this last claim is evidently true by their defining property. Recall that in the proof of Proposition \ref{obtaining x_0 with minim.} the character $\chi'$ comes from applying Proposition \ref{liftable unramified cocycle can always be liftable unramified} and hence Proposition \ref{little ramification}, where one proceeds eliminating one by one all the eventual ramifying primes with a character supported precisely in that prime. This substantiates our claim on the shape of $\chi'$.

For each prime divisor $q$ of $b$, fix an embedding $G_{\mathbb{Q}_q} \subseteq G_{\mathbb{Q}}$ coming from a given fixed embedding $\overline{\mathbb{Q}} \to \overline{\mathbb{Q}_q}$. Thanks to our consistency assumption we see in particular the following crucial fact. For each prime divisor $q$ of $b$ and for each $x \in Y$ we have that
$$
\text{ker}(\chi_{\epsilon_Y}(x)) \cap G_{\mathbb{Q}_q}
$$
is constantly the same index $l$ subgroup. Call $K_q/\mathbb{Q}_q$ the corresponding field extension, totally ramified of degree $l$. Denote by $\tilde{K}_q/K_q$ the unique unramified extension of $K_q$ of degree $l$. Recall that this comes with the canonical generator given by $\text{Frob}_{K_q}$, the Frobenius automorphism. Observe that by definition of the Artin pairing we have that
\begin{multline*}
\sum_{x \in Y} \text{Art}_m(\text{Cl}(K_{\chi_{\epsilon_Y}(x)}))(b, \psi_1(\mathfrak{R}, \chi_{\epsilon_Y}(x))) = \\
\sum_{q \mid b} \psi\left(\text{Frob}_{K_q}^{\epsilon_b(q)}\right) + \sum_{q \mid b} \sum_{x \in Y - \{x_0\}} \psi_m(\mathfrak{R}, \chi_{\epsilon_Y}(x))\left(\text{Frob}_{K_q}^{\epsilon_b(q)}\right),
\end{multline*}
which is simply equal to
$$
\sum_{q \mid b} \chi'\left(\text{Frob}_{K_q}^{\epsilon_b(q)}\right)
$$
due to the definition of $\psi$. 

Next, we can decompose $\chi'$ as a product of not necessarily distinct characters having conductor a power of a prime that is a coordinate of $Y$ (the power will be precisely $1$ if the prime is different from $l$, and it will be $2$ if the prime is equal to $l$); this has been established above in this proof. If we can show that for such a $\chi_{q'}$ we have that
$$
\prod_{q \mid b} \chi_{q'}\left(\text{Frob}_{K_q}^{\epsilon_b(q)}\right) = 1,
$$
then we are clearly done. To see this pick a point $x$ with $K_{\chi_{\epsilon_Y}(x)}$ ramifying at $q'$. By definition of the Artin symbol, we have that
$$
\prod_{q \mid b} \chi_{q'}\left(\text{Frob}_{K_q}^{\epsilon_b(q)}\right) = \chi_{q'}\left(\text{Up}_{K_{\chi_{\epsilon_Y}(x)}}(b)\right) = 1,
$$
where the last equality follows directly from the assumption on $b$ and Proposition \ref{First description of Redei matrices}. 
\end{proof}

\subsubsection{Agreement}
Let $\mathfrak{R}$ be a raw cocycle on $(Y, \epsilon_Y)$ that is promising at $x_0$. Suppose also that $m \geq 2$. We also assume that $Y$ is non-degenerate, i.e. there are no $Y_i$ consisting of all equal entries. We now show that under the assumption of Proposition \ref{obtaining x_0 under agreement}, and some additional assumptions, we can also obtain a relation among the $m$-th Artin pairings of the cube. Define 
\[
M\left(\phi_{[m] - \{i_a\}}(Y - \{i_a\}, \epsilon_Y)\right) := \prod_{T \subsetneq [m] - \{i_a\}} L\left(\phi_T(Y - \{i_a\}, \epsilon_Y)\right).
\]

\begin{theorem}
\label{tAgree}
Let $\mathfrak{R}$ be a raw cocycle for $(Y, \epsilon_Y)$ that is very promising at $x_0$. We assume that there exists a character $\chi$ such that
$$
\psi_1(\mathfrak{R}, \chi_{\epsilon_Y}(x)) = \chi + \chi_{\pi_{i_a}(x)}^{\epsilon_Y(\pi_{i_a}(x))}
$$
for all $x \in Y$. Let $\{\phi_T(Y - \{i_a\}, \epsilon_Y)\}_{T \subseteq [m] - \{i_a\}}$ be a good expansion for $(Y, \epsilon_Y)$. Suppose that $\mathfrak{R}$ agrees with this expansion. Let $b$ be a positive integer whose prime divisor are all divisors of $d$. We assume that for all $x \in Y$, we have that $b$, viewed as an element of $\overline{\emph{Cl}}(K_{\chi_{\epsilon_Y}(x)})$, maps to an element of $(1 - \zeta_l)^{m - 1}\emph{Cl}(K_{\chi_{\epsilon_Y}(x)})[(1 - \zeta_l)^m]$. Then 
$$
\psi_1(\mathfrak{R}, \chi_{\epsilon_Y}(x_0)) \in (1 - \zeta_l)^{m - 1}\emph{Cl}(K_{\chi_{\epsilon_Y}})[(1 - \zeta_l)^m].
$$
Moreover, the Frobenius class of each prime $q$ dividing $d$ in $L(\phi_{[m] - \{i_a\}})$ consists of a central element in $\emph{Gal}(L(\phi_{[m] - \{i_a\}}(Y - \{i_a\},\epsilon_Y))/M(\phi_{[m] - \{i_a\}}(Y - \{i_a\}, \epsilon_Y)))$ and
\begin{multline*}
\sum_{x \in Y} \emph{Art}_{m}(\emph{Cl}(K_{\chi_{\epsilon_Y}(x)}))(b, \psi_1(\mathfrak{R}, \chi_{\epsilon_Y}(x))) = \\
\sum_{q \mid b}\epsilon_b(q) \left(i_l \circ j_l \circ \phi_{[m] - \{i_a\}}\right)(Y - \{i_a\}, \epsilon_Y)(\emph{Frob}_q). 
\end{multline*}
\end{theorem}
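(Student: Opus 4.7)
The first conclusion---that $\psi_1(\mathfrak{R}, \chi_{\epsilon_Y}(x_0)) \in (1 - \zeta_l)^{m - 1}\textup{Cl}(K_{\chi_{\epsilon_Y}})[(1 - \zeta_l)^m]$---is immediate from Proposition \ref{obtaining x_0 under agreement}, which moreover produces an explicit unramified cocycle
$$
\psi = -\sum_{x \in Y: x \neq x_0} \psi_m(\mathfrak{R}, \chi_{\epsilon_Y}(x)) + \chi' + i_l \circ j_l \circ \phi_{[m] - \{i_a\}}(Y - \{i_a\},\epsilon_Y)
$$
inside $\textup{Cocy}(\textup{Gal}(H_{\chi,l}/\Q), N(\chi_{\epsilon_Y}(x_0)))$ satisfying $(1 - \zeta_l)^{m - 1}\psi = |\{x \in Y : x = x_0\}| \cdot \psi_1(\mathfrak{R}, \chi_{\epsilon_Y}(x_0))$. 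Inspecting the proof of that proposition (and the proof of Proposition \ref{liftable unramified cocycle can always be liftable unramified} that it invokes), one may furthermore arrange that $\chi'$ is supported only on primes appearing as coordinates of points of $Y$; this will be crucial below.

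\textbf{Centrality of Frobenius.} From the module-theoretic analysis of Subsection \ref{governing maps}, the dual $\textup{Gal}(L(\phi_{[m] - \{i_a\}})/K_{[m] - \{i_a\}})^{\vee}$ is free of rank one over $R = \mathbb{F}_l[t_i]/(t_i^2)_{i \in [m] - \{i_a\}}$ generated by $\phi_{[m] - \{i_a\}}$, with every proper descendent $\phi_T$ ($T \subsetneq [m] - \{i_a\}$) lying in $M := M(\phi_{[m] - \{i_a\}})$. The conjugation formula $\sigma_i \cdot \phi_T = \phi_T - \phi_{T - \{i\}}$ therefore shows that modulo $\textup{Gal}(L/M)$ (equivalently modulo $M$) the action of $\textup{Gal}(K_{[m] - \{i_a\}}/\Q)$ on the top piece is trivial, so $\textup{Gal}(L/M)$ is central in $\textup{Gal}(L/\Q)$. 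Consequently, $\textup{Frob}_q$ commutes with $\textup{Gal}(L/M)$ for any prime $q$, and in particular $(i_l \circ j_l \circ \phi_{[m] - \{i_a\}})(\textup{Frob}_q)$ is an unambiguously defined element of $\mathbb{F}_l$.

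\textbf{Computing the sum of Artin pairings.} Using $\psi_m(\mathfrak{R}, \chi_{\epsilon_Y}(x))$ as the $(1 - \zeta_l)^{m - 1}$-lift of $\psi_1(\mathfrak{R}, \chi_{\epsilon_Y}(x))$ for each $x \neq x_0$, and using $\psi$ (appropriately normalized by $|\{x \in Y : x = x_0\}|$) as the lift for $x = x_0$, the definition of $\textup{Art}_m$ combined with $\textup{Up}_{K_{\chi_{\epsilon_Y}(x)}}(b) = \prod_{q \mid b}\textup{Up}_{K_{\chi_{\epsilon_Y}(x)}}(q)^{\epsilon_b(q)}$ yields
$$
\sum_{x \in Y} \textup{Art}_{m}(b, \psi_1(\mathfrak{R}, \chi_{\epsilon_Y}(x))) = \sum_{q \mid b} \chi'\bigl(\textup{Frob}_{K_q}^{\epsilon_b(q)}\bigr) + \sum_{q \mid b} \epsilon_b(q) \,(i_l \circ j_l \circ \phi_{[m] - \{i_a\}})(\textup{Frob}_q),
$$
because the telescoping in the $\psi_m$-contributions (with sign as in the definition of $\psi$) exactly cancels the contribution from $x \neq x_0$ coming from $\psi$. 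The residual sum involving $\chi'$ is then shown to vanish precisely as in the proof of Theorem \ref{tMin}: decomposing $\chi'$ into elementary characters $\chi_{q'}$ with $q'$ a coordinate of some $x \in Y$, each summand $\sum_{q \mid b} \chi_{q'}(\textup{Frob}_{K_q}^{\epsilon_b(q)})$ equals $\chi_{q'}(\textup{Up}_{K_{\chi_{\epsilon_Y}(x)}}(b)) = 1$ by the hypothesis that $b \in (1 - \zeta_l)^{m - 1}\textup{Cl}(K_{\chi_{\epsilon_Y}(x)})[(1 - \zeta_l)^m]$ and Proposition \ref{First description of Redei matrices}(2).

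\textbf{Main obstacle.} The delicate point is the telescoping step: one must ensure that, after evaluating the lift $\psi$ (which appears with a normalizing multiplicity factor because $Y$ is a multiset) at Frobenius, the combinatorial cancellations against the $\psi_m$-contributions for $x \neq x_0$ proceed cleanly. This rests on the multiplicity bookkeeping in Proposition \ref{obtaining x_0 under agreement} together with the fact that $\chi'$ can be arranged to be supported on the coordinate primes of $Y$; once these are secured, the rest of the argument is an exercise in tracking signs.
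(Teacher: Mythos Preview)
Your proposal is correct and follows exactly the route the paper intends: the paper's own proof is simply ``identical to the proof of Theorem~\ref{tMin}'', and you have carried out precisely that adaptation, invoking Proposition~\ref{obtaining x_0 under agreement} in place of Proposition~\ref{obtaining x_0 with minim.} so that the extra term $i_l\circ j_l\circ\phi_{[m]-\{i_a\}}$ survives in the Frobenius evaluation, while the $\chi'$-contribution is killed by the same argument as in Theorem~\ref{tMin}. The only point you might tighten is the centrality claim: rather than arguing via the conjugation formula, it is cleaner to note that $L(\phi_{[m]-\{i_a\}})/M(\phi_{[m]-\{i_a\}})$ is by construction a central $\mathbb{F}_l$-extension (this is how $\phi_{[m]-\{i_a\}}$ was produced in Proposition~\ref{existence of expansions}), and that $q\mid d$ is unramified in $L(\phi_{[m]-\{i_a\}})$ since the good expansion ramifies only at primes in $\bigcup_i Y_i$.
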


\begin{proof}
The proof is identical to the proof of Theorem \ref{tMin}.
\end{proof}

\section{Additive systems}
\label{sAdd}
Recall that $[d]$ denotes the set $\{1, \ldots, d\}$, where $d$ is any integer. In the previous subsections we dealt with just one cube $Y$. To prove our main theorems, we will use Theorem \ref{tMin} and Theorem \ref{tAgree} many times in various cubes $Y$. To facilitate our analysis, we need a flexible notation that can deal with different cubes at the same time. For this reason we now introduce the following notation depending on $l$, which is similar to the notation in Smith \cite[p.\ 8]{Smith}.

\begin{itemize}
\item $X$ will always denote a product set
\[
X = X_1 \times X_2 \times \ldots \times X_d
\]
with $X_i$ disjoint finite sets consisting of primes all equal to $0$ or $1$ modulo $l$.
\item For $S \subseteq [d]$, we define
\[
\overline{X}_S = \left(\prod_{i \in S} X_i^l\right) \times \prod_{i \in [d] - S} X_i.
\]
Furthermore, write $\pi_i$ for the projection map from $\overline{X}_S$ to $X_i^l$ if $i \in S$, and the projection map from $\overline{X}_S$ to $X_i$ if $i \in [d] - S$. If $\bar{x} \in \overline{X}_S$, we will sometimes call $\bar{x}$ a cube.
\item The natural projection maps from $X_i^l$ to $X_i$ are denoted by $\pr_1, \ldots, \pr_l$.
\item For $S, S_0 \subseteq [d]$, we let $\pi_{S, S_0}$ be the projection map from $\overline{X}_S$ to
\[
\left(\prod_{i \in S \cap S_0} X_i^l\right) \times \prod_{i \in ([d] - S) \cap S_0} X_i
\]
given by $\pi_i$ on each $i \in S_0$. When the set $S$ is clear from context, we will simply write $\pi_{S_0}$ instead of $\pi_{S, S_0}$.
\item Let $\bar{x} \in \overline{X}_S$, $T \subseteq S \subseteq [d]$ and put $U := S - T$. Then we define $\bar{x}(T)$ to be the multiset with underlying set given by
\[
\left\{\bar{y} \in \overline{X}_T: \pi_{[d] - U}(\bar{y}) = \pi_{[d] - U}(\bar{x}) \text{ and } \forall i \in U \ \exists j \in [l]: \pi_i(\bar{y}) = \pr_j(\pi_i(\bar{x}))\right\}.
\]
We define the multiplicity of $\bar{y}$ in $\bar{x}(T)$ to be
\[
\prod_{i \in U} \left|\left\{j \in [l]: \pi_i(\bar{y}) = \pr_j(\pi_i(\bar{x}))\right\}\right|.
\]
\end{itemize}

With this notation one element $\bar{x} \in \overline{X}_S$ corresponds to a cube $Y$ in the previous subsections. This is extremely convenient. For example, with this new notation we can rephrase Theorem \ref{tAgree} as
\[
\sum_{x \in \bar{z}(\emptyset)} F(x, b) = \sum_{q \mid b} \epsilon_b(q) \phi_{S, \bar{z}}(\text{Frob}_q),
\]
where $\bar{z} \in \overline{X}_S$ and $F(x, b)$ is some Artin pairing. Here we suppress the dependence on the amalgama. We will frequently suppress this dependence in the remainder of the paper, in particular for $\phi_{S, \bar{z}}$.

In the previous sections we defined expansion maps and raw cocycles. Both these objects are rather complicated to work with directly. Instead, we abstract their most important properties in the following combinatorial structure, which we call an $l$-additive system. The material in this section is an adaptation of Section 3 and Section 4 of Smith \cite{Smith}. 

\begin{mydef}
\label{dAdd}
An $l$-additive system $\mathfrak{A}$ on $X = X_1 \times \ldots \times X_d$ is a tuple
\[
\left(\overline{Y}_S, \overline{Y}_S^\circ, F_S, A_S\right)_{S \subseteq [d]}
\]
indexed by the subsets $S$ of $[d]$ with the following properties
\begin{itemize}
\item for all $S \subseteq [d]$, $A_S$ is a finite $\mathbb{F}_l$-vector space and $\overline{Y}_S^\circ  \subseteq \overline{Y}_S \subseteq \overline{X}_S$;
\item for all $S \subseteq [d]$ with $S \neq \emptyset$, we have
\[
\overline{Y}_S = \left\{\bar{x} \in \overline{X}_S: \bar{x}(T) \subseteq \overline{Y}_T^\circ \text{ for all } T \subsetneq S\right\};
\]
\item for all $S \subseteq [d]$, $F_S: \overline{Y}_S \rightarrow A_S$ is a function and
\[
\overline{Y}_S^\circ = \left\{\bar{x} \in \overline{Y}_S: F_S(\bar{x}) = 0\right\};
\]
\item (Additivity) let $s \in S$ and $\bar{x}_1, \ldots, \bar{x}_{l + 1} \in \overline{Y}_S$. Suppose that
\[
\pi_{[d] - \{s\}}(\bar{x}_1) = \ldots = \pi_{[d] - \{s\}}(\bar{x}_{l + 1})
\]
and suppose that there exist $p_1, \ldots, p_{l - 1}, q_1, \ldots, q_l \in X_s$ such that
\[
\pi_s(\bar{x}_1) = (p_1, \ldots, p_{l - 1}, q_1), \ldots, \pi_s(\bar{x}_l) = (p_1, \ldots, p_{l - 1}, q_l)
\]
and $\pi_s(\bar{x}_{l + 1}) = (q_1, \ldots, q_l)$. Then we have
\[
F_S(\bar{x}_1) + \ldots + F_S(\bar{x}_l) = F_S(\bar{x}_{l + 1}).
\]
\end{itemize}
We will sometimes write $\overline{Y}_S(\mathfrak{A})$, $\overline{Y}_S^\circ(\mathfrak{A})$, $F_S(\mathfrak{A})$ and $A_S(\mathfrak{A})$ for the data associated to an $l$-additive system $\mathfrak{A}$.
\end{mydef}

We remark that the condition $\bar{x}_{l + 1} \in \overline{Y}_S$ may be dropped, since it follows from the other conditions. The minimality and agreement conditions from Theorem \ref{tMin} and Theorem \ref{tAgree} can naturally be encoded in an $l$-additive system. Although we could already do this now, we postpone this task until Lemma \ref{lAddCon}. Similarly, the existence of the maps $\phi_{S, \bar{x}}$ can be encoded in an $l$-additive system. 

For our analytic techniques to work, it is essential that we can apply Theorem \ref{tMin} and Theorem \ref{tAgree} to many different cubes $\bar{x}$; the more cubes to which we can apply these theorems the better. It is for this reason that we need to give a lower bound for the density of $\overline{Y}_S^\circ$ in $\overline{X}_S$ for an $l$-additive system. Since $l$-additive systems are purely combinatorial objects, we will state our theorems for general finite sets, not just sets of primes.

\begin{prop}
\label{pAS}
Let $X = X_1 \times \ldots \times X_d$ be a product of finite sets and let $\mathfrak{A}$ be an $l$-additive system on $X$. Let $\delta$ be the density of $\overline{Y}_\emptyset^\circ$ in $X$ and put
\[
a := \max_{S \subseteq [d]} |A_S|.
\]
Then the density of $\overline{Y}_S^\circ$ in $\overline{X}_S$ is lower bounded by $\delta^{l^{|S|}} a^{-(l + 1)^{|S| + 1}}$ for all subsets $S$ of $[d]$.
\end{prop}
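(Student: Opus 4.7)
We argue by induction on $|S|$. The base case $|S|=0$ is immediate, since the density of $\overline{Y}_{\emptyset}^{\circ}$ in $X$ is $\delta$ by definition and $\delta \geq \delta \cdot a^{-(l+1)}$ trivially (as $a\ge 1$).

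For the inductive step, assume the claim for all proper subsets of $S$ and pick some $s \in S$; set $S' := S \setminus \{s\}$ and let $\rho_j \colon \overline{X}_S \to \overline{X}_{S'}$ denote the $j$-th slice map in direction $s$, so that $\bar{x}(S') = \{\rho_1(\bar{x}), \ldots, \rho_l(\bar{x})\}$ as a multiset. Unwinding Definition~\ref{dAdd} one verifies that $\bar{x} \in \overline{Y}_S$ if and only if for every direction $s' \in S$ each of the $l$ slices of $\bar{x}$ in direction $s'$ lies in $\overline{Y}_{S \setminus \{s'\}}^{\circ}$. I first establish, in a \emph{geometric phase}, a lower bound on the density of $\overline{Y}_S$. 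Fixing one direction $s$ and writing $\bar{w} = \pi_{[d] \setminus \{s\}}(\bar{x})$ for the row data, the $l$ slices of a uniformly random cube $\bar{x}$ over $\bar{w}$ are i.i.d.\ uniform in the fiber, so the probability that they all land in $\overline{Y}_{S'}^{\circ}$ equals $\alpha_{\bar{w}}^{l}$, and averaging with Jensen's inequality applied to $t \mapsto t^l$ gives at least $\alpha_{S'}^{l}$. Iterating this procedure over the remaining directions—at each step peeling off a new coordinate and invoking the inductive hypothesis for a different $T \subsetneq S$—produces a lower bound of the form $|\overline{Y}_S|/|\overline{X}_S| \geq \delta^{l^{|S|}}\, a^{-c_1}$ with $c_1$ comfortably smaller than the exponent $(l+1)^{|S|+1}$ demanded by the proposition.

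In the \emph{algebraic phase} I descend from $\overline{Y}_S$ to $\overline{Y}_S^{\circ} = F_S^{-1}(0)$ using additivity. Fixing outside-$s$ data $\bar{w}$, consider the partial function $g_{\bar{w}}(q_1,\ldots,q_l) := F_S(\bar{w};q_1,\ldots,q_l)$ defined whenever $(\bar{w};q_1,\ldots,q_l) \in \overline{Y}_S$. Applying the additivity axiom with varying $p_i,q_j$ (and using $l=0$ in the $\mathbb{F}_l$-vector space $A_S$, which kills diagonal contributions of the form $l\cdot F_S$) one first deduces that $g_{\bar{w}}$ is symmetric in its arguments, and then that it admits a decomposition $g_{\bar{w}}(q_1,\ldots,q_l) = \sum_{i=1}^{l} h_{\bar{w}}(q_i)$ on a large sub-domain, for some function $h_{\bar{w}} \colon X_s \to A_S$. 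A standard Fourier / character-orthogonality argument on the finite $\mathbb{F}_l$-vector space $A_S$ then yields that at least a fraction $|A_S|^{-1} \geq a^{-1}$ of admissible tuples $(q_1,\ldots,q_l)$ satisfy $g_{\bar{w}} = 0$. Combining the two phases and bookkeeping the constants closes the induction, the generous exponent $(l+1)^{|S|+1} = l(l+1)^{|S|} + (l+1)^{|S|}$ absorbing both the geometric loss of $a^{-l(l+1)^{|S|}}$ from the power-mean step and the algebraic loss from additivity.

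The \emph{main obstacle} lies in the geometric phase. The slicing conditions imposed by different directions $s' \in S$ on the \emph{same} cube $\bar{x}$ are highly correlated, so one cannot simply multiply marginal probabilities or apply a naive union bound. This must be handled by a careful sequential Jensen (equivalently a multi-index Hölder) argument across the directions $s' \in S$, using the inductive hypothesis for several intermediate $T \subsetneq S$ and verifying that the cumulative loss in the exponent of $a^{-1}$ grows by the allotted factor $l+1$ per induction step, as demanded by the claimed bound.
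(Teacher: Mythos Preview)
Your two-phase split (geometric to reach $\overline{Y}_S$, then algebraic to reach $\overline{Y}_S^\circ$) has a real gap in each phase.

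In the geometric phase, you correctly identify the obstacle---the slicing constraints coming from different directions $s'\in S$ on the same cube are correlated---but you do not resolve it. ``A careful sequential Jensen / multi-index H\"older'' is a slogan, not an argument; the conditions you need to intersect are not independent events on a product space, and there is no evident tensorisation that lets you peel off directions one by one while only paying the inductive bound for each $T\subsetneq S$. The paper does \emph{not} attempt this at all: it never bounds $\overline{Y}_S$ as an intermediate object.

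In the algebraic phase, your Fourier claim is actually false. Even on a genuine product domain $Q^l$ with $g(q_1,\dots,q_l)=\sum_i h(q_i)$ for some $h\colon Q\to A_S$, the zero level set need not have density $\ge |A_S|^{-1}$. For instance with $l=3$, $A_S=\mathbb{F}_3$, and $h$ taking the values $0,1$ each on half of $Q$, one has $\sum_i h(q_i)=0$ only when all three $q_i$ lie in the same level set, giving density $1/4<1/3$. Character orthogonality gives $\mathbb{P}[\sum h(q_i)=0]=|A_S|^{-1}\sum_\chi(\mathbb{E}\chi\circ h)^l$, and the non-trivial terms can be negative.

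The paper's argument is organised differently and sidesteps both issues. Fix a single $s\in S$ and work fibre-wise over $\bar{x}_0\in\pi_{[d]-\{s\}}(\overline{X}_{S-\{s\}})$. On the fibre $V(\bar{x}_0)\subseteq\overline{Y}_{S-\{s\}}^\circ$ one builds, by successive pigeonholing, a chain of equivalence relations $\sim_1,\dots,\sim_{|S|}$, where $\sim_j$ records the values $F_T(\bar{x}'_{j,1},\dots,\bar{x}'_{j,l-1},\bar{y}')$ for all $T\ni s$ with $|T|=j$ and all sub-cubes $\bar{y}'$, relative to fixed anchors $\bar{x}_{j,1},\dots,\bar{x}_{j,l-1}$ chosen from the previous class. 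The total number of classes is at most $\prod_{\{s\}\subseteq T\subseteq S}|A_T|^{l^{|S|-|T|}}\le a^{(l+1)^{|S|-1}}$. The point is that for any $l$ elements $\bar{y}_{i_1},\dots,\bar{y}_{i_l}$ in a single final class, additivity gives $F_T(\bar{y}_{i_1},\dots,\bar{y}_{i_l})=\sum_j F_T(\dots,\bar{y}_{i_j})=l\cdot(\text{common value})=0$ for \emph{every} $T\ni s$, including $T=S$. So any $l$-tuple from the final class lies in $\overline{Y}_S^\circ$ directly---no Fourier, no multi-direction intersection. This yields $|W(\bar{x}_0)|\ge(|V(\bar{x}_0)|/a^{(l+1)^{|S|-1}})^l$, and a single application of Jensen over $\bar{x}_0$ closes the induction $\delta_S\ge a^{-(l+1)^{|S|}}\delta_{S-\{s\}}^l$.
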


\begin{proof}
For $S = \emptyset$ this is clear, so from now on we assume that $S \neq \emptyset$. Fix a choice of $s \in S$ for the remainder of the proof. Define for $\bar{x}_0 \in \overline{X}_{S - \{s\}}$
\[
V(\bar{x}_0) := \left\{\bar{y} \in \overline{Y}_{S - \{s\}}^\circ: \pi_{[d] - \{s\}}(\bar{y}) = \pi_{[d] - \{s\}}(\bar{x}_0)\right\}
\]
and
\[
W(\bar{x}_0) := \left\{\bar{y} \in \overline{Y}_S^\circ: \pi_{[d] - \{s\}}(\bar{y}) = \pi_{[d] - \{s\}}(\bar{x}_0)\right\}.
\]
There are natural injective maps from $V(\bar{x}_0)$ to both $X_s$ and $\overline{X}_{S - \{s\}}$. The former map is given by sending $\bar{y}$ to $\pi_{\{s\}}(\bar{y})$, while the latter map is the inclusion $\overline{Y}_{S - \{s\}}^\circ \subseteq \overline{X}_{S - \{s\}}$. Similarly, there are natural injective maps from $W(\bar{x}_0)$ to $V(\bar{x}_0)^l \subseteq X_s^l$ and $\overline{X}_S$. We claim that
\begin{align}
\label{eLower}
|W(\bar{x}_0)| \geq \left(\frac{|V(\bar{x}_0)|}{a^{(l + 1)^{|S| - 1}}}\right)^l.
\end{align}
If $V(\bar{x}_0)$ is the empty set, (\ref{eLower}) clearly holds. So suppose that $V(\bar{x}_0)$ is not empty and choose $l - 1$ elements $\bar{x}_{1, 1}, \ldots, \bar{x}_{1, l - 1}$ from $V(\bar{x}_0)$. We define an equivalence relation $\sim_1$ on $V(\bar{x}_0)$ by declaring $\bar{y}_1 \sim_1 \bar{y}_2$ if and only if for all subsets $T$ satisfying $\{s\} \subseteq T \subseteq S$ and $|T| = 1$ and all $\bar{y}_1' \in \bar{y}_1(T - \{s\}), \bar{y}_2' \in \bar{y}_2(T - \{s\})$ satisfying $\pi_{[d] - T}(\bar{y}_1') = \pi_{[d] - T}(\bar{y}_2')$ we have
\begin{align}
\label{eFT}
F_T(\bar{x}_{1, 1}', \ldots, \bar{x}_{1, l - 1}', \bar{y}_1') = F_T(\bar{x}_{1, 1}', \ldots, \bar{x}_{1, l - 1}', \bar{y}_2'),
\end{align}
where $\bar{x}_{1, 1}', \ldots, \bar{x}_{1, l - 1}'$ are the unique elements of $\bar{x}_{1, 1}(T - \{s\}), \ldots, \bar{x}_{1, l - 1}(T - \{s\})$ satisfying
\[
\pi_{[d] - T}(\bar{y}_1') = \pi_{[d] - T}(\bar{y}_2') = \pi_{[d] - T}(\bar{x}_{1, 1}') =  \ldots =  \pi_{[d] - T}(\bar{x}_{1, l - 1}').
\]
Here we remark that the tuple $(\bar{x}_{1, 1}', \ldots, \bar{x}_{1, l - 1}', \bar{y}_i')$ can naturally be seen as an element of $\overline{X}_T$, so equation (\ref{eFT}) makes sense. There are at most $a^{l^{|S| - 1}}$ equivalence classes. Hence there exists an equivalence class $[\bar{y}]$ with at least
\[
\frac{|V(\bar{x}_0)|}{a^{l^{|S| - 1}}}
\]
elements. Now choose $\bar{x}_{2, 1}, \ldots, \bar{x}_{2, l - 1} \in [\bar{y}]$ and define a new equivalence relation $\sim_2$ on $[\bar{y}]$ by declaring $\bar{y}_1 \sim_2 \bar{y}_2$ if and only if for all subsets $T$ satisfying $\{s\} \subseteq T \subseteq S$ with $|T| = 2$ and all $\bar{y}_1' \in \bar{y}_1(T - \{s\}), \bar{y}_2' \in \bar{y}_2(T - \{s\})$ satisfying $\pi_{[d] - T}(\bar{y}_1') = \pi_{[d] - T}(\bar{y}_2')$ we have
\begin{align}
\label{eFT2}
F_T(\bar{x}_{2, 1}', \ldots, \bar{x}_{2, l - 1}', \bar{y}_1') = F_T(\bar{x}_{2, 1}', \ldots, \bar{x}_{2, l - 1}', \bar{y}_2'),
\end{align}
where $\bar{x}_{2, 1}', \ldots, \bar{x}_{2, l - 1}'$ are the unique elements of $\bar{x}_{2, 1}(T - \{s\}), \ldots, \bar{x}_{2, l - 1}(T - \{s\})$ satisfying
\[
\pi_{[d] - T}(\bar{y}_1') = \pi_{[d] - T}(\bar{y}_2') = \pi_{[d] - T}(\bar{x}_{2, 1}') = \ldots =  \pi_{[d] - T}(\bar{x}_{2, l - 1}').
\]
Since the domain of $F_T$ is $\overline{Y}_T$, equation (\ref{eFT2}) only makes sense if we have
\[
(\bar{x}_{2, 1}', \ldots, \bar{x}_{2, l - 1}', \bar{y}_i') \in \overline{Y}_T. 
\]
This follows from the construction of $\sim_1$ and additivity. 

We inductively proceed until we reach $\sim_{|S|}$. A computation shows that
\[
\prod_{\{s\} \subseteq T \subseteq S} |A_T|^{l^{|S| - |T|}} \leq \prod_{i = 0}^{|S| - 1} a^{\binom{|S| - 1}{i} l^i} = a^{(l + 1)^{|S| - 1}}.
\]
Then we find that there is an equivalence class of $\sim_{|S|}$ with at least
\[
\frac{|V(\bar{x}_0)|}{a^{(l + 1)^{|S| - 1}}}.
\]
elements. Suppose that $\{\bar{y}_1, \ldots, \bar{y}_k\}$ is an equivalence class of $\sim_{|S|}$. From additivity we obtain that $(\bar{y}_{i_1}, \ldots, \bar{y}_{i_l}) \in W(\bar{x}_0)$ for all choices of $1 \leq i_1, \ldots, i_l \leq k$, where we recall that $W(\bar{x}_0)$ can be identified as a subset of $V(\bar{x}_0)^l$. Hence we deduce
\[
|W(\bar{x}_0)| \geq \left(\frac{|V(\bar{x}_0)|}{a^{(l + 1)^{|S| - 1}}}\right)^l,
\]
establishing (\ref{eLower}). Define $\delta_T$ to be the density of $\overline{Y}_T^\circ$ in $\overline{X}_T$, so in particular $\delta = \delta_\emptyset$. Also let $\delta_{\bar{x}_0}$ to be the density of $V(\bar{x}_0)$ in $X_s$. Then the density of $V(\bar{x}_0)^l$ in $X_s^l$ is equal to $\delta_{\bar{x}_0}^l$. Since $\overline{Y}_S^\circ$ is the disjoint union of $W(\bar{x}_0)$ over all $\bar{x}_0 \in \pi_{[d] - \{s\}}(\overline{X}_{S - \{s\}})$, it follows from equation (\ref{eLower}) that
\begin{align*}
\delta_S &= \sum_{\bar{x}_0} \frac{\left|W(\bar{x}_0)\right|}{\left|\overline{X}_S\right|} \geq a^{-l \cdot (l + 1)^{|S| - 1}} \cdot \sum_{\bar{x}_0} \frac{\left|V(\bar{x}_0)\right|^l}{\left|\overline{X}_S\right|} = a^{-l \cdot (l + 1)^{|S| - 1}} \cdot \sum_{\bar{x}_0} \frac{\delta_{\bar{x}_0}^l}{\left|\pi_{[d] - \{s\}}\left(\overline{X}_{S - \{s\}}\right)\right|} \\
&\geq a^{-l \cdot (l + 1)^{|S| - 1}} \cdot \left(\sum_{\bar{x}_0} \frac{\delta_{\bar{x}_0}}{\left|\pi_{[d] - \{s\}}\left(\overline{X}_{S - \{s\}}\right)\right|}\right)^l.
\end{align*}
We observe that $\delta_{S - \{s\}}$ is the average of $\delta_{\bar{x}_0}$ over all $\bar{x}_0 \in \pi_{[d] - \{s\}}(\overline{X}_{S - \{s\}})$. This shows
\begin{align}
\label{eDone}
\delta_S \geq a^{-l \cdot (l + 1)^{|S| - 1}} \cdot \delta_{S - \{s\}}^l \geq a^{-(l + 1)^{|S|}} \cdot \delta_{S - \{s\}}^l.
\end{align}
Repeated application of (\ref{eDone}) yields the proposition.
\end{proof}

Proposition \ref{pAS} shows that there are many $\bar{x} \in \overline{Y}_S^\circ$. The proof of Proposition \ref{pAS} heavily relies on the special structure of $l$-additive systems. It will also be important to find $\bar{x}$ with $\bar{x}(\emptyset) \subseteq \overline{Y}_\emptyset^\circ$. Unlike $l$-additive systems, the set $\overline{Y}_\emptyset^\circ$ has very little structure. Instead we have to rely on Ramsey theory to find such $\bar{x}$.

\begin{prop}
\label{pRam}
Let $d$ be a positive integer and let $X_1, \ldots, X_d$ be finite sets all with cardinality at least $n > 0$. Let $Y$ be a subset of $X = X_1 \times \ldots \times X_d$ of density at least $\delta > 0$. Let $r$ be a positive integer satisfying
\[
r \leq n \cdot (2^{-d - 1} \delta)^{2r^{d - 1}}.
\]
Then there are subsets $Z_i \subseteq X_i$ all of cardinality $r$ such that
\[
Z_1 \times \ldots \times Z_d \subseteq Y.
\]
\end{prop}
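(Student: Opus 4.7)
The plan is to proceed by induction on the dimension $d$, reducing dimension by one at each step via a slicing-plus-averaging argument. The base case $d=1$ is essentially trivial: the hypothesis $r \le n(2^{-2}\delta)^2 = n\delta^2/16$ implies $r \le \delta n \le |Y|$ (using $\delta \le 1$), so any $r$-subset of $Y$ works.

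For the inductive step, I would write each element of $X$ as $(\bar x, y)$ with $\bar x \in X_1 \times \cdots \times X_{d-1}$ and $y \in X_d$, and consider the slices $Y_y := \{\bar x : (\bar x, y) \in Y\}$. For $Z_d \in \binom{X_d}{r}$, set $Y_{Z_d} := \bigcap_{y \in Z_d} Y_y$. A double-counting identity gives
\[
\sum_{Z_d \in \binom{X_d}{r}} |Y_{Z_d}| \;=\; \sum_{\bar x} \binom{|N(\bar x)|}{r},
\]
where $N(\bar x) := \{y : (\bar x,y) \in Y\}$. Since $\binom{\cdot}{r}$ is convex and the average of $|N(\bar x)|$ is $\delta |X_d|$, Jensen's inequality yields a lower bound of $|X_1\cdots X_{d-1}| \binom{\delta |X_d|}{r}$. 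The hypothesis forces $r \le n\delta/4$ (bounding the exponent crudely), which in turn gives $\binom{\delta |X_d|}{r}/\binom{|X_d|}{r} \ge (\delta/2)^r$ through a term-by-term comparison. Hence I can select $Z_d \in \binom{X_d}{r}$ such that $Y_{Z_d}$ has density at least $\delta' := (\delta/2)^r$ in $X_1 \times \cdots \times X_{d-1}$.

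Now I apply the inductive hypothesis with $d-1$, density $\delta'$, to $Y_{Z_d}$, producing subsets $Z_1 \subseteq X_1, \ldots, Z_{d-1} \subseteq X_{d-1}$ each of size $r$ with $Z_1 \times \cdots \times Z_{d-1} \subseteq Y_{Z_d}$. By the definition of $Y_{Z_d}$, this yields $Z_1 \times \cdots \times Z_d \subseteq Y$, completing the induction. The nontrivial bookkeeping is showing that the stated hypothesis propagates: one has to verify that
\[
n(2^{-d-1}\delta)^{2r^{d-1}} \;\le\; n(2^{-d}\delta')^{2r^{d-2}},
\]
which after substituting $\delta' = (\delta/2)^r$ and taking logarithms reduces to $(d+1)r^{d-1} \ge d r^{d-2} + r^{d-1}$, i.e.\ $dr^{d-1} \ge dr^{d-2}$, valid whenever $r \ge 1$.

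The main obstacle is exactly this arithmetic of exponents: choosing the numerical constants (the factor $\tfrac12$ in the slice density bound, the factor $2^{-d-1}$ in the hypothesis) so that the induction closes cleanly. No deep combinatorics is needed beyond Jensen's inequality, but the double exponential nature of the bound makes the tracking delicate, and the cushion $2^{-d-1}$ versus $2^{-d}$ in the statement is precisely what absorbs the loss incurred when moving from $\delta$ to $(\delta/2)^r$ at each step.
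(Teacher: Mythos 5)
Your proof is correct, and the argument --- induction on $d$, with the inductive step selecting $Z_d$ by convexity of $t \mapsto \binom{t}{r}$ and Jensen, then recursing on the amplified density $(\delta/2)^r$ --- is essentially the same one used in Proposition 4.1 of Smith's paper, which the present paper cites without reproving. The exponent bookkeeping, including the reduction to $r \le n\delta/4$ and the verification $(d+1)r^{d-1} \ge dr^{d-2} + r^{d-1}$, is accurate, so the induction closes as you claim.
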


\begin{proof}
This is proven in Proposition 4.1 of Smith \cite{Smith}.
\end{proof}

Before we move on, we explain our strategy for proving our main theorems. In Section \ref{sRel} we have seen that under suitable conditions on $\bar{x}$
\begin{align}
\label{eDifferential}
\sum_{x \in \bar{x}(\emptyset)} F(x) = g(\bar{x}),
\end{align}
where $F$ is a class group pairing and $g(\bar{x})$ is an Artin symbol in a relative governing field. If we could directly get a handle on $F$, we would be done, but this seems to be completely out of reach with the current methods available. And indeed, equation (\ref{eDifferential}) would be of little help in such a strategy.

Instead, we will take the following approach that uses (\ref{eDifferential}) in an essential way. First of all, observe that given $g$ there are many functions $F$ satisfying (\ref{eDifferential}). Our goal will be to find one function $g$ for which all $F$ satisfying (\ref{eDifferential}) are equidistributed. Obviously, such a conclusion is only possible if we know that (\ref{eDifferential}) holds for many $\bar{x}$, and this is where Proposition \ref{pAS} and Proposition \ref{pRam} are essential. Then we use the Chebotarev density theorem to make this function $g$ many times, which allows us to conclude equidistribution of $F$. Our next definition formalizes these ideas.

\begin{mydef}
\label{dDifferential}
Let $X_1, \ldots, X_d$ be finite non-empty sets, and put $X = X_1 \times \ldots \times X_d$. Let $S \subseteq [d]$ be a set with $|S| \geq 2$. For $Z \subseteq X$ we define $\FF_l$-vector spaces $V$ and $W$ by
\[
V := \left\{F: Z \rightarrow \FF_l\right\}, \quad W := \left\{g: \left\{\bar{x} \in \overline{X}_S: \bar{x}(\emptyset) \subseteq Z\right\} \rightarrow \FF_l\right\}.
\]
Let $d: V \rightarrow W$ be the linear map given by
\[
dF(\bar{x}) = \sum_{x \in \bar{x}(\emptyset)} F(x),
\]
where we remind the reader that $\bar{x}(\emptyset)$ is a multiset. Equivalently,
\[
dF(\bar{x}) = \sum_{x \in \textup{Set}(\bar{x}(\emptyset))} \left(\prod_{i \in S} \left| \{j \in [l]: \pr_j(\pi_i(\bar{x})) = \pi_i(x)\} \right|\right) F(x).
\]
For $\epsilon > 0$ a real number, we say that $F: Z \rightarrow \FF_l$ is $\epsilon$-balanced if for all $a \in \FF_l$
\[
\left(\frac{1}{l} - \epsilon\right) \cdot |Z| \leq |F^{-1}(a)| \leq \left(\frac{1}{l} + \epsilon\right) \cdot |Z|,
\]
and we say that $F$ is $\epsilon$-unbalanced otherwise. Define $\mathscr{G}_S(Z) := \text{im } d$ and 
\[
\mathscr{G}_S(\epsilon, Z) := \left\{g \in \mathscr{G}_S(Z): g = dF \text{ for some } \epsilon \text{-unbalanced F}\right\}.
\]
\end{mydef}

\begin{lemma}
\label{lGSZ}
Let $X$, $Z$, $S$ and $d$ be as in Definition \ref{dDifferential} such that $\left|\pi_{[d] - S}(Z)\right| = 1$. Further suppose that $\delta > 0$ satisfies
\[
|Z| \geq \delta \cdot |\pi_S(X)|.
\]
If $|X_i| \geq n$ for all $i \in S$, we have for all $\epsilon > 0$
\[
\frac{|\mathscr{G}_S(\epsilon, Z)|}{|\mathscr{G}_S(Z)|} \leq 2 \cdot l \cdot \exp\left(|\pi_S(X)| \cdot \left(-\delta \cdot \epsilon^2 + \log l \cdot 2^{|S| + 2} \cdot n^{-1/l^{|S|}} \right)\right).
\]
\end{lemma}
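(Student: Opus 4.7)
The plan is to combine a Hoeffding-type concentration bound on uniformly random $F : Z \to \mathbb{F}_l$ with a dimension estimate for $\ker d$, exploiting the fact that $|\mathscr{G}_S(\epsilon, Z)|$ is controlled by the number of $\epsilon$-unbalanced $F \in V$ divided by the uniform fiber size of $d$.

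First, for fixed $a \in \mathbb{F}_l$ and uniformly random $F \in V$, Hoeffding's inequality applied to the i.i.d.\ indicator variables $\mathbf{1}_{F(x) = a}$ ($x \in Z$) yields
\[
\Pr_F\bigl[\bigl||F^{-1}(a)| - |Z|/l\bigr| > \epsilon |Z|\bigr] \leq 2 \exp(-2 \epsilon^2 |Z|).
\]
A union bound over $a \in \mathbb{F}_l$ together with the hypothesis $|Z| \geq \delta |\pi_S(X)|$ gives
\[
\Pr_F[F \text{ is } \epsilon\text{-unbalanced}] \leq 2l \exp(-\delta \epsilon^2 |\pi_S(X)|).
\]
Next, every $g \in \mathscr{G}_S(\epsilon, Z)$ admits at least one $\epsilon$-unbalanced preimage in $V$, so
\[
|\mathscr{G}_S(\epsilon, Z)| \leq \bigl|\{F \in V : F \text{ is } \epsilon\text{-unbalanced}\}\bigr|.
\]
Linearity of $d$ makes all of its fibers of size $|\ker d|$, yielding $|\mathscr{G}_S(Z)| = l^{|Z|}/|\ker d|$ and hence
\[
\frac{|\mathscr{G}_S(\epsilon, Z)|}{|\mathscr{G}_S(Z)|} \leq |\ker d| \cdot \Pr_F[F \text{ is } \epsilon\text{-unbalanced}] \leq l^{\dim_{\mathbb{F}_l} \ker d} \cdot 2l \exp(-\delta \epsilon^2 |\pi_S(X)|).
\]

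It therefore suffices to establish
\[
\dim_{\mathbb{F}_l} \ker d \leq 2^{|S|+2} n^{-1/l^{|S|}} |\pi_S(X)|.
\]
Any $F \in \ker d$ satisfies $\sum_{x \in \bar{x}(\emptyset)} F(x) = 0$ for every $\bar{x} \in \overline{X}_S$ with $\bar{x}(\emptyset) \subseteq Z$. Mimicking the coordinate-by-coordinate differencing argument used to prove Proposition \ref{pAS}, one shows that on any sub-rectangle $Z_1 \times \ldots \times Z_{|S|} \subseteq Z$ with $|Z_i| \geq l + 1$, the restriction of $F$ decomposes as a sum of functions each depending on at most $|S| - 1$ of the $S$-coordinates. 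An iterated application of Proposition \ref{pRam} (with density at least $\delta$ and factor sizes $\geq n$) extracts such sub-rectangles of side length $\gtrsim n^{1/l^{|S|}}$ from $Z$, and an inclusion-exclusion bookkeeping of the rank contributions across a covering of $Z$ by such sub-rectangles produces the displayed bound on $\dim \ker d$.

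The principal obstacle is the kernel-dimension estimate above. The key technical difficulty is coupling the iterated Ramsey extraction of sub-rectangles with the multilinear decomposition of $F \in \ker d$ in a way that the codimension contributions accumulate to $2^{|S|+2} n^{-1/l^{|S|}} |\pi_S(X)|$; the constants $2^{|S|+2}$ and the tower $l^{|S|}$ both arise from the $|S|$-fold iteration of the extraction, each step costing a factor of $l$ in the exponent and a factor of $2$ in the leading constant.
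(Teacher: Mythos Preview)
Your overall framework---Hoeffding on uniformly random $F$ combined with a bound on $\dim_{\mathbb{F}_l}\ker d$---is exactly the paper's. The Hoeffding step and the passage from $|\mathscr{G}_S(\epsilon,Z)|$ to $|\ker d|\cdot\Pr[F\text{ unbalanced}]$ are correct and match.

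The gap is precisely where you flag it: the kernel-dimension estimate. Your proposed route (multilinear decomposition on sub-rectangles, iterated Ramsey extraction, inclusion--exclusion across a covering of $Z$) is not an argument but a wish list; you do not say how the codimension contributions are controlled when the sub-rectangles overlap or fail to cover, and invoking Proposition~\ref{pAS} as an analogy is misleading since that proposition concerns density propagation in an additive system, not kernel structure. I do not see how to make your sketch converge to the stated bound.

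The paper's argument is both simpler and decisive. Take $Z'\subseteq Z$ \emph{maximal} with the property that every cube $\bar z\in\overline{X}_S$ with $\bar z(\emptyset)\subseteq Z'$ is degenerate. Then the restriction map $\ker d\to\mathbb{F}_l^{Z'}$ is injective: if $F\in\ker d$ vanishes on $Z'$ but $F(x)\neq 0$ for some $x\in Z\setminus Z'$, maximality produces a non-degenerate cube $\bar z$ with $\bar z(\emptyset)\subseteq Z'\cup\{x\}$, and non-degeneracy forces the multiplicity of $x$ in $\bar z(\emptyset)$ to be a product of integers in $\{1,\dots,l-1\}$, hence a unit mod $l$, so $dF(\bar z)\neq 0$. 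Thus $\dim\ker d\le |Z'|$. Finally, $Z'$ contains no product $\prod_{i\in S}Z_i$ with $|Z_i|=l$ (such a product would yield a non-degenerate cube), so a \emph{single} application of Proposition~\ref{pRam} with $r=l$ gives $|Z'|\le |\pi_S(X)|\cdot 2^{|S|+2}\cdot n^{-1/l^{|S|}}$. No iteration, no covering, no inclusion--exclusion.
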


\begin{proof}
Recall that a cube $\bar{z} \in \overline{X}_S$ is called degenerate if there is $i \in S$ such that
\[
\left|\left\{\text{pr}_1\left(\pi_{\{i\}}(\bar{z})\right), \ldots, \text{pr}_l\left(\pi_{\{i\}}(\bar{z})\right)\right\}\right| = 1.
\]
Let $Z'$ be a maximal subset of $Z$ such that all cubes $\bar{z} \in \overline{X}_S$ with $\bar{z}(\emptyset) \subseteq Z'$ are degenerate. Let $F: Z \rightarrow \FF_l$ be a map with $F(x) \neq 0$ for some $x \in Z - Z'$ and $F(x) = 0$ for all $x \in Z'$. We claim that $F$ is not in the kernel of the linear map $d: V \rightarrow W$. Indeed, let us consider the set $Z' \cup \{x\}$. By construction of $Z'$, we find a non-degenerate $\bar{z} \in \overline{X}_S$ with $\bar{z}(\emptyset) \subseteq Z' \cup \{x\}$ and $x \in \bar{z}(\emptyset)$. Then it follows that $dF(\bar{z}) \neq 0$, establishing our claim.

From our claim we deduce that the kernel of $d$ is at most of size $l^{|Z'|}$. On the other hand, Proposition \ref{pRam} with $r = l$ yields
\[
|Z'| \leq |\pi_S(X)| \cdot 2^{|S| + 2} \cdot n^{-1/l^{|S|}}
\]
and hence
\begin{align}
\label{eGSZ}
\left|\mathscr{G}_S(Z)\right| \geq l^{|Z| - |Z'|} \geq l^{|Z|} \cdot \exp\left(-\log l \cdot |\pi_S(X)| \cdot 2^{|S| + 2} \cdot n^{-1/l^{|S|}}\right).
\end{align}
From Hoeffding's inequality and a straightforward union bound we obtain that the number of $\epsilon$-unbalanced $F$ is bounded by
\[
2 \cdot l^{|Z| + 1} \cdot \exp\left(-2 \cdot \epsilon^2 \cdot |Z|\right).
\]
We conclude that
\begin{align}
\label{eGSZe}
\left|\mathscr{G}_S(\epsilon, Z)\right| \leq 2 \cdot l^{|Z| + 1} \cdot \exp\left(-2 \cdot \epsilon^2 \cdot |Z|\right) 
\leq 2 \cdot l^{|Z| + 1} \cdot \exp\left(-|\pi_S(X)| \cdot \delta \cdot \epsilon^2\right).
\end{align}
The lemma follows upon combining (\ref{eGSZ}) and (\ref{eGSZe}).
\end{proof}

Lemma \ref{lGSZ} is very much in the spirit of the strategy we outlined earlier. Unfortunately, we do not have equality (\ref{eDifferential}) for all $\bar{x} \in \overline{X}_S$ with $\bar{x}(\emptyset) \subseteq \overline{Y}_\emptyset^\circ$. Instead, we will show in Lemma \ref{lAddCon} that equation (\ref{eDifferential}) holds under the much stronger condition that $\bar{x}(T) \cap \overline{Y}_T^\circ(\mathfrak{A})$ is ``large''  for all proper subsets $T$ of $S$, where $\mathfrak{A}$ is a completely explicit $l$-additive system. 

Fortunately, it turns out that $\overline{Y}_\emptyset^\circ$ has some special structure in our application. Namely, in Lemma \ref{lAddCon} we will prove that for ``sufficiently nice'' $\bar{x} \in \overline{X}_S$ we have $\bar{x}(\emptyset) \subseteq \overline{Y}_\emptyset^\circ$. Our next definition formalizes what we mean by ``sufficiently nice'' $\bar{x} \in \overline{X}_S$.

\begin{mydef}
\label{dAcceptable}
For an $l$-additive system $\mathfrak{A}$ on $X$ define
\[
\overline{Z}_S(\mathfrak{A}) := \bigcap_{i \in S} \left\{\bar{x} \in \overline{X}_S: \left|\pi_i\left(\bar{x}(S - \{i\}) \cap \overline{Y}_{S - \{i\}}^\circ(\mathfrak{A})\right)\right|\geq \max\left(1, \left|\pi_i(\bar{x}(S - \{i\}))\right| - 1\right)\right\},
\]
where $\pi_i$ of a multiset is defined to be $\pi_i$ of the underlying set. Let $a \geq 2$ be an integer. Call an $l$-additive system $\mathfrak{A}$ on $X$ $S$-acceptable if the following conditions are satisfied
\begin{itemize}
\item $|A_T(\mathfrak{A})| \leq a$ for all subsets $T$ of $S$;
\item if $\bar{x}$ is in $\overline{Z}_S(\mathfrak{A})$, we have $\textup{Set}(\bar{x}(\emptyset)) \subseteq \overline{Y}_\emptyset^\circ(\mathfrak{A})$.
\end{itemize}
\end{mydef}

Before we state the next proposition, we explain why we will run over all $l$-additive systems $\mathfrak{A}$ on $X$ in this proposition instead of just the special $l$-additive system from Lemma \ref{lAddCon}. To prove our equidistribution statements in the final section, we consider a large interval of primes. Using Chebotarev we split this interval in many sets $A_1, \ldots, A_k$.

Then we apply our next proposition to every $A_i$ with $\mathfrak{A}$ equal to the $l$-additive system from Lemma \ref{lAddCon} restricted to $A_i$. Since we have no control over the restriction of this $l$-additive system to a smaller subset, we simply run over all $l$-additive systems provided that $\overline{Y}_\emptyset^\circ$ has the special property in Definition \ref{dAcceptable}.

\begin{prop}
\label{p4.4}
There exists an absolute constant $A > 0$ such that the following holds. Let $X$ and $S$ be as in Definition \ref{dDifferential}. Let $a \geq 2$, $\epsilon > 0$ and define $n := \min_{i \in S} X_i$. Suppose that $\left|\pi_{[d] - S}(X)\right| = 1$, $\epsilon < a^{-1}$ and
\[
\log n \geq A \cdot (l \cdot (l + 1))^{|S| + 3} \cdot \log \epsilon^{-1}.
\]
Then there exists $g \in \mathscr{G}_S(X)$ such that for all $S$-acceptable $l$-additive systems $\mathfrak{A}$ at $S$ on $X$ and for all $F: \overline{Y}_\emptyset^\circ(\mathfrak{A}) \rightarrow \FF_l$ satisfying
\begin{align}
\label{edFg}
dF(\bar{x}) = g(\bar{x})
\end{align}
for all $\bar{x} \in \overline{Z}_S(\mathfrak{A})$, we have that $F$ is $\frac{|X|}{|\overline{Y}_\emptyset^\circ(\mathfrak{A})|} \cdot \epsilon$-balanced. In case $|\overline{Y}_\emptyset^\circ(\mathfrak{A})| = 0$, this is to be interpreted as $\infty$-balanced.
\end{prop}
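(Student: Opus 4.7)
The plan is to exhibit a suitable $g$ by a probabilistic argument: sample $G : X \to \FF_l$ uniformly and set $g := dG \in \mathscr{G}_S(X)$, then bound the probability that $g$ is \emph{bad} for some $S$-acceptable $\mathfrak{A}$, where bad means there exists a $\frac{|X|}{|\overline{Y}_\emptyset^\circ(\mathfrak{A})|}\epsilon$-unbalanced $F : \overline{Y}_\emptyset^\circ(\mathfrak{A}) \to \FF_l$ with $dF = g$ on $\overline{Z}_S(\mathfrak{A})$. Showing this probability is strictly less than $1$ produces the desired $g$.

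First I would fix an $\mathfrak{A}$, abbreviate $Z := \overline{Y}_\emptyset^\circ(\mathfrak{A})$, $W := \overline{Z}_S(\mathfrak{A})$ and $\epsilon' := \epsilon |X|/|Z|$, and reduce to the case $\epsilon' < 1/l$, since otherwise the balance conclusion is automatic. Proposition \ref{pAS} then gives a density lower bound on each $\overline{Y}_T^\circ(\mathfrak{A})$ inside $\overline{X}_T$ in terms of $|Z|/|X|$, $a$ and $|S|$; unwinding the definition of $\overline{Z}_S(\mathfrak{A})$ and running a simple union bound over the coordinates in $S$ transforms these into a positive density lower bound $|W|/|\overline{X}_S| \geq \eta$ that depends only on $\epsilon, a, l, |S|$.

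Next I would bound the conditional bad probability for this $\mathfrak{A}$. Since $\bar z(\emptyset) \subseteq Z$ for every $\bar z \in W$ by $S$-acceptability, the restricted differential $\tilde d : \FF_l^Z \to \FF_l^W$, $F \mapsto dF|_W$, is well defined, and $dG|_W$ depends only on $G|_Z$. For uniform $G|_Z$, $dG|_W$ is uniform on $\mathrm{image}(\tilde d)$, so each fixed unbalanced $F$ satisfies $\Pr[\,dG|_W = dF|_W\,] = |\ker \tilde d|/l^{|Z|}$; summing over $\epsilon'$-unbalanced $F$ via Hoeffding yields
\[
\Pr[\,g \text{ bad for } \mathfrak{A}\,] \;\leq\; 2l \cdot |\ker \tilde d| \cdot \exp\bigl(-2(\epsilon')^2 |Z|\bigr) \;\leq\; 2l \cdot |\ker \tilde d| \cdot \exp(-2\epsilon^2 |X|).
\]
To control $|\ker \tilde d|$ I would take $Z' \subseteq Z$ maximal such that no non-degenerate cube $\bar z \in W$ has $\bar z(\emptyset) \subseteq Z'$, and argue exactly as in the proof of Lemma \ref{lGSZ} that $|\ker \tilde d| \leq l^{|Z'|}$. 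The density bound $|W|/|\overline{X}_S| \geq \eta$ combined with Proposition \ref{pRam}, applied to $W$ with a carefully chosen target dimension, forces $|Z'| \leq |X| \cdot 2^{|S|+2} n^{-1/l^{|S|}}$, making $\log |\ker \tilde d|$ much smaller than $\epsilon^2 |X|$ in view of the hypothesis on $\log n$.

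The final step is to union bound over $\mathfrak{A}$. The bad event for $\mathfrak{A}$ depends on $\mathfrak{A}$ only through the pair $(Z, W)$, and the density constraints above restrict $(Z, W)$ to a much smaller family of admissible pairs; summing the per-$\mathfrak{A}$ estimate, the slack afforded by $\log n \geq A \cdot (l(l+1))^{|S|+3} \log \epsilon^{-1}$ keeps the total failure probability below $1$ and produces a good $g$. The main obstacle will be implementing this union bound efficiently: a naive enumeration of $l$-additive systems blows up far beyond what Hoeffding can absorb, so one has to parametrize $\mathfrak{A}$ essentially by $(Z, W)$ and exploit the lower bounds from Proposition \ref{pAS} to keep the effective count within the Hoeffding slack, which is precisely what pins down the exponent $(l(l+1))^{|S|+3}$ in the hypothesis on $\log n$.
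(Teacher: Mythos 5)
Your probabilistic reformulation is fine as a framing device — the paper's proof is a counting version of the same inequality — but the two central technical steps of your plan do not go through.

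\textbf{The kernel bound is the main gap.} You propose to bound $|\ker\tilde d|$ where $\tilde d : \FF_l^Z \to \FF_l^W$ with $W := \overline{Z}_S(\mathfrak{A})$, by taking $Z'\subseteq Z$ maximal so that no non-degenerate $\bar z\in W$ has $\bar z(\emptyset)\subseteq Z'$, and then forcing $|Z'|$ small via the density of $W$ and Proposition \ref{pRam}. This fails. In Lemma \ref{lGSZ}, the set $W$ in play is \emph{all} cubes in $\overline{X}_S$ with $\bar z(\emptyset)\subseteq Z$, so a product subset of $Z'$ produced by Proposition \ref{pRam} automatically gives a non-degenerate cube \emph{in $W$} inside $Z'$, contradicting maximality. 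Here $W = \overline{Z}_S(\mathfrak{A})$ is a highly constrained subset of cubes: even if $W$ has positive density $\eta$ in $\overline{X}_S$ and $Z'$ has substantial density in $\pi_S(X)$, there is no pigeonhole reason for any non-degenerate cube with $\bar z(\emptyset)\subseteq Z'$ to land in $W$ (the cubes through $Z'$ have density on the order of $(\text{density of }Z')^{l^{|S|}}$, and since $\eta$ is itself tiny, these two families can easily be disjoint). Thus $|Z'|$ can be close to $|Z|$, the kernel of $\tilde d$ can be essentially the whole of $\FF_l^Z$, and your per-$\mathfrak{A}$ estimate collapses.

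\textbf{The union bound is the second gap.} Even granting the kernel bound, you say to parametrize $\mathfrak{A}$ by $(Z, W)$ and "exploit lower bounds from Proposition \ref{pAS}" to tame the count, but you do not explain how; there are roughly $2^{|X|}\cdot 2^{|\overline{X}_S|}$ candidate pairs, far beyond the $\exp(-\epsilon^{O(1)}|X|)$ slack Hoeffding gives. What is actually needed — and is the engine of the paper's proof — is a different observation you do not use: because $dF$ and $g$ are additive on $\overline{X}_S$, the constraint $dF = g$ on $\overline{Z}_S(\mathfrak{A})$ \emph{propagates by additivity} to all cubes $\bar x$ with $\bar x(\emptyset)$ inside a set $Z_S(\mathfrak{A}, f)$ (defined via cubes $c(f,x)$ tethered to a fixed $(l-1)$-tuple $f$). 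One then applies Lemma \ref{lGSZ} to $Z'_S(\mathfrak{A},j)$ against the \emph{full} family of cubes inside it (so the Ramsey step does apply), and the crucial counting savings comes from the identity
\[
Z_S(\mathfrak{A}, f) = \left\{x\in X : \pi_{S-\{i\}}(x)\in\pi_{S-\{i\}}(Z_S(\mathfrak{A},f)) \text{ for all } i\in S\right\},
\]
which shows that $Z_S(\mathfrak{A},f)$ is determined by its $|S|$ projections, so there are only about $2^{|X|\cdot|S|/n}$ such sets. Your proposal has neither the propagation step nor this efficient parametrization, and without both the argument does not close.
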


\begin{proof}
Let $\mathscr{G}_S(\epsilon, a, X)$ be the set consisting of those $g \in \mathscr{G}_S(X)$ that fail Proposition \ref{p4.4}. We claim that
\begin{align}
\label{eOS}
\frac{\left|\mathscr{G}_S(\epsilon, a, X)\right|}{\left|\mathscr{G}_S(X)\right|} \leq \exp\left(-|X| \cdot \epsilon^{6 + (l + 1)^{|S| + 3}}\right),
\end{align}
which immediately yields the proposition. Define
\[
\delta := \frac{\left|\overline{Y}_\emptyset^\circ(\mathfrak{A})\right|}{|X|}.
\]
Let $g \in \mathscr{G}_S(\epsilon, a, X)$. Then, from the definition of $\mathscr{G}_S(\epsilon, a, X)$, there exists an $S$-acceptable $l$-additive system $\mathfrak{A}$ on $X$ and a $\delta^{-1} \epsilon$-unbalanced $F: \overline{Y}_\emptyset^\circ(\mathfrak{A}) \rightarrow \FF_l$ satisfying
\[
dF(\bar{x}) = g(\bar{x})
\]
for all $\bar{x} \in \overline{Z}_S(\mathfrak{A})$. For $f: [l - 1] \rightarrow \overline{Y}_\emptyset^\circ(\mathfrak{A})$ and $x \in \overline{Y}_\emptyset^\circ(\mathfrak{A})$ we let $c(f, x)$ be the unique element of $\overline{X}_S$ satisfying
\[
\pi_i(f(j)) = \pr_j(\pi_i(c(f, x))) \text{  and  } \pi_i(x) = \pr_l(\pi_i(c(f, x)))
\]
for $i \in S$ and $j \in [l - 1]$. Next define
\begin{align*}
Z_S(\mathfrak{A}, f) := \{x \in X : &\text{ writing } \bar{x} := c(f, x), \text{ we have } \bar{y} \in \overline{Y}_T^\circ(\mathfrak{A}) \text{ for all } T \subsetneq S\\
& \text{ and all } \bar{y} \in \bar{x}(T) \text{ satisfying } f([l - 1]) \cap \bar{y}(\emptyset) \neq \emptyset\}.
\end{align*}
Note that $x \in Z_S(\mathfrak{A}, f)$ implies $x \in \overline{Y}_\emptyset^\circ(\mathfrak{A})$. There is a natural injective map from $\overline{Y}_S^\circ(\mathfrak{A})$ to 
\[
\coprod_{f: [l - 1] \rightarrow \overline{Y}_\emptyset^\circ(\mathfrak{A})} Z_S(\mathfrak{A}, f), 
\]
where $\coprod$ denotes a disjoint union. This map is given by sending $\bar{y}$ to the pair $(f, x)$, where $f: [l - 1] \rightarrow \overline{Y}_\emptyset^\circ(\mathfrak{A})$ and $x \in Z_S(\mathfrak{A}, f)$ are uniquely determined by
\[
\pi_i(f(j)) = \pr_j(\pi_i(\bar{y})) \text{  and  } \pi_i(x) = \pr_l(\pi_i(\bar{y}))
\]
for $i \in S$ and $j \in [l - 1]$. We conclude that
\begin{align}
\label{eYSUu}
\left|\overline{Y}_S^\circ(\mathfrak{A})\right| \leq \sum_{f: [l - 1] \rightarrow \overline{Y}_\emptyset^\circ(\mathfrak{A})} |Z_S(\mathfrak{A}, f)| \leq |X|^{l - 1} \cdot \max_{f: [l - 1] \rightarrow \overline{Y}_\emptyset^\circ(\mathfrak{A})} |Z_S(\mathfrak{A}, f)|.
\end{align}
Since $F$ is $\delta^{-1} \epsilon$-unbalanced, it follows that $\frac{\epsilon}{2} \leq \delta$. Hence the density of $\overline{Y}_\emptyset^\circ(\mathfrak{A})$ in $X$ is at least $\frac{\epsilon}{2}$. From Proposition \ref{pAS} we see that the density of $\overline{Y}_S^\circ(\mathfrak{A})$ in $\overline{X}_S$ is lower bounded by
\begin{align}
\label{eYSUl}
\delta^{l^{|S|}} a^{-(l + 1)^{|S| + 1}} \geq \left(\frac{\epsilon}{2}\right)^{l^{|S|}} \epsilon^{(l + 1)^{|S| + 1}} \geq \epsilon^{(l + 1)^{|S| + 3}}.
\end{align}
Upon combining (\ref{eYSUu}) and (\ref{eYSUl}) we find that there exists $f_1: [l - 1] \rightarrow \overline{Y}_\emptyset^\circ(\mathfrak{A})$ such that $Z_S(\mathfrak{A}, f_1)$ has density at least $\epsilon^{(l + 1)^{|S| + 3}}$ in $X$. If the complement $\overline{Y}_\emptyset^\circ(\mathfrak{A}) - Z_S(\mathfrak{A}, f_1)$ has density at least $\frac{\epsilon}{2}$ in $X$, we can repeat this argument with the $S$-acceptable $l$-additive system $\mathfrak{A}'$ on $X$ given by 
\[
\overline{Y}_\emptyset^\circ(\mathfrak{A}') := \overline{Y}_\emptyset^\circ(\mathfrak{A}) - Z_S(\mathfrak{A},
f_1)
\]
and the same maps $F_T$ and groups $A_T$ as $\mathfrak{A}$. Hence we find a sequence of functions $f_1, \ldots, f_r: [l - 1] \rightarrow \overline{Y}_\emptyset^\circ(\mathfrak{A})$ such that
\[
Z_S(\mathfrak{A}, f_j) - Z_S(\mathfrak{A}, f_{j - 1}) - \ldots - Z_S(\mathfrak{A}, f_1)
\]
has density at least $\epsilon^{(l + 1)^{|S| + 3}}$ in $X$ for $j \geq 1$ and so that
\[
\overline{Y}_\emptyset^\circ(\mathfrak{A}) - Z_S(\mathfrak{A}, f_r) - \ldots - Z_S(\mathfrak{A}, f_1)
\]
has density at most $\frac{\epsilon}{2}$ in $X$. Define
\[
Z'_S(\mathfrak{A}, j) := Z_S(\mathfrak{A}, f_j) - Z_S(\mathfrak{A}, f_{j - 1}) - \ldots - Z_S(\mathfrak{A}, f_1).
\]
Then there exists a $j$ so that $F$ is $\frac{\epsilon}{2}$-unbalanced when restricted to $Z'_S(\mathfrak{A}, j)$. If $\bar{x} \in \overline{X}_S$ satisfies $\text{Set}(\bar{x}(\emptyset)) \subseteq Z'_S(\mathfrak{A}, j)$, (\ref{edFg}) combined with the additivity of $dF$ and $g$ imply $dF(\bar{x}) = g(\bar{x})$. From this we deduce that $g \in \mathscr{G}_S(\epsilon, a, X)$ implies
\[
g|_{\left\{\bar{x} \in \overline{X}_S : \text{Set}(\bar{x}(\emptyset)) \subseteq Z'_S(\mathfrak{A}, j)\right\}} \in \mathscr{G}_S\left(\frac{\epsilon}{2}, Z'_S(\mathfrak{A}, j)\right)
\]
for some $S$-acceptable $l$-additive system $\mathfrak{A}$ on $X$ and some $j$. We get from Lemma \ref{lGSZ} that the number of $g \in \mathscr{G}_S(X)$ with $g|_{\left\{\bar{x} \in \overline{X}_S : \text{Set}(\bar{x}(\emptyset)) \subseteq Z'_S(\mathfrak{A}, j)\right\}} \in \mathscr{G}_S\left(\frac{\epsilon}{2}, Z'_S(\mathfrak{A}, j)\right)$ is bounded by
\begin{align}
\label{eGSZA}
2 \cdot l \cdot \left|\mathscr{G}_S(X)\right| \cdot \exp\left(|X| \cdot \left(-\epsilon^{4 + (l + 1)^{|S| + 3}} + \log l \cdot 2^{|S| + 2} \cdot n^{-1/l^{|S|}} \right)\right).
\end{align}
For $A$ sufficiently large we can simplify (\ref{eGSZA}) as
\begin{align}
\label{eGSZA2}
\left|\mathscr{G}_S(X)\right| \cdot \exp\left(-|X| \cdot \epsilon^{5 + (l + 1)^{|S| + 3}}\right).
\end{align}
Let us now give an upper bound for the number of subsets $E$ of $X$ such that there exists an $S$-acceptable $l$-additive system $\mathfrak{A}$ on $X$ and $f: [l - 1] \rightarrow \overline{Y}_\emptyset^\circ(\mathfrak{A})$ satisfying $E = Z_S(\mathfrak{A}, f)$. A straightforward computation shows the equality
\[
Z_S(\mathfrak{A}, f) = \left\{x \in X: \pi_{S - \{i\}}(x) \in \pi_{S - \{i\}}(Z_S(\mathfrak{A}, f)) \text{ for all } i \in S\right\}.
\]
Therefore, $Z_S(\mathfrak{A}, f)$ is determined by the sets $\pi_{S - \{i\}}(Z_S(\mathfrak{A}, f))$ as $i$ varies through $S$. From this, we obtain the following upper bound for the number of possible sets $E$
\[
2^{|X| \cdot \sum_{i \in S} \frac{1}{|X_i|}} \leq 2^{|X| \cdot |S| \cdot n^{-1}}.
\]
Hence there are at most
\[
2^{r \cdot |X| \cdot |S| \cdot n^{-1}}
\]
sequences $Z'_S(\mathfrak{A}, f_1), \ldots, Z'_S(\mathfrak{A}, f_r)$ and at most $r$ choices of $j$. Multiplying this with the bound from (\ref{eGSZA2}) we conclude that
\begin{align}
\label{eGSeaX}
|\mathscr{G}_S(\epsilon, a, X)| \leq r \cdot 2^{r \cdot |X| \cdot |S| \cdot n^{-1}} \cdot \left|\mathscr{G}_S(X)\right| \cdot \exp\left(-|X| \cdot \epsilon^{5 + (l + 1)^{|S| + 3}}\right).
\end{align}
Using $r \leq \epsilon^{-(l + 1)^{|S| + 3}}$ and (\ref{eGSeaX}) we infer
\begin{align*}
|\mathscr{G}_S(\epsilon, a, X)| &\leq \left|\mathscr{G}_S(X)\right| \cdot \exp\left(|X| \cdot \left(-\epsilon^{5 + (l + 1)^{|S| + 3}} + r \cdot |S| \cdot n^{-1}\right)\right) \\
&\leq \left|\mathscr{G}_S(X)\right| \cdot \exp\left(-|X| \cdot \epsilon^{6 + (l + 1)^{|S| + 3}}\right).
\end{align*}
for $A$ sufficiently large. This establishes (\ref{eOS}), completing our proof.
\end{proof}

\section{Governing expansions} 
\label{sGov}
We will heavily make use of the notation introduced at the beginning of Section \ref{sAdd}; recall that this notation implicitly depends on $l$. Let $d$ be a positive integer and let $X_1, \ldots, X_d$ be disjoint sets of primes $q$ that are either $1$ modulo $l$ or equal to $l$. Fix a subset $\overline{Y}_{\emptyset}$ and a function $f : X_1 \coprod \ldots \coprod X_r \rightarrow [l - 1]$, where we remind the reader that $\coprod$ denotes disjoint union. 

If $\bar{x} \in \overline{X}_S$, we obtain an amalgama $\epsilon$ for the cube $\bar{x}$ by restricting $f$ to $\bar{x}$. Fix furthermore an integer $i_a \in [d]$. The coming definition will depend implicitly on the choice of $f$, and we shall suppress this dependence in the notation. A collection of sets
$$
\{\overline{Y}_S\}_{i_a \in S\subseteq d}
$$
with $\overline{Y}_S \subseteq \overline{X}_S$, together with a collection of continuous $1$-cochains
$$
\{\phi_{S, \bar{x}} : G_{\mathbb{Q}} \to \mathbb{F}_l\}_{\bar{x} \in \overline{Y}_S}
$$
for each $S$ containing $i_a$, is said to be a governing expansion $\mathfrak{G}$ if it satisfies the following requirements

\begin{enumerate}
\item[(1)] for each $\bar{x} \in \overline{Y}_{i_a}$ we have 
$$
\phi_{\{i_a\}, \bar{x}} = j_l^{-1} \circ \sum_{i = 1}^l \chi_{\text{pr}_i(\pi_{i_a}(\bar{x}))}^{f(\text{pr}_i(\pi_{i_a}(\bar{x})))};
$$
\item[(2)] if $\bar{x}_1, \bar{x}_2 \in \overline{Y}_S$ are cubes with the same multisets $X_i$ for each $i \in S$, we have $\phi_{S, \bar{x}_1} = \phi_{S, \bar{x}_2}$;
\item[(3)] if $i_a \in S$, then we have for all $\bar{x} \in \overline{Y}_S$ and all subsets $T$ satisfying $i_a \in T \subseteq S$ that $\bar{x}_T \in \overline{Y}_{T}$ for any choice of $\bar{x}_T \in \bar{x}(T)$. Moreover, the collection $\{\phi_{T, \bar{x}_T}\}_{i_a \in T}$ is a good expansion for all choices of $\bar{x}_T \in \bar{x}(T)$ (here the collection of subsets of $[m]$ not containing $i_a$ is naturally identified with the collection of subsets of $[m]$ containing $i_a$);
\item[(4)] we choose for each rational prime $q$ that ramifies in $\prod_{\bar{x}}L(\phi_{S, \bar{x}})/\mathbb{Q}$ a generator of an inertia subgroup $\sigma_q$. We assume that the $\phi_{S, \bar{x}}$ are such that $\phi_{S, \bar{x}}(\sigma_q) = 0$ for each $\bar{x}$;
\item[(5)] if $\bar{x} \in \overline{X}_S$, then $\bar{x} \in \overline{Y}_S$ if and only if the following two conditions are satisfied
\begin{itemize}
\item we have for all subsets $T$ satisfying $i_a \in T \subseteq S$ and all $\bar{x}_T \in \bar{x}(T)$ that $\bar{x} \in \overline{Y}_T$;
\item we have for each $i \in S$ that $\text{pr}_1(\pi_i(\bar{x})), \ldots, \text{pr}_l(\pi_i(\bar{x}))$ split completely in the field $L\left(\phi_{S - \{i\}, \bar{x}_{S - \{i\}}}\right)$.
\end{itemize} 
\end{enumerate}

These requirements are very similar to those imposed in \cite[p.\ 10-12]{Smith}, but not completely the same. Using the calculations done in Section \ref{governing maps} and following the same proof strategy explained in \cite[Proposition $2.3$]{Smith} one obtains the following important fact. 

\begin{prop} 
\label{gov.exp. are additive}
If $\mathfrak{G}$ is a governing expansion, then the assignment
$$
\bar{x} \mapsto \phi_{S, \bar{x}}(\mathfrak{G}),
$$
is additive for each $S$, see Definition \ref{dAdd}.
\end{prop}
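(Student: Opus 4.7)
The plan is to follow the strategy of \cite[Proposition 2.3]{Smith}, adapted to odd primes, using crucially the multilinear formula for iterated commutators $\beta_{|S|+1}(\phi_{S,\bar{x}})$ derived in Section \ref{governing maps}. That formula expresses these commutators as multilinear polynomials in the base characters $\chi_{\{i\},\bar{x}}$ together with $\phi_{\{i_a\},\bar{x}}$. Because $\phi_{S,\bar{x}}$ is determined, up to an element of $\Gamma_{\FF_l}(\Q)$ whose ramification is pinned down by property $(4)$, by these iterated commutators, additivity for $\phi_{S,\bar{x}}$ will reduce to additivity of the base characters (which is essentially tautological) together with a ramification clean-up.

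I would induct on $|S|$. The base case $|S|=1$, so $S=\{i_a\}$, is immediate from property $(1)$: writing $\phi_{\{i_a\},\bar{x}} = j_l^{-1} \circ \sum_{j=1}^l \chi_{\mathrm{pr}_j(\pi_{i_a}(\bar{x}))}^{f(\cdot)}$ and evaluating on cubes $\bar{x}_1,\dots,\bar{x}_{l+1}$ as in Definition \ref{dAdd}, the $l-1$ repeated primes $p_1,\dots,p_{l-1}$ each appear $l$ times and hence cancel in $\FF_l$, leaving $\phi_{\{i_a\},\bar{x}_{l+1}}$ exactly.

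For the inductive step, let $\bar{x}_1,\dots,\bar{x}_{l+1}$ agree off coordinate $s \in S$ and put $\Psi := \sum_{i=1}^l \phi_{S,\bar{x}_i} - \phi_{S,\bar{x}_{l+1}}$. I would compute $d\Psi$ from the defining expansion equation, splitting the outer sum over $U \subseteq S$ according to whether $s \in U$ or not. For $U \ni s$: the factors $\chi_{U - \{s\},\bar{x}}$ and $\phi_{S-U,\bar{x}(S-U)}$ are constant across the $\bar{x}_i$ by property $(2)$ (since $s \notin S-U$), and the remaining factor $\chi_{\{s\},\bar{x}}$ already vanishes under the alternating sum by the base case. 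For $U \not\ni s$: the character $\chi_{U,\bar{x}}$ is constant, while $\sum_i \phi_{S-U,\bar{x}_i(S-U)} - \phi_{S-U,\bar{x}_{l+1}(S-U)}$ is a $1$-cocycle with controlled ramification by the inductive hypothesis. Combining these, $d\Psi$ becomes a coboundary of an explicit character, and after adjusting $\Psi$ by that character we may assume $\Psi$ itself is a $1$-cocycle.

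To finish, property $(4)$ guarantees that each $\phi_{S,\bar{x}}$ vanishes on the chosen inertia generators $\sigma_q$ at every ramified prime, hence so does $\Psi$. Combined with property $(5)$, which ensures that the fresh primes appearing in $\bar{x}_{l+1}$ split completely in the lower-dimensional fields $L(\phi_{S-\{i\}})$, this is enough to invoke Proposition \ref{no ramification} and force $\Psi$ to lie in the trivial class of the appropriate finite value space $A_S$ (a suitable quotient of continuous $1$-cochains by the characters with controlled ramification). The hard part is the careful bookkeeping in the inductive step, to ensure that the ``error'' cocycle produced in the case $s \notin U$ has exactly the ramification structure required to be absorbed by Proposition \ref{no ramification}; once that is pinned down, the rest is a direct odd-prime adaptation of Smith's argument.
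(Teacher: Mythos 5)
Your sketch matches the paper's own proof, which is a reference to Smith's Proposition 2.3 together with the governing-map computations of Section \ref{governing maps}, and the inductive scheme (induct on $|S|$, use property $(1)$ as the base case, feed the expansion equation into $d\Psi$, split by whether $s \in U$) is the right way to flesh that reference out. One tightening is in order, though: the inductive hypothesis is exact additivity, so the alternating sum $\sum_{i=1}^l \phi_{S-U, \bar{x}_i(S-U)} - \phi_{S-U, \bar{x}_{l+1}(S-U)}$ vanishes identically, not merely ``is a $1$-cocycle with controlled ramification.'' Combined with the $s \in U$ cancellation (the $\FF_l$-valued factor $\chi_{\{s\},\bar{x}}$ telescopes, the other two factors are constant across the $\bar{x}_i$ by property $(2)$), this yields $d\Psi = 0$ on the nose, so $\Psi$ is already a genuine character of $G_{\mathbb{Q}}$ with no coboundary adjustment needed. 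Then property $(4)$ alone kills $\Psi$ on each inertia generator $\sigma_q$, so $\Psi$ is unramified everywhere and therefore trivial; neither Proposition \ref{no ramification} nor property $(5)$ is required at this last step. The $\beta_{|S|+1}$ formula you flag in the overview is the conceptual backdrop but is not actually invoked once the expansion equation has been written out, so the opening paragraph overstates its role.
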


Fix a set $S$ satisfying $i_a \in S \subseteq [d]$. Denote by $\mathcal{A}(\overline{Y}_S, \mathbb{F}_l)$ the $\mathbb{F}_l$-vector space of all additive maps from $\overline{Y}_S$ to $\mathbb{F}_l$. Following the same proof strategy of \cite[Proposition $2.4$]{Smith} one obtains the following proposition.

\begin{prop} 
\label{pGovAdd}
Let $\mathfrak{G}$ be a governing expansion. The assignment
$$
\sigma \mapsto (\bar{x} \mapsto \phi_{S, \bar{x}}\left(\mathfrak{G})(\sigma)\right)_{\bar{x} \in \overline{Y}_S(\mathfrak{G})}
$$
gives an isomorphism between the group 
$$
\left.\emph{Gal}\left(\prod_{\bar{x} \in \overline{Y}_S(\mathfrak{G})} L(\phi_{S, \bar{x}}(\mathfrak{G})) \right/ \prod_{i_a \in T \subsetneq S} \prod_{\bar{x} \in \overline{Y}_T(\mathfrak{G})} L(\phi_{T, \bar{x}}(\mathfrak{G}))\right)
$$ 
and the space of additive maps $\mathcal{A}(\overline{Y}_S(\mathfrak{G}), \mathbb{F}_l)$.
\end{prop}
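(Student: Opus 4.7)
The plan is to verify well-definedness, injectivity, and surjectivity in sequence. Write $M_{\text{top}} := \prod_{\bar{x} \in \overline{Y}_S(\mathfrak{G})} L(\phi_{S,\bar{x}}(\mathfrak{G}))$ and $M_{\text{bot}} := \prod_{i_a \in T \subsetneq S} \prod_{\bar{x} \in \overline{Y}_T(\mathfrak{G})} L(\phi_{T,\bar{x}}(\mathfrak{G}))$. First I would show that for each $\bar{x} \in \overline{Y}_S(\mathfrak{G})$, the restriction $\phi_{S,\bar{x}}|_{G_{M_{\text{bot}}}}$ is a homomorphism to $\mathbb{F}_l$. The key input is the expansion equation
$$
d\phi_{S,\bar{x}}(\sigma,\tau) = \sum_{\emptyset \neq U \subseteq S - \{i_a\}} (-1)^{|U|+1} \chi_{U,\epsilon}(\sigma)\, \phi_{S - U,\, \bar{x}(S - U)}(\tau),
$$
where for $U \neq \emptyset$ the index $S - U$ is a proper subset of $S$ that still contains $i_a$, so $L(\phi_{S - U, \bar{x}(S - U)}) \subseteq M_{\text{bot}}$ and each term vanishes on $\tau \in G_{M_{\text{bot}}}$. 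Since $L(\phi_{S,\bar{x}}) \subseteq M_{\text{top}}$, the value $\phi_{S,\bar{x}}(\sigma)$ depends only on the image of $\sigma$ in $\text{Gal}(M_{\text{top}}/\mathbb{Q})$, so a well-defined homomorphism
$$
\Psi \colon \text{Gal}(M_{\text{top}}/M_{\text{bot}}) \to \mathbb{F}_l^{\overline{Y}_S(\mathfrak{G})}
$$
is obtained. Proposition \ref{gov.exp. are additive} then places the image inside $\mathcal{A}(\overline{Y}_S(\mathfrak{G}), \mathbb{F}_l)$.

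Next I would dispatch injectivity. Each character $\phi_{S,\bar{x}}|_{G_{M_{\text{bot}}}}$ cuts out the degree dividing $l$ subextension $L(\phi_{S,\bar{x}}) \cdot M_{\text{bot}}$ of $M_{\text{top}}/M_{\text{bot}}$; consequently any $\sigma \in \ker \Psi$ lies in the intersection $\bigcap_{\bar{x}} G_{L(\phi_{S,\bar{x}}) \cdot M_{\text{bot}}} = G_{M_{\text{top}}}$, and is therefore trivial in $\text{Gal}(M_{\text{top}}/M_{\text{bot}})$.

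The main difficulty lies in surjectivity, which I would establish via a dimension count. Using the analysis at the end of Section \ref{governing maps}, for each fixed $\bar{x}$ the dual Galois group $\text{Gal}(L(\phi_{S,\bar{x}})/K_{S - \{i_a\}})^{\vee}$ is free of rank one over the local ring $R_S := \mathbb{F}_l[t_i : i \in S - \{i_a\}]/(t_i^2)$, with $\phi_{S,\bar{x}}$ as generator, and the proper subfields $L(\phi_{T, \bar{x}(T)})$ for $i_a \in T \subsetneq S$ correspond to $R_S$-quotients killing the top-degree monomials in the complementary variables. Conditions (5) and (2) in the definition of a governing expansion—primes split completely in the lower-level fields, and the expansion maps depend only on the underlying multisets—together ensure that as $\bar{x}$ varies, the fields $L(\phi_{S,\bar{x}})$ assemble into $M_{\text{top}}$ in a manner linearly disjoint over $M_{\text{bot}}$, except for precisely the linear relations encoded by the additivity axiom. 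A careful bookkeeping then yields $\dim_{\mathbb{F}_l} \text{Gal}(M_{\text{top}}/M_{\text{bot}}) = \dim_{\mathbb{F}_l} \mathcal{A}(\overline{Y}_S(\mathfrak{G}), \mathbb{F}_l)$, and combined with injectivity finishes the proof. The hard part is precisely this linear-disjointness claim across cubes, which I would handle by mirroring the strategy of \cite[Proposition 2.4]{Smith}, induction on $|S|$, and invoking the freeness of the $R_S$-module structure to rule out extraneous coincidences among the fields $L(\phi_{S,\bar{x}})$.
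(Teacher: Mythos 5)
Your overall architecture (well-definedness, injectivity, then surjectivity by a dimension count) matches the paper's strategy, which is simply to follow Smith's Proposition 2.4; the paper gives no more detail than that. Your well-definedness step is correct: the expansion equation makes $d\phi_{S,\bar x}(\sigma,\tau)$ vanish for $\tau \in G_{M_{\text{bot}}}$, so $\phi_{S,\bar x}|_{G_{M_{\text{bot}}}}$ is a homomorphism, and since every term in $d\phi_{S,\bar x}(\sigma,\tau)$ also dies for $\tau\in G_{M_{\text{bot}}}$, the restriction factors through $\mathrm{Gal}(M_{\text{top}}/M_{\text{bot}})$. Your injectivity step is also fine, modulo the (paper-wide) convention that one should really read $\mathrm{Gal}(M_{\text{top}}M_{\text{bot}}/M_{\text{bot}})$, since nothing in the definition of a governing expansion forces $M_{\text{bot}}\subseteq M_{\text{top}}$.

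The gap is in the surjectivity step, which is where all the content of the proposition lives. You write that conditions (2) and (5) ``ensure that the fields $L(\phi_{S,\bar x})$ assemble into $M_{\text{top}}$ in a manner linearly disjoint over $M_{\text{bot}}$, except for precisely the linear relations encoded by the additivity axiom,'' and then that ``careful bookkeeping then yields'' the dimension equality. But that linear-disjointness-up-to-additivity claim \emph{is} the proposition, restated dually: it says exactly that $\ker\bigl(\mathbb{F}_l^{(\overline{Y}_S)} \to \mathrm{Gal}(M_{\text{top}}/M_{\text{bot}})^\vee,\ e_{\bar x}\mapsto \phi_{S,\bar x}|_{G_{M_{\text{bot}}}}\bigr)$ is spanned by the additivity vectors $e_{\bar x_1}+\cdots+e_{\bar x_l}-e_{\bar x_{l+1}}$. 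Invoking freeness of $\mathrm{Gal}(L(\phi_{[m]})/K_{[m]})^\vee$ over $R_S$ does not close this gap: that freeness is established in Section \ref{governing maps} for a \emph{single} expansion (one cube), and says nothing about whether two expansions attached to different cubes $\bar x, \bar x' \in \overline{Y}_S$ can collide in an unexpected way once one passes to $G_{M_{\text{bot}}}$.

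The tool you need, and which the paper explicitly flags as ``of utmost importance in Section \ref{sGov}'' in the paragraph immediately preceding the proposition, is the closed formula for the iterated-commutator evaluation $\beta_{m+1}(\phi_{[m]})(\tau_1,\ldots,\tau_{m+1})$ at the end of Section \ref{governing maps}. Suppose $\sum_{\bar x} a_{\bar x}\,\phi_{S,\bar x}$ vanishes on $G_{M_{\text{bot}}}$. Feeding an $|S|$-fold commutator of elements of $G_{\mathbb{Q}}$ into this sum, and applying the $\beta_{|S|+1}$ formula to each term, rewrites the putative relation as a relation among products of the base characters $\chi_{\{i\},\epsilon}$ attached to the individual prime columns of the cubes. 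Because distinct primes contribute Kummer-linearly-independent characters, the only such relations are the tautological ones, and tracing these back through the $\beta$ formula shows they are exactly the additivity relations among the $\phi_{S,\bar x}$. That is the argument that makes the cross-cube dimension count work; without it, the assertion that no ``extraneous coincidences'' occur is unproved.
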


It will be important in our main application to recognize that for the product space
$$
X = X_1 \times \ldots \times X_d
$$ 
the space of additive maps $\mathcal{A}(\overline{X}_S, \mathbb{F}_l)$ is equal to $\mathscr{G}_S(X)$ as defined in Definition \ref{dDifferential}.

\begin{prop}
\label{pDiff}
The image of the map $d : \emph{Map}(X,\mathbb{F}_l) \to \emph{Map}(\overline{X}_S, \mathbb{F}_l)$ is equal to $\mathcal{A}(\overline{X}_S, \mathbb{F}_l)$. Furthermore, the dimension of $\mathcal{A}(\overline{X}_S, \mathbb{F}_l)$ is equal to
$$
\prod_{i \in S} \left(|X_i| - 1\right) \cdot \prod_{j \in [d] - S} |X_j|.
$$
\end{prop}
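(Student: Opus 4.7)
The plan is to prove Proposition \ref{pDiff} by constructing an explicit linear section of $d$ whose image is identified with a subspace of $\textup{Map}(X, \mathbb{F}_l)$ of the predicted dimension. Fix once and for all distinguished elements $y_i^\ast \in X_i$ for each $i \in S$. First I would verify the easy containment $\textup{Im}(d) \subseteq \mathcal{A}(\overline{X}_S, \mathbb{F}_l)$ by direct calculation: given $F : X \to \mathbb{F}_l$ and a tuple $(\bar{x}_1, \ldots, \bar{x}_{l+1})$ as in the additivity axiom of Definition \ref{dAdd} at some coordinate $s \in S$, the multiset $\bar{x}_j(\emptyset)$ splits according to the $s$-th coordinate into contributions from the common entries $p_1, \ldots, p_{l-1}$ and from $q_j$. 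Writing $dF(\bar{x}_j) = \sum_{i=1}^{l-1} G(p_i) + G(q_j)$ for a function $G$ on $X_s$ depending only on the fixed other coordinates, summing over $j \in [l]$ kills the common contributions by the relation $l \equiv 0$ in $\mathbb{F}_l$ and leaves $\sum_j G(q_j) = dF(\bar{x}_{l+1})$.

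For the reverse inclusion I would define an $\mathbb{F}_l$-linear map $\sigma : \mathcal{A}(\overline{X}_S, \mathbb{F}_l) \to \textup{Map}(X, \mathbb{F}_l)$ by $\sigma(\alpha)(x_1, \ldots, x_d) := \alpha(\bar{x}^{(x)})$, where $\bar{x}^{(x)} \in \overline{X}_S$ is the cube with $\pi_i(\bar{x}^{(x)}) = (y_i^\ast, \ldots, y_i^\ast, x_i)$ for $i \in S$ (first $l-1$ entries equal to $y_i^\ast$) and $\pi_j(\bar{x}^{(x)}) = x_j$ for $j \notin S$. To verify $d \circ \sigma = \textup{id}$, I would iterate the additivity axiom one coordinate at a time: for $s \in S$, applying the axiom with $p_1 = \cdots = p_{l-1} = y_s^\ast$ rewrites $\alpha(\bar{x})$ as a sum over the $l$ possible values of the last entry of $\pi_s$; performing this for each $s \in S$ in turn yields
\[
\alpha(\bar{x}) \;=\; \sum_{y \in \bar{x}(\emptyset)} \alpha(\bar{x}^{(y)}) \;=\; \sum_{y \in \bar{x}(\emptyset)} \sigma(\alpha)(y) \;=\; d\sigma(\alpha)(\bar{x}),
\]
thereby simultaneously proving surjectivity of $d$ onto $\mathcal{A}(\overline{X}_S, \mathbb{F}_l)$.

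Finally, I would compute the dimension by pinning down the image of $\sigma$ inside $\textup{Map}(X, \mathbb{F}_l)$. Specializing the additivity axiom at $i \in S$ with every entry of $\pi_i(\bar{x}^{(x)})$ equal to $y_i^\ast$ gives $l \cdot \alpha(\bar{x}^{(x)}) = \alpha(\bar{x}^{(x)})$, which forces $\sigma(\alpha)(x) = 0$ whenever $x_i = y_i^\ast$ for some $i \in S$; so the image of $\sigma$ sits inside the space $W$ of functions supported on $\prod_{i \in S}(X_i - \{y_i^\ast\}) \times \prod_{j \notin S} X_j$. Conversely, for any $F \in W$ one has $dF(\bar{x}^{(x)}) = F(x)$, since every element of $\bar{x}^{(x)}(\emptyset)$ other than $x$ itself has some coordinate equal to $y_i^\ast$ and is killed by $F$; hence $F = \sigma(dF)$ and $\sigma$ is an $\mathbb{F}_l$-linear isomorphism onto $W$. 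Since $\dim_{\mathbb{F}_l} W = \prod_{i \in S}(|X_i|-1) \cdot \prod_{j \in [d] - S} |X_j|$, the dimension formula follows. The only real subtlety is the bookkeeping in the iterated additivity argument that establishes $d\sigma = \textup{id}$; beyond Definition \ref{dAdd} itself no further input is needed.
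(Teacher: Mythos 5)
Your proof is correct, and it takes a genuinely different route from the paper's. The paper establishes the easy containment $\mathscr{G}_S(X) \subseteq \mathcal{A}(\overline{X}_S,\mathbb{F}_l)$ and then sandwiches the dimension between two inequalities: a lower bound on $\dim \mathscr{G}_S(X)$ by constructing a ``Ramsey-maximal'' set $\text{Max}(x_0)$ and invoking the kernel-bound argument from Lemma \ref{lGSZ}, and an upper bound on $\dim \mathcal{A}(\overline{X}_S,\mathbb{F}_l)$ by exhibiting a set $\text{Min}(x_0)$ of cubes on which an additive map is already determined. Equality then follows by comparing the two bounds. You instead build an explicit $\mathbb{F}_l$-linear section $\sigma$ of $d$ supported on $\prod_{i\in S}(X_i - \{y_i^\ast\}) \times \prod_{j\notin S}X_j$, verify $d\circ\sigma = \mathrm{id}$ on $\mathcal{A}(\overline{X}_S,\mathbb{F}_l)$ by iterating the additivity axiom one coordinate at a time, and show $\sigma\circ d = \mathrm{id}$ on the target subspace $W$; this yields a canonical isomorphism $\mathcal{A}(\overline{X}_S,\mathbb{F}_l) \simeq W$ that gives surjectivity and the dimension count in one stroke. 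The two approaches cost about the same amount of work, but yours is self-contained (it does not lean on the earlier kernel argument) and more constructive, producing explicit representatives; the paper's proof has the virtue of reusing the counting machinery already developed for Lemma \ref{lGSZ}. Two small clarifications worth including if you write this up: state explicitly that the forward containment $\mathrm{Im}(d)\subseteq\mathcal{A}$ is needed to conclude $dF\in\mathcal{A}$ in the verification $\sigma(dF)=F$, and track multiplicities in the iterated-additivity step by noting that the sum over $(j_1,\ldots,j_{|S|})\in [l]^{|S|}$ produced by the iteration coincides with the sum over the multiset $\bar{x}(\emptyset)$ precisely because the multiplicity of $y$ in $\bar{x}(\emptyset)$ counts the index tuples mapping to $y$.
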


\begin{proof}
It is a triviality that
\begin{align}
\label{eIncGSX}
\mathscr{G}_S(X) \subseteq \mathcal{A}(\overline{X}_S, \mathbb{F}_l).
\end{align}
We will now establish that $\mathcal{A}(\overline{X}_S, \mathbb{F}_l) \subseteq \mathscr{G}_S(X)$. To do so we pick once and for all a point $x_0 \in X$. We define the following subset of $X$
$$
\text{Max}(x_0) := \{x \in X : \exists i \in S \text{ with } \pi_i(x) = \pi_i(x_0)\}.
$$
We observe that $\text{Max}(x_0)$ does not contain any product sets of the form
$$
\prod_{i \in S} Y_i \times \prod_{j \in [d] - S} \{y_j\},
$$
where each $Y_i$ has precisely two elements. We claim that $\text{Max}(x_0)$ is a maximal subset of $X$ with the above property. Indeed, take any $y \in X - \text{Max}(x_0)$. Then we have $\pi_i(y) \neq \pi_i(x_0)$ for each $i$ in $S$. Now define
$$
Y(y) := \prod_{i \in S}\{\pi_i(y),\pi_i(x_0)\} \times \prod_{j \in [d] - S}\{\pi_j(y)\}, 
$$
which is clearly contained in $\text{Max}(x_0) \cup \{y\}$. This shows the claim. 

Next observe that the size of the complement of $\text{Max}(x_0)$ is trivially 
$$
\prod_{i \in S} \left(|X_i| - 1\right) \cdot \prod_{j \in [d] - S} |X_j|.
$$
Hence we obtain
$$
|\text{Max}(x_0)| = |X| - \prod_{i \in S} \left(|X_i| - 1\right) \cdot \prod_{j \in [d] - S} |X_j|.
$$
Following the proof of Proposition \ref{lGSZ}, we find that the kernel of $d$ has dimension at most $|\text{Max}(x_0)|$. This gives
\begin{align}
\label{eIneqGSX}
\text{dim}_{\mathbb{F}_l} \mathscr{G}_S(X) \geq \prod_{i \in S} \left(|X_i| - 1\right) \cdot \prod_{j \in [d] - S} |X_j|.
\end{align}
Finally consider the set
$$
\text{Min}(x_0) := \left\{\bar{x} \in \overline{X}_S: \pi_S(\bar{x}(\emptyset)) \text{ contains } \pi_S(x_0) \text{ with multiplicity } (l - 1)^{|S|}\right\}.
$$
It is not difficult to show that an additive function is completely determined by its restriction to $\text{Min}(x_0)$. Hence we conclude that 
\begin{align}
\label{eIneqGSX2}
\text{dim}_{\mathbb{F}_l} \mathcal{A}(\overline{X}_S, \mathbb{F}_l) \leq \prod_{i \in S} \left(|X_i| - 1\right) \cdot \prod_{j \in [d] - S} |X_j|.
\end{align}
The proposition follows upon combining equation (\ref{eIncGSX}), (\ref{eIneqGSX}) and (\ref{eIneqGSX2}).
\end{proof}

\begin{prop}
\label{pDegree}
Let $X := X_1 \times \ldots \times X_d$ be a product space and let $S \subseteq [d]$ with $|S| \geq 2$. We assume that there is some constant $A$ such that $|X_i| = A$ for all $i \in S$. We further assume that there is a governing expansion $\mathfrak{G}$ on $X$ such that $\overline{X}_S = \overline{Y}_S(\mathfrak{G})$. Define
\[
F(X) := \prod_{i \in S} \prod_{p \in X_i} \overline{\Q}^{\textup{ker}(\chi_p)} \Q\left(\zeta_l, \sqrt[l]{p}\right) \prod_{i_a \in T \subsetneq S} \prod_{\bar{x} \in \overline{X}_T} L(\phi_{T, \bar{x}}(\mathfrak{G})),
\]
where $\chi_p$ is any character from $G_\Q$ to $\langle \zeta_l \rangle$ of conductor dividing $p^\infty$. Then the degree of $F(X)$ depends only on $A$ and $|S|$, and we call it $d(A, |S|)$. If $P$ is a set of primes all equal to $0$ or $1$ modulo $l$ and disjoint from $\bigcup_{i \in S} X_i$, we have moreover
\[
\left(\prod_{\bar{x} \in \overline{Y}_S(\mathfrak{G})} L(\phi_{S, \bar{x}}(\mathfrak{G}))\right) \cap \left(\prod_{p \in P} \overline{\Q}^{\textup{ker}(\chi_p)} \Q\left(\zeta_l, \sqrt[l]{p}\right)\right) = \mathbb{Q}.
\]
Let $X' := X_1' \times \ldots \times X_d'$ be another product space with the same conditions as $X$ and further suppose that $|X_i \cap X_i'| = 1$ for all $i \in S$. Then the degree of $F(X) F(X')$ is equal to
\[
\frac{d(A, |S|)^2}{l^{2|S|}}.
\]
\end{prop}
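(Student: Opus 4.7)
The plan is to establish the three claims of the proposition in sequence, each building on the previous.

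For the first claim, I will construct $F(X)$ as a tower over $\Q$. At the base I place the Kummer--cyclotomic field
\[
M_0 := \prod_{i \in S}\prod_{p \in X_i}\overline{\Q}^{\textup{ker}(\chi_p)}\Q(\zeta_l,\sqrt[l]{p}),
\]
whose degree is computed by Kummer theory: the $\chi_p$-characters and the $\sqrt[l]{p}$-Kummer characters lie in different eigenspaces for the $\Gal(\Q(\zeta_l)/\Q)$-action (trivial and cyclotomic, respectively), so they are $\mathbb{F}_l$-independent over $\Q(\zeta_l)$. On top of $M_0$ I stack the expansion layers $\prod_{\bar{x} \in \overline{X}_T}L(\phi_{T,\bar{x}})$ in order of increasing $|T|$ for $i_a \in T \subsetneq S$. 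Proposition \ref{pGovAdd} identifies the Galois group of each new layer over the previous as $\mathcal{A}(\overline{X}_T,\mathbb{F}_l)$, and Proposition \ref{pDiff} computes its $\mathbb{F}_l$-dimension as $\prod_{i \in T}(A-1)\cdot\prod_{j \notin T}|X_j|$. Since $|X_i| = A$ for $i \in S$, the resulting total degree is a function of $A$ and $|S|$ alone.

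For the second claim, set $L := \prod_{\bar{x}}L(\phi_{S,\bar{x}})$ and $R := \prod_{p \in P}\overline{\Q}^{\textup{ker}(\chi_p)}\Q(\zeta_l,\sqrt[l]{p})$. The crux is that $L/\Q$ is pro-$l$: Section \ref{governing maps} exhibits $\Gal(L(\phi_{[m]})/\Q)$ for a single expansion map explicitly as the $l$-group $\frac{\mathbb{F}_l[t_1,\dots,t_m]}{(t_1^2,\dots,t_m^2)}\rtimes\Gal(K_{[m]}/\Q)$, and composita of $l$-group Galois extensions are again pro-$l$. Hence $L/\Q$ is ramified only at primes of $\bigcup_{i \in S}X_i \cup \{l\}$. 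A direct commutator computation gives $\Gal(R/\Q) \cong (\bigoplus_{p \in P}\mathbb{F}_l^{2})\rtimes\mathbb{F}_l^*$ with cyclotomic action on the Kummer factor and trivial action on the $\chi_p$-factor, whence $R^{\textup{ab}} = \Q(\zeta_l)\cdot\prod_{p \in P}\overline{\Q}^{\textup{ker}(\chi_p)}$. The pro-$l$ property of $L$ forces $L^{\textup{ab}}\cap\Q(\zeta_l) = \Q$, and since $L^{\textup{ab}}$ ramifies only inside $\bigcup_{i \in S}X_i \cup \{l\}$ while the non-cyclotomic part of $R^{\textup{ab}}$ ramifies in $P$, one gets $L^{\textup{ab}}\cap R^{\textup{ab}} = \Q$. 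Combining $(L\cap R)^{\textup{ab}} \subseteq L^{\textup{ab}}\cap R^{\textup{ab}}$ with the Frattini-type fact that a non-trivial finite $l$-group has non-trivial abelianization forces $L\cap R = \Q$.

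For the third claim, let $p_i^* := X_i\cap X_i'$ and set $M^* := \prod_{i \in S}\overline{\Q}^{\textup{ker}(\chi_{p_i^*})}\Q(\zeta_l,\sqrt[l]{p_i^*})$, which sits inside $F(X)\cap F(X')$. For the matching upper bound I will decompose $F(X)$ and $F(X')$ as towers over $M^*$ and apply the pro-$l$-plus-ramification argument of part (2) at each level: the ``new'' part of $F(X)$ above $M^*$ is pro-$l$ and ramifies only at primes of $\bigcup_{i \in S}X_i - \{p_i^*\}$, while the analogous part of $F(X')$ ramifies only at $\bigcup_{i \in S}X_i' - \{p_i^*\}$; since these sets are disjoint, linear disjointness over $M^*$ follows by the same abelianization--ramification argument. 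Together with the degree identity $[F(X)F(X'):\Q] = [F(X):\Q][F(X'):\Q]/[F(X)\cap F(X'):\Q]$ this yields the claim. The main technical obstacle will be carrying the linear-disjointness argument uniformly through all the lower expansion layers $L(\phi_{T,\bar{x}})$ with $i_a \in T \subsetneq S$: some of these use only shared primes and already collapse into $M^*$, while others genuinely contribute new ramification and must be handled by iterating the reasoning from part (2).
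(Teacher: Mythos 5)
The paper states Proposition~\ref{pDegree} without proof, so there is no paper argument to compare against; I am therefore assessing your proposal on its own merits. Your three-part strategy --- a tower over the Kummer--cyclotomic base $M_0$ using Propositions~\ref{pGovAdd} and~\ref{pDiff}, an abelianization/ramification argument for the second claim, and identifying $F(X)\cap F(X')$ with $M^*$ for the third --- is the natural approach. Your argument for the second claim is sound: $L$ is a finite $l$-extension ramified only above $\bigcup_{i\in S}X_i$, $R^{\textup{ab}} = \Q(\zeta_l)\cdot\prod_{p\in P}\overline{\Q}^{\textup{ker}(\chi_p)}$, and the disjointness of ramification loci together with $\gcd(l,l-1)=1$ gives $L^{\textup{ab}}\cap R^{\textup{ab}} = \Q$, after which the Frattini observation closes.

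The argument for the third claim, however, has a genuine gap. You observe that $F(X)/M^*$ ramifies only above $\bigcup_{i\in S}(X_i - \{p_i^*\})$, that $F(X')/M^*$ ramifies only above $\bigcup_{i\in S}(X_i' - \{p_i^*\})$, and that these loci are disjoint, and you conclude linear disjointness over $M^*$ ``by the same abelianization--ramification argument.'' But that argument only shows $(F(X)\cap F(X'))/M^*$ is everywhere unramified, i.e., sits inside the Hilbert class field of $M^*$ --- and unlike $\Q$ in claim~(2), the field $M^*$ may very well have nontrivial $l$-class group. To close the gap you must show that $\Gal(F(X)/M^*)^{\textup{ab}}$ is generated by inertia (equivalently, that $F(X)$ contains no nontrivial extension of $M^*$ unramified everywhere), and this requires unpacking the module structure of $\Gal\bigl(\prod_{\bar{x}}L(\phi_{S,\bar{x}})/M_0\bigr)$ supplied by Proposition~\ref{pGovAdd} and the explicit $R$-module description in Section~\ref{governing maps}; it is not automatic from ramification considerations alone.

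Finally, your own identification $F(X)\cap F(X') = M^*$ is at odds with the stated formula: $M^* = \prod_{i\in S}\overline{\Q}^{\textup{ker}(\chi_{p_i^*})}\Q(\zeta_l,\sqrt[l]{p_i^*})$ has degree $(l-1)l^{2|S|}$ over $\Q$ (the cyclotomic layer $\Q(\zeta_l)$ contributes $l-1$ exactly once), so the quotient degree would be $d(A,|S|)^2/\bigl((l-1)l^{2|S|}\bigr)$ if $d(A,|S|)$ denotes the absolute degree. Comparing with the use of the proposition in the proof of Proposition~\ref{pFinal}, where all degrees are taken relative to a base field $L(y,x_1,\ldots,x_k)$ that already contains $\Q(\zeta_l)$, the intended reading is almost certainly that ``degree'' means degree over $\Q(\zeta_l)$; your write-up should either adopt this convention explicitly or flag the missing factor of $l-1$.
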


\section{Prime divisors}
\label{sDivisor}
In the previous sections it has been very beneficial to work with product spaces of the shape $X = X_1 \times \ldots \times X_r$, where $X_i$ are disjoint non-empty sets of primes $0$ or $1$ modulo $l$. Let $S_r(N, l)$ be the set of squarefree integers of size at most $N$ with exactly $r$ prime divisors all equal to $0$ or $1$ modulo $l$. Then there is a natural injective map from $X$ to $S_r(\infty, l)$. To prove our analytic results, we will not work with all product spaces $X$, but only those that are sufficiently nice. By carefully studying $S_r(N, l)$ we are able to show that most product spaces $X$ have the nice properties we need. The material in this section is directly based on Section 5 of Smith \cite{Smith}.

\begin{mydef}
\label{dRanges}
Let $N \geq e^{e^{10 \cdot l}}$ be a large real number and let $r$ be an integer satisfying
\begin{align}
\label{er}
1 \leq r \leq 2 \log \log N.
\end{align}
For $n \in S_r(N, l)$ we write $(p_1, \ldots, p_r)$ for the prime divisors of $n$ with $p_1 < \ldots < p_r$.
\begin{itemize}
\item If $D_1 > 100$ is a real number, we say that $n$ is comfortably spaced above $D_1$ if for all $i < r$ satisfying $p_i > D_1$
\[
l^{200}D_1 < l^{200} p_i < p_{i + 1}.
\]
\item Let $C_0 > 1$ be a real number. We call $n$ $C_0$-regular if for all $i \leq \frac{1}{3}r$
\[
\left|i - \frac{r\log \log p_i}{\log \log N}\right| < C_0^{\frac{1}{5}} \cdot \max(i, C_0)^{\frac{4}{5}}.
\]
\end{itemize}
We say that $X = X_1 \times \ldots \times X_r \subseteq S_r(N, l)$ is $C_0$-regular if there is $n \in X$ that is $C_0$-regular.
\end{mydef}

For a general squarefree integer $n$, there is a well-known heuristic model for the values of $\log \log p_i$. This heuristic predicts that the values of $\log \log p_i$ for $i = 1, \ldots, r$ behave as a Poisson point process of intensity $1$. It is not hard to see that this heuristic breaks down for small and large values of $i$, but it is nevertheless a solid heuristic, see for example the work of Granville \cite{Granville}.

The heuristic model needs to be slightly modified in our setting. Recall that, loosely speaking, a typical squarefree integer $n$ has roughly $\log \log n$ prime divisors on average with standard deviation $\sqrt{\log \log n}$. We require only very weak conditions on $r$ in Definition \ref{dRanges} far outside the typical range. This makes the correction factor $r/ \log \log N$ in the definition of $C_0$-regular necessary.

Assuming the heuristic model, it is an exercise in probability theory to show that most integers are comfortably spaced above $D_1$ and $C_0$-regular. This is done in Proposition 5.2 in Smith \cite{Smith}. Remarkably enough, this proposition is then used to establish the analogous result for the integers. 

We will now show that almost all $n \in S_r(N, l)$ are comfortably spaced above $D_1$ and $C_0$-regular following the strategy of Smith \cite{Smith}. Since we are following Smith's strategy, our first goal is to generalize Proposition 5.2 of Smith \cite{Smith}.

\begin{prop}
\label{p5.2}
Let $L > 2$ be a real number and let $r \geq 1$ be an integer. Suppose that $X_1, \ldots, X_r \sim U(0, L)$ are independent, uniformly distributed random variables. Define $U_{(i)}$ to be the $i$-th order statistic of $X_1, \ldots, X_r$. For a real number $C_0 > 0$, we say that $X_1, \ldots, X_r$ are $C_0$-regular if for all $1 \leq i \leq r$
\[
\left|i - \frac{rU_{(i)}}{L}\right| < C_0^{\frac{1}{5}} \cdot \max(i, C_0)^{\frac{4}{5}}.
\]
Then there is an absolute constant $c > 0$ such that
\[
\mathbb{P}(X_1, \ldots, X_r \text{ is not } C_0\text{-regular}) = O\left(\exp(-c \cdot C_0)\right).
\]
\end{prop}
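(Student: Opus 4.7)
The plan is to reduce the statement to standard concentration bounds for sums of i.i.d.\ exponential random variables via the classical representation of uniform order statistics. Specifically, if $E_1, \ldots, E_{r+1}$ are i.i.d.\ $\mathrm{Exp}(1)$ variables and $S_k := E_1 + \cdots + E_k$, then the vector $(U_{(1)}/L, \ldots, U_{(r)}/L)$ has the same joint distribution as $(S_1/S_{r+1}, \ldots, S_r/S_{r+1})$. Writing $Y_i := rU_{(i)}/L$ and unwinding the identity $rS_i - iS_{r+1} = r(S_i-i) - i(S_{r+1}-(r+1)) - i$, one obtains on the event $\{S_{r+1} \geq r/2\}$ the deterministic bound
\[
|Y_i - i| \;\leq\; 2|S_i - i| \;+\; \frac{2i}{r}\bigl|S_{r+1} - (r+1)\bigr| \;+\; 2.
\]

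First I dispose of the trivial case $r \leq C_0$: since $Y_i \in [0,r]$ and $i \in [1,r]$, one has $|Y_i - i| \leq r \leq C_0$ automatically, so the sequence is $C_0$-regular with probability $1$. Hence I may assume $r > C_0$. Next, Bernstein's inequality for sums of i.i.d.\ $\mathrm{Exp}(1)$ random variables yields an absolute constant $c_0 > 0$ with $\mathbb{P}(|S_k - k| \geq t) \leq 2\exp(-c_0 \min(t^2/k, t))$ for all $k \geq 1$ and $t \geq 0$. In particular $\mathbb{P}(S_{r+1} < r/2) \leq 2\exp(-c_0 r) \leq 2\exp(-c_0 C_0)$, disposing of the bad conditioning event.

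It remains to union-bound over $i \in [r]$, which I split into two regimes. For $1 \leq i \leq C_0$ the threshold is $t_i = C_0$, and Bernstein gives $\mathbb{P}(|S_i - i| > C_0/8) \leq 2\exp(-c_1 C_0)$ (using $i \leq C_0$, so the exponent equals $C_0$) while $\mathbb{P}((2i/r)|S_{r+1}-(r+1)| > C_0/2) \leq 2\exp(-c_1 r) \leq 2\exp(-c_1 C_0)$; summing over the $C_0$ values contributes $O(C_0 \exp(-c_1 C_0)) = O(\exp(-c_2 C_0))$. For $C_0 < i \leq r$ the threshold is $t_i = C_0^{1/5} i^{4/5} \leq i$, so Bernstein's exponent equals $t_i^2/i = C_0^{2/5} i^{3/5}$, giving $\mathbb{P}(|S_i - i| > t_i/8) \leq 2\exp(-c_3 C_0^{2/5} i^{3/5})$; an analogous computation treats the $S_{r+1}$ contribution. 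Integral comparison shows $\sum_{i > C_0} \exp(-c_3 C_0^{2/5} i^{3/5})$ is dominated by its first term $\exp(-c_3 C_0)$ up to a constant.

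There is no genuine obstacle; the content is the verification that the thresholds $t_i = C_0^{1/5} \max(i, C_0)^{4/5}$ are calibrated so that Bernstein yields exponent $\gtrsim C_0$ at $i \approx C_0$ and grows superlinearly like $C_0^{2/5} i^{3/5}$ for larger $i$, making the tail summable. The trickiest bookkeeping is checking that the cross term $(2i/r)|S_{r+1} - (r+1)|$ contributes no worse than the diagonal term $2|S_i - i|$ after optimizing Bernstein in $i$; this works uniformly because the prefactor $i/r$ exactly compensates the growth of $rt_i/i$ in the Bernstein exponent. Combining the three contributions yields $\mathbb{P}(X_1,\ldots,X_r \text{ is not } C_0\text{-regular}) = O(\exp(-cC_0))$ for a suitable absolute constant $c > 0$.
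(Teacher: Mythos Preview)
Your argument is correct and self-contained. The paper's proof, by contrast, is a one-liner: rescale via $X_i' := rX_i/L$ to reduce to the case $L = r$, then cite Proposition~5.2 of Smith~\cite{Smith}. Your direct route through the exponential representation $U_{(i)}/L \stackrel{d}{=} S_i/S_{r+1}$ and Bernstein's inequality is the natural way to prove such a statement from scratch; it buys independence from the external reference, while the paper's approach buys brevity. One minor imprecision worth tightening: in the regime $i > C_0$, union-bounding the cross term $(2i/r)|S_{r+1}-(r+1)|$ over $i$ does not directly give a summable series, since the relevant Bernstein exponent $\asymp rC_0^{2/5}i^{-2/5}$ is \emph{decreasing} in $i$ and you would pick up an unwanted factor of $r$. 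The fix is immediate: the threshold $rt_i/(4i) = rC_0^{1/5}/(4i^{1/5})$ is monotone decreasing in $i$, so the union over $i \le r$ of these events is the single event $\{|S_{r+1}-(r+1)| > c\,r^{4/5}C_0^{1/5}\}$, whose Bernstein exponent is $\gtrsim r^{3/5}C_0^{2/5} \ge C_0$ since $r > C_0$. With this adjustment your proof goes through cleanly.
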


\begin{proof}
Define $L' := rL/L = r$ and $X'_i := rX_i/L$. Now apply Proposition 5.2 of Smith \cite{Smith}.
\end{proof}

Having established Proposition \ref{p5.2}, we are ready to study $S_r(N, l)$. In our proofs, we will frequently encounter the following integral
\[
I_r(u) := \int_{\substack{t_1, \ldots, t_r \geq 1 \\ t_1 + \ldots + t_r \leq u}} \frac{dt_1}{t_1} \cdot \ldots \cdot \frac{dt_r}{t_r},
\]
which was first studied by Ramanujan. It is this integral that provides the connection between $S_r(N, l)$ and the heuristic model. Note that $I_r(u)$ is trivially bounded by $(\log u)^r$. Our next lemma gives a better bound for $I_r(u)$ in some ranges of $u$ and $r$.

\begin{lemma}
\label{l5.1}
Let $\gamma$ be the Euler-Mascheroni constant. Let $u \geq 3$ be a real number and let $r \geq 1$ be an integer. If we set $\alpha := r / \log u$, we have
\[
\left|I_r(u) - \frac{e^{-\gamma \alpha}}{\Gamma(1 + \alpha)} (\log u)^r\right| = O\left((\alpha + 1) (\log u)^r \frac{(\log \log u)^3}{\log u}\right).
\]
\end{lemma}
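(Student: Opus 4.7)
The plan is to compute $I_r(u)$ by Laplace inversion and compare it to a Hankel representation of the proposed main term. Setting $f(t) := t^{-1}\mathbf{1}_{t \geq 1}$, observe that $I_r(u)$ is the antiderivative of the $r$-fold convolution $f^{*r}$, and since $\mathcal{L}[f](s) = E_1(s) := \int_1^\infty e^{-st}t^{-1}\,dt$ is the exponential integral, convolution-multiplication gives
\[
\mathcal{L}[I_r](s) \;=\; \frac{E_1(s)^r}{s}, \qquad I_r(u) \;=\; \frac{1}{2\pi i}\int_{(c)} \frac{e^{su} E_1(s)^r}{s}\,ds \quad (c>0).
\]
In parallel, Hankel's classical formula yields
\[
\frac{e^{-\gamma\alpha} u^\alpha}{\Gamma(1+\alpha)} \;=\; \frac{1}{2\pi i}\int_{(c)} \frac{e^{su}\,e^{-\gamma\alpha}}{s^{1+\alpha}}\,ds,
\]
so the proof reduces to comparing the two integrands on a common Hankel contour $\mathcal{H}$ encircling the negative real axis (the branch cut of both $\log s$ and $E_1$). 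The choice $\alpha := r/\log u$ is forced by saddle point considerations: on $\mathcal{H}$ the saddle of $e^{su}s^{-1}(-\log s)^r$ sits at $|s| \asymp \alpha$, where $|\log s| \asymp \log u$, so matching the asymptotic $E_1(s) = -\log s - \gamma + O(s)$ valid near $s = 0$ against the identity $e^{-\gamma\alpha}s^{-\alpha}$ evaluated at $|s| \asymp \alpha$ produces precisely the factor $e^{-\gamma\alpha}(\log u)^r$.

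To execute the plan I would deform both contours to $\mathcal{H}$, split it at some radius $\rho$ slightly larger than $(\log u)^{-1}$, apply the asymptotic $E_1(s) = -\log s - \gamma + O(s)$ on the inner portion $|s| \leq \rho$, and bound the outer portion $|s|>\rho$ using the crude estimate $|E_1(s)| \ll |\log s|$ together with the exponential decay of $|e^{su}|$. The inner integral produces $e^{-\gamma\alpha}(\log u)^r/\Gamma(1+\alpha)$ as leading term via the Hankel formula, upon substituting $(-\log s - \gamma)^r$ for $E_1(s)^r$ and rewriting using $(-\log s -\gamma)^r = (\partial/\partial\alpha)^r|_{\alpha=0}(e^{-\gamma}/s)^\alpha$ combined with Taylor expansion around the saddle value $\alpha = r/\log u$. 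The $O(s)$ remainder in the expansion of $E_1$ contributes, after integration against the Hankel kernel, a correction of size $O((\alpha+1)(\log u)^{r-1})$ times polylogarithmic losses from the binomial expansion of $(E_1(s))^r$.

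The hard part is tracking the error uniformly in the full admissible range $1 \leq r \leq 2\log\log N$: because $\alpha$ need not be small, one cannot linearise in $\alpha$, and the error must survive a nontrivial dependence on $r$. The exact factor $(\log\log u)^3/\log u$ in the stated bound should emerge from balancing three sources of error: (i) the $O(s)$ correction in $E_1(s) = -\log s - \gamma + O(s)$, contributing a factor of order $1/\log u$ on $\mathcal{H}$; (ii) the binomial expansion of $(-\log s - \gamma + O(s))^r$, whose cross terms carry an extra factor of $\log\log u$ per power of $\log s$ picked up; and (iii) the choice of truncation radius $\rho$, tuned to optimise (i) against the tail bound and contributing a further $\log\log u$ from the logarithmic contour width around the saddle. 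Making these three contributions cohere uniformly in $r$, so that the total error is exactly $O((\alpha+1)(\log u)^r (\log\log u)^3/\log u)$ rather than a larger power of $\log\log u$, is the principal technical challenge of the argument.
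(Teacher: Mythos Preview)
The paper does not prove this lemma at all: its entire proof is the sentence ``This is Lemma 5.1 of Smith \cite{Smith}.'' So there is no argument in the paper against which to compare your strategy.

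Your Laplace-inversion/Hankel-contour framework is a reasonable line of attack in principle, and the identity $\mathcal{L}[I_r](s)=E_1(s)^r/s$ is correct. However, the comparison you set up is wrong on its face. You write
\[
\frac{e^{-\gamma\alpha}\,u^{\alpha}}{\Gamma(1+\alpha)}
\;=\;\frac{1}{2\pi i}\int_{(c)}\frac{e^{su}\,e^{-\gamma\alpha}}{s^{1+\alpha}}\,ds,
\]
which is true, but the left-hand side is \emph{not} the main term of the lemma: the main term is $e^{-\gamma\alpha}(\log u)^r/\Gamma(1+\alpha)$, and since $u^{\alpha}=e^{\alpha\log u}=e^{r}$ this differs from $(\log u)^r$ by the enormous factor $(\log u/e)^r$. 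So ``comparing the two integrands on a common Hankel contour'' cannot possibly yield the stated asymptotic; the target you have written down is simply the wrong quantity. Relatedly, the saddle of $e^{su}s^{-1}(-\log s)^r$ does not sit at $|s|\asymp\alpha$ as you claim: solving $u=1/s - r/(s\log s)$ gives $|s|\asymp(1+\alpha)/u$, and it is only because of this $1/u$ scale that $|\log s|\asymp\log u$ holds.

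A correct contour approach does exist --- essentially Selberg's method for the Sathe--Selberg formula --- but it proceeds by studying the generating function $\sum_r I_r(u)z^r$ as a function of the complex variable $z$ (extracting the $r$-th coefficient by Cauchy's formula in $z$, with a saddle at $z\approx\alpha$), rather than by comparing $E_1(s)^r$ to $s^{-\alpha}$ directly in the $s$-plane. Your sketch conflates these two roles, and as written it does not reach the main term, let alone the $(\log\log u)^3/\log u$ error.
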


\begin{proof}
This is Lemma 5.1 of Smith \cite{Smith}.
\end{proof}

Define $S'_r(N, l)$ to be the subset of $S_r(N, l)$ consisting of those integers that are not divisible by $l$. For technical reasons, it turns out to be more convenient to work with $S'_r(N, l)$.

\begin{theorem}
\label{tSprimeNL}
Let $N$ and $r$ be as in Definition \ref{dRanges}.
\begin{itemize}
\item Suppose that $D_1 > 100$. Then we have
\[
\frac{\left|\left\{n \in S'_r(N, l) : n \text{ is not comfortably spaced above } D_1\right\}\right|}{|S'_r(N, l)|} = O\left(\frac{1}{\log D_1} + \frac{1}{\log \log N}\right).
\]
\item There is an absolute constant $c > 0$ such that for all $C_0 > 0$
\[
\frac{\left|\left\{n \in S'_r(N, l) : n \text{ is not } C_0\text{-regular}\right\}\right|}{|S'_r(N, l)|} = O\left(\exp(-c \cdot C_0) + \exp\left(-cr^{\frac{1}{3}}\right)\right).
\]
\end{itemize}
\end{theorem}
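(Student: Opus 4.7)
The plan is to follow Smith's strategy from Section 5 of \cite{Smith}, replacing sums over all primes with sums over primes in $P_l := \{p \text{ prime} : p \equiv 1 \bmod l\}$. The fundamental input is the Mertens-type asymptotic
\[
\sum_{\substack{p \leq x \\ p \in P_l}} \frac{1}{p} = \frac{1}{l-1} \log \log x + M_l + O\left(\frac{1}{\log x}\right),
\]
together with $\pi_l(x) = \frac{x}{(l-1)\log x}(1 + o(1))$. Preliminarily, I would establish a two-sided bound for $|S'_r(N,l)|$ in the range $r \leq 2\log\log N$ via Selberg--Delange type reasoning, giving a main term proportional to $\frac{N}{\log N}\cdot I_r(\log N)\cdot(l-1)^{-r}$ with correction factors as in Lemma \ref{l5.1}.

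For the first bullet (comfortably spaced above $D_1$), I would bound the count of bad $n$ by summing over the ``close pair'' of consecutive prime divisors $p_i < p_{i+1}$ with $D_1 < p_i$ and $p_{i+1} \leq l^{200} p_i$. Writing $n = pqm$ with $m \in S'_{r-2}(N/(pq), l)$ subject to the constraints, and using the standard upper bound $|S'_{r-2}(N/(pq), l)| \ll |S'_{r-2}(N,l)|/(pq)^{1 - \epsilon}$, Mertens yields
\[
\sum_{p \in P_l, p > D_1}\frac{1}{p}\sum_{\substack{q \in P_l \\ p < q \leq l^{200} p}}\frac{1}{q} \ll \frac{\log l^{200}}{(l-1)^{2}\log D_1} \ll \frac{1}{\log D_1}.
\]
Dividing by the main term, the $1/\log\log N$ contribution comes from the Mertens error, giving the claimed bound.

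For the second bullet ($C_0$-regularity), the plan is to set up a direct comparison with Proposition \ref{p5.2}. For $n \in S'_r(N,l)$ and $i \leq r/3$, let $E_i$ be the event that $|i - r \log\log p_i / \log\log N| \geq C_0^{1/5}\max(i, C_0)^{4/5}$. The key step is to show that for fixed $i$ and a window $[T, T+dT] \subseteq (0, \log\log N)$, the number of $n \in S'_r(N,l)$ with $\log\log p_i \in [T, T+dT]$ factors (up to acceptable error) as a product: the count of $i-1$ small primes in $P_l$ below $e^{e^T}$, the density of primes at scale $T$, and the count of $r-i$ primes in $P_l$ above $e^{e^T}$ with product bounded by $N/p_i \cdots$. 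Applying the Mertens estimate repeatedly and translating to the variables $t_i = \log\log p_i$, the resulting integral matches the one governing the joint density of order statistics of $r$ iid uniform variables on $(0, L)$ with $L = \log\log N$, up to corrections controlled by Lemma \ref{l5.1}. Then Proposition \ref{p5.2}, applied to each $i$ and summed via a union bound, yields the $\exp(-cC_0)$ term; the additional $\exp(-cr^{1/3})$ term arises from using this comparison only for a favorable range of $i$ and estimating the remaining tails trivially.

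The main obstacle will be uniformity in $r$ in the Mertens-type estimates for tuples, particularly handling the regime where $r$ is close to $2\log\log N$ so that $\alpha = r/\log N$ is not vanishingly small. The Bessel-correction factor $e^{-\gamma\alpha}/\Gamma(1+\alpha)$ from Lemma \ref{l5.1} must be tracked through both the main term $|S'_r(N,l)|$ and the constrained counts, so that it cancels in the ratios and the resulting error is indeed $O(\exp(-cr^{1/3}))$ rather than something swamping the main saving. Carefully implementing Smith's telescoping between nested integrals (for successive values of $i$) is the technical heart of the argument and where the cube-root exponent $r^{1/3}$ ultimately comes from.
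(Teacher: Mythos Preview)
Your first-bullet argument matches the paper's: bound bad $n$ by summing over a close pair $(p,q)$ with $D_1 < p < q \leq l^{200}p$ and estimate $|S'_{r-2}(N/(pq), l)|$ via the two-sided count (Lemma~\ref{l5.5}). One correction: the $1/\log\log N$ term does not come from the Mertens error. The paper splits at $p < N^{1/4}$ versus $p \geq N^{1/4}$; in the former range Lemma~\ref{l5.5} and the Mertens sum over close pairs give $O(|S'_r(N,l)|/\log D_1)$, while in the latter the crude bound $|S'_{r-2}(N/pq,l)| \leq N/(pq)$ yields $O(N/\log N)$, which is $O(|S'_r(N,l)|/\log\log N)$ since $r > 1$.

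Your second-bullet plan has a real gap. The mechanism you describe --- comparing each $\log\log p_i$ individually to an order statistic, union-bounding over $i$, and attributing $\exp(-cr^{1/3})$ to ``restricting to a favorable range of $i$'' via some ``telescoping between nested integrals'' --- is not how the paper (or Smith) proceeds, and I do not see how you make the comparison uniform in $r$ without the device below. The paper instead introduces a threshold $N_1 := \exp\bigl(\sqrt{\log N \cdot B(D,l)}\bigr)$ and splits $S'_r(N,D,l)$ by the number $k$ of prime factors below $N_1$ (Lemma~\ref{l5.6}). A direct Hoeffding-type bound on the resulting counts shows that the proportion with $|k - r/2| > r^{2/3}$ is $O(\exp(-cr^{1/3}))$; \emph{this} is the source of the $r^{1/3}$ exponent, not any restriction on $i$. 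For $k$ in the good range the count factors as a sum over the $k$ small primes times $|S'_{r-k}(N/P, N_1, l)|$, and because $P < N_1^k < \sqrt{N}$ the latter is uniform in $P$ up to a constant (Lemma~\ref{l5.5}). This reduces $C_0$-regularity to a statement about the Grid volume cut out by the small primes alone, which is then compared directly to Proposition~\ref{p5.2}. Your worry about tracking $e^{-\gamma\alpha}/\Gamma(1+\alpha)$ through the ratios is sidestepped precisely by this factorization: those correction factors sit in the large-prime count and cancel upon conditioning on the small primes.
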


\begin{proof}
This is a mostly straightforward generalization of Theorem 5.4 in Smith \cite{Smith}. Define for any real $x > 1$
\[
F_l(x) := \sum_{\substack{p \leq x \\ p \equiv 1 \bmod l}} \frac{1}{p}.
\]
We use the Siegel--Walfisz theorem and partial summation to obtain absolute constants $A, c > 0$ such that for all $x \geq e^l$
\[
\left|F_l(x) - \frac{1}{\varphi(l)} \log \log x - B_1(l)\right| \leq A \cdot e^{-c \sqrt{\log x}},
\]
where $B_1(l)$ is a computable constant in terms of $l$. In particular, there is a constant $A(l) > 0$ depending only on $l$ and an absolute constant $c > 0$ such that for all $x \geq 1.5$
\[
\left|F_l(x) - \frac{1}{\varphi(l)} \log \log x - B_1(l)\right| \leq A(l) \cdot e^{-c \sqrt{\log x}}.
\]
Denote by $\mathcal{P}_l$ the set of primes $1$ modulo $l$ and let $T$ be a subset of $\mathcal{P}_l^r$. Then we define $\text{Grid}(T) \subseteq \mathbb{R}^r$ by
\[
\text{Grid}(T) := \bigcup_{(p_1, \ldots, p_r) \in T} \prod_{1 \leq i \leq r} \left[\varphi(l) \cdot \left(F_l(p_i) - \frac{1}{p_i} - B_1(l)\right), \varphi(l) \cdot \left(F_l(p_i) - B_1(l)\right)\right].
\]
Here $\prod$ is to be interpreted as the Cartesian product of intervals. To facilitate our analysis of $S'_r(N, l)$ we define 
\[
S'_r(N, D, l) := \left\{n \in S'_r(N, l) : p \mid n \implies p > D\right\},
\]
where $D > 1.5$ is a real number. Let $\mathbb{R}_{\geq B}$ be the subset of $\mathbb{R}^r$ with all coordinates at least $B$. Define for $u > 0$ a real number and $r \geq 1$ an integer
\[
V_r(u, D, l) := \left\{(x_1, \ldots, x_r) \in \mathbb{R}_{\geq \varphi(l)(F_l(D) - B_1(l))}^r : e^{x_1} + \ldots + e^{x_r} \leq u\right\}.
\]
Put
\[
T_r(N, D, l) := \{(p_1, \ldots, p_r) \in \mathcal{P}_l^r : p_1 \cdot \ldots \cdot p_r < N, \ p_i > D\}.
\]
There exists a constant $\kappa(l)$ depending only on $A(l)$ and $c$ such that
\[
\exp\left(x + A(l) \exp\left(-c \cdot e^{\frac{x}{2}}\right)\right) - \exp(x) \leq \kappa(l).
\]
This implies that for a good choice of $A(l)$ and $c$
\begin{align}
\label{eVolume}
V_r(\log N - r\kappa(l), D, l) \subseteq \text{Grid}(T_r(N, D, l)) \subseteq V_r(\log N + r\kappa(l), D, l).
\end{align}
A change of variables shows
\begin{align}
\label{eVvolume}
\text{Vol}(V_r(u, D, l)) = I_r\left(e^{-\varphi(l)(F_l(D) - B_1(l))} u\right).
\end{align}
Setting $B(D, l) := e^{\varphi(l)(F_l(D) - B_1(l))}$, we deduce from (\ref{eVolume}) and (\ref{eVvolume}) that
\[
I_r\left(\frac{\log N - r\kappa(l)}{B(D, l)}\right) \leq \varphi(l)^r \sum_{\substack{p_1 \cdot \ldots \cdot p_r < N \\ p_1, \ldots, p_r \in \mathcal{P}_l \\ p_1, \ldots, p_r > D}} \frac{1}{p_1 \cdot \ldots \cdot p_r} \leq I_r\left(\frac{\log N + r\kappa(l)}{B(D, l)}\right).
\]
Now assume that $3 \cdot B(D, l) \leq \log N$ and that $r^2 \leq \log N$. Using the classical differential equation $I'_r(u) = r/u \cdot I_{r - 1}(u - 1)$ due to Ramanujan and the trivial bound for $I_{r - 1}(u - 1)$, we obtain the following equality
\[
\varphi(l)^r \sum_{\substack{p_1 \cdot \ldots \cdot p_r < N \\ p_1, \ldots, p_r \in \mathcal{P}_l \\ p_1, \ldots, p_r > D}} \frac{1}{p_1 \cdot \ldots \cdot p_r} = I_r\left(\frac{\log N}{B(D, l)}\right) + O\left(\frac{r^2}{\log N} \cdot (\log \log N - \log B(D, l))^{r - 1}\right).
\]
We introduce the following sums
\begin{align*}
F'_r(N, D, l) &:= \varphi(l)^r \sum_{\substack{p_1 \cdot \ldots \cdot p_r < N \\ p_1, \ldots, p_r \in \mathcal{P}_l \\ p_1, \ldots, p_r > D}} \frac{1}{p_1 \cdot \ldots \cdot p_r} \\
G'_r(N, D, l) &:= \varphi(l)^r \sum_{\substack{p_1 \cdot \ldots \cdot p_r < N \\ p_1, \ldots, p_r \in \mathcal{P}_l \\ p_1, \ldots, p_r > D}} \log(p_1 \cdot \ldots \cdot p_r) \\
H'_r(N, D, l) &:= \varphi(l)^r \sum_{\substack{p_1 \cdot \ldots \cdot p_r < N \\ p_1, \ldots, p_r \in \mathcal{P}_l \\ p_1, \ldots, p_r > D}} 1.
\end{align*}
Put 
\[
u := \frac{\log N}{B(D, l)}.
\]
Until now we have only assumed that $u \geq 3$ and $r^2 \leq \log N$. We additionally suppose that $\log \log N \geq 1.1 \log \log D$. Under these three assumptions we claim that
\begin{align}
\label{eGr}
G'_r(N, D, l) = rN \cdot I_{r - 1}(u) + O\left(\frac{r^4N}{\log N} \cdot (\log u)^{r + 3}\right)
\end{align}
and
\begin{align}
\label{eHr}
H'_r(N, D, l) = \frac{rN}{\log N} \cdot I_{r - 1}(u) + O\left(\frac{r^4N}{(\log N)^2} \cdot (\log u)^{r + 3}\right).
\end{align}
Recall that we have already shown that
\begin{align}
\label{eFr}
F'_r(N, D, l) = I_r(u) + O\left(\frac{r^2}{\log N} \cdot (\log u)^{r - 1}\right).
\end{align}
Let us start with (\ref{eGr}). It will be convenient to abbreviate $p_1 \cdot \ldots \cdot p_{r - 1}$ as $P$. Then we calculate
\begin{align}
\label{eCalculate}
G'_r(N, D, l) =\ &\varphi(l)^r \sum_{\substack{p_1 \cdot \ldots \cdot p_r < N \\ p_1, \ldots, p_r \in \mathcal{P}_l \\ p_1, \ldots, p_r > D}} \log(p_1 \cdot \ldots \cdot p_r) = r\varphi(l)^r \sum_{\substack{P < N/D \\ p_1, \ldots, p_{r - 1} \in \mathcal{P}_l \\ p_1, \ldots, p_{r - 1} > D}} \sum_{\substack{p \in \mathcal{P}_l \\ p > D}}^{N/P} \log p \nonumber \\
=\ &r\varphi(l)^r \sum_{\substack{P < N/D \\ p_1, \ldots, p_{r - 1} \in \mathcal{P}_l \\ p_1, \ldots, p_{r - 1} > D}} \left(\frac{N}{\varphi(l)P} \cdot \left(1 + \left(O\left(e^{-c\sqrt{\log N/P}}\right)\right)\right) - \sum_{\substack{p \in \mathcal{P}_l \\ p < D}} \log p\right) \nonumber \\
=\ &rN \cdot F'_{r - 1}(N/D, D, l) - r\varphi(l) \cdot H'_{r - 1}(N/D, D, l) \cdot \sum_{\substack{p \in \mathcal{P}_l \\ p < D}} \log p \ + \nonumber \\
&rN \varphi(l)^{r - 1} \sum_{\substack{P < N/D \\ p_1, \ldots, p_{r - 1} \in \mathcal{P}_l \\ p_1, \ldots, p_{r - 1} > D}} \frac{1}{P} \cdot O\left(e^{-c\sqrt{\log N/P}}\right).
\end{align}
To simplify (\ref{eCalculate}) we first attack
\[
rN \varphi(l)^{r - 1} \sum_{\substack{P < N/D \\ p_1, \ldots, p_{r - 1} \in \mathcal{P}_l \\ p_1, \ldots, p_{r - 1} > D}} \frac{1}{P} \cdot O\left(e^{-c\sqrt{\log N/P}}\right).
\]
Define $N_0 := Ne^{-(c^{-1} \log \log N)^2}$. Then splitting this sum in two ranges depending on $P < N_0$ or $P > N_0$ yields
\[
rN \varphi(l)^{r - 1} \sum_{\substack{P < N_0 \\ p_1, \ldots, p_{r - 1} \in \mathcal{P}_l \\ p_1, \ldots, p_{r - 1} > D}} \frac{1}{P} \cdot O\left(e^{-c\sqrt{\log N/P}}\right) + rN \varphi(l)^{r - 1} \sum_{\substack{N_0 < P < N/D \\ p_1, \ldots, p_{r - 1} \in \mathcal{P}_l \\ p_1, \ldots, p_{r - 1} > D}} \frac{1}{P} \cdot O\left(e^{-c\sqrt{\log N/P}}\right).
\]
The former sum is bounded by 
\begin{align}
\label{eBound1}
O\left(rNe^{-c\sqrt{\log N/N_0}}F'_{r - 1}(N_0, D, l)\right),
\end{align}
while the latter sum is bounded by
\begin{align}
\label{eBound2}
O\left(rNe^{-c\sqrt{\log D}}\left(F'_{r - 1}(N, D, l) - F'_{r - 1}(N_0, D, l)\right)\right).
\end{align}
A careful computation using (\ref{eFr}) shows that both (\ref{eBound1}) and (\ref{eBound2}) are in the error term of (\ref{eGr}) for our choice of $N_0$. To further simplify (\ref{eCalculate}) we look at $F'_{r - 1}(N/D, D, l)$. Because of our assumptions on $N$ and $D$ coupled with (\ref{eFr}) we can put $F'_{r - 1}(N, D, l) - F'_{r - 1}(N/D, D, l)$ in the error term of (\ref{eGr}). So far we have shown
\begin{multline}
\label{eGrHr}
G'_r(N, D, l) = rN \cdot F'_{r - 1}(N, D, l) - \\
r\varphi(l) \cdot H'_{r - 1}(N/D, D, l) \cdot \sum_{\substack{p \in \mathcal{P}_l \\ p < D}} \log p +
O\left(\frac{r^4N}{\log N} \cdot (\log u)^{r + 3}\right).
\end{multline}
To finish the proof of (\ref{eGr}), we have to deal with the term $- r\varphi(l) \cdot H'_{r - 1}(N/D, D, l)$. If we carefully go through the proof of (\ref{eGrHr}), we see that
\begin{align}
\label{eGrUpper}
G'_r(N, D, l) = O\left(rN \cdot F'_{r - 1}(N, D, l)\right)
\end{align}
without any restrictions on $D$ and $N$. Then partial summation combined with (\ref{eGrUpper}) shows
\[
H'_r(N, D, l) = \frac{G'_r(N, D, l)}{\log N} + \int_D^N \frac{G'_r(x, D, l)}{x (\log x)^2} dx = \frac{G'_r(N, D, l)}{\log N} + O\left(\frac{rN}{(\log N)^2} \cdot (\log u)^{r - 1}\right).
\]
Plugging this in (\ref{eGrHr}) and using (\ref{eGrUpper}) again establishes (\ref{eGr}) and hence (\ref{eHr}). We have the trivial upper bound
\[
|S'_r(N, D, l)| \leq \frac{1}{r!} H'_r(N, D, l).
\]
To give a lower bound for $|S'_r(N, D, l)|$ in terms of $H'_r(N, D, l)$, we observe that
\[
r! \cdot \frac{|S'_r(N, D, l)|}{H'_r(N, D, l)} = O\left(\sum_{\substack{p \in \mathcal{P}_l \\ p > D}} \frac{1}{p^2}\right) = O\left(\frac{1}{D}\right).
\]
This implies that
\[
|S'_r(N, D, l)| \geq \frac{1}{r!} H'_r(N, D, l) \cdot O\left(\frac{1}{D}\right).
\]
Hence upon taking $D > D_0(l)$ for some constant $D_0(l)$ depending only on $l$, we have the following lower bound for $|S'_r(N, D, l)|$
\[
|S'_r(N, D, l)| \geq \frac{1}{2 \cdot r!} H'_r(N, \max(D, D_0(l)), l).
\]
Altogether we have proven the following claim.

\begin{lemma}
\label{l5.5}
Let $N \geq e^{e^{10 \cdot l}}$ and $D > 1.5$ be real numbers satisfying $\log \log N \geq 1.1 \log \log D$ and $3 \cdot B(D, l) \leq \log N$. Assume that 
\begin{align}
\label{er2}
1 \leq r \leq \epsilon \log \log N
\end{align}
for some $\epsilon > 0$. Then there are positive constants $A_2(\epsilon, l)$ and $A_3(\epsilon, l)$ depending only on $\epsilon$ and $l$ such that
\[
\frac{A_2(\epsilon, l)}{\varphi(l)^r} \cdot \frac{N}{\log N} \cdot \frac{(\log u)^{r - 1}}{(r - 1)!} < |S'_r(N, D, l)| < \frac{A_3(\epsilon, l)}{\varphi(l)^r} \cdot \frac{N}{\log N} \cdot \frac{(\log u)^{r - 1}}{(r - 1)!}.
\]
\end{lemma}

We stress that condition (\ref{er2}) is essential for the correctness of Lemma \ref{l5.5}. Indeed, in general it is not clear that the main term in (\ref{eHr}) dominates the error term. However, if $r$ satisfies (\ref{er2}), we can use Lemma \ref{l5.1} to show that the main term does dominate the error term. Take $N$ and $D$ as in the previous lemma and suppose that $r$ satisfies (\ref{er2}). If $k \leq r$ is an integer we define $S'_{r, k}(N, D, l)$ to be the subset of $S'_r(N, D, l)$ with exactly $k$ prime divisors smaller than
\[
N_1 := \exp\left(\sqrt{\log N \cdot B(D, l)}\right).
\]

\begin{lemma}
\label{l5.6}
Let $N \geq e^{e^{10 \cdot l}}$ and $D > 1.5$ be real numbers satisfying $\log \log N \geq 2 \log \log D$ and $3 \cdot B(D, l) \leq \log N$. If $r$ satisfies (\ref{er}), there exists an absolute constant $c > 0$ such that
\[
\left.\left|\bigcup_{|0.5r - k| \leq r^{\frac{2}{3}}} S'_{r, k}(N, D, l)\right| \ \right/ \ |S'_r(N, D, l)| = O\left(\exp\left(-cr^{\frac{1}{3}}\right)\right).
\]
Now suppose that $k$ satisfies $|0.5r - k| \leq r^{\frac{2}{3}}$. Let
\[
\textup{Tuples}_k(N_1, D, l) := \left\{(p_1, \ldots, p_k) \in \mathcal{P}_l^k : D < p_1 < \ldots < p_k < N_1\right\}
\]
and let $T_1$ and $T_2$ be subsets of $\textup{Tuples}_k(N_1, D, l)$. Define $S'_{r, k}(N, D, l, T)$ to be the subset of $S'_{r, k}(N, D, l)$ for which the $k$ smallest prime factors $(p_1, \ldots, p_k)$ lie in $T$. Then
\[
\frac{\left|S'_{r, k}(N, D, l, T_1)\right|}{\left|S'_{r, k}(N, D, l, T_2)\right|} = O\left(\frac{\textup{Vol}(\textup{Grid}(T_1))}{\textup{Vol}(\textup{Grid}(T_2))}\right).
\]
\end{lemma}

\begin{proof}
We start with the easy formula
\begin{align}
\label{eSrk}
|S'_{r, k}(N, D, l)| = \sum_{\substack{p_1, \ldots, p_k \in \mathcal{P}_l \\ D < p_1 < \ldots < p_k < N_1}} \left|S'_{r - k}\left(\frac{N}{p_1 \cdot \ldots \cdot p_k}, N_1, l\right)\right|.
\end{align}
If $N$ is sufficiently large, we have $N_1^k < \sqrt{N}$. For convenience we will write $P := p_1 \cdot \ldots \cdot p_k$. Then for all choices of $P$
\[
\left(\frac{\log \log N - \log B(D, l)}{\log \log N/P - \log B(D, l)}\right)^{r - 1} = O(1).
\]
We also have for all choices of $P$
\[
\frac{\log N}{\log N/P} = O(1).
\]
Suppose that $r - k > 0$. An appeal to Lemma \ref{l5.5} yields constants $A_4(l)$ and $A_5(l)$ depending only on $l$ with the property
\begin{align}
\label{eSrkNP}
\frac{A_4(l)}{P} \cdot |S'_{r - k}(N, N_1, l)| < |S'_{r - k}(N/P, N_1, l)| < \frac{A_5(l)}{P} \cdot |S'_{r - k}(N, N_1, l)|.
\end{align}
Plugging (\ref{eSrkNP}) into (\ref{eSrk}) shows
\begin{align}
\label{eSrkHoeff}
|S'_{r, k}(N, D, l)| = O\left(\frac{N}{\log N} \cdot \frac{2^{-r + 1} (\log \log N - \log B(D, l))^{r - 1}}{\varphi(l)^r(r - k - 1)! k!}\right)
\end{align}
for $r - k > 0$. In case $r = k$ we have the trivial bound $|S'_{r, k}(N, D, l)| \leq N_1^r$, so we can remove this case from the union. Now we are in the position to apply Hoeffding's inequality to (\ref{eSrkHoeff}), which proves the first part of the lemma. The second part quickly follows from (\ref{eSrkNP}).
\end{proof}

Lemma \ref{l5.6} directly relates $S'_{r, k}(N, D, l)$ and the heuristic model introduced at the beginning of the section. Since we have already dealt with the heuristic model in Proposition \ref{p5.2}, the second part of Theorem \ref{tSprimeNL} is now straightforward. Indeed, we first restrict to those $k$ for which we have $|0.5r - k| \leq r^{\frac{2}{3}}$. Take $T_2 := \text{Tuples}_k(N_1, 1.5, l)$ and take $T_1$ to be the subset of $T_2$ that is not $C_0$-regular. There exists an absolute constant $c > 0$ such that
\[
\text{Vol}(\text{Grid}(T_2)) \geq \frac{c}{k!} \cdot (\log \log N_1)^k.
\]
Furthermore, all elements of $\text{Grid}(T_1)$ are not $C_0 - \kappa(l)$-regular for some constant $\kappa(l) > 0$ depending only on $l$. Proposition \ref{p5.2} shows that
\[
\text{Vol}(\text{Grid}(T_1)) = O\left(\frac{\exp(-c' \cdot C_0)}{k!} \cdot (\log \log N_1)^k\right)
\]
for some absolute constant $c' > 0$. This proves the second part of Theorem \ref{tSprimeNL}.

It remains to prove the first part of Theorem \ref{tSprimeNL}. If we have $r \leq 2$, we can directly prove Theorem \ref{tSprimeNL}, so suppose that $r \geq 3$. The number of $n \in S'_r(N, l)$, that are not comfortably spaced above $D_1$, is bounded by
\begin{align}
\label{eUnc}
\sum_{p > D_1}^N \sum_{q > p}^{l^{200}p} |S'_{r - 2}(N/pq, l)|.
\end{align}
We split (\ref{eUnc}) in two ranges depending on $p < N^{\frac{1}{4}}$ and $p \geq N^{\frac{1}{4}}$. First suppose that $p < N^{\frac{1}{4}}$. Then we use Lemma \ref{l5.5} to bound (\ref{eUnc}) by
\[
O\left(|S'_{r - 2}(N, l)| \cdot \sum_{p > D_1}^N \sum_{q > p}^{l^{200}p} \frac{1}{pq}\right) = O\left(\frac{|S'_r(N, l)|}{\log D_1}\right).
\]
Now suppose that $p \geq N^{\frac{1}{4}}$. From the crude bound $|S'_{r - 2}(N/pq, l)| \leq N/pq$ we deduce
\[
\sum_{p > N^{\frac{1}{4}}}^N \sum_{q > p}^{l^{200}p} |S'_{r - 2}(N/pq, l)| = O\left(\frac{N}{\log N}\right) = O\left(\frac{\left|S'_r(N, l)\right|}{\log \log N}\right),
\]
since $r > 1$. This implies the theorem.
\end{proof}

\noindent Finally, we use Theorem \ref{tSprimeNL} to prove most integers in $S_r(N, l)$ are comfortably spaced above $D_1$ and $C_0$-regular.

\begin{theorem}
\label{tSNL}
Let $N$ and $r$ be as in Definition \ref{dRanges}.
\begin{itemize}
\item Suppose that $D_1 > 100$. Then we have
\[
\frac{\left|\left\{n \in S_r(N, l) : n \text{ is not comfortably spaced above } D_1\right\}\right|}{|S_r(N, l)|} = O\left(\frac{1}{\log D_1} + \frac{1}{\log \log N}\right).
\]
\item There is an absolute constant $c > 0$ such that for all $C_0 > 0$
\[
\frac{\left|\left\{n \in S_r(N, l) : n \text{ is not } C_0\text{-regular}\right\}\right|}{|S_r(N, l)|} = O\left(\exp(-c \cdot C_0) + \exp\left(-cr^{\frac{1}{3}}\right)\right).
\]
\end{itemize}
\end{theorem}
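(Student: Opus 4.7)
The plan is to reduce Theorem \ref{tSNL} to Theorem \ref{tSprimeNL} via the decomposition
\[
S_r(N, l) = S'_r(N, l) \ \sqcup\  l\cdot S'_{r-1}(N/l, l),
\]
valid because $l$ is the unique prime $\equiv 0 \bmod l$ and elements of $S_r(N,l)$ are squarefree, so $l$ appears to at most the first power. Since $r \leq 2\log\log N$, Lemma \ref{l5.5} applies with $\epsilon = 2$ (and some $D = D_0(l)$) to both $|S'_r(N,l)|$ and $|S'_{r-1}(N/l,l)|$; computing the resulting ratio gives
\[
\frac{|S'_{r-1}(N/l, l)|}{|S'_r(N, l)|} \ll_l \frac{r}{\log\log N} \ll_l 1,
\]
so $|S_r(N,l)| \asymp_l |S'_r(N,l)|$. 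It therefore suffices to bound the proportion of "bad" integers in each of the two pieces separately.

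For the first bullet, Theorem \ref{tSprimeNL} applied to $S'_r(N,l)$ handles the integers coprime to $l$. For $n = l \cdot m$ with $m \in S'_{r-1}(N/l,l)$ and $D_1 \geq l$, the prime $l$ is excluded from every spacing check and the remaining prime divisors of $n$ coincide with those of $m$, so $n$ is comfortably spaced above $D_1$ if and only if $m$ is. When $D_1 < l$ the target bound $O(1/\log D_1)$ is itself $\Omega_l(1)$ and the statement is vacuous. Applying Theorem \ref{tSprimeNL} to $S'_{r-1}(N/l,l)$ (using $\log\log(N/l) = \log\log N + o(1)$) and combining with the size comparison yields the first bullet.

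For the second bullet, the bound for $n \in S'_r(N,l)$ is immediate. For $n = l\cdot m$ with $m \in S'_{r-1}(N/l,l)$, set $p_1(n) = l$ and $p_i(n) = p_{i-1}(m)$ for $i \geq 2$. The $i=1$ condition reads
\[
\left|1 - \frac{r\log\log l}{\log\log N}\right| < C_0^{1/5}\max(1,C_0)^{4/5},
\]
whose left side is bounded by $1 + 2\log\log l$, a constant depending only on $l$; hence it is automatic for $C_0 \geq C_l$ and vacuous below. For $i \geq 2$ and $j = i - 1$, using $r \leq 2\log\log N$ and $\log\log N - \log\log(N/l) = O_l(1/\log N)$, a direct computation shows that
\[
\left|i - \frac{r\log\log p_i(n)}{\log\log N}\right|
\quad\text{and}\quad
\left|j - \frac{(r-1)\log\log p_j(m)}{\log\log(N/l)}\right|
\]
differ by $O_l(1)$. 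Hence there is a constant $C_l > 0$ such that $m$ being $(C_0 - C_l)$-regular in $S'_{r-1}(N/l,l)$ implies $n$ is $C_0$-regular in $S_r(N,l)$; Theorem \ref{tSprimeNL} then handles the proportion, with the $C_l$-shift absorbed into the absolute constant inside $\exp(-c C_0)$.

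The main bookkeeping obstacle is the $C_0$-regular transfer, where three simultaneous perturbations --- the index shift $i \mapsto i-1$, the parameter shift $r \mapsto r-1$, and the scale shift $\log\log N \mapsto \log\log(N/l)$ --- must combine to a bounded shift in $C_0$. Because $r = O(\log\log N)$ and $\log\log N - \log\log(N/l) = O_l(1/\log N)$, each perturbation contributes only $O_l(1)$, and the exponential $\exp(-cC_0)$ is stable under a constant shift in $C_0$ (at the cost of shrinking $c$). Everything else is routine.
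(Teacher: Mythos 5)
The paper itself gives no argument here; its proof is the single line ``This is an easy consequence of Theorem~\ref{tSprimeNL}.'' Your proposal supplies exactly the natural argument that remark must be pointing at — decompose $S_r(N,l)$ into $S'_r(N,l)$ and $l\cdot S'_{r-1}(N/l,l)$, compare sizes via Lemma~\ref{l5.5}, note the comfortably-spaced condition transfers verbatim for $D_1\geq l$ (and is vacuous for $100<D_1<l$), and absorb the $O_l(1)$ shift in the $C_0$-regularity bound coming from the index shift $i\mapsto i-1$, the change $r\mapsto r-1$, and $N\mapsto N/l$ into the constant $c$ — and it is correct.
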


\begin{proof}
This is an easy consequence of Theorem \ref{tSprimeNL}.
\end{proof}

\section{Redei matrices}
To prove our main results, we will arrange cyclic degree $l$ extensions in product spaces called boxes. These boxes provide the combinatorial structure that allows us to apply the results from the previous sections. Let us start by giving a precise definition of a box.

\begin{mydef}
Let $\max(100, l) < D_1$ be a real number and let $1 \leq k \leq r$ be integers. Choose a sequence of primes
\[
p_1 < \ldots < p_k < D_1
\]
all equal to $0$ or $1$ modulo $l$. Also choose real numbers
\[
D_1 < t_{k + 1} < \ldots < t_r.
\]
For $i \geq k + 1$ define
\[
t_i' := \left(1 + \frac{1}{e^{i - k} \cdot \log D_1}\right) \cdot t_i.
\]
We assume that $t_i \geq l^{100}t_{i - 1}'$ for all $k + 1 \leq i \leq r$, where by definition $t_k' := p_k$. Put
\[
X := X_1 \times \ldots \times X_r,
\]
where $X_i := \{p_i\}$ for $i \leq k$ and $X_i$ is the set of primes equal to $1$ modulo $l$ in the interval $(t_i, t_i')$ for $i \geq k + 1$. Finally, choose a function $f : X_1 \coprod \ldots \coprod X_r \rightarrow [l - 1]$. We will say that the set $X$ as constructed above is a box.

\end{mydef}

In Smith \cite{Smith}, the transition from squarefree integers to boxes is done by appealing to his Proposition 6.9. Instead of degree $2$ extensions of $\Q$, we need to keep track of cyclic degree $l$ extensions and this is substantially more difficult. One key difference is the fact that there are precisely $l - 1$ characters $\chi: G_\Q \rightarrow \langle \zeta_l \rangle$ with the same field of definition, while our algebraic results use only one of the $l - 1$ characters.

It will therefore be important to keep track of the characters we have chosen, and this is the reason for introducing $f$. For $x \in X$ we define the character
\[
\chi_{x, f} := \prod_{1 \leq i \leq r} \chi_{\pi_i(x)}^{f(\pi_i(x))},
\]
where $\chi_p$ is the character of conductor dividing $p^\infty$ that we fixed in Subsection \ref{conventions}. Given a box $X$ we also define $\text{Field}(X)$ to be the set of cyclic degree $l$ number fields $K$ over $\Q$ satisfying
\begin{itemize}
\item for all $1 \leq i \leq r$ there is exactly one prime $p \in X_i$ such that $K$ is ramified at $p$;
\item for all primes $p$ that are not in any of the $X_i$ we have that $K$ is unramified.
\end{itemize}
Given $X$ and $f$ there is a natural map $i_f: X \rightarrow \text{Field}(X)$ that sends $x$ to the field fixed by the kernel of $\chi_{x, f}$. There is also a natural map $j: \text{Field}(X) \rightarrow S_r(\infty, l)$ given by sending $K$ to the radical of $D_K$. The compositum of the two maps $j \circ i_f: X \rightarrow S_r(\infty, l)$ is the natural inclusion from $X$ to $S_r(\infty, l)$ that we have seen in Section \ref{sDivisor}.

Finally, define $\text{Field}(N, r, l)$ to be the set of cyclic degree $l$ number fields $K$ over $\Q$ with the following properties
\begin{itemize}
\item the radical of $D_K$ is at most $N$;
\item $D_K$ has exactly $r$ prime divisors.
\end{itemize}
Our first proposition is a variant of Smith's Proposition 6.9 that can deal with the more complicated structure of our new boxes.

\begin{prop}
\label{p6.9}
Let $l$ be a prime and let $N \geq D_1 > \max(100, l)$ with $\log \log N \geq 2 \log \log D_1$. Suppose that $r$ satisfies (\ref{er}) and let $W$ be a subset of $S_r(N, l)$ that is comfortably spaced above $D_1$. Let $\epsilon > 0$ be such that
\[
|W| > (1 - \epsilon) \cdot |S_r(N, l)|.
\]
Let $V$ be a subset of $\textup{Field}(N, r, l)$. Assume that there exists $\delta > 0$ such that for all boxes $X$ with $j(\textup{Field}(X)) \cap W \neq \emptyset$ and $\textup{Field}(X) \subseteq \textup{Field}(N, r, l)$ we have
\[
(\delta - \epsilon) \cdot |\textup{Field}(X)| < |V \cap \textup{Field}(X)| < (\delta + \epsilon) \cdot |\textup{Field}(X)|.
\]
Then
\[
|V| = \delta \cdot |\textup{Field}(N, r, l)| + O\left(\left(\epsilon + \frac{1}{\log D_1}\right) \cdot |\textup{Field}(N, r, l)|\right).
\]
\end{prop}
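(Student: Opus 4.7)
The strategy is to partition $\textup{Field}(N,r,l)$ according to a canonical collection of boxes that almost everywhere satisfies the hypothesis of the proposition, apply the density estimate on each such box, and bound the residual terms using Theorem \ref{tSNL} together with an ``anti-concentration'' estimate for $|S_r(N,l)|$ near the cutoff $N$.

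More precisely, for each $n=q_1\cdots q_r\in S_r(N,l)$ that is comfortably spaced above $D_1$ I produce a canonical box $B(n)$ as follows. Let $k$ be the number of prime factors $q_i\le D_1$; set $X_i=\{q_i\}$ for $i\le k$. For $i>k$ fix once and for all a logarithmic grid with ratio $\alpha_i:=1+1/(e^{i-k}\log D_1)$, let $t_i$ be the largest grid point with $t_i<q_i$, and set $X_i$ to be the primes $\equiv 1\pmod{l}$ in $(t_i,t_i\alpha_i]$. The comfortable spacing $l^{200}q_i<q_{i+1}$ together with $\alpha_i\alpha_{i+1}\le 4\le l^{100}$ easily implies the box spacing condition $t_{i+1}\ge l^{100}t_i'$, so $B(n)$ is a genuine box in the sense of the excerpt (the case $k=0$ is handled with the convention that the $i=1$ spacing constraint is vacuous). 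Let $\mathcal B$ be the resulting finite collection of boxes. Two comfortably spaced integers $n,n'$ give the same box iff they share the same small primes and the same grid intervals for the primes above $D_1$, and for each $X\in\mathcal B$ the set $\{D_x:x\in X\}$ consists precisely of those integers $n'$ (comfortably spaced or not) whose prime factorization falls in the prescribed intervals. I now split $\mathcal B$ into three types: \emph{good boxes} with every $D_x\le N$ and $\{D_x\}\cap W\ne\emptyset$; \emph{straddling boxes} with $\min_xD_x\le N<\max_xD_x$; and \emph{residual boxes} with every $D_x\le N$ but no $D_x\in W$ (boxes entirely above $N$ contribute nothing).

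On good boxes the hypothesis applies directly, giving
\[
\sum_{X\text{ good}}|V\cap\textup{Field}(X)|=\delta\sum_{X\text{ good}}|\textup{Field}(X)|+O\bigl(\epsilon\cdot|\textup{Field}(N,r,l)|\bigr).
\]
For straddling boxes, the key geometric observation is that
\[
\frac{\max_{x\in X}D_x}{\min_{x\in X}D_x}\le\prod_{i=k+1}^r\alpha_i\le\exp\!\Bigl(\tfrac{1}{(e-1)\log D_1}\Bigr)=1+O(1/\log D_1),
\]
so the integers $D_x\le N$ in straddling boxes lie in $[N(1-O(1/\log D_1)),N]$. By the asymptotic for $|S_r(N,l)|$ following from Lemma \ref{l5.5} (plus the trivial bound for multiples of $l$), the number of such integers is $O((1/\log D_1)|S_r(N,l)|)$, contributing $O((1/\log D_1)|\textup{Field}(N,r,l)|)$ fields. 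For residual boxes each $D_x\le N$ is an element of $S_r(N,l)\setminus W$, and each integer sits in at most one canonical box, so the total field count over residual boxes is at most $(l-1)^{r-1}|S_r(N,l)\setminus W|\le \epsilon|\textup{Field}(N,r,l)|$. Fields whose discriminant radical is not comfortably spaced above $D_1$ contribute $O((1/\log D_1)|\textup{Field}(N,r,l)|)$ by Theorem \ref{tSNL}. Combining these bounds gives $\sum_{X\text{ good}}|\textup{Field}(X)|=|\textup{Field}(N,r,l)|+O((\epsilon+1/\log D_1)|\textup{Field}(N,r,l)|)$ and the analogous equality with $V$ on both sides, yielding the claim.

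The only real technical work is the verification that the grid construction is compatible with the box spacing axiom and the derivation of the short-interval estimate $|S_r(N,l)|-|S_r(N(1-\eta),l)|=O(\eta|S_r(N,l)|)$ from Lemma \ref{l5.5}; both are elementary but require a little care because of the dependence on the position $i$ in $\alpha_i$. Everything else reduces to bookkeeping across the three box types.
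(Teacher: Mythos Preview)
Your approach is correct and yields the proposition, but it is organised differently from the paper's proof. The paper does not fix a discrete grid; instead it integrates over the continuous box parameters $(t_{k+1},\ldots,t_r)$ with logarithmic weight $\prod dt_i/t_i$, so that every field $K$ contributes to a whole range of boxes and its total contribution is the explicit product $\prod_{i>k}\log\alpha_i$. One then bounds the integral from above and below in two different ways (once trivially, once using the hypothesis on $|V\cap\textup{Field}(X)|$) and compares. Your construction is essentially a Riemann-sum discretisation of this same integral: the grid pins down one box per integer and replaces the double estimate of a single integral by a partition into good, straddling, and residual boxes. What the integration buys is that one never has to worry about whether the grid box attached to a comfortably spaced $n$ satisfies the box spacing axiom at $i=k+1$ (a point your sketch glosses over); what your partition buys is a cleaner combinatorial picture with no overlapping boxes.

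Two small remarks on the writeup. First, the short-interval bound $|S_r(N,l)|-|S_r((1-\eta)N,l)|=O(\eta\,|S_r(N,l)|)$ does not follow from Lemma~\ref{l5.5} alone, since that lemma only gives matching upper and lower bounds with different constants; you need the finer asymptotic~(\ref{eHr}) (which the paper also invokes at exactly this step). Second, your separate appeal to Theorem~\ref{tSNL} for the non-comfortably-spaced integers is unnecessary: every such integer already lies in $S_r(N,l)\setminus W$, whose size is $<\epsilon\,|S_r(N,l)|$ by hypothesis, so this contribution is already absorbed in the $O(\epsilon)$ term without invoking Theorem~\ref{tSNL} (which would introduce an extraneous $1/\log\log N$).
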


\begin{proof}
Let $0 \leq k \leq r$ be an integer. Let $T_k$ be the set of tuples $(p_1, \ldots, p_k)$ such that
\[
p_1 < \ldots < p_k < D_1
\]
and all the $p_i$ are primes $0$ or $1$ modulo $l$. Define $T'_k$ to be set of tuples $(t_{k + 1}, \ldots, t_r)$ satisfying
\[
D_1 < t_{k + 1} < \ldots < t_r,
\]
where the $t_i$ are real numbers with $t_{i + 1} \geq 2t_i$. Given $\mathbf{t} \in T_k$ and $\mathbf{t'} \in T'_k$ there is a natural way to construct a box $X(\mathbf{t}, \mathbf{t'})$. Take
\[
T''_k := \left\{(\mathbf{t}, \mathbf{t'}) \in T_k \times T'_k : j(\text{Field}(X(\mathbf{t}, \mathbf{t'}))) \cap W \neq \emptyset \text{ and } \text{Field}(X(\mathbf{t}, \mathbf{t'})) \subseteq \text{Field}(N, r, l)\right\}.
\]
Now consider
\begin{align}
\label{eLU}
\sum_{\mathbf{t} \in T_k} \int\limits_{\substack{\mathbf{t'} \in T'_k \\ (\mathbf{t}, \mathbf{t'}) \in T''_k}} \frac{|V \cap \text{Field}(X(\mathbf{t}, \mathbf{t'}))|}{t_{k + 1} \cdot \ldots \cdot t_r}  \ dt_{k + 1} \cdot \ldots \cdot dt_r.
\end{align}
Let $K \in V$ with $j(K) := (q_1, \ldots, q_r) \in W$. Note that $K \in \text{Field}(X(\mathbf{t}, \mathbf{t'}))$ if and only if
\[
(q_1, \ldots, q_k) = \mathbf{t}
\]
and for all $k + 1 \leq i \leq r$
\[
t_i < q_i < \left(1 + \frac{1}{e^{i - k} \cdot \log D_1}\right) \cdot t_i.
\]
If $K$ also satisfies
\begin{align}
\label{en}
q_1 \cdot \ldots \cdot q_r < N \cdot \prod_{i = k + 1}^r \left(1 + \frac{1}{e^{i - k} \cdot \log D_1}\right)^{-1},
\end{align}
we see that the contribution of $K$ to (\ref{eLU}) is equal to
\begin{align}
\label{eContribution}
\prod_{i = k + 1}^r \log\left(1 + \frac{1}{e^{i - k} \cdot \log D_1}\right).
\end{align}
If $j(K)$ is outside $W$ or $j(K)$ does not satisfy (\ref{en}), we see that the contribution of $K$ to (\ref{eLU}) is bounded by (\ref{eContribution}). Put
\[
H_r(N, l) := \sum_{\substack{p_1 \cdot \ldots \cdot p_r < N \\ p_i \equiv 0, 1 \bmod l}} 1.
\]
Due to (\ref{eHr}) and Lemma \ref{l5.5} we have for all $c \in (0, 0.5)$
\[
\frac{1}{r!} \cdot \left(H_r(N, l) - H_r((1 - c) \cdot N, l)\right) = O\left(c + \frac{(\log \log N)^4}{\log N}\right) \cdot |S_r(N, l)|.
\]
Hence the number of elements in $S_r(N, l)$ failing (\ref{en}) is bounded by $O\left(\left|S_r(N, l)\right| / \log D_1\right)$. In particular, we can bound the number of $K \in \text{Field}(N, r, l)$ with $j(K)$ failing (\ref{en}) by $O\left(\left|\text{Field}(N, r, l)\right| / \log D_1\right)$. Finally consider
\begin{align}
\label{eLU2}
\sum_{k \geq 0}^r \prod_{i = k + 1}^r \left(1 + \frac{1}{e^{i - k} \cdot \log D_1}\right)^{-1} \sum_{\mathbf{t} \in T_k} \int\limits_{\substack{\mathbf{t'} \in T'_k \\ (\mathbf{t}, \mathbf{t'}) \in T''_k}} \frac{|V \cap \text{Field}(X(\mathbf{t}, \mathbf{t'}))|}{t_{k + 1} \cdot \ldots \cdot t_r}  \ dt_{k + 1} \cdot \ldots \cdot dt_r.
\end{align}
Since the contribution of any $K \in V$ to (\ref{eLU}) is always bounded by (\ref{eContribution}), we have an upper bound for (\ref{eLU2}) given by $|V|$. On the other hand, if $K \in V \cap j^{-1}(W)$ and satisfies (\ref{en}), the contribution of $K$ to (\ref{eLU}) is equal to (\ref{eContribution}). This yields a lower bound for (\ref{eLU2}), namely
\[
|V \cap j^{-1}(W)| - O\left(\frac{\left|\text{Field}(N, r, l)\right|}{\log D_1}\right).
\]
Using our assumption
\[
(\delta - \epsilon) \cdot |\text{Field}(X)| < |V \cap \text{Field}(X)| < (\delta + \epsilon) \cdot |\text{Field}(X)|
\]
for all boxes $X$ with $j(\text{Field}(X)) \cap W \neq \emptyset$ and $\text{Field}(X) \subseteq \text{Field}(N, r, l)$, we can again obtain upper and lower bounds for (\ref{eLU2}). Indeed, we have an upper bound for (\ref{eLU2}) given by $(\delta + \epsilon) \cdot |\text{Field}(N, r, l)|$ and a lower bound for (\ref{eLU2}) given by
\[
\delta \cdot |\text{Field}(N, r, l)| - O\left(\left(\epsilon + \frac{1}{\log D_1}\right) \cdot |\text{Field}(N, r, l)|\right).
\]
Combining the various lower and upper bounds finishes the proof of the proposition.
\end{proof}

Proposition \ref{p6.9} usefulness lies in the fact that it allows us to deduce equidistribution of $\text{Field}(N, r, l)$ from equidistribution of product spaces $\text{Field}(X)$. However, our algebraic results work for a product space of the shape $i_f(X)$ and not for the full set $\text{Field}(X)$. To work around this issue, the identity
\[
|\{f: K \in i_f(X)\}| = |\{f: K' \in i_f(X)\}|
\]
for all $K, K' \in \text{Field}(X)$ will be pivotal in our next sections.

In our coming sections it will also be important to have very fine control over $r$. Up until this point we have only assumed that $r$ satisfies (\ref{er}), but we will now introduce the much stronger requirement
\begin{align}
\label{er3}
\left|r - \log \log N\right| \leq \left(\log \log N\right)^\frac{2}{3}.
\end{align}
Our next theorem shows that (\ref{er3}) is usually satisfied.

\begin{theorem}
\label{tRanger}
Recall that $\textup{Field}(N, l)$ is the set of cyclic degree $l$ number fields $K$ over $\Q$ with $\text{rad}(D_K) \leq N$. Then we have
\begin{align}
\label{eRanger}
\left||\textup{Field}(N, l)| - \bigcup_{r \textup{ sat. } (\ref{er3})}|\textup{Field}(N, r, l)|\right| = O\left(\frac{|\textup{Field}(N, l)|}{\left(\log \log N\right)^c}\right)
\end{align}
for some absolute constant $c > 0$.
\end{theorem}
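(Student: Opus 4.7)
The plan is to reduce Theorem \ref{tRanger} to a concentration statement for the number of prime divisors of $D_K$. By the conductor-discriminant description recalled in Subsection \ref{conventions}, for each $r \geq 1$ one has
\[
|\textup{Field}(N, r, l)| = (l-1)^{r-1} \bigl(|S_r^{(1)}(N, l)| + |S_{r-1}^{(1)}(N/l, l)|\bigr),
\]
where $S_r^{(1)}(\cdot, l)$ denotes squarefree integers with exactly $r$ prime factors all congruent to $1$ modulo $l$; the two terms account for whether $l$ ramifies in $K$ or not, and the second is negligible.

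Next I would apply the Landau--Selberg--Delange method to squarefree integers whose prime factors lie in the residue class $1 \bmod l$, which has Dirichlet density $1/(l-1)$ among primes. Combined with the above, this furnishes, uniformly in $r$ throughout the range (\ref{er3}), an asymptotic of the shape
\[
|\textup{Field}(N, r, l)| = \frac{C_l \cdot N}{\log N} \cdot \frac{(\log\log N)^{r-1}}{(r-1)!} \cdot \bigl(1 + o(1)\bigr)
\]
for a positive constant $C_l$: the weight $(l-1)^{r-1}$ cancels exactly the $(l-1)^{-(r-1)}$ shrinkage coming from the Mertens-type sum $\sum_{p \leq x,\, p \equiv 1 \bmod l} 1/p \sim (\log\log x)/(l-1)$. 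Summing over all $r$ yields $|\textup{Field}(N, l)| \sim C_l \cdot N$.

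To bound the tail I would set $M := \log\log N$ and recognize that the weights $M^{r-1}/(r-1)!$ are, up to a factor $e^M$, the point masses of a $\mathrm{Poisson}(M)$ distribution, concentrated at $r \approx M$ with standard deviation $\sqrt{M}$. A standard Chernoff estimate then yields
\[
\sum_{|r - M| > M^{2/3}} \frac{M^{r-1}}{(r-1)!} \;\ll\; e^M \exp\bigl(-c M^{1/3}\bigr)
\]
for some absolute $c > 0$, and multiplying through by $N/\log N = N \cdot e^{-M}$ gives
\[
\sum_{r \,\text{failing}\, (\ref{er3})} |\textup{Field}(N, r, l)| \;\ll\; N \exp\bigl(-c (\log\log N)^{1/3}\bigr),
\]
which is $O\bigl(|\textup{Field}(N, l)|/(\log\log N)^c\bigr)$ for any fixed $c > 0$, as required by (\ref{eRanger}).

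The main obstacle is establishing the Landau--Selberg--Delange asymptotic uniformly in $r$ across the whole range (\ref{er3}). Lemma \ref{l5.5} already covers $r \leq \varepsilon \log\log N$ and the same Dirichlet-series/Perron argument, pushed further, covers the rest, as the range (\ref{er3}) is comfortably inside the classical uniformity band $r \leq (2-\varepsilon)\log\log N$. A cleaner alternative that avoids uniformity altogether is to apply the Tur\'an--Kubilius inequality directly to $\omega(\mathrm{rad}(D_K))$, regarded as a random variable on $\textup{Field}(N, l)$ with the counting measure: once one has $|\textup{Field}(N, l)| \asymp N$ and a second-moment bound, Chebyshev yields the theorem with $c = 1/3$, which already suffices for (\ref{eRanger}) as stated.
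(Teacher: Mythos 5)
Your high-level plan is the right one — exploit the fact that the pushforward of counting measure on $\textup{Field}(N, l)$ under $\omega \circ j$ is close to a Poisson$(\log\log N)$ distribution, then use concentration — and it matches the spirit of the paper's proof. However, as written the proposal has a genuine gap in the treatment of the upper tail.

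The problematic step is where you ``multiply through by $N/\log N = Ne^{-M}$'' to pass from $\sum_{r \text{ failing } (\ref{er3})} M^{r-1}/(r-1)!$ to $\sum_{r \text{ failing } (\ref{er3})} |\textup{Field}(N,r,l)|$. This requires an upper bound $|\textup{Field}(N,r,l)| \ll \frac{N}{\log N}\frac{(\log\log N)^{r-1}}{(r-1)!}$ valid \emph{outside} (\ref{er3}), not inside it. But a squarefree $n \leq N$ can have as many as $\sim \log N/\log\log N$ prime divisors, far beyond the Sathe--Selberg uniformity band $r \leq (2-\varepsilon)\log\log N$ that you invoke; the asymptotic you propose simply fails for such $r$, and the Selberg--Delange contour argument gives no control there. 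Your step ``summing over all $r$ yields $|\textup{Field}(N,l)| \sim C_l N$'' suffers from the same issue — it silently uses the asymptotic for all $r$. The paper sidesteps this by abandoning the asymptotic for the upper tail: it invokes a Hardy--Ramanujan-type \emph{upper bound} $|S_r(N,l)| < \frac{KN}{(l-1)^r\log N}\frac{(\log\log N + C)^r}{r!}$ that is unconditional in $r$ (the paper cites \cite{HR} and sketches the extension to primes in a residue class). Once multiplied by the field-counting factor $(l-1)^{r-1}$, this bound feeds directly into the Chernoff estimate for all large $r$. For the lower tail the paper uses Lemma~\ref{l5.5} (which is a Siegel--Walfisz plus partial-summation argument, not a Perron/Dirichlet-series argument as you describe), and then $N = O(|\textup{Field}(N,l)|)$ from Pollack's Lemma~2.2 to convert $O(N/\cdot)$ into $O(|\textup{Field}(N,l)|/\cdot)$.

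Your proposed Tur\'an--Kubilius alternative also doesn't go through ``directly.'' The counting measure on $\textup{Field}(N,l)$ pushes forward to a measure on squarefree integers that weights $n$ by $(l-1)^{\omega(n)-1}$, which is not the uniform measure on $[1,N]$ for which Tur\'an--Kubilius is stated. You would have to redo the second-moment computation in this weighted/restricted-prime setting; the identity $\sum_p (l-1)/p \approx \log\log N$ over $p \equiv 1 \bmod l$ does make the numerology come out right, but this is a computation, not a citation. In summary: the Poisson--Chernoff skeleton is correct and matches the paper, but you need to (i) replace the asymptotic for large $r$ by a crude Hardy--Ramanujan upper bound valid for all $r$, and (ii) get $N \ll |\textup{Field}(N,l)|$ independently (Pollack), rather than by summing asymptotics.
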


\begin{proof}
Note that the map $j: \text{Field}(N, r, l) \rightarrow S_r(N, l)$ is $(l - 1)^{r - 1}$ to $1$. This observation combined with Lemma \ref{l5.5} shows that
\[
\left|\left\{K \in \text{Field}(N, l) : \omega(D_K) \leq \log \log N - \left(\log \log N\right)^\frac{2}{3}\right\}\right| = O\left(\frac{N}{\left(\log \log N\right)^c}\right).
\]
for some absolute constant $c > 0$. From Lemma 2.2 of \cite{Pollack} we infer that $N = O\left(\left|\text{Field}(N, l)\right|\right)$. We conclude that our error term fits in the error term of the theorem. To deal with the case
\[
\omega(D_K) \geq \log \log N + \left(\log \log N\right)^\frac{2}{3},
\]
we take a different approach. Indeed, Lemma \ref{l5.5} does not directly apply, since condition (\ref{er2}) may not be satisfied. In the classical paper \cite{HR} it is proven that there are absolute constants $C > 0$ and $K > 0$ such that
\[
\pi_k(x) < \frac{Kx}{\log x} \frac{\left(\log \log x + C\right)^k}{k!},
\]
where $\pi_k(x)$ is equal to the number of squarefree integers with size at most $x$ and exactly $k$ prime divisors. A straightforward generalization of their argument proves
\[
|S_r(N, l)| < \frac{KN}{(l - 1)^r\log N} \frac{\left(\log \log N + C\right)^r}{r!}
\]
for some absolute constants $C > 0$ and $K > 0$. Then a small computation finishes the proof of (\ref{eRanger}).
\end{proof}

For $\alpha \in \Z[\zeta_l]$ and $\mathfrak{n}$ an ideal of $\Z[\zeta_l]$ with $(\mathfrak{n}, 1 - \zeta_l) = (1)$ we write
\[
\left(\frac{\alpha}{\mathfrak{n}}\right)_{\Z[\zeta_l], l}
\]
for the $l$-th power residue symbol in $\Z[\zeta_l]$. We assume that the reader is familiar with the basic properties of the power residue symbol. Suppose that $p \neq l$. Then, given $\chi_p$, there is a unique prime ideal $\pp$ of $\Z[\zeta_l]$ satisfying
\[
\chi_p(\text{Frob}(q)) = \left(\frac{q}{\pp}\right)_{\Z[\zeta_l], l} = \frac{\text{Frob}(\pp)\left(\sqrt[l]{q}\right)}{\sqrt[l]{q}}
\]
for all primes $q$. We will now define a generalized Redei matrix.

\begin{mydef}
\label{dRedei}
Take $P$ to be a set of prime numbers $1$ modulo $l$. Choose a function $f : P \coprod X_1 \coprod \ldots \coprod X_r \rightarrow [l - 1]$. Let $X$ be any box with $X_1, \ldots, X_r$ disjoint from the set $P$. Put
\[
M := \{(i, j): 1 \leq i, j \leq r, i \neq j\}.
\]
We also define
\[
M_{P, 1} := [r] \times P, \quad M_{P, 2} := P \times [r].
\]
For an assignment $a: M \coprod M_{P, 1} \coprod M_{P, 2} \rightarrow \langle \zeta_l \rangle$, we define $X(a)$ to be the set of tuples $(x_1, \ldots, x_r) \in X$ satisfying
\begin{itemize}
\item for all $(i, j) \in M$
\[
\chi_{x_j}^{f(x_j)}(\textup{Frob}(x_i)) = a(i, j);
\]
\item for all $(i, p) \in M_{P, 1}$
\[
\chi_p^{f(p)}(\textup{Frob}(x_i)) = a(i, p);
\]
\item for all $(p, j) \in M_{P, 2}$
\[
\chi_{x_j}^{f(x_j)}(\textup{Frob}(p)) = a(p, j).
\]
\end{itemize}
\end{mydef}

Note that $X(a)$ depends on the choice of the $f$. To make this more explicit we will sometimes write $X(a, f)$. We can think of the assignment $a$ as an analogue of the classical Redei matrix. In our final section we need to treat $X(a, f)$ as fixed. For this reason we would like to prove that $X(a, f)$ is of the expected size. Unfortunately, this turns out to be rather hard. The reason for this is that we made one choice of $\chi_p$, but we could just as well have chosen $\chi_p^s$ for some integer $s$ with $(s, l) = 1$. This creates substantial difficulties, for example, when dealing with sums of the type
\[
\sum_{X <  p < Y} \chi_p(\text{Frob}(q))
\]
for fixed $q$. Indeed, by changing many of the $\chi_p$ to $\chi_p^s$ it is easy to make such a sum unbalanced. Instead we will prove something weaker, but still sufficient for our application. There is one final obstacle that we need to deal with; for $i \leq k$ the set $X_i$ consists of only one element. Hence we must restrict our attention to a special set of $a$. 

\begin{mydef}
Let $X$ be a box and let $a: M \coprod M_{P, 1} \coprod M_{P, 2} \rightarrow \langle \zeta_l \rangle$. For $1 \leq i \leq k$, let $x_i$ be the unique element of $X_i$. We say that $a$ agrees with $X$ at stage $k$ if
\begin{itemize}
\item for all $(i, j) \in M$ with $i, j \leq k$
\[
\chi_{x_j}^{f(x_j)}(\textup{Frob}(x_i)) = a(i, j);
\]
\item for all $(i, p) \in M_{P, 1}$ with $i \leq k$
\[
\chi_p^{f(p)}(\textup{Frob}(x_i)) = a(i, p);
\]
\item for all $(p, j) \in M_{P, 2}$ with $j \leq k$
\[
\chi_{x_j}^{f(x_j)}(\textup{Frob}(p)) = a(p, j).
\]
\end{itemize}
We stress that this notion implicitly depends on the choice of $f$. Whenever we need to make the choice of $f$ explicit, we will say instead that $X$ agrees with $a$ and $f$.
\end{mydef}

Clearly, if $a$ does not agree with $X$, we have $X(a) = \emptyset$. It will be convenient to define
\[
g(l, P, k) := l^{|M| + |M_{P, 1}| + |M_{P, 2}| - k(k - 1)  - 2k|P|},
\]
so that $g(l, P, k)$ is equal to the number of $a$ that agree with a given $X$. Let $K$ be a Galois extension of $\Q$. Then we define
\[
X(a, f, K) := \{x \in X(a, f) : \pi_i(x) \text{ splits completely in } K \text{ for all } k + 1 \leq i \leq r\}.
\]
Also define for a set of primes $P$
\[
L(P) := \prod_{p \in P} \overline{\Q}^{\text{ker}(\chi_p)} \Q\left(\zeta_l, \sqrt[l]{p}\right).
\]
We can now show that $X(a, f, K)$ is of the expected size for most choices of $f$.

\begin{theorem}
\label{t4r}
Assume GRH and let $l$ be an odd prime. There are constant $A(l), A'(l) > 0$ such that the following holds. Let $X$ be a box with $D_1 > A(l)$. Let $P$ be a set of primes $1$ modulo $l$ disjoint from all the $X_i$. Write $x_i$ for the unique element of $X_i$ for $1 \leq i \leq k$ and fix a function $g : P \cup \{x_1, \ldots, x_k\} \rightarrow [l - 1]$. Suppose that the assignment $a: M \coprod M_{P, 1} \coprod M_{P, 2} \rightarrow \langle \zeta_l \rangle$ agrees with $X$ at stage $k$. Let $K$ be a Galois extension of $\Q$ of degree $n_K$ that is disjoint from $L(P \cup x)$ for all $x \in X$. Define $B$ to be the maximum of the primes in $P$ and $X_{k + 1}$. We assume that
\begin{align}
\label{eDKUpper}
l^{2|P| + 2k} \cdot \left(n_K \cdot \log B + \log \left|\Delta_{K/\Q}\right|\right) \leq t_{k + 1}^{\frac{1}{8}}.
\end{align}
Then the proportion of $f$ in $\textup{Map}(X_{k + 1} \coprod \ldots \coprod X_r, [l - 1])$ with
\begin{align}
\label{eRedei}
\left||X(a, f, K)| - \frac{|X|}{n_K^{r - k} g(l, P, k)}  \right| > A'(l) \cdot \frac{|X|}{n_K^{r - k} g(l, P, k)} \cdot x_k^{-\frac{1}{4}}
\end{align}
is at most
\[
O\left(e^{-2\left|X_{k + 1}\right|^{\frac{1}{8}}}\right).
\]
\end{theorem}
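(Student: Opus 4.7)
\vspace{0.5em}

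The plan is to estimate $\lvert X(a,f,K)\rvert$ by writing its defining indicator as a sum of characters via orthogonality, and then to extract the main term from Chebotarev under GRH while showing that the error terms have small variance as $f$ ranges over $\mathrm{Map}(X_{k+1}\sqcup\cdots\sqcup X_r,[l-1])$. More precisely, for each condition $\chi_{x_j}^{f(x_j)}(\mathrm{Frob}(x_i))=a(i,j)$ (and analogously for the conditions involving $P$) I would use the identity
$$
\mathbf{1}\bigl[\chi(\sigma)=a\bigr]=\frac{1}{l}\sum_{s=0}^{l-1}\chi^s(\sigma)\overline{a}^{s}
$$
to expand $\lvert X(a,f,K)\rvert$ as $\frac{1}{g(l,P,k)}$ times a sum indexed by frequency vectors $\mathbf{s}=(s_{ij},s_{ip},s_{pj})$, each term being a Frobenius sum over $x\in X$ also weighted by the complete-splitting indicator of $\pi_i(x)$ in $K$ for $i>k$. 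Writing $t_j:=\sum_{i\neq j}s_{ij}+\sum_{p\in P}s_{pj}\bmod l$, the dependence on $f(x_j)$ collapses to $\chi_{x_j}^{f(x_j)t_j}$, so averaging over $f(x_j)\in[l-1]$ yields a non-trivial cancellation precisely when $t_j\neq 0$.

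For the \emph{main term} $\mathbf{s}=\mathbf{0}$ I would apply an effective Chebotarev density theorem under GRH to the Galois extension $K\cdot L(P\cup\{x_1,\dots,x_k\})/\mathbb{Q}$ to count, for each choice of $i>k$ with Frobenius constrained by $g$ and $a$, the number of primes in $X_i$ realising that Frobenius and splitting completely in $K$. The disjointness hypothesis together with~(\ref{eDKUpper}) makes the GRH error negligible, giving the desired leading term $|X|/(n_K^{r-k}g(l,P,k))$. For \emph{non-zero} $\mathbf{s}$ I would split into two regimes. If some $t_j\neq 0$ then the $f(x_j)$-average kills the term up to a factor $O(1/(l-1))$ per such $j$, providing cancellation that is combinatorial in origin. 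If all $t_j=0$ (``balanced'' frequencies) then the term is $f$-independent and one obtains cancellation from Chebotarev applied to an auxiliary extension in which a non-trivial product of the $\chi_{x_j}^{s_{ij}}$ restricts to a non-trivial character; again the degree and conductor of this extension are under control because~(\ref{eDKUpper}) gives a comfortable margin relative to $t_{k+1}$.

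To upgrade expectation control into the concentration statement~(\ref{eRedei}) with the stated exponential-in-$|X_{k+1}|^{1/8}$ failure probability, I would use a high-moment/Hoeffding argument. Fix all $f(x_j)$ for $j>k+1$ and view $N(a,f,K)$ as a function of the independent coordinates $f(p)$ for $p\in X_{k+1}$, each taking values in $[l-1]$. The contribution of a single coordinate $f(p)$ to $N$ is a sum over $x\in X$ with $\pi_{k+1}(x)=p$, which is bounded by $|X|/|X_{k+1}|$. Applying a Hoeffding-type inequality with the boundedness estimate $|X|/|X_{k+1}|$ on each coordinate and the (already-established) expectation, I would obtain deviation probabilities of the shape $\exp(-c|X_{k+1}|^{1/4})$, after which a union bound over the choice of the remaining $f(x_j)$'s and over the finitely many relevant frequency classes $\mathbf{s}$ yields the stated bound $e^{-2|X_{k+1}|^{1/8}}$ (the exponent $1/8$ leaving room for the union bound and for absorbing factors of $l^{O(|P|+r)}$).

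The main obstacle I expect is the careful bookkeeping of cross-dependencies in the $\mathbf{s}\neq\mathbf{0}$ error terms: a naive variance computation produces covariance contributions from pairs $x,y\in X$ sharing a coordinate in some $X_j$ with $j>k$, and one must simultaneously arrange that (i) Chebotarev applies in a field of degree small compared to $t_{k+1}^{1/8}$, which is where~(\ref{eDKUpper}) enters decisively, and (ii) the Hoeffding step sees genuinely independent randomness, which forces one to choose $X_{k+1}$ as the ``outer'' coordinate and to absorb into the final error the loss coming from the very slight non-uniformity caused by $f(p)$ ranging over $[l-1]$ rather than $\{0,1,\dots,l-1\}$ (this is the source of the mismatch between $1/l$ and $1/(l-1)$ discussed after Definition~\ref{dRedei}). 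Once those two technical points are reconciled, the rest of the argument is a standard, if lengthy, combination of orthogonality, effective Chebotarev under GRH, and a concentration inequality.
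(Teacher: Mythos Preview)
Your orthogonality-expansion approach is different from the paper's, and as written it has a genuine gap in the concentration step. You propose to fix $f$ on $X_{k+2}\sqcup\cdots\sqcup X_r$, apply Hoeffding over the independent coordinates $(f(p))_{p\in X_{k+1}}$, and then take a union bound over the remaining choices of $f$. But the number of such choices is $(l-1)^{\sum_{i>k+1}|X_i|}$, and since the $|X_i|$ are at least geometrically increasing (the box conditions force $t_{i+1}\ge l^{100}t_i'$), this is far larger than $\exp(c|X_{k+1}|^{\alpha})$ for any $\alpha<1$; the union bound is hopeless. Moreover, Hoeffding only gives concentration around the conditional mean $\mathbb{E}_{f|_{X_{k+1}}}\bigl[|X(a,f,K)|\bigr]$, which still depends on $f|_{X_{k+2}\sqcup\cdots\sqcup X_r}$. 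Showing that this conditional mean is close to $|X|/(n_K^{r-k}g(l,P,k))$ for each fixed tail of $f$ is exactly the problem at stage $k+1$, so you are forced into an induction anyway --- but your sketch does not set one up.

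The paper proceeds by downward induction on $k$ from the start, and the Hoeffding step is applied to a much simpler quantity. The key observation is that choosing $f(p)\in[l-1]$ for $p\in X_{k+1}$ amounts to choosing a prime $\mathfrak{p}$ of $\mathbb{Z}[\zeta_l]$ above $p$, hence a specific element $\mathrm{Frob}(\mathfrak{p})$ within its conjugacy class $C\subseteq\mathrm{Gal}(KL(P\cup\{x_1,\ldots,x_k\})/\mathbb{Q})$. Chebotarev under GRH controls the conjugacy class (this is where (\ref{eDKUpper}) enters), and a \emph{single} Hoeffding application shows that for most $f$ the count of $p$ with $\mathrm{Frob}(\mathfrak{p})=\sigma$ is close to $|C|^{-1}$ times the count with $\mathrm{Frob}(\mathfrak{p})\in C$. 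One then writes $|X(a,f,K)|=\sum_p|X_p(a,f,K)|$ over the relevant $p\in X_{k+1}$ and invokes the induction hypothesis on each sub-box $X_p$. The proportion of exceptional $f$ at stage $k$ is bounded by the Hoeffding term at level $k+1$ plus $|X_{k+1}|$ times the exceptional proportion at stage $k+1$; since $s_i=|X_i|/(n_K l^{2|P|+2i-2})$ grows rapidly enough that $|X_{k+1}|e^{-2s_{k+2}^{1/4}}\le \tfrac12 e^{-2s_{k+1}^{1/4}}$, this telescopes to the stated bound. No union bound over tail choices of $f$ is ever needed.
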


\begin{proof}
Here and later on we implicitly extend our function $f \in \textup{Map}(X_{k + 1} \coprod \ldots \coprod X_r, [l - 1])$ to $f \in \textup{Map}(P \coprod X_1 \coprod \ldots \coprod X_r, [l - 1])$ by using the function $g$. Define for $k + 1 \leq i \leq r$
\[
s_i := \frac{|X_i|}{n_K \cdot l^{2|P| + 2i - 2}}, \quad s_{r + 1} = 0.
\]
We claim that there exist constants $A_1(l), A_2(l) > 0$ depending only on $l$ such that the proportion of $f$ with
\begin{align}
\label{eRedei2}
\left||X(a, f, K)| - \frac{|X|}{n_K^{r - k} g(l, P, k)} \right| > A_1(l) \cdot \frac{|X|}{n_K^{r - k} g(l, P, k)} \cdot \left(\sum_{k + 1 \leq i \leq r} t_i^{-\frac{1}{4}} \right)
\end{align}
is bounded by
\[
A_2(l) \cdot e^{-2s_{k + 1}^{\frac{1}{4}}}.
\]
Once we establish the claim, we immediately deduce (\ref{eRedei}) and the theorem. We proceed by downwards induction on $k$ with base case $k = r$. In the base case (\ref{eRedei2}) is trivial, so henceforth we shall assume $k < r$. Define the number field
\[
L := L(P \cup \{x_1, \ldots, x_k\}).
\]
There is an isomorphism between $\Gal(L/\Q(\zeta_l))$ and $(\langle \zeta_l \rangle \times \langle \zeta_l \rangle)^{|P| + k}$ given by
\[
\sigma \mapsto \left(\chi_p(\sigma), \frac{\sigma\left(\sqrt[l]{p}\right)}{\sqrt[l]{p}}\right)
\]
on each coordinate, where $p$ runs trough $P$ and $x_1, \ldots, x_k$. Then the Galois group of $L$ over $\Q$ is naturally isomorphic to
\[
\Gal(L/\Q) \simeq (\langle \zeta_l \rangle \times \langle \zeta_l \rangle)^{|P| + k} \rtimes \mathbb{F}_l^\ast.
\]
If $s \in \mathbb{F}_l^\ast$, the $\mathbb{F}_l^\ast$-action is given by
\[
(x, y) \mapsto (x, y^s)
\]
on every copy of $\langle \zeta_l \rangle \times \langle \zeta_l \rangle$. We denote this automorphism of $(\langle \zeta_l \rangle \times \langle \zeta_l \rangle)^{|P| + k}$ by $T_s$. Using the classical formula
\[
\Delta_{L/\Q} = N_{\Q(\zeta_l)/\Q}\left(\Delta_{L/\Q(\zeta_l)}\right) \cdot \Delta_{\Q(\zeta_l)/\Q}^{[L : \Q(\zeta_l)]},
\]
we obtain the following bound
\[
\log \left|\Delta_{L/\Q}\right|  \ll (|P| + k) \cdot l^{2|P| + 2k} \cdot \log B \ll l^{2|P| + 2k} \cdot \log B.
\]
This implies
\begin{align*}
\log \left|\Delta_{KL/\Q}\right| &\leq n_K \log \left|\Delta_{L/\Q}\right| + (l - 1) \cdot l^{2|P| + 2k} \log \left|\Delta_{K/\Q}\right| \\
&\ll l^{2|P| + 2k} \cdot \left(n_K \cdot \log B + \log \left|\Delta_{K/\Q}\right|\right) \leq t_{k + 1}^{\frac{1}{8}}
\end{align*}
where the last inequality is just equation (\ref{eDKUpper}). We know that
\[
\Gal(KL/\Q) \simeq \Gal(K/\Q) \times \Gal(L/\Q),
\]
and we let $p_1$ and $p_2$ be the natural projection maps. Let $C$ be a conjugacy class of $\Gal(KL/\Q)$ with $p_1(C) = \text{id}$ and $p_2(C) \subseteq \Gal(L/\Q(\zeta_l))$. Then $C$ is equal to $\{(\text{id}, T_s\sigma): s \in \mathbb{F}_l^\ast\}$ for some $\sigma \in \Gal(L/\Q(\zeta_l))$. The Chebotarev density theorem yields conditional on GRH \cite{LO}
\[
\sum_{\substack{t_{k + 1} < p < t'_{k + 1} \\ p \equiv 1 \bmod l \\ \text{Frob}(p) = C}} 1 = \frac{|C|}{n_K \cdot l^{2|P| + 2k}} \left(\left|X_{k + 1}\right| + O\left(t_{k + 1}^{\frac{5}{8}}\right)\right).
\]
Recall that we have chosen prime ideals $\pp$ in $\Z[\zeta_l]$ above every $p \equiv 1 \mod l$ in the range $t_{k + 1} < p < t'_{k + 1}$. Hence we obtain
\begin{align}
\label{eCheb}
\sum_{\substack{t_{k + 1} < p < t'_{k + 1} \\ p \equiv 1 \bmod l \\ \text{Frob}(\pp) \in C}} 1 = \frac{|C|}{n_K \cdot l^{2|P| + 2k}} \left(\left|X_{k + 1}\right| + O\left(t_{k + 1}^{\frac{5}{8}}\right)\right).
\end{align}
Let $a: M \coprod M_{P, 1} \coprod M_{P, 2} \rightarrow \langle \zeta_l \rangle$ be an assignment that agrees with $X$ at stage $k$. Our next step is to attach a conjugacy class $C$ to $a$, for which we will use (\ref{eCheb}). There exists exactly one element $(a_{1i}, a_{2i})_{1 \leq i \leq |P| + k}$ satisfying
\begin{align*}
a(k + 1, i) = a_{1i} \text{ for all } 1 \leq i \leq k, &\quad a(i, k + 1) = a_{2i} \text{ for all } 1 \leq i \leq k \\
a(k + 1, p) = a_{1p} \text{ for all } p \in P, &\quad a(p, k + 1) = a_{2p} \text{ for all } p \in P.
\end{align*}
Define $C$ to be the conjugacy class of $\text{Gal}(KL/\Q)$ with $p_1(C) = \text{id}$ and
\[
(a_{1i}, a_{2i})_{1 \leq i \leq |P| + k} \in p_2(C).
\]
Take $\sigma := (a_{1i}, a_{2i})_{1 \leq i \leq |P| + k}$. Then we say that $f$ is balanced if
\begin{align}
\label{eBalance}
\left|\sum_{\substack{t_{k + 1} < p < t'_{k + 1}, \ p \equiv 1 \bmod l \\ p_1(\text{Frob}(p)) = \text{id}, \ p_2(\text{Frob}(\pp)) = \sigma}} 1 - \sum_{\substack{t_{k + 1} < p < t'_{k + 1}, \ p \equiv 1 \bmod l \\ \text{Frob}(\pp) \in C}} \frac{1}{|C|}\right| \leq \left(\frac{|C||X_{k + 1}|}{n_K \cdot l^{2|P| + 2k}}\right)^{\frac{5}{8}},
\end{align}
and we say that $f$ is unbalanced otherwise. We deduce from Hoeffding's inequality that the proportion of unbalanced $f$ is bounded by 
\[
\frac{A_2(l)}{2} \cdot e^{-2s_{k + 1}^{\frac{1}{4}}}
\]
for a good choice of $A_2(l)$. Take $p \in X_{k + 1}$ with $p_1(\text{Frob}(p)) = \text{id}$ and $p_2(\text{Frob}(\pp)) = \sigma$. Define the box
\[
X_p := X_1 \times \ldots X_k \times \{p\} \times X_{k + 2} \times \ldots \times X_r.
\]
Then we have the decomposition
\begin{align}
\label{eCombinatorial}
|X(a, f, K)| = \sum_{\substack{t_{k + 1} < p < t'_{k + 1} \\ p \equiv 1 \bmod l \\ p_1(\text{Frob}(p)) = \text{id} \\ p_2(\text{Frob}(\pp)) = \sigma}} |X_p(a, f, K)|.
\end{align}
To apply the induction hypothesis we must check that equation (\ref{eDKUpper}) is still valid. This is a straightforward computation and an appeal to the induction hypothesis gives
\begin{align}
\label{eIH}
\left||X_p(a, f, K)| - \frac{|X_p|}{n_K^{r - k - 1}g(l, P, k + 1)}\right| \leq A_1(l) \cdot \frac{|X_p|}{n_K^{r - k - 1}g(l, P, k + 1)} \cdot \left(\sum_{k + 2 \leq i \leq r} t_i^{-\frac{1}{4}} \right)
\end{align}
except for a proportion of $f$ bounded in magnitude by
\[
A_2(l) \cdot e^{-2s_{k + 2}^{\frac{1}{4}}}.
\]
We say that $f$ is exceptional if $f$ is unbalanced or fails equation (\ref{eIH}) for some choice of $X_p$. Then the total proportion of exceptional $f$ is bounded by
\[
\frac{A_2(l)}{2} \cdot e^{-2s_{k + 1}^{\frac{1}{4}}} + A_2(l) \cdot \left|X_{k + 1}\right| \cdot e^{-2s_{k + 2}^{\frac{1}{4}}} \leq A_2(l) \cdot e^{-2s_{k + 1}^{\frac{1}{4}}},
\]
if $D_1$ is sufficiently large. We employ the triangle inequality to bound the LHS of equation (\ref{eRedei2}) as follows
\begin{multline}
\label{eTriangle}
\left||X(a, f, K)| - \sum_{\substack{t_{k + 1} < p < t'_{k + 1}, \ p \equiv 1 \bmod l \\ p_1(\text{Frob}(p)) = \text{id}, \ p_2(\text{Frob}(\pp)) = \sigma}} \frac{|X_p|}{n_K^{r - k - 1}g(l, P, k + 1)}\right| + \\
\left|\sum_{\substack{t_{k + 1} < p < t'_{k + 1}, \ p \equiv 1 \bmod l \\ p_1(\text{Frob}(p)) = \text{id}, \ p_2(\text{Frob}(\pp)) = \sigma}} \frac{|X_p|}{n_K^{r - k - 1}g(l, P, k + 1)} - \frac{|X|}{n_K^{r - k} g(l, P, k)} \right|.
\end{multline}
If $f$ is not exceptional, we deduce from (\ref{eCombinatorial}) and (\ref{eIH}) that the first term of equation (\ref{eTriangle}) is bounded by
\begin{align}
\label{eFromIH}
\frac{A_1(l)|X|/|X_{k + 1}|}{n_K^{r - k - 1}g(l, P, k + 1)} \cdot \left(\sum_{k + 2 \leq i \leq r} t_i^{-\frac{1}{4}} \right) \cdot \sum_{\substack{t_{k + 1} < p < t'_{k + 1} \\ p \equiv 1 \bmod l \\ p_1(\text{Frob}(p)) = \text{id} \\ p_2(\text{Frob}(\pp)) = \sigma}} 1.
\end{align}
For sufficiently large $D_1$, we use (\ref{eCheb}) and (\ref{eBalance}) to bound the second term of equation (\ref{eTriangle}) and to bound equation (\ref{eFromIH}). Then a straightforward computation completes the proof of the theorem.
\end{proof}

\section{Klys revisited}
Suppose that $1 \leq k \leq r, s$ are integers. Define 
\[
P(r, s, l, j) := \frac{\left|\left\{A \in \text{Mat}(r, s, \mathbb{F}_l) : \text{dim}(\text{ker}(A)) = j\right\}\right|}{\left|\text{Mat}(r, s, \mathbb{F}_l)\right|}.
\]
Fix $M \in \text{Mat}(k, k, \mathbb{F}_l)$. Let $\text{Mat}(r, s, \mathbb{F}_l, M)$ to be the subset of $\text{Mat}(r, s, \mathbb{F}_l)$ consisting of those matrices $A$ satisfying $A(i, j) = M(i, j)$ for all $1 \leq i, j \leq k$. Then we set
\[
Q(r, s, l, M, j) := \frac{\left|\left\{A \in \text{Mat}(r, s, \mathbb{F}_l, M) : \text{dim}(\text{ker}(A)) = j\right\}\right|}{\left|\text{Mat}(r, s, \mathbb{F}_l, M)\right|}.
\]
We are interested in the difference $P(r, r - 1, l, j) - Q(r, r - 1, l, M, j)$ as $r$ goes to infinity, independent of the choice of $M$.

\begin{lemma}
\label{lRM}
Suppose that $r \geq 2k$. We have
\[
\left|P(r, r - 1, l, j) - Q(r, r - 1, l, M, j)\right| \leq 2k \cdot l^{2k - r}.
\]
\end{lemma}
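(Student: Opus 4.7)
The plan is to couple the two distributions by partitioning the matrix into a small ``constrained'' block and a large ``free'' block, and to argue that the kernel dimension depends only on the free part except on a small bad event. Write $A = \begin{pmatrix} A_1 \\ A_2 \end{pmatrix}$, where $A_1$ consists of the first $k$ rows and $A_2$ of the last $r-k$ rows. Under both $\mathbb{P}$ (uniform) and $\mathbb{Q}$ (uniform conditioned on the top-left $k \times k$ block being $M$), the submatrix $A_2$ is uniform on $(r-k) \times (r-1)$ matrices, because $\mathbb{Q}$ only constrains entries inside $A_1$. Moreover $A_1$ and $A_2$ are independent under both measures. So it suffices to control, conditionally on $A_2$, the distribution of $\dim\ker A = \dim\ker(A_1|_V)$ where $V := \ker(A_2) \subseteq \mathbb{F}_l^{r-1}$.

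The key observation is that this conditional distribution coincides under $\mathbb{P}$ and $\mathbb{Q}$ whenever $V \cap W = 0$, where $W := \mathrm{span}(e_1, \ldots, e_k)$. Indeed, under $\mathbb{Q}$ one can write $A_1 = M \circ \pi_W + B$, where $\pi_W$ is the projection onto $W$ along $W' := \mathrm{span}(e_{k+1}, \ldots, e_{r-1})$, and $B$ is uniform on linear maps $\mathbb{F}_l^{r-1} \to \mathbb{F}_l^k$ vanishing on $W$. If $V \cap W = 0$, then $\pi_{W'}|_V$ is injective, so $B|_V$ is uniformly distributed on $\mathrm{Hom}(V, \mathbb{F}_l^k)$, and hence $A_1|_V$ is uniformly distributed there as well — exactly its distribution under $\mathbb{P}$.

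It remains to bound $\mathbb{P}(V \cap W \neq 0)$. But $V \cap W = \ker(A_2|_W)$, where $A_2|_W$ is the first $k$ columns of $A_2$, a uniformly random $(r-k) \times k$ matrix. A standard count (which uses the hypothesis $r \geq 2k$) gives
\[
\mathbb{P}(V \cap W \neq 0) \;\leq\; \sum_{i=0}^{k-1} l^{\,i - (r-k)} \;\leq\; l^{2k - r} \cdot \frac{l^k - 1}{(l-1)\, l^{k-1}} \;\leq\; 2k \cdot l^{2k - r}.
\]
Combining this with the fact that for fixed $j$
\[
P(r,r-1,l,j) - Q(r,r-1,l,M,j) = \sum_{V} \mathbb{P}(V_2 = V)\bigl[\mathbb{P}(\dim\ker(A_1|_V)=j\mid V)-\mathbb{Q}(\dim\ker(A_1|_V)=j\mid V)\bigr],
\]
where only $V$ with $V\cap W \neq 0$ contribute, yields the desired inequality. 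There is no real obstacle here — the only subtlety is checking that the conditional law of $A_1|_V$ under $\mathbb{Q}$ really is uniform on $\mathrm{Hom}(V, \mathbb{F}_l^k)$ when $V \cap W = 0$, which is a short linear-algebra verification.
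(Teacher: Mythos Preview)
Your argument is correct. You couple the two measures by splitting $A$ into its top $k$ rows $A_1$ and bottom $r-k$ rows $A_2$, observe that $A_2$ has the same law under $\mathbb{P}$ and $\mathbb{Q}$, and show that conditional on $V=\ker A_2$ satisfying $V\cap W=0$ (with $W=\mathrm{span}(e_1,\dots,e_k)$) the restriction $A_1|_V$ is uniform on $\mathrm{Hom}(V,\mathbb{F}_l^k)$ under both measures. The linear-algebra verification you flag is indeed short. There is a harmless slip in your displayed chain: the middle expression $l^{2k-r}\cdot\dfrac{l^k-1}{(l-1)l^{k-1}}$ is actually $l$ times $\sum_{i=0}^{k-1}l^{\,i-(r-k)}$, not equal to it; but since $\sum_{i=0}^{k-1}l^{\,i-(r-k)}\le k\,l^{2k-1-r}\le 2k\,l^{2k-r}$ the final bound stands.

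The paper takes a related but different route: it conditions on the first $k$ \emph{columns} $a_1,\dots,a_k$ of $A$ being linearly independent and asserts that the resulting joint probabilities $P(r,r-1,l,j,k)$ and $Q(r,r-1,l,M,j,k)$ coincide. The underlying idea is the same---on the good event the kernel statistics match---and indeed the \emph{conditional} law of $\dim\ker A$ given $\{a_1,\dots,a_k\text{ independent}\}$ agrees under both measures (a row-operation argument). However, the \emph{joint} probabilities need not agree, because $\mathbb{P}(a_1,\dots,a_k\text{ independent})$ and $\mathbb{Q}(a_1,\dots,a_k\text{ independent})$ can differ (e.g.\ the latter equals $1$ when $M$ is invertible). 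Your row-wise decomposition sidesteps this: since the constraint defining $\mathbb{Q}$ lives entirely inside $A_1$, the block $A_2$ is genuinely shared, and your good event $\{V\cap W=0\}$---equivalently, that the bottom $(r-k)\times k$ block has full column rank---has the same probability under both measures, giving a clean one-sided bound $|P-Q|\le 1-P(r-k,k,l,0)$.
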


\begin{proof}
For a matrix $A$, let $a_1, \ldots, a_k$ denote its first $k$ columns. We define
\[
P(r, s, l, j, k) := \frac{\left|\left\{A \in \text{Mat}(r, s, \mathbb{F}_l) : \text{dim}(\text{ker}(A)) = j \text{ and } \text{dim}(a_1, \ldots, a_k) = k\right\}\right|}{\left|\text{Mat}(r, s, \mathbb{F}_l)\right|}
\]
and
\[
Q(r, s, l, M, j, k) := \frac{\left|\left\{A \in \text{Mat}(r, s, \mathbb{F}_l, M) : \text{dim}(\text{ker}(A)) = j \text{ and } \text{dim}(a_1, \ldots, a_k) = k\right\}\right|}{\left|\text{Mat}(r, s, \mathbb{F}_l, M)\right|}.
\]
Then we have
\[
P(r, r - 1, l, j) - P(r, r - 1, l, j, k) \leq 1 - P(r, k, l, 0).
\]
and
\[
Q(r, r - 1, l, M, j) - Q(r, r - 1, l, M, j, k) \leq 1 - P(r - k, k, l, 0)
\]
due to our assumption $r \geq 2k$. We observe that $P(r, r - 1, l, j, k) = Q(r, r - 1, l, M, j, k)$. Combining this with the previous two inequalities gives 
\begin{align}
\label{ePrr}
\left|P(r, r - 1, l, j) - Q(r, r - 1, l, M, j)\right| \leq 2 - 2P(r - k, k, l, 0).
\end{align}
Using the classical formula for $P(r - k, k, l, 0)$, we obtain
\begin{align*}
P(r - k, k, l, 0) &= \frac{\prod_{j = 0}^{k - 1} \left(l^{r - k} - l^j\right)}{l^{k(r - k)}} = \prod_{j = 0}^{k - 1} \left(1 - l^{j + k - r}\right) \\
&\geq \left(1 - l^{2k - r}\right)^k \geq 1 - k \cdot l^{2k - r},
\end{align*}
where the last inequality follows from Bernouilli's inequality. Inserting this in (\ref{ePrr}) ends the proof of our theorem.
\end{proof}

With this lemma we have done all the preparatory work needed for understanding the $(1 - \zeta_l)^2$-rank when $K$ varies in $\text{Field}(N, l)$. Recall that we have defined a matrix $\text{Redei}(K)$ in Section \ref{Redei matrices}. It will be useful to observe that an assignment $a : M \rightarrow \langle \zeta_l \rangle$ uniquely determines a Redei matrix and vice versa. Proposition \ref{First description of Redei matrices} implies that the $(1 - \zeta_l)^2$-rank of $K$ is equal to $r - 1 - \text{rank}_{\mathbb{F}_l} \ \text{Redei}(K)$, see also Theorem 1 in \cite{Stevenhagen}. Our next theorem is similar to Theorem 4 in Klys \cite{Klys}, but has the benefit of providing an error term.

\begin{theorem}
\label{tKlys}
Assume GRH and let $l$ be an odd prime. Let $\textup{Field}(N, l, j)$ be the subset of $\textup{Field}(N, l)$ consisting of those fields $K$ with $(1 - \zeta_l)^2$-rank equal to $j$. Then we have
\[
\left|\lim_{s \rightarrow \infty} P(s, s - 1, l, j) \cdot \left|\textup{Field}(N, l)\right| - \left|\textup{Field}(N, l, j)\right|\right| = O\left(\frac{\left|\textup{Field}(N, l)\right|}{(\log \log N)^{c}}\right)
\]
for some absolute constant $c > 0$.
\end{theorem}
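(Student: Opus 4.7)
The strategy is to reduce the counting problem through several layers: from fields ordered by $\textup{rad}(D_K)$, to a fixed number of prime divisors $r$, to boxes, and finally to the distribution of Redei matrices within a box. Four tools are combined: Theorem \ref{tRanger} (to fix $r$), Proposition \ref{p6.9} (to pass to boxes), Theorem \ref{t4r} (equidistribution of Redei matrix entries within a box), and Lemma \ref{lRM} (rank distribution after fixing a block). Throughout, set $P_\infty(l, j) := \lim_{s \to \infty} P(s, s-1, l, j)$; the explicit formula for $P(r, r-1, l, j)$ as a product yields the elementary estimate $|P(r, r-1, l, j) - P_\infty(l, j)| \ll l^{-r}$.

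By Theorem \ref{tRanger} we may restrict to $K \in \textup{Field}(N, l)$ with $r := \omega(D_K)$ in the range $|r - \log\log N| \leq (\log\log N)^{2/3}$, incurring an admissible error and allowing us to replace $P(r, r-1, l, j)$ by $P_\infty(l, j)$. Fix such an $r$. Choose $D_1 := (\log\log N)^2$ and $C_0 := \log\log\log N$, and let $W \subseteq S_r(N, l)$ be the set of $D$ that are both comfortably spaced above $D_1$ and $C_0$-regular. Theorem \ref{tSNL} gives $|S_r(N, l) - W| = O((\log\log N)^{-c}) \cdot |S_r(N, l)|$ for some absolute $c > 0$. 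By Proposition \ref{p6.9} it suffices to show
\[
|\textup{Field}(X) \cap \textup{Field}(N, l, j)| = \bigl(P_\infty(l, j) + O((\log\log N)^{-c})\bigr) \cdot |\textup{Field}(X)|
\]
for every box $X$ with $\textup{Field}(X) \subseteq \textup{Field}(N, r, l)$ and $j(\textup{Field}(X)) \cap W \neq \emptyset$. A crucial consequence of $C_0$-regularity is that the number $k$ of fixed primes of any such box is $O(\log\log\log N)$, so that $l^{2k - r}$ is super-polynomially small in $\log\log N$ and $r \geq 2k$ is automatic.

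Inside such a box, fields in $\textup{Field}(X)$ correspond to pairs $(x, f)$ with $x \in X$ and $f$ a function to $[l-1]$, modulo diagonal $\FF_l^\ast$-scaling of $f$. By Corollary \ref{More on Redei matrices}, $\textup{rk}_{(1-\zeta_l)^2} \textup{Cl}(i_f(x))$ equals the right kernel dimension of the $r \times (r-1)$ matrix obtained from $\textup{Redei}(i_f(x))$ by dropping a column. The restriction $g$ of $f$ to the $k$ fixed primes fixes the $k(k-1)$ off-diagonal entries in the top-left $k \times k$ block of this matrix. For each Redei assignment $a$ that agrees with $g$, Theorem \ref{t4r} applied with $P = \emptyset$ and $K = \mathbb{Q}$ gives
\[
|X(a, f)| = \frac{|X|}{l^{r(r-1) - k(k-1)}} \bigl(1 + O((\log\log N)^{-1/2})\bigr)
\]
outside a set of $f$ extending $g$ of exponentially small density. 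Summing over $a$ with right kernel dimension $j$ and agreeing with $g$, Lemma \ref{lRM} --- applied after first conditioning on the $k$ diagonal entries in the top-left $k \times k$ block of the $r \times (r-1)$ matrix (which are not constrained by $g$) and then averaging over them --- shows that the number of such $a$ equals $l^{r(r-1) - k(k-1)} \cdot (P_\infty(l, j) + O(k \cdot l^{2k - r}))$. Multiplying the two estimates, summing over $(g, f)$, and dividing by $l - 1$ to pass from pairs to fields produces the desired box estimate; feeding this back into Proposition \ref{p6.9} and summing over admissible $r$ completes the proof.

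The main obstacle is calibrating the auxiliary parameters so that all four error sources --- the Chebotarev-plus-Hoeffding tail from Theorem \ref{t4r}, the matrix-rank discrepancy from Lemma \ref{lRM}, the regularity error from Theorem \ref{tSNL}, and the sampling error from Proposition \ref{p6.9} --- are simultaneously of size $O((\log\log N)^{-c})$. The choices $D_1 = (\log\log N)^2$ and $C_0 = \log\log\log N$, combined with the range constraint on $r$, achieve this balance.
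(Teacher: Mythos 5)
Your overall route mirrors the paper's: restrict $r$ via Theorem \ref{tRanger}, replace the limit by $P(r,r-1,l,j)$, pass to boxes via Theorem \ref{tSNL} and Proposition \ref{p6.9}, then combine Theorem \ref{t4r} with Lemma \ref{lRM} inside a box. The one genuine gap is your calibration of $D_1$. Setting $D_1 := (\log\log N)^2$ fails on two counts. First, Proposition \ref{p6.9} contributes an unavoidable error of order $1/\log D_1$, and with your choice this is $1/(2\log\log\log N)$, which is \emph{not} $O((\log\log N)^{-c})$ for any $c>0$ --- so the conclusion of the theorem cannot be reached. Second, the hypothesis (\ref{eDKUpper}) of Theorem \ref{t4r} with $P=\emptyset$, $K=\Q$ becomes $l^{2k}\log B \leq t_{k+1}^{1/8}$, where $B$ is comparable to $t_{k+1} > D_1$. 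Since $C_0$-regularity allows $k$ up to roughly $C_0 = \log\log\log N$, one has $l^{2k}$ as large as $(\log\log N)^{2\log l}$, while $t_{k+1}^{1/8}$ near the bottom of the box is only $(\log\log N)^{1/4}$; for every odd prime $l$ this inequality is violated, so Theorem \ref{t4r} cannot even be invoked at boxes near the lower end of the $t_{k+1}$ range. Your final paragraph asserts that the choices $D_1 = (\log\log N)^2$, $C_0 = \log\log\log N$ balance all four error sources, but as shown this balance does not hold.

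The fix is simply to take $D_1$ much larger --- the paper uses $D_1 := \log N$ (and $C_0 := \frac{1}{10}\log\log\log N$). With $D_1 = \log N$ one has $1/\log D_1 = 1/\log\log N = O((\log\log N)^{-1})$, and $t_{k+1}^{1/8} > (\log N)^{1/8}$ vastly dominates $l^{2k}\log B \ll (\log\log N)^{O(1)}$, so both obstructions vanish. With this change the rest of your argument --- passing to $i_f(X)$ via the weight identities, relating the $(1-\zeta_l)^2$-rank to the right kernel of the truncated Redei matrix, applying Lemma \ref{lRM} after conditioning on the $k\times k$ block, and averaging over $f$ --- is correct and is exactly what the paper does.
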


\begin{proof}
By Theorem \ref{tRanger} we know that almost all $r := \omega(D_K)$ satisfy (\ref{er3}). Hence it suffices to prove that there exists an absolute constant $c > 0$ with
\[
\left|\lim_{s \rightarrow \infty} P(s, s - 1, l, j) \cdot \left|\text{Field}(N, r, l)\right| - \left|\text{Field}(N, r, l, j)\right|\right| = O\left(\frac{\left|\text{Field}(N, r, l)\right|}{(\log \log N)^c}\right),
\]
where $\text{Field}(N, r, l, j)$ is defined in the obvious way. An easy computation shows that we may replace $\lim_{s \rightarrow \infty} P(s, s - 1, l, j)$ with $P(r, r - 1, l, j)$. Put
\[
D_1 := \log N, \quad C_0 := \frac{1}{10} \log \log \log N.
\]
Let $W$ be the subset of $S_r(N, l)$ that is comfortably spaced above $D_1$ and $C_0$-regular. By Proposition \ref{p6.9} and Theorem \ref{tSNL} it is enough to show
\[
\left|P(r, r - 1, l, j) \cdot \left|\text{Field}(X)\right| - \left|\text{Field}(X) \cap \text{Field}(N, r, l, j)\right|\right| = O\left(\frac{\left|\text{Field}(X)\right|}{(\log \log N)^c}\right)
\]
for all boxes $X$ with $j(\text{Field}(X)) \cap W \neq \emptyset$ and $\text{Field}(X) \subseteq \text{Field}(N, r, l)$. Let $X$ be such a box and write $x_1, \ldots, x_k$ for the unique elements of $X_1, \ldots, X_k$. Fix a function $g : \{x_1, \ldots, x_k\} \rightarrow [l - 1]$. Then we have the identity
\begin{align}
\label{eIFx}
\left|\text{Field}(X)\right| = \frac{1}{W(X)} \sum_f \left|i_f(X)\right|,
\end{align}
where $W(X)$ is a weight depending only on $X$ and the sum is taken over all $f$ in the set $\text{Map}(X_{k + 1} \coprod \ldots \coprod X_r, [l - 1])$. We implicitly extend $f$ to $\text{Map}(X_1 \coprod \ldots \coprod X_r, [l - 1])$ using our function $g$. We have another identity
\begin{align}
\label{eIFx2}
\left|\text{Field}(X) \cap \text{Field}(N, r, l, j)\right| = \frac{1}{W(X)} \sum_f \left|i_f(X) \cap \text{Field}(N, r, l, j)\right|.
\end{align}
Due to (\ref{eIFx}) and (\ref{eIFx2}) it suffices to establish
\begin{align}
\label{eIFx3}
\frac{1}{W(X)} \sum_f \left|P(r, r - 1, l, j) \cdot \left|i_f(X)\right| - \left|i_f(X) \cap \text{Field}(N, r, l, j)\right|\right| = O\left(\frac{\left|\text{Field}(X)\right|}{(\log \log N)^c}\right).
\end{align}
Define $g(l, \emptyset, k, j)$ to be the number of functions $a$ that satisfy the following two properties
\begin{itemize}
\item $a$ agrees with $X$ at stage $k$;
\item the Redei matrix $A$ associated to $a$ has kernel of rank $j$.
\end{itemize}
Then we claim
\begin{align}
\label{eMatrixCount}
\left|\frac{g(l, \emptyset, k, j)}{g(l, \emptyset, k)} - P(r, r - 1, l, j)\right| = O\left(\frac{1}{\sqrt{\log N}}\right).
\end{align}
We have
\begin{align}
\label{eSumg1}
g(l, \emptyset, k) = l^{r^2  - r - k^2 + k}.
\end{align}
Let $M \in \text{Mat}\left(k, k, \mathbb{F}_l\right)$ be such that $M$ agrees with $X$, i.e. we have for all $1 \leq i, j \leq k$ with $i \neq j$ the equality
\[
\zeta_l^{M(i, j)} = \chi_{x_j}\left(\text{Frob}(x_i)\right),
\]
where $x_i$ and $x_j$ are the unique elements of $X_i$ and $X_j$ respectively. Then we have
\begin{align}
\label{eSumg2}
g(l, \emptyset, k, j) = \sum_{\substack{M \in \text{Mat}\left(k, k, \mathbb{F}_l\right) \\ M \text{ agrees with } X}} Q(r, r - 1, l, M, j) l^{r^2 - r - k^2}.
\end{align}
We combine (\ref{eSumg1}) and (\ref{eSumg2}) to deduce
\begin{align}
\label{eSumg3}
\frac{g(l, \emptyset, k, j)}{g(l, \emptyset, k)} = \frac{1}{l^k} \sum_{\substack{M \in \text{Mat}\left(k, k, \mathbb{F}_l\right) \\ M \text{ agrees with } X}} Q(r, r - 1, l, M, j).
\end{align}
Since our box $X$ is $C_0$-regular, we are in the position to apply Lemma \ref{lRM} to the sum in equation (\ref{eSumg3}). Using once more that $X$ is $C_0$-regular, we see that $k$ is roughly equal to $\log r$. Hence we can fit the difference $|P(r, r - 1, l, M, j) - Q(r, r - 1, l, M, j)|$ in the error of  (\ref{eMatrixCount}), thus establishing (\ref{eMatrixCount}). Because of equation (\ref{eIFx3}) and (\ref{eMatrixCount}) we are left to prove
\[
\frac{1}{W(X)} \sum_f \left|\frac{g(l, \emptyset, k, j)}{g(l, \emptyset, k)} \cdot \left|i_f(X)\right| - \left|i_f(X) \cap \text{Field}(N, r, l, j)\right|\right| = O\left(\frac{\left|\text{Field}(X)\right|}{(\log \log N)^c}\right).
\]
We observe that $i_f(X)$ is equal to the disjoint union of $X(a, f)$ over $a$. An application of Theorem \ref{t4r} finishes the proof.
\end{proof}

\section{Proof of Theorem \ref{tCyclic}}
\label{sMain}
The goal of this section is to prove Theorem \ref{tCyclic}, which will follow from a combination of Theorem \ref{tMin}, Theorem \ref{tAgree} and Proposition \ref{p4.4}. Unfortunately, these results are only valid under very strong conditions. Hence most of the work in this section are reduction steps. Before we start the proof of Theorem \ref{tCyclic}, we need a definition.

\begin{mydef}
\label{dNiceBox}
Let $N$ be a large real and let $X$ be a box. Put
\[
D_1 := e^{(\log \log N)^2}, \quad C_0 := \frac{1}{10} \log \log \log N.
\]
Define $W$ to be the subset of $S_r(N, l)$ that is comfortably spaced above $D_1$ and $C_0$-regular. We say that $X$ is a nice box for $N$ if the following three conditions are satisfied
\begin{itemize}
\item $r$ satisfies (\ref{eRanger});
\item $j(\textup{Field}(X)) \cap W \neq \emptyset$;
\item $\textup{Field}(X) \subseteq \textup{Field}(N, l)$.
\end{itemize}
\end{mydef}

\begin{prop}
\label{p7.3}
Assume GRH and let $l$ be an odd prime. There are $c, A, N_0 > 0$ such that for all $N > N_0$, all nice boxes $X$ for $N$, all integers $m \geq 2$ and all sequences $n_2 \geq \ldots \geq n_{m + 1} \geq 0$ of integers, we have
\[
\left|\left|\textup{Field}(X) \cap \bigcap_{k = 2}^{m + 1} D_{l, k}(n_k)\right| - P(n_{m + 1} | n_m) \cdot \left|\textup{Field}(X) \cap \bigcap_{k = 2}^m D_{l, k}(n_k)\right|\right| \leq \frac{A\left|\textup{Field}(X)\right|}{\left(\log \log N\right)^{\frac{c}{m^2(l^2 + l)^m}}}.
\]
\end{prop}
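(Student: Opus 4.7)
The plan is to prove Proposition \ref{p7.3} by induction on $m$, with the $m=1$ case being (a box-level version of) Theorem \ref{tKlys} established via Theorem \ref{t4r}. Fix a nice box $X = X_1 \times \ldots \times X_r$ and a sequence $n_2 \geq \ldots \geq n_{m+1}$. Write $\text{Field}(X) = \coprod_f i_f(X)$ for the decomposition into amalgamas $f: X_1 \coprod \ldots \coprod X_r \to [l-1]$, so that it suffices to prove the analogous estimate after averaging over $f$. For each $x \in X$ with $K = K_{\chi_{x,f}}$, conditioning on $\bigcap_{k \leq m} D_{l,k}(n_k)$ and fixing bases for $(1-\zeta_l)^{m-1} \text{Cl}(K)[(1-\zeta_l)^m]$ and its dual reduces the computation of $\dim (1-\zeta_l)^m \text{Cl}(K)[(1-\zeta_l)^{m+1}]$ to that of the rank of an $n_m \times (n_m+1)$ matrix of Artin pairings $\text{Art}_m(\text{Cl}(K))(b_i, \psi_j)$. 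The task is to show these matrix entries equidistribute in $\mathbb{F}_l$ as $K$ varies, with the error rate stated.

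Next I would fix a prime divisor $b$ of $d$ (the fixed part of the box) and a cocycle character $\psi$, and study the function $F: x \mapsto \text{Art}_m(\text{Cl}(K_{\chi_{x,f}}))(b, \psi_1(x))$ on $i_f(X)$. By Corollary \ref{lifting character is the same as lifting unramified cocycles} together with Propositions \ref{obtaining x_0 with minim.} and \ref{obtaining x_0 under agreement}, whenever a cube $\bar{x} \in \overline{X}_S$ admits a raw cocycle that is minimal or agrees with a good expansion, every corner $y \in \bar{x}(\emptyset)$ has $\psi_1$ liftable through rank $m$. I would then encode these minimality/agreement hypotheses as an $l$-additive system $\mathfrak{A}$ on $X$: $\overline{Y}_\emptyset^\circ(\mathfrak{A})$ consists of those $x$ with the required $(1-\zeta_l)^k$-ranks for $k \leq m$, and $F_S(\bar{x}) \in A_S(\mathfrak{A})$ records the obstruction to extending the raw cocycle data across the cube. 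One needs an explicit Lemma \ref{lAddCon}-type verification that $\mathfrak{A}$ is $S$-acceptable in the sense of Definition \ref{dAcceptable} with $|A_S(\mathfrak{A})|$ bounded by a function of $l$ and $m$ only; for cubes $\bar{x} \in \overline{Y}_S^\circ(\mathfrak{A})$, Theorems \ref{tMin} and \ref{tAgree} supply the crucial identity
\[
\sum_{y \in \bar{x}(\emptyset)} F(y) = \sum_{q \mid b} \epsilon_b(q)\, \phi_{[m], \bar{x}}(\text{Frob}_q),
\]
where $\{\phi_{T,\bar{x}}\}$ is a governing expansion, built inductively via Proposition \ref{existence of expansions}.

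To leverage this identity I would apply Proposition \ref{p4.4} to $\mathfrak{A}$ with $S$ of size $m$: there exists a generic $g \in \mathscr{G}_S(X)$ such that any $F$ satisfying $dF = g$ on $\overline{Z}_S(\mathfrak{A})$ is $\epsilon$-balanced, with $\epsilon \sim (\log\log N)^{-c/(m^2(l^2+l)^m)}$ coming from the bound in the proposition and the nice-box regularity parameters. To ensure the governing Frobenius data constitutes such a generic $g$, I would invoke the Chebotarev density theorem in the compositum of the $L(\phi_{S,\bar{x}})$, whose degree is controlled by Proposition \ref{pDegree} in terms of $l$, $|S|$, and the box parameters; GRH enters via the effective version of Chebotarev used in Theorem \ref{t4r}, which bounds the proportion of $f$ for which the Redei-style data is bad by $\exp(-|X_{k+1}|^{1/8})$. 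Combined with Proposition \ref{pGovAdd} identifying the governing Frobenius map with the full additive space $\mathcal{A}(\overline{Y}_S, \mathbb{F}_l) = \mathscr{G}_S(X)$ (Proposition \ref{pDiff}), this forces all but a negligible proportion of $f$ to produce balanced $F$. Summing over bases $b, \psi$ and applying standard linear algebra over $\mathbb{F}_l$ then converts matrix-entry equidistribution into the rank distribution $P(n_{m+1} \mid n_m)$.

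The main obstacle will be a delicate simultaneous balancing act: the hypotheses of Proposition \ref{p4.4} require $\log n \gtrsim (l(l+1))^{|S|+3} \log \epsilon^{-1}$ with $|S| = m$ and $n \sim |X_{k+1}|$, while the GRH-effective Chebotarev step (Theorem \ref{t4r}, bound (\ref{eDKUpper})) requires $l^{2|P|+2k}(n_K \log B + \log |\Delta_{K/\Q}|) \leq t_{k+1}^{1/8}$, and both must be checked against the $C_0$-regularity and comfortable spacing of the nice box. Threading these three constraints together — and verifying acceptability of $\mathfrak{A}$ uniformly across the inductive parameters $n_k$ — is what produces the specific exponent $c/(m^2(l^2+l)^m)$ in the error term. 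Once this is handled cube by cube for a generic $f$, combining the resulting estimates over $f$ and $b$ yields Proposition \ref{p7.3}, and summing over nice boxes via Proposition \ref{p6.9} and Theorem \ref{tRanger} gives Theorem \ref{tCyclic}.
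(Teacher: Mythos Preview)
Your high-level strategy matches the paper's: reduce to character sums over Artin-pairing entries, encode the minimality/agreement hypotheses as an $l$-additive system so that Theorems \ref{tMin} and \ref{tAgree} give $dF(\bar{x}) = \phi_{S,\bar{z}}(\text{Frob})$, then use Proposition \ref{p4.4} and Chebotarev (via Theorem \ref{t4r}) to force balance. But there are three structural reductions in the paper that your sketch skips, and without them the argument does not close.

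First, you never split by the Redei assignment $a$ or introduce the \emph{generic} condition. The paper deduces Proposition \ref{p7.3} from a finer statement (Proposition \ref{p7.4}) about generic $a$ agreeing with $X$ and a fixed sequence of Artin pairings $\{\text{Art}_k\}_{2\le k<m}$; non-generic $a$ are handled trivially by a matrix-counting bound. Genericity is not cosmetic: it is exactly what lets you find \emph{variable indices} $S(j_1,j_2)\subset [r]$ of size $m+1$ on which the basis vectors $w_{1,j},w_{2,j}$ vanish except at two prescribed spots (Lemma \ref{lVariable}). Without this, you have no control over which indices to vary, and the governing expansion cannot be set up with the correct ramification pattern.

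Second, Proposition \ref{p4.4} is \emph{not} applied to the full box $X$ with the natural additive system. The paper first passes from $X(a,f,m-1)$ to \emph{satisfactory product spaces} $\widetilde{Z}$, built from small well-governed blocks $Z=\prod_{i\in S_{\text{var}}} Z_i$ of side $M_{\text{box}}\sim (\log\log N)^{1/(10m)}$, via a greedy covering argument and a first/second moment computation on the multiplicity function $\Lambda(x)$ (the ``Proof that Proposition \ref{pFinal} implies Proposition \ref{pChar}''). Only inside $\widetilde{Z}(P)$ is the additive system of Lemma \ref{lAddCon} constructed, and only there is Proposition \ref{p4.4} applied (formally to $Z\times [M_{\text{box}}]$, after partitioning $X_{i_2}$ into size-$M_{\text{box}}$ chunks realizing the chosen $g_0$). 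The reason this detour is necessary is that one must simultaneously guarantee that the governing expansion exists on the \emph{entire} block (well-governedness), that the fields $M_\circ(Z)$ have controlled degree (so Chebotarev applies), and that the block is disjoint enough from its neighbors for the moment computation. Your proposal to encode the additive system directly on $X$ and invoke \ref{p4.4} once would fail the degree bound \eqref{eDKUpper}: the field $M_\circ$ over the whole box is far too large.

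Third, the character reduction (your ``matrix-entry equidistribution'') is done in the paper via a non-zero multiplicative character $F$ on $\text{Mat}(n_m+1,n_m,\FF_l)$ depending on a specific entry $(j_1,j_2)$ (Proposition \ref{pChar}); this is what singles out the pair of variable-index sets and justifies picking $b$ supported on $w_{2,j_2}$ and $\psi$ coming from $w_{1,j_1}$. Your description ``fix a prime divisor $b$ of $d$'' is not quite right: $b$ is a product of ramified primes determined by $w_{2,j_2}$ and the fixed coordinates, not an arbitrary divisor of the box's fixed part. Once these three reductions are inserted, your outline becomes the paper's proof.
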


\begin{proof}[Proof that Proposition \ref{p7.3} implies Theorem \ref{tCyclic}.] From Theorem \ref{tRanger} it follows that we need only consider $\text{Field}(N, r, l)$ with $r$ satisfying (\ref{eRanger}). Now apply Proposition \ref{p6.9} with $W$ as in Definition \ref{dNiceBox} and use the lower bound for $W$ established in Theorem \ref{tSNL}.
\end{proof}

For the remainder of this paper $a$ will always denote an assignment from $M$ to $\langle \zeta_l \rangle$. We let $A$ be the Redei matrix associated to $a$, i.e. $A$ is the unique matrix with entries $a(i, j)$ and the property
\[
\prod_{j = 1}^r a(i, j) = 1
\]
for all $1 \leq i \leq r$, which uniquely specifies $a(i, i)$. In this section it is essential to keep track of the characters we have chosen. If $S$ is a subset of $[r]$, we define 
$$
\text{Ch}(S) := \text{Map}\left(\coprod_{i \in S} X_i, [l - 1]\right).
$$
Furthermore, we set
\[
W(X, S) := \left|\{f \in \text{Ch}(S) : K \in i_f(X)\}\right|,
\]
where $K$ is any field in $i_f(X)$. Note that this does not depend on the choice of $K$.

\begin{mydef}
Let $V$ be the $\mathbb{F}_l$-vector space $\mathbb{F}_l^r$, which we think of as column vectors. Given the assignment $a: M \rightarrow \langle \zeta_l \rangle$ and associated Redei matrix $A$, we define
\[
D_{a, 2} := \{v \in V : v^TA = 0\}, \quad D_{a, 2}^\vee := \{v \in V : Av = 0\},
\]
where we think of $A$ as having entries in $\mathbb{F}_l$ through the isomorphism $j_l^{-1}$. Put
\[
n_{\textup{max}} := \left\lfloor \sqrt{\frac{c}{m^2(l^2 + l)^m} \log \log \log N} \right\rfloor, \quad n_2 := -1 + \dim_{\mathbb{F}_l} D_{a, 2}
\]
with $c$ a small constant depending only on $l$. Define $R := (1, \ldots, 1) \in \mathbb{F}_l^r$ and 
\[
\alpha := \left|\left\{j \in [r] : \frac{r}{4} \leq j \leq \frac{r}{3}\right\}\right|.
\]
We say that the assignment $a: M \rightarrow \langle \zeta_l \rangle$ is generic if the following conditions are satisfied
\begin{itemize}
\item $n_2 \leq n_{\textup{max}}$;
\item we have for all $i \in \mathbb{F}_l$, for all $T_1 \in D_{a, 2}$ and all $T_2 \in D_{a, 2}^\vee$ such that $T_1 \neq 0$ or $T_2 \not \in \langle R \rangle$
\begin{align}
\label{eGeneric}
\left|\left|\left\{j \in [r] : \frac{r}{4} \leq j \leq \frac{r}{3} \text{ and } \pi_j(T_1 + T_2) = i\right\}\right| - \frac{\alpha}{l}\right| \leq 2^{-10n_{\textup{max}}} \cdot r,
\end{align}
where $\pi_j$ is the projection on the $j$-th coordinate.
\end{itemize}
\end{mydef}

During our proof we will fix all previous Artin pairings, and then prove that the $m$-th Artin pairing is equidistributed. We formalize this in the following definition.

\begin{mydef}
Fix an assignment $a: M \rightarrow \langle \zeta_l \rangle$. Let $m \geq 2$ and choose filtrations of $\mathbb{F}_l$-vector spaces
\[
D_{a, 2} \supseteq \ldots \supseteq D_{a, m}, \quad D_{a, 2}^\vee \supseteq \ldots \supseteq D_{a, m}^\vee
\]
with $R \in D_{a, m}^\vee$. For $2 \leq k \leq m$ we define an integer $n_k$ by
\[
n_k := -1 + \dim_{\mathbb{F}_l} D_{a, k}.
\]
For $2 \leq k < m$, choose a bilinear pairing
\[
\textup{Art}_k : D_{a, k} \times D_{a, k}^\vee \rightarrow \mathbb{F}_l
\]
with left kernel $D_{a, k + 1}$ and right kernel $D_{a, k + 1}^\vee$. We call the set $\left\{\textup{Art}_k\right\}_{2 \leq k < m}$ a sequence of valid Artin pairings. Given a sequence of valid Artin pairings, we define
\[
X(a, f, i) := \left\{x \in X(a, f) : \textup{the Artin pairing of } x \textup{ agrees with } \left\{\textup{Art}_k\right\}_{2 \leq k \leq i}\right\}.
\]
\end{mydef}

\begin{prop}
\label{p7.4}
Assume GRH and let $l$ be an odd prime. There are $c, A, N_0 > 0$ such that for all $N > N_0$, all nice boxes $X$ for $N$, all generic assignments $a: M \rightarrow \langle \zeta_l \rangle$ that agree with $X$, all integers $m \geq 2$, all sequences of valid Artin pairings $\left\{\textup{Art}_k\right\}_{2 \leq k < m}$ and a valid Artin pairing $\textup{Art}_m$, we have with $S := [r] - [k]$
\begin{multline}
\frac{1}{W(X, S)} \sum_{f \in \textup{Ch}(S)} \left|\left|X(a, f, m)\right| - l^{-n_m(n_m + 1)} \cdot \left|X(a, f, m - 1)\right|\right| \leq \\
\frac{A}{W(X, S)} \sum_{f \in \textup{Ch}(S)} \frac{\left|X(a, f)\right|}{\left(\log \log N\right)^{\frac{c}{m(l^2 + l)^m}}}. \nonumber
\end{multline}
\end{prop}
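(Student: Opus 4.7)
The plan is to decompose the $m$-th Artin pairing according to its values on each pair $(b, \psi)$ in the left and right domain, and then combine the additive systems machinery (Proposition \ref{p4.4}) with the reflection principle (Theorem \ref{tMin} and Theorem \ref{tAgree}) together with the conditional Chebotarev input (Theorem \ref{t4r}). Concretely, for each fixed pair $(b, \psi) \in D_{a,m} \times D_{a,m}^\vee$, I consider the function
\[
F_{b, \psi} : X(a, f, m-1) \to \mathbb{F}_l, \qquad F_{b, \psi}(x) := \textup{Art}_m(\textup{Cl}(K_{\chi_{x,f}}))(b, \psi).
\]
Since the target space of the $m$-th Artin pairing has $l^{n_m(n_m+1)}$ elements after quotienting out the radical generated by $R$, it suffices to show that $F_{b, \psi}$ is $(\log\log N)^{-c/(m(l^2+l)^m)}$-balanced on $X(a, f, m-1)$ for most $f \in \textup{Ch}(S)$ and then sum over all such pairs $(b, \psi)$ to recover the full pairing.

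For the combinatorial side, I would build an $l$-additive system $\mathfrak{A}$ on $X_{k+1} \times \cdots \times X_r$ with $\overline{Y}_\emptyset^\circ(\mathfrak{A}) = X(a, f, m-1)$. For each non-empty $T \subseteq S$ the obstruction $F_T(\mathfrak{A})$ encodes whether the raw cocycle attached to the cube is promising and minimal, or is very promising and agrees with a good expansion, in the sense of Subsection \ref{sum of artin pairings}; concretely it records failures of partial sums of $\psi_{|T|}$ over proper subcubes to land in $N[1-\zeta_l]$ and failures of constancy of $\psi_1$. The target groups $A_T$ all have cardinality bounded by some universal $a = l^{\text{poly}(l,|T|)}$, and additivity of $F_T$ follows from the cocycle bilinearity that was already used in Proposition \ref{gov.exp. are additive}. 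The hardest point of this step is $S$-acceptability: one must show that whenever $\bar{x} \in \overline{Z}_S(\mathfrak{A})$, the underlying set of $\bar{x}(\emptyset)$ automatically lies in $X(a, f, m-1)$. This is precisely where the genericity assumption (\ref{eGeneric}) is essential, mirroring the analogous step in Smith, because it excludes pathological alignments of $D_{a, 2}$ and $D_{a, 2}^\vee$ vectors that would otherwise allow $\overline{Z}_S$-cubes to slip outside $X(a, f, m-1)$.

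Once acceptability is in hand, Theorem \ref{tMin} (or Theorem \ref{tAgree}, after invoking Proposition \ref{existence of expansions} to furnish a good expansion for the cube) yields the reflection identity
\[
\sum_{x \in \bar{x}(\emptyset)} F_{b, \psi}(x) = g_{\textup{gov}}(\bar{x})
\]
for every $\bar{x} \in \overline{Z}_S(\mathfrak{A})$, where $g_{\textup{gov}}(\bar{x})$ is a sum of values $\phi_{[m]-\{i_a\}}(\textup{Frob}_q)$ for primes $q \mid b$ sitting in the fixed stratum $X_1 \times \cdots \times X_k$. I would then apply Proposition \ref{p4.4} with $\epsilon = (\log\log N)^{-c/(m(l^2+l)^m)}$; the lower bound on $n = \min_{i \in S} |X_i|$ coming from the nice-box definition (with $D_1 = e^{(\log\log N)^2}$) comfortably satisfies the growth hypothesis. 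This produces a single distinguished $g^* \in \mathscr{G}_S(X)$ with the property that whenever $g_{\textup{gov}}$ agrees with $g^*$ on $\overline{Z}_S(\mathfrak{A})$, the function $F_{b,\psi}$ is automatically $\epsilon$-balanced on $\overline{Y}_\emptyset^\circ(\mathfrak{A})$.

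The final and most delicate step is to verify, for all but a negligible proportion of $f \in \textup{Ch}(S)$, that the Frobenius-built function $g_{\textup{gov}}$ indeed matches $g^*$ on $\overline{Z}_S(\mathfrak{A})$. This is a Chebotarev question: the value $g_{\textup{gov}}(\bar{x})$ is determined by Artin symbols of the primes $q \mid b$ in the compositum of governing fields, and these governing fields depend on the variable primes through $f$. Proposition \ref{pGovAdd} guarantees that the governing cocycle at level $|S|$ is linearly independent modulo all lower strata, hence essentially any target behavior can in principle be realized by varying $f$. Using Theorem \ref{t4r} (under GRH) to control the joint distribution of these symbols together with the concentration estimate already packaged into Proposition \ref{p4.4}, one obtains the desired bound on the proportion of bad $f$. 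The principal obstacle is exactly this Chebotarev step: it is the place where GRH is genuinely needed, and the averaging over $f \in \textup{Ch}(S)$ (rather than fixing $f$) is forced on us because, unlike in the quadratic case, individual characters $\chi_p$ can systematically bias Frobenius sums, as noted in the introduction.
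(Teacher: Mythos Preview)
Your proposal has a fundamental structural gap: you attempt to build the $l$-additive system on the full variable space $X_{k+1}\times\cdots\times X_r$, i.e.\ with $S=[r]-[k]$ of size roughly $\log\log N$. This cannot work for two independent reasons. First, the reflection identity from Theorem~\ref{tMin}/\ref{tAgree} produces $\sum_{x\in\bar{x}(\emptyset)}F(x)=g(\bar{x})$ only for cubes of dimension $m$ (the level of the Artin pairing), since a rank-$m$ raw cocycle controls only an $m$-dimensional cube; there is no such identity on $(r-k)$-dimensional cubes. Second, even granting the identity, Proposition~\ref{p4.4} requires $\log n\geq A\,(l(l+1))^{|S|+3}\log\epsilon^{-1}$, and with $|S|\approx\log\log N$ this is hopeless.

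The paper's actual route is quite different. It first reduces (Proposition~\ref{pChar}) to a character $F$ depending on a single matrix entry $(j_1,j_2)$. Genericity is used not for $S$-acceptability but, via Lemma~\ref{lVariable}, to locate a \emph{small} set of variable indices $S(j_1,j_2)\subseteq[r]$ of size $m+1$ with prescribed support against the basis vectors $w_{1,\ast},w_{2,\ast}$. One then passes to \emph{satisfactory product spaces} $\widetilde Z$: small boxes $Z=\prod_{i\in S_{\mathrm{var}}}Z_i$ with $|Z_i|=M_{\mathrm{box}}$ that are well-governed, and a covering argument (first/second moments of $\Lambda(x)$, plus Proposition~\ref{pRam}) shows these capture almost all of $X(a,f,m-1)$. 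Only on such $\widetilde Z(P)$ is the $l$-additive system built (Lemma~\ref{lAddCon}), with $|S|=m+1$ fixed, so Proposition~\ref{p4.4} applies. Crucially, the governing value $d\widetilde F(\bar x)$ is $\phi_{S_{\mathrm{var}},\bar z}$ evaluated at Frobenii of primes in the \emph{varying} coordinate $X_{i_2(j_1,j_2)}$, not at fixed primes dividing $b$ as you wrote; this is exactly what allows one to partition $X_{i_2(j_1,j_2)}$ (via Theorem~\ref{t4r}) so that the resulting $g$ matches the distinguished $g_0$ from Proposition~\ref{p4.4}. Your final Chebotarev step, as written, has no leverage because the primes you propose to vary do not move the value of $g_{\mathrm{gov}}$.
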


\begin{proof}[Proof that Proposition \ref{p7.4} implies Proposition \ref{p7.3}.] 
Note that $X(a, f)$ is in fact a slight abuse of notation, since this is only defined for $f \in \text{Ch}([r])$. However, one of the assumptions in the proposition statement is that $a$ agrees with $X$, and this involves a choice of function $g$ in $\text{Map}(\{x_1, \ldots, x_k\}, [l - 1])$, where $x_1, \ldots, x_k$ are the unique elements of $X_1, \ldots, X_k$. Hence, whenever we write $f \in \text{Ch}(S)$, we mean the function $f$ extended to $\text{Ch}([r])$ using $g$. We have the identities
\[
\left|\textup{Field}(X) \cap \bigcap_{k = 2}^{m + 1} D_{l, k}(n_k)\right| = \frac{1}{W(X, S)} \sum_{f \in \text{Ch}(S)} \left|i_f(X) \cap \bigcap_{k = 2}^{m + 1} D_{l, k}(n_k)\right|
\]
and
\[
\left|\textup{Field}(X) \cap \bigcap_{k = 2}^m D_{l, k}(n_k)\right| = \frac{1}{W(X, S)} \sum_{f \in \text{Ch}(S)} \left|i_f(X) \cap \bigcap_{k = 2}^m D_{l, k}(n_k)\right|.
\]
Hence the LHS of Proposition \ref{p7.3} is upper bounded by
\begin{align}
\label{eifX}
\frac{1}{W(X, S)} \sum_{f \in \text{Ch}(S)} \left|\left|i_f(X) \cap \bigcap_{k = 2}^{m + 1} D_{l, k}(n_k)\right| - P(n_{m + 1}|n_m) \cdot \left|i_f(X) \cap \bigcap_{k = 2}^m D_{l, k}(n_k)\right|\right|.
\end{align}
Since $i_f(X)$ is the disjoint union of $X(a, f, m)$, the quantity in equation (\ref{eifX}) is at most
\begin{align}
\label{eSplita}
\frac{1}{W(X, S)} \sum_a \sum_{f \in \text{Ch}(S)} \sum_{\left\{\textup{Art}_k\right\}_{2 \leq k \leq m}} \left|\left|X(a, f, m)\right| - l^{-n_m(n_m + 1)} \cdot \left|X(a, f, m - 1)\right|\right|,
\end{align}
where the first sum is over all $a$ that agree with $X$ and the last sum is over all sequences of valid Artin pairings with the property 
\[
\text{dim}_{\mathbb{F}_l} D_{a, k} = n_k + 1
\]
for all $2 \leq k \leq m + 1$. We split the sum in equation (\ref{eSplita}) in two parts depending on the genericity of $a$. If $a$ is generic, we use Proposition \ref{p7.4} to bound the sum. There are at most 
\[
l^{mn_2(n_2 + 1)} \leq l^{mn_{\text{max}}(n_{\text{max}} + 1)}
\]
sequences of valid Artin pairings, so the sum is within the error of Proposition \ref{p7.3}.  If $a$ is not generic, we employ the trivial bound to (\ref{eSplita}) inducing an error of size at most
\begin{align}
\label{eErrorTerm}
\frac{2}{W(X, S)} \sum_{a \text{ not generic}} \sum_{f \in \text{Ch}(S)} |X(a, f)|.
\end{align}
The sum over $f$ is easily bounded by Theorem \ref{t4r}. So it remains to count the number of $a$ that agree with $X$ and are not generic, which is a purely combinatorial problem. We first deal with the $a$ for which $n_2 > n_{\text{max}}$. These $a$ are easily bounded using the ideas from the proof of Theorem \ref{tKlys}.

Now consider the assignments $a : M \rightarrow \langle \zeta_l \rangle$ not satisfying equation (\ref{eGeneric}). We have to estimate
\[
\frac{\left|\left\{a \text{ assignment} : a \text{ agrees with } X \text{ and } a \text{ fails equation } (\ref{eGeneric})\right\}\right|}{\left|\left\{a \text{ assignment} : a \text{ agrees with } X\right\}\right|},
\]
which is clearly upper bounded by
\begin{align}
\label{eMatrixSpace}
2^{k^2} \frac{\left|\left\{a \text{ assignment} : a \text{ fails equation } (\ref{eGeneric})\right\}\right|}{\left|\left\{a \text{ assignment}\right\}\right|}.
\end{align}
We first count the number of pairs $(T_1, T_2)$ that fail equation (\ref{eGeneric}) with $T_1 \neq 0$ and $T_2$ linearly independent from $R$. From Hoeffding's inequality we deduce that the proportion of such pairs is at most
\[
O\left(e^{-r \cdot 2^{-20n_{\text{max}}}}\right).
\]
Now observe that the number of $a$ for which $T_1 \in D_{a, 2}$ and $T_2 \in D_{a, 2}^\vee$ does not depend on the pair $(T_1, T_2)$ provided that $T_1 \neq 0$ and that $T_2$ is linearly independent from $R$. Hence we get the desired upper bound for equation (\ref{eMatrixSpace}). We still need to deal with the case $T_1 = 0$ and $T_2 \in \langle R \rangle$. In both cases we apply Hoeffding's inequality once more, and proceed along the same lines. This proves the proposition.
\end{proof}

For generic $a$, our next goal is to find sets $S$ for which we can apply Theorem \ref{tMin} and Theorem \ref{tAgree}. Following Smith \cite{Smith}, we call such sets $S$ variable indices.

\begin{mydef}
Let $a: M \rightarrow \langle \zeta_l \rangle$ be an assignment and let $m \geq 2$ be an integer. For the rest of this paper, fix a basis $w_{2, 1}, \ldots, w_{2, n_2 + 1}$ for $D_{a, 2}$ and fix a basis $w_{1, 1}, \ldots, w_{1, n_2}, R$ for $D_{a, 2}^\vee$ in such a way that for all $2 \leq k \leq m$, $w_{2, 1}, \ldots, w_{2, n_k + 1}$ is a basis for $D_{a, k}$ and $w_{1, 1}, \ldots, w_{1, n_k}, R$ is a basis for $D_{a, k}^\vee$. Let $1 \leq j_1, j_2 \leq n_m$ be integers. We say that $S(j_1, j_2) \subseteq [r]$ is a set of variable indices for $(j_1, j_2)$ if there are integers $i_1(j_1, j_2)$ and $i_2(j_1, j_2)$ with the following properties
\begin{itemize}
\item $|S(j_1, j_2)| = m + 1$;
\item $i_1(j_1, j_2), i_2(j_1, j_2) \in S(j_1, j_2)$;
\item $S(j_1, j_2)$ lies in the zero set of all $w_{1, j}$ for all $j \leq n_2$ other than $j_1$ or $j_2$;
\item $S(j_1, j_2)$ lies in the zero set of all $w_{2, j}$ for all $j \leq n_2 + 1$ other than $j_1$ or $j_2$;
\item $S(j_1, j_2)$ lies in the zero set of $w_{2, j_1}$ and $w_{1, j_2}$;
\item $\{i \in [r] : \pi_i(w_{1, j_1}) \neq 0\} \cap S(j_1, j_2) = \{i_1(j_1, j_2)\}$;
\item $\{i \in [r] : \pi_i(w_{2, j_2}) \neq 0\} \cap S(j_1, j_2) = \{i_2(j_1, j_2)\}$.
\end{itemize}
\end{mydef}

With this definition in place, we are ready to find variable indices for generic $a$. We do so with the following lemma.

\begin{lemma}
\label{lVariable}
Let $a: M \rightarrow \langle \zeta_l \rangle$ be a generic assignment. If $w_1, \ldots, w_d \in D_{a, 2}$ are linearly independent and also $w_{d + 1}, \ldots, w_e, R \in D_{a, 2}^\vee$ are linearly independent, we have for all $\mathbf{v} \in \mathbb{F}_l^e$
\[
\left|\left|\left\{i \in [r] : \frac{r}{4} \leq i \leq \frac{r}{3} \text{ and } \pi_i(w_j) = \pi_j(\mathbf{v}) \text{ for all } 1 \leq j \leq l\right\}\right| - \frac{\alpha}{l^e}\right| \leq 100^e \cdot 2^{-10n_{\textup{max}}} \cdot r.
\]
\end{lemma}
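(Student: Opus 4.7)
The plan is to reduce the joint equidistribution claim to the ``one-dimensional'' equidistribution already guaranteed by genericity, via Fourier analysis on $\mathbb{F}_l^e$.

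Fix a non-trivial additive character $\chi\colon \mathbb{F}_l \to \mathbb{C}^\ast$ and observe that for any $\mathbf{x}, \mathbf{v} \in \mathbb{F}_l^e$,
\[
\mathbf{1}_{\mathbf{x} = \mathbf{v}} \;=\; \frac{1}{l^e}\sum_{\mathbf{c} \in \mathbb{F}_l^e} \chi\!\left(\mathbf{c}\cdot(\mathbf{x} - \mathbf{v})\right).
\]
Applying this to $\mathbf{x} = (\pi_i(w_1), \ldots, \pi_i(w_e))$ and summing over $i$ in the range $[r/4, r/3]$, the contribution from $\mathbf{c} = \mathbf{0}$ is exactly $\alpha / l^e$, so it suffices to bound, for each non-zero $\mathbf{c} \in \mathbb{F}_l^e$,
\[
S(\mathbf{c}) \;:=\; \sum_{i \in [r/4,\, r/3]} \chi\!\left(\pi_i(W_{\mathbf{c}})\right), \qquad W_{\mathbf{c}} \;:=\; \sum_{j=1}^e c_j w_j.
\]

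The key algebraic observation is that, for every non-zero $\mathbf{c}$, the vector $W_{\mathbf{c}}$ is of a shape to which the genericity hypothesis applies. Indeed, if $(c_1, \ldots, c_d) \neq 0$, then by linear independence of $w_1, \ldots, w_d$ inside $D_{a, 2}$ the vector $W_{\mathbf{c}}$ is a non-zero element of $D_{a, 2}$, so it can be written as $T_1 + T_2$ with $T_1 = W_{\mathbf{c}} \neq 0$ and $T_2 = 0$. On the other hand, if $(c_1, \ldots, c_d) = 0$ but $(c_{d+1}, \ldots, c_e) \neq 0$, then $W_{\mathbf{c}} \in D_{a, 2}^\vee$ and, by the linear independence of $w_{d+1}, \ldots, w_e, R$, it is not in $\langle R \rangle$, so we can take $T_1 = 0$ and $T_2 = W_{\mathbf{c}} \notin \langle R \rangle$. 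In either case the assumptions of the genericity condition are satisfied.

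Therefore, for every $k \in \mathbb{F}_l$,
\[
\left|\bigl|\{i \in [r/4, r/3] : \pi_i(W_{\mathbf{c}}) = k\}\bigr| - \frac{\alpha}{l}\right| \;\leq\; 2^{-10 n_{\max}} \cdot r,
\]
and since $\sum_{k \in \mathbb{F}_l} \chi(k) = 0$, we obtain $|S(\mathbf{c})| \leq l \cdot 2^{-10 n_{\max}} \cdot r$. Summing over the $l^e - 1$ non-zero $\mathbf{c}$ and dividing by $l^e$ gives the even stronger bound $l \cdot 2^{-10 n_{\max}} \cdot r$, which is comfortably within $100^e \cdot 2^{-10 n_{\max}} \cdot r$.

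There is no serious obstacle here: the heart of the argument is the linear-algebraic verification that every non-trivial Fourier frequency $W_{\mathbf{c}}$ lies in one of the two ``good'' classes covered by genericity; the rest is standard character-sum bookkeeping.
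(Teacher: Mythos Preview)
Your Fourier-analytic approach is correct and in fact cleaner than the paper's. There is one small slip in the phrasing: when $(c_1,\ldots,c_d)\neq 0$ you assert that $W_{\mathbf c}\in D_{a,2}$, but this is false if some $c_j$ with $j>d$ is also non-zero. The fix is the obvious one already implicit in your set-up: always take the natural decomposition $T_1=\sum_{j\le d}c_jw_j\in D_{a,2}$ and $T_2=\sum_{j>d}c_jw_j\in D_{a,2}^\vee$; then $(c_1,\ldots,c_d)\neq 0$ gives $T_1\neq 0$, and otherwise the linear independence of $w_{d+1},\ldots,w_e,R$ gives $T_2\notin\langle R\rangle$. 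With that correction the argument goes through verbatim and yields the uniform bound $l\cdot 2^{-10n_{\max}}\cdot r$, independent of $e$.

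The paper takes a different route: it treats $e=1$ directly from genericity, handles the case $e=2$ by summing the level sets over the $l+1$ affine lines through a fixed point $\mathbf v\in\mathbb F_l^2$ (so that each such line-sum is controlled by genericity applied to a single linear combination), and then claims the general case by induction on $e$. This geometric inclusion--exclusion loses a constant factor at each step, which is why the paper only states the weaker bound $100^e\cdot 2^{-10n_{\max}}\cdot r$. Your Fourier argument bypasses the induction entirely and gives a bound that does not deteriorate with $e$; it also makes the ``one non-trivial frequency at a time'' structure of the proof transparent. Either bound suffices for the application, but yours is both simpler and sharper.
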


\begin{proof}
The case $e = 1$ follows easily from the genericity condition on $a$. We start with the special case $d = 1$, $e = 2$. Define for $\mathbf{x} \in \mathbb{F}_l^2$
\[
g(\mathbf{x}) = \left|\left\{i \in [r] : \frac{r}{4} \leq i \leq \frac{r}{3} \text{ and } \pi_i(w_j) = \pi_j(\mathbf{x}) \text{ for all } 1 \leq j \leq 2\right\}\right|.
\]
We use equation (\ref{eGeneric}) with the pair $(T_1, T_2)$ in the following set
\[
\{(w_1, \beta w_2) : \beta \in \mathbb{F}_l\} \cup \{(0, w_2)\}.
\]
Then, if $\mathbf{x}_1, \dots, \mathbf{x}_l$ lie on an affine line in $\mathbb{F}_l^2$, we have
\begin{align}
\label{eAffine}
\left|\left(\sum_{i = 1}^l g(\mathbf{x}_i)\right) - \frac{\alpha}{l}\right| \leq 2^{-10n_{\textup{max}}} \cdot r.
\end{align}
Now take an element $\mathbf{v} \in \mathbb{F}_l^2$. Let $L(\mathbf{v})$ be the collection of affine lines in $\mathbb{F}_l^2$ through $\mathbf{v}$. We use equation (\ref{eAffine}) for all elements in $L(\mathbf{v})$ to deduce the following inequality
\[
l\left|g(\mathbf{v}) - \frac{\alpha}{l^2}\right| = \left|\sum_{L(\mathbf{v})} \sum_{\mathbf{x} \in L(\mathbf{v})} \left(g(\mathbf{x}) - \frac{\alpha}{l}\right)\right| \leq \sum_{L(\mathbf{v})} \left|\sum_{\mathbf{x} \in L(\mathbf{v})} \left(g(\mathbf{x}) - \frac{\alpha}{l}\right)\right| \leq (l + 1) \cdot 2^{-10n_{\textup{max}}} \cdot r.
\]
Since the special cases $d = 0$, $e = 2$ and $d = e = 2$ are trivial, this settles the case $e = 2$. An easy induction establishes the lemma for all $e > 2$.
\end{proof}

Fix two integers $1 \leq j_1, j_2 \leq n_m$. We will now demonstrate how to find variable indices $S(j_1, j_2)$ for $(j_1, j_2)$ using Lemma \ref{lVariable}. We apply Lemma \ref{lVariable} with $w_{2, 1}, \ldots, w_{2, n_2 + 1} \in D_{a, 2}$ and $w_{1, 1}, \ldots, w_{1, n_2}, R \in D_{a, 2}^\vee$, so $d = n_2 + 1$ and $e = 2n_2 + 1$. We let $\mathbf{v} \in \mathbb{F}_l^e$ be the unique vector satisfying $\pi_j(\mathbf{v}) = 1$ for $j = j_1$ and $j = d + j_2$, and furthermore $\pi_j(\mathbf{v}) = 0$ for all other $j$. With these choices we can choose $S(j_1, j_2)$ to be any subset of
\[
\left|\left|\left\{i \in [r] : \frac{r}{4} \leq i \leq \frac{r}{3} \text{ and } \pi_i(w_j) = \pi_j(\mathbf{v}) \text{ for all } 1 \leq j \leq l\right\}\right| - \frac{\alpha}{l^e}\right|,
\]
provided that $|S(j_1, j_2)| = m + 1$. Then we must have
\begin{align}
\label{eBoundm}
m + 1 \leq C \cdot \frac{r}{l^{2n_2}}
\end{align}
for some small constant $C > 0$ depending only on $l$. We always have 
$$
m < \log \log \log \log N,
$$
since otherwise Theorem \ref{tCyclic} is trivial. If $n_{\text{max}}$ is sufficiently small, this implies (\ref{eBoundm}). Having found our variable indices, we are ready for our next reduction step.

\begin{prop}
\label{pChar}
Assume GRH and let $l$ be an odd prime. There are $c, A, N_0 > 0$ such that for all $N > N_0$, all nice boxes $X$ for $N$, all generic assignments $a: M \rightarrow \langle \zeta_l \rangle$ that agree with $X$, all integers $m \geq 2$, all sequences of valid Artin pairings $\left\{\textup{Art}_k\right\}_{2 \leq k < m}$, a valid Artin pairing $\textup{Art}_m$ and a non-zero multiplicative character $F$ from $\textup{Mat}(n_m + 1, n_m, \mathbb{F}_l)$ to $\langle \zeta_l \rangle$, we have
\[
\frac{1}{W(X, S)} \sum_{f \in \textup{Ch}(S)} \left|\sum_{x \in X(a, f, m - 1)} F(\textup{Art}(x, f, m)) \right| \leq \frac{A}{W(X, S)} \sum_{f \in \textup{Ch}(S)} \frac{\left|X(a, f)\right|}{\left(\log \log N\right)^{\frac{c}{m(l^2 + l)^m}}},
\]
where $S := [r] - [k]$ and $\textup{Art}(x, f, m)$ is the $m$-th Artin pairing of the field $i_f(x)$.
\end{prop}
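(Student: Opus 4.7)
Since $F$ is a non-trivial character on the finite abelian group $\textup{Mat}(n_m + 1, n_m, \mathbb{F}_l)$, we can write $F(A) = \zeta_l^{\sum_{i, j} c_{ij} A_{ij}}$ for some collection of coefficients $(c_{ij})$ not all zero. Fix once and for all a pair $(j_1, j_2)$ with $c_{j_1, j_2} \neq 0$. The plan is to exploit the variable indices $S(j_1, j_2)$ (which exist by the discussion following Lemma \ref{lVariable}, using that $a$ is generic) in order to view the character sum as an additive character sum in the single matrix entry $\textup{Art}_m(b_{j_1}, \psi_{j_2})$, modulo quantities that remain constant as we vary within cubes. The key structural fact is that for a cube $\bar{y}$ built out of $S(j_1, j_2)$, the basis vectors $w_{1, j'}$ for $j' \neq j_1$ and $w_{2, j'}$ for $j' \neq j_2$ are supported in $[r] - S(j_1, j_2)$, so the corresponding entries of the Artin matrix are invariant under variation of $\bar{y}$.

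Next, I would set $S := S(j_1, j_2)$ and build an $l$-additive system $\mathfrak{A}$ on $Y := \prod_{i \in S} X_i$ parametrized by $f' \in \textup{Ch}([r] - S)$. The group $\overline{Y}_\emptyset^\circ(\mathfrak{A})$ will record the set of $y \in Y$ such that, together with $f'$, the resulting field lies in $X(a, f, m - 1)$. The higher groups $\overline{Y}_T^\circ(\mathfrak{A})$ will record whether a promising raw cocycle $\mathfrak{R}$ can be constructed on a cube so that its rank-$1$ level agrees with the character attached to $w_{2, j_2}$; this uses Corollary \ref{lifting character is the same as lifting unramified cocycles} and Proposition \ref{liftable unramified cocycle can always be liftable unramified} inductively on the subsets $T$. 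Crucially, the agreement condition is enforced by pairing $\mathfrak{R}$ with a good expansion $\{\phi_T(Y - \{i_a\}, \epsilon_Y)\}$ attached to the neighbouring variable indices, whose existence follows from Proposition \ref{existence of expansions}. I would verify that $\mathfrak{A}$ is $S$-acceptable in the sense of Definition \ref{dAcceptable} using the constraint that the indices of $S(j_1, j_2)$ are cleanly separated from the supports of the other basis vectors.

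Once $\mathfrak{A}$ is in hand, I would apply Theorem \ref{tAgree} to each cube $\bar{y} \in \overline{Z}_S(\mathfrak{A})$ to obtain the relation
\[
\sum_{y \in \bar{y}(\emptyset)} \textup{Art}_m(b_{j_1}, \psi_{j_2})(y) = \sum_{q \mid b_{j_1}} \epsilon_{b_{j_1}}(q) \bigl(i_l \circ j_l \circ \phi_{[m] - \{i_a\}}\bigr)(\textup{Frob}_q),
\]
and hence, after multiplying by $c_{j_1, j_2}$ and exponentiating, a relation of the form $\prod_{y} F(\textup{Art}(y, f, m)) = F_0(\bar{y})$, where $F_0$ depends only on the Frobenius data of the relative governing field. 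Proposition \ref{p4.4} then yields a function $g \in \mathscr{G}_S(Y)$ such that every $F \circ \textup{Art}$ satisfying this cube-summation relation must be $\epsilon$-balanced, for $\epsilon$ a suitable inverse power of $\log \log N$. Summoning Chebotarev in the form of Theorem \ref{t4r} produces enough independent choices of $f$ for which the value of $F_0(\bar{y})$ realises the required $g$, which is the point at which GRH enters.

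Finally, I would translate balance into a character sum bound: an $\epsilon$-balanced function has its non-trivial character sums of size $O(l \epsilon \cdot |X(a, f, m - 1)|)$, and summing over $f \in \textup{Ch}(S)$ and averaging against $W(X, S)$ recovers the claimed estimate. The main obstacles, in descending order of difficulty, are: (i) checking that the $l$-additive system $\mathfrak{A}$ is $S$-acceptable, which requires that the promising agreement condition propagates down from $\overline{Y}_S$ to $\overline{Y}_\emptyset$ along all subcubes of $\overline{Z}_S(\mathfrak{A})$ and relies on the combinatorial flexibility granted by genericity and by the comfortable spacing of the box; (ii) producing the good expansion with the correct splitting properties at the primes of $Y$, which forces one to verify the cooperativity hypothesis of Proposition \ref{existence of expansions} using the large gaps in $t_{k+1} < \ldots < t_r$; and (iii) controlling the number of exceptional $f$ via Theorem \ref{t4r}, where GRH provides the effective Chebotarev bounds needed to make the averaging argument quantitative on the scale $(\log \log N)^{-c/(m(l^2 + l)^m)}$.
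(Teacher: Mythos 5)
Your proposal correctly identifies the main ingredients: reducing to a single entry $(j_1,j_2)$ of the Artin matrix, selecting variable indices $S(j_1,j_2)$ via genericity, constructing an $l$-additive system so that Theorem \ref{tAgree} applies, and closing the loop with Proposition \ref{p4.4} and Theorem \ref{t4r}. However, there is a genuine gap that your obstacle (ii) substantially understates. You propose to build the $l$-additive system $\mathfrak{A}$ directly on the full space $Y = \prod_{i \in S(j_1,j_2)} X_i$ and then verify cooperativity there. This cannot work as stated: the existence of a governing expansion with $\overline{X}_S = \overline{Y}_S(\mathfrak{G})$ is not automatic on a large product of intervals of primes, because the requirements in item (5) of the definition of a governing expansion impose splitting conditions on the primes in the various $\phi_{T,\bar{x}_T}$-fields of definition. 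These conditions define a sparse subset of the box, not the whole thing, and no amount of spacing in the $t_i$ grants them for free.

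The paper circumvents this via an intermediate reduction that your plan omits entirely: it restricts to \emph{satisfactory product spaces} $\widetilde{Z}$, in which $Z = \pi_{S_{\textup{var}}}(\widetilde{Z})$ has the well-governed property built in and the remaining coordinates $X_i(M^\circ(Z))$ are pinned down by compatible splitting conditions. The content of the reduction from Proposition \ref{pChar} to Proposition \ref{pFinal} is to exhibit a large family $Z_{\textup{var}}^1, \ldots, Z_{\textup{var}}^t$ of such well-governed subcubes covering almost all of $V_{\textup{var}}$, with pairwise small overlap, and then to show via first and second moment estimates for $\Lambda(x)$ (the number of $\widetilde{Z_{\textup{var}}^s}$ containing $x$) together with Chebyshev's inequality that the average behaviour over the full box is controlled by the behaviour on the satisfactory pieces. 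This covering argument is where a second application of Proposition \ref{pRam} and Theorem \ref{t4r} appears, and it is essential; without it one only controls the character sum on the sparse set of points where the governing expansion happens to be defined. Only after that reduction does the argument on each $\widetilde{Z}(P)$ — the $l$-additive system from Lemma \ref{lAddCon}, Proposition \ref{pDiff} identifying $\mathrm{Gal}(M(Z)/M_\circ(Z))$ with $\mathscr{G}_{S_{\textup{var}}}(Z)$, the formal use of Proposition \ref{p4.4}, the partition of $X_{i_2(j_1,j_2)}$ via Chebotarev — proceed roughly as you envisage. You should revise your plan to insert this covering step between the choice of variable indices and the application of Theorem \ref{tAgree}.
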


\begin{proof}[Proof that Proposition \ref{pChar} implies Proposition \ref{p7.4}.]
This is straightforward.
\end{proof}

Before we can make our final reduction step, we must restrict ourselves to rather special product spaces. Our next definition will make this precise.

\begin{mydef}
Let $a : M \rightarrow \langle \zeta_l \rangle$ be a generic assignment. Define $S_{\textup{var}} := S(j_1, j_2) - \{i_2(j_1, j_2)\}$. For each $i \in S_{\textup{var}}$, let $Z_i$ be subsets of $X_i$ of cardinality equal to
\[
M_{\textup{box}} := \left\lfloor (\log \log N)^{\frac{1}{10m}} \right\rfloor,
\]
which is at least $l$ for sufficiently large $N$. Put
\[
Z := \prod_{i \in S_{\text{var}}} Z_i.
\]
We say that $Z$ is well-governed if there is a governing expansion $\mathfrak{G}$ on $Z$ such that $i_1(j_1, j_2) = i_a(\mathfrak{G})$ and furthermore all $\bar{x} \in \overline{Z}_{S_{\textup{var}}}$ satisfy $\bar{x} \in \overline{Y}_{S_{\textup{var}}}(\mathfrak{G})$. Set
\[
M(Z) := \prod_{\bar{x} \in \overline{Z}_{S_{\textup{var}}}} \phi_{S_{\textup{var}}, \bar{x}}, \quad M_\circ(Z) := \prod_{S \subsetneq S_{\textup{var}}} \prod_{\bar{x} \in \overline{Z}_S} \phi_{S, \bar{x}}.
\]
Define
\[
M^\circ(Z) := L(Z \cup \{x_1, \ldots, x_k\}) M_\circ(Z).
\]
For $i > k$ not in $S_{\textup{var}}$, we define $X_i(M^\circ(Z))$ to be the subset consisting of the $p \in X_i$ satisfying
\begin{itemize}
\item $p$ splits completely in the extension $M_\circ(Z)/\Q$;
\item we have for all $j \in [k] \cup S_{\textup{var}}$ and all $x \in X_j$ 
\[
\chi_x^{f(x)}(\textup{Frob}(p)) = a(i, j), \quad \chi_p^{f(p)}(\textup{Frob}(x)) = a(j, i).
\]
\end{itemize}
Take
\[
\widetilde{Z} := \prod_{1 \leq i \leq k} X_i \times Z \times \prod_{\substack{i > k \\ i \not \in S_{\textup{var}}}} X_i(M^\circ(Z)).
\]
We call $\widetilde{Z}$ a satisfactory product space if the following three conditions are satisfied
\begin{itemize}
\item $Z$ is well-governed;
\item $x_1, \ldots, x_k$ split completely in the extension $M_\circ(Z)/\Q$;
\item for all distinct $i, j \in S_{\textup{var}}$, for all $x_i \in \pi_i(Z)$ and for all $x_j \in \pi_j(Z)$ we have
\[
\chi_{x_j}^{f(x_j)}(\textup{Frob}(x_i)) = a(i, j).
\]
\end{itemize}
\end{mydef}

We remind the reader that our equidistribution comes from a combination of Theorem \ref{tMin}, Theorem \ref{tAgree} and Proposition \ref{p4.4}. We will apply these results to satisfactory product spaces $\widetilde{Z}$. In order to do so, we need to construct an $l$-additive system $\mathfrak{A}$ on $\widetilde{Z}$ that satisfies the conditions of Proposition \ref{p4.4}. This is done in our next lemma.

\begin{lemma}
\label{lAddCon}
Let $l$ be an odd prime and let $\widetilde{Z}$ be a satisfactory product space. Let $P$ be an element of $\pi_{[r] - S(j_1, j_2)}\left(\widetilde{Z}\right)$ and define
\[
\widetilde{Z}(P) := \{P\} \times Z \times X_{i_2(j_1, j_2)}(M^\circ(Z)).
\]
Let $F$ be a non-zero multiplicative character from $\textup{Mat}(n_m + 1, n_m, \mathbb{F}_l)$ to $\langle \zeta_l \rangle$ that depends on the entry $(j_1, j_2)$. There exists an $l$-additive system $\mathfrak{A}$ on $\widetilde{Z}(P)$ with the following properties
\begin{itemize}
\item $\overline{Y}_\emptyset^\circ(\mathfrak{A}) = X(a, f, m - 1) \cap \widetilde{Z}(P)$;
\item $\mathfrak{A}$ is $S(j_1, j_2)$-acceptable, see Definition \ref{dAcceptable}, with $\left|A_T(\mathfrak{A})\right|$ bounded by $l^{n_2(n_2 + m + 1)}$;
\item for all $\bar{x} \in \overline{Z}_{S(j_1, j_2)}(\mathfrak{A})$, see once more Definition \ref{dAcceptable}, we have
\[
d\widetilde{F}(\bar{x}) = j_l^{-1}(F(j_1, j_2)) \cdot \pi_{i_2(j_1, j_2)}(w_{2, j_2}) \cdot \phi_{S_{\textup{var}}, \bar{z}}\left(\textup{Frob}(p_1) \cdot \ldots \cdot \textup{Frob}(p_l)\right),
\]
where $p_i := \pr_i\left(\pi_{i_2(j_1, j_2)}(\bar{x})\right)$ for $i \in [l]$, $\widetilde{F}(x) := F(\textup{Art}(x, f, m))$ and $\bar{z}$ is any element of $\bar{x}(S_{\textup{var}})$.
\end{itemize}
\end{lemma}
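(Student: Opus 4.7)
\medskip

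\noindent\emph{Proof proposal.} The plan is to build $\mathfrak{A}$ by letting each space $A_S(\mathfrak{A})$ record exactly the data needed to certify, for a cube $\bar{x}$, that (i) each $x\in\bar{x}(\emptyset)$ has Artin pairings agreeing with $\{\mathrm{Art}_k\}_{2\le k<m}$, and (ii) the partial cube sums of the $(m{-}1)$st raw cocycle behave in the ``very promising'' manner required by Theorem~\ref{tAgree}. Concretely, for each proper $S\subseteq S(j_1,j_2)$ I would take $A_S$ to be a direct sum of an $\mathbb{F}_l$-space encoding the kernel of $\mathrm{Art}_2,\dots,\mathrm{Art}_{m-1}$ (this contributes $\le l^{n_2(n_2+1)}$), together with an $\mathbb{F}_l$-space of dimension at most $n_2+m+1$ recording the residue modulo $N[1-\zeta_l]$ of the various partial raw-cocycle sums $\sum_{x\in\bar{x}(T')} \psi_{|S|-|T'|}(\mathfrak{R},\chi_{\epsilon}(x))$ for $T'\subsetneq S$. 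The map $F_S$ is simply the vector of those two invariants; additivity in the sense of Definition~\ref{dAdd} is automatic on the first factor (since Artin pairings are $\mathbb{Z}_l[\zeta_l]$-linear) and on the second by the bilinearity of raw cocycles. This makes $\overline{Y}_\emptyset^\circ(\mathfrak{A})$ precisely $X(a,f,m-1)\cap\widetilde{Z}(P)$, giving the first bullet, and a direct dimension count gives the stated bound $|A_S|\le l^{n_2(n_2+m+1)}$.

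For acceptability, the first clause is immediate from the bound just stated. For the second, given $\bar{x}\in\overline{Z}_{S}(\mathfrak{A})$ I would use the inductive structure of $\overline{Y}_T$: because $|A_T|$ is bounded uniformly, a pigeonhole argument identical in spirit to Proposition~\ref{pAS} shows that having $\bar{x}(T)\cap\overline{Y}_T^\circ$ cover almost all of $\bar{x}(T)$ for every proper $T$ forces the remaining missing points to actually lie in $\overline{Y}_T^\circ$ as well, because the missing point is determined by additivity from the others. Iterating down from $T=S-\{i\}$ to $T=\emptyset$ gives $\mathrm{Set}(\bar{x}(\emptyset))\subseteq \overline{Y}_\emptyset^\circ$, which is precisely the second bullet of Definition~\ref{dAcceptable}.

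The substantive content is the third bullet, and here the idea is to take a $\bar{x}\in\overline{Z}_{S(j_1,j_2)}(\mathfrak{A})$ and, using the governing expansion $\mathfrak{G}$ existing on $Z$ by assumption, build a raw cocycle $\mathfrak{R}$ on $\bar{z}\in\bar{x}(S_{\mathrm{var}})$ that is \emph{very promising} at the missing point and \emph{agrees} with the restriction of $\mathfrak{G}$ in the sense of Section~\ref{sRel}. The agreement is forced precisely by the conditions built into $F_T$ for $T\subsetneq S_{\mathrm{var}}$, together with the splitting-completely hypotheses in the definition of well-governedness and of $X_i(M^\circ(Z))$ (so that the unramified-everywhere step in Proposition~\ref{liftable unramified cocycle can always be liftable unramified} goes through). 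Once agreement is in place, Theorem~\ref{tAgree} applied with the integer $b$ chosen to be the unique variable prime in the $i_2(j_1,j_2)$ direction yields the sum of Artin pairings as the Frobenius of $b$ in $\phi_{S_{\mathrm{var}},\bar{z}}$. Taking the character $F$, which depends on the $(j_1,j_2)$-entry, unwraps this to the claimed scalar multiple $j_l^{-1}(F(j_1,j_2))\cdot\pi_{i_2(j_1,j_2)}(w_{2,j_2})\cdot\phi_{S_{\mathrm{var}},\bar{z}}(\mathrm{Frob}(p_1)\cdots\mathrm{Frob}(p_l))$, the extra factor $\pi_{i_2(j_1,j_2)}(w_{2,j_2})$ coming from how $\mathrm{Art}_m$ pairs against the basis element $w_{1,j_1}$ selected by the variable indices.

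The main obstacle I anticipate is the verification of agreement in Theorem~\ref{tAgree}: one has to check that for \emph{every} proper $T\subsetneq S_{\mathrm{var}}$ containing $i_a(\mathfrak{G})$ the partial sums of $\psi_{m-|T|}$ along $\bar{x}(T)$ really coincide with $i_l\circ j_l\circ \phi_{T-\{i_a\}}$, while along $T$ not containing $i_a$ the partial sums vanish. This is where the careful choice of $A_T$ pays off — the acceptability bookkeeping has been set up so that the two values defining $F_T$ are constrained to be zero exactly when these agreement equalities hold — but one still needs to track the identifications $i_l$, $j_l$ from Section~\ref{conventions} and the sign conventions in the formula for $\beta_{m+1}(\phi_{[m]})$, which is where the explicit scalar $\pi_{i_2(j_1,j_2)}(w_{2,j_2})$ ultimately comes from.
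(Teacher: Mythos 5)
Your proposal correctly identifies the general skeleton — that the $l$-additive system should be built from raw cocycles $\mathfrak{R}(w_{1,j})$ and their partial cube sums, that $\overline{Y}_\emptyset^\circ$ is defined to be $X(a,f,m-1)\cap\widetilde{Z}(P)$, and that the third bullet comes from Theorem~\ref{tAgree} applied with $b$ the prime in the $i_2(j_1,j_2)$-direction, with the scalar $\pi_{i_2(j_1,j_2)}(w_{2,j_2})$ coming from the pairing against $w_{2,j_2}$. But the heart of the lemma is the actual construction of $F_S$ for nonempty $S$, and there your proposal has a genuine gap. Your first factor of $A_S$, ``an $\mathbb{F}_l$-space encoding the kernel of $\mathrm{Art}_2,\dots,\mathrm{Art}_{m-1}$,'' is not a function of a cube $\bar{x}$: Artin pairings live at the level of individual fields, and ``lying in the kernel'' is a constraint on $x$, not a value you can assign additively to $\bar{x}$. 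More importantly, your second factor — ``the residue modulo $N[1-\zeta_l]$ of the various partial raw-cocycle sums'' — does not compress to a bounded-dimensional space by itself. A priori the cube sum $\psi(\mathfrak{R}(w_{1,j}),\bar{x})$, or $\psi(\mathfrak{R}(w_{1,i_1}),\bar{x})-\phi_{S,\bar{x}}$ in the $i_a$-case, is merely a $1$-cochain; there is no finite set of invariants determining whether it vanishes unless one already knows it is an unramified character. The missing device is \emph{acceptable ramification}: one chooses inertia generators $\sigma_p$ in the compositum of the fields of definition, normalizes so that $\phi_{S,\bar{x}}(\sigma_p)=0$, and includes in $F_{S-\{i\}}$ the condition that $\sum_{x\in\bar{x}(\emptyset)}\psi_{|S|}(\mathfrak{R}(w_{1,j}),x)(\sigma_{\pi_i(\bar{x})})=0$. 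It is precisely these conditions at the previous stages that force $\chi(\bar{x},j)$ to be unramified above each $K_{\chi_{x,f}}$, hence to lie in the finite space $D_{a,2}^\vee$, hence to have its vanishing testable by evaluating at the $|S|$ primes in $\pi_S(x)$ and the $n_2+1$ integers built from the $w_{2,j'}$. This gives the bound $n_2\cdot\bigl((m-|S|)+|S|+(n_2+1)\bigr)=n_2(n_2+m+1)$ on $\dim_{\mathbb{F}_l}A_S$; without acceptable ramification there is no finite $A_S$ at all, and the ``automatic additivity'' you assert cannot even be stated.

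The acceptability verification is also off. You propose ``a pigeonhole argument identical in spirit to Proposition~\ref{pAS},'' but that proposition is a density lower bound for $\overline{Y}_S^\circ$, not a pointwise implication. Acceptability requires you to show, for a fixed $\bar{x}\in\overline{Z}_{S(j_1,j_2)}(\mathfrak{A})$, that the single uncovered point $x_0\in\bar{x}(\emptyset)$ lies in $\overline{Y}_\emptyset^\circ$. The actual argument first uses the splitting-completely hypotheses built into the definition of $\overline{Y}_S(\mathfrak{G})$ (together with the consistency conditions on $\widetilde{Z}$) to put $x_0\in X(a,f)$, and then applies Theorem~\ref{tMin} — \emph{minimality}, not agreement — on sub-cubes of $\bar{x}$ that avoid $i_1(j_1,j_2)$, to force the lower Artin pairings of $x_0$ to coincide with $\{\mathrm{Art}_k\}_{k<m}$. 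You should be careful to distinguish the roles of Theorem~\ref{tMin} (used for the acceptability step) and Theorem~\ref{tAgree} (used for the third bullet, once the sub-cube in the $i_a$-direction is included).
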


\begin{proof}
We will start by constructing an $l$-additive system $\mathfrak{A}$ and then verify its required properties. To do this, we need to introduce the concept of acceptable ramification, which is based on Smith \cite[p.\ 33]{Smith}. We write $W$ for $\widetilde{Z}(P)$. If $w \in D_{a, 2}^\vee$, we define a raw cocycle $\mathfrak{R}(w)$ for $(W, f)$ to be a choice of raw cocycle for each $x \in W$ such that the $\psi_1$ of this raw cocycle is equal to the character naturally associated to $w$. Here and later we shall often suppress the dependence on $f$. Now choose a raw cocycle $\mathfrak{R}(w_{1, j})$ for $(W, f)$, where $j$ runs through $1, \ldots, n_2$. Define $M$ to be the compositum of all the $L(\psi_k(\mathfrak{R}, x))$, where $\mathfrak{R}$ is one of the raw cocycles $\mathfrak{R}(w_{1, j})$, $x \in W$ and $k \leq \min(m, \text{rk}(\mathfrak{R})(x))$. 

Let $M'$ be the compositum of $M$ with $\phi_{S_{\text{var}}, \bar{x}}$ for all $\bar{x} \in \overline{W}_{S_{\text{var}}}$. If $p$ ramifies in $M'$, then the ramification degree of $p$ is $l$, and we choose an element $\sigma_p \in \Gal(M'/\Q)$ such that $\langle\sigma_p\rangle$ is the inertia group of some prime dividing $p$ in $M'$. We assume that the $\phi_{S, \bar{x}}$ are normalized in such a way that $\phi_{S, \bar{x}}(\sigma_p) = 0$ for all $p$ ramifying in $M$. Here and for the rest of the proof, $\phi_{S, \bar{x}}$ is defined to be the zero map if $S$ does not contain $i_1(j_1, j_2)$. Let $S$ be a subset of $S(j_1, j_2)$, $\bar{x} \in \overline{W}_S$ and suppose that 
\[
\text{rk}\left(\mathfrak{R}\left(w_{1, j}\right)\right) \geq |S| + 1 \text{ for all } x \in \bar{x}(\emptyset).
\]
We say that $\mathfrak{R}(w_{1, j})$ is acceptably ramified at $(\bar{x}, i)$ for $i \in S_{\text{var}} - S$ if
\[
\sum_{x \in \bar{x}(\emptyset)} \psi_{|S| + 1}(\mathfrak{R}(w_{1, j}), x)\left(\sigma_{\pi_i(\bar{x})}\right) = 0.
\]
We inductively define a subset $\overline{Y}_S^\circ(\mathfrak{A})$ in $\overline{Y}_S(\mathfrak{A})$. If $S = \emptyset$, we have already done this by the first property of $\mathfrak{A}$ in the lemma. Now suppose that $S$ is a subset of $S_{\text{var}}$ of cardinality at most $|S_{\text{var}}| - 1$. If $S$ does not satisfy these two conditions, we let $F_S(\mathfrak{A})$ be the zero map and $\overline{Y}_S^\circ(\mathfrak{A}) = \overline{Y}_S(\mathfrak{A})$.

Let $\bar{x} \in \overline{Y}_S(\mathfrak{A})$. If $j \neq i_1(j_1, j_2)$, we know that 
\[
\psi\left(\mathfrak{R}(w_{1, j}), \bar{x}\right) := \sum_{x \in \bar{x}(\emptyset)} \psi_{|S|}(\mathfrak{R}(w_{1, j}), x).
\]
is a character. If instead $j = i_1(j_1, j_2)$, then $\psi\left(\mathfrak{R}\left(w_{1, i_1(j_1, j_2)}\right), \bar{x}\right) - \phi_{S, \bar{x}}$ is a character. We call this character $\chi(\bar{x}, j)$ in both cases. We define $\overline{Y}_S^\circ(\mathfrak{A})$ to be those $\bar{x} \in \overline{Y}_S(\mathfrak{A})$ such that $\chi(\bar{x}, j) = 0$ for all $j$ and furthermore $\mathfrak{R}(w_{1, j})$ is acceptably ramified for all $j$ and $i \in S_{\text{var}} - S$. We will now describe how to encode this as a map $F_S(\mathfrak{A})$ and we do so for each $j$ separately.

The acceptable ramification can be encoded with an additive map to $\mathbb{F}_l^{|S_{\text{var}} - S|}$. To deal with $\chi(\bar{x}, j)$, we remark that the acceptable ramification conditions at the stages $S - \{i\}$ ensures that $\chi(\bar{x}, j)$ is an unramified character above $K_{\chi_{x, f}}$ for all $x \in \bar{x}(\emptyset)$. In particular, we deduce that $\chi(\bar{x}, j)$ is supported outside the primes in $S$.

Let $p$ be a prime ramifying in $K_{\chi_{x, f}}$ and not in $\pi_S(x)$. We claim that the prime above $p$ splits in the extension $K_{\chi(\bar{x}, j)} K_{\chi_{x, f}}/K_{\chi_{x, f}}$. By construction of $\overline{Y}_\emptyset^\circ$ we have 
\[
\text{rk}\left(\mathfrak{R}\left(w_{1, j}\right)\right)(x) > |S| \text{ for each } x \in \bar{x}(\emptyset).
\]
This implies that $p$ has residue field degree $1$ in $\psi_S(\mathfrak{R}(w_{1, j}), x)$. Since $\overline{Y}_\emptyset^\circ$ is also contained in $X(a, f)$ by construction, we conclude that $p$ has residue field degree $1$ in the compositum of the $\psi_S(\mathfrak{R}(w_{1, j}), x)$ over all $x \in \bar{x}(\emptyset)$ and $\phi_{S, \bar{x}}$. Since $K_{\chi(\bar{x}, j)}$ is contained in this compositum, we have proven our claim.

Hence, in order to test if $\chi(\bar{x}, j)$ is zero, we merely have to check that the primes in $\pi_S(x)$ split completely in $K_{\chi(\bar{x}, j)} K_{\chi_{x, f}} / K_{\chi_{x, f}}$ and furthermore that $\chi(\bar{x}, j)$ is zero in the vector space $D_{a, 2}^\vee$. We make this precise in the following way. Fix an $x \in \bar{x}(\emptyset)$ and define the map $F_S(\mathfrak{A})$ that sends $\chi(\bar{x}, j)$ to
\[
\chi(\bar{x}, j)\left(\text{Frob}\left(\text{Up}_{K_{\chi_{x, f}}}(z)\right)\right).
\]
Here $z$ runs through the primes in $\pi_S(x)$ and the integers
\[
\prod_{i = 1}^r \pi_i(x)^{\pi_i(w_{2, j})}
\]
as $j$ varies from $1$ to $n_2 + 1$. The key property is that this does not depend on $x$, which follows from the fact that $\overline{Y}_\emptyset^\circ$ is contained in $X(a, f)$ and $\chi(\bar{x}, j)$ is supported outside the primes in $S$. Once this is established, it is not hard to show that $F_S(\mathfrak{A})$ is additive. This describes our $l$-additive system $\mathfrak{A}$. We will now demonstrate that $\mathfrak{A}$ is $S(j_1, j_2)$-acceptable.

Let $\bar{x} \in \overline{Z}_{S(j_1, j_2)}(\mathfrak{A})$. If $\pi_i(\bar{x}(S(j_1, j_2) - \{i\}))$ consists of only one element for some $i \in S$, the condition $\bar{x}(\emptyset) \subseteq \overline{Y}_\emptyset^\circ(\mathfrak{A})$ is trivially satisfied. So now suppose that
\[
\left|\pi_i(\bar{x}(S(j_1, j_2) - \{i\}))\right| > 1
\]
for all $i \in S$. Let $\bar{z}_{i, m_1}, \ldots, \bar{z}_{i, m_i}$ be a distinct list of elements of $\bar{x}(S(j_1, j_2) - \{i\})$. By assumption we have 
\[
m_i \geq \left|\pi_i(\bar{x}(S(j_1, j_2) - \{i\}))\right| - 1
\]
for all $i \in S$. In case we have strict inequality for some $i \in S$, it is immediate that $\mathfrak{A}$ is $S(j_1, j_2)$-acceptable. So suppose that we have equality for all $i \in S$, and let $x_0$ be the unique element outside all of the $\bar{z}_{i, m}(\emptyset)$. We have to show that $x_0 \in X(a, f, m - 1)$. 

We start by checking that $x_0$ is in $X(a, f)$, so take two distinct indices $i, i' \in S$. Take the $l - 1$ other points, with multiplicity, in the set $\bar{x}(\emptyset)$ having the property
\[
\pi_{[r] - i'}(x_0) = \pi_{[r] - i'}(x).
\]
All these $l - 1$ points are in $X(a, f)$. Now the splitting conditions coming from the existence of $\phi_{S, \bar{x}}$ show that $x_0$ must be in $X(a, f)$ as well. It remains to prove that $\text{Art}(x, f, i)$ is equal to the Artin pairing $\text{Art}_i$ for all $2 \leq i \leq m - 1$.

So take such an $i$ and let $S$ a subset of $S(j_1, j_2)$ with $i + 1$ elements not containing $i_1(j_1, j_2)$. Take any cube $\bar{y} \in \bar{x}(S)$ with $x_0 \in \bar{y}(\emptyset)$. We apply Theorem \ref{tMin} with $\bar{y}$, $S$ and for all raw cocycles $\mathfrak{R}(w_{1, j})$ and all $b$ corresponding to some $w_{2, j'}$. Since $i_1(j_1, j_2) \not \in S$, we have minimality in all cases. From Theorem \ref{tMin} we deduce that $\text{Art}(x_0, f, m)$ is equal to $\text{Art}_i$.

Finally we must check that $\mathfrak{A}$ satisfies the third property listed in the lemma. But this follows from an application of Theorem \ref{tMin} and Theorem \ref{tAgree}.
\end{proof}

\begin{prop}
\label{pFinal}
Assume GRH and let $l$ be an odd prime. There are $c, A, N_0 > 0$ such that for all $N > N_0$, all nice boxes $X$ for $N$, all generic assignments $a: M \rightarrow \langle \zeta_l \rangle$ that agree with $X$, all integers $m \geq 2$, all sequences of valid Artin pairings $\left\{\textup{Art}_k\right\}_{2 \leq k < m}$, a valid Artin pairing $\textup{Art}_m$, a non-zero multiplicative character $F$ from $\textup{Mat}(n_m + 1, n_m, \mathbb{F}_l)$ to $\langle \zeta_l \rangle$ that depends on the entry $(j_1, j_2)$ and all satisfactory produce spaces $\widetilde{Z}$, we have with $S := [r] - [k] - S_{\textup{var}}$
\[
\frac{1}{W(X, S)} \sum_{f \in \textup{Ch}(S)} \left|\sum_{x \in \widetilde{Z} \cap X(a, f, m - 1)} F(\textup{Art}(x, f, m)) \right| \leq \frac{A}{W(X, S)} \sum_{f \in \textup{Ch}(S)} \frac{\left|\widetilde{Z} \cap X(a, f)\right|}{\left(\log \log N\right)^{\frac{c}{m(l^2 + l)^m}}}.
\]
\end{prop}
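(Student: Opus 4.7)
My plan combines Lemma \ref{lAddCon} with Proposition \ref{p4.4}, using Theorem \ref{t4r} as the Chebotarev input that establishes equidistribution of the relevant Frobenius classes. First, decompose the outer sum over $f \in \textup{Ch}(S)$ by separating its restriction $f'$ to $X_{i_2(j_1, j_2)}(M^\circ(Z))$ from its restriction $f''$ to the remaining indices of $S$, and partition $\widetilde{Z}$ into fibers $\widetilde{Z}(P) := \{P\} \times Z \times X_{i_2(j_1, j_2)}(M^\circ(Z))$ indexed by $P \in \pi_{[r] - S(j_1, j_2)}(\widetilde{Z})$. By the triangle inequality this reduces the problem to bounding, for each fixed $P$ and $f''$, the average over $f'$ of $\left|\sum_{x \in \widetilde{Z}(P) \cap X(a, f, m - 1)} F(\textup{Art}(x, f, m))\right|$.

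For each such $P$ and $f''$, Lemma \ref{lAddCon} produces an $S(j_1, j_2)$-acceptable $l$-additive system $\mathfrak{A}$ on $\widetilde{Z}(P)$ with $\overline{Y}_\emptyset^\circ(\mathfrak{A}) = X(a, f, m - 1) \cap \widetilde{Z}(P)$, together with the key differential identity
\[
d\widetilde{F}(\bar{x}) = C \cdot \phi_{S_{\textup{var}}, \bar{z}}\left(\textup{Frob}(p_1) \cdots \textup{Frob}(p_l)\right),
\]
where $\widetilde{F}(x) := F(\textup{Art}(x, f, m))$, $C$ is a nonzero element of $\mathbb{F}_l$ independent of $x$, and the $p_i$ are the coordinates of $\bar{x}$ at position $i_2(j_1, j_2)$. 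Thus the Artin pairing values become encoded in Frobenius evaluations of the governing expansion. I then apply Proposition \ref{p4.4} to $\mathfrak{A}$ with $S = S(j_1, j_2)$ and $\epsilon$ of order $(\log \log N)^{-c/(m(l^2+l)^m)}$; its hypothesis is satisfied since $|\pi_i(\widetilde{Z}(P))| \geq M_{\textup{box}} = \lfloor (\log \log N)^{1/(10m)} \rfloor$ for $i \in S_{\textup{var}}$ when $c$ is sufficiently small. The proposition yields a specific $g^\ast \in \mathscr{G}_{S(j_1, j_2)}(\widetilde{Z}(P))$ such that any $\widetilde{F}$ with $d\widetilde{F} = g^\ast$ is $\epsilon$-balanced, and inspecting its proof shows that in fact only an exponentially small fraction of $g$ fails this property.

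It remains to show that as $f'$ varies, the induced $g_{f'}$ lies outside this exceptional set for all but a small proportion of $f'$, and to convert $\epsilon$-balance into a character sum bound. By Proposition \ref{pGovAdd} (combined with Proposition \ref{pDiff}), the Galois group $\textup{Gal}(M(Z)/M_\circ(Z))$ is canonically identified with the space of additive maps $\mathcal{A}(\overline{Z}_{S_{\textup{var}}}, \mathbb{F}_l) = \mathscr{G}_{S_{\textup{var}}}(Z)$, so the map $f' \mapsto g_{f'}$ factors through Frobenius evaluation in $M^\circ(Z)$. Applying Theorem \ref{t4r} to the compositum of $M^\circ(Z)$ with the governing expansion compositum (whose degree is bounded using Proposition \ref{pDegree}), GRH provides equidistribution of $g_{f'}$ up to an error $1 + O(x_k^{-1/4})$ outside a fraction $O(\exp(-2|X_{i_2(j_1, j_2)}|^{1/8}))$ of $f'$. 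For the remaining $f'$ the non-triviality of $F$ turns $\epsilon$-balance into the estimate $\left|\sum_x F(\textup{Art}(x, f, m))\right| = O(\epsilon \cdot |\widetilde{Z}(P) \cap X(a, f)|)$; summing over $P$ and averaging over $f$ recovers the claim. The main obstacle is the simultaneous balancing of parameters: $|S_{\textup{var}}| = m + 1$, so the governing expansion compositum has degree doubly exponential in $m$, and making the degree bound (\ref{eDKUpper}) of Theorem \ref{t4r} compatible with a useful choice of $\epsilon$ in Proposition \ref{p4.4} is precisely what forces the error rate $(\log \log N)^{-c/(m(l^2+l)^m)}$.
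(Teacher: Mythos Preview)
Your overall architecture (fix $P$, invoke Lemma \ref{lAddCon}, feed into Proposition \ref{p4.4}, use Theorem \ref{t4r} as the Chebotarev input) is the right shape, but the way you couple Proposition \ref{p4.4} to the character sum has a genuine gap.

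The problem is in your claim that ``as $f'$ varies, the induced $g_{f'}$'' equidistributes and therefore avoids the bad set from the proof of Proposition \ref{p4.4}. By Lemma \ref{lAddCon}, for $\bar x\in\overline Z_{S(j_1,j_2)}(\mathfrak A)$ one has
\[
d\widetilde F(\bar x)=C\cdot \phi_{S_{\mathrm{var}},\bar z}\bigl(\mathrm{Frob}(p_1)\cdots\mathrm{Frob}(p_l)\bigr),
\]
where the $p_i$ are the $i_2(j_1,j_2)$-coordinates of $\bar x$. Every such $p_i$ lies in $X_{i_2(j_1,j_2)}(M^\circ(Z))$ and hence splits completely in $M_\circ(Z)$; therefore its Frobenius in $\Gal(M(Z)/\mathbb Q)$ lies in the \emph{centre} $\Gal(M(Z)/M_\circ(Z))$ and is a single well-defined element independent of which prime $\mathfrak p\mid p$ in $\mathbb Z[\zeta_l]$ (i.e.\ independent of $f'(p)$) one chooses. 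Thus the function $g$ on the right-hand side does not vary with $f'$ in any useful sense: changing $f'$ changes which primes belong to $X_{i_2(j_1,j_2)}(M^\circ(Z))$, but for each such prime the value $\phi_{S_{\mathrm{var}},\bar z}(\mathrm{Frob}(p))$ is fixed. Consequently the family $\{g_{f'}\}$ ranges over at most a tiny subset of $\mathscr G_{S(j_1,j_2)}(\widetilde Z(P))$, and the ``exponentially small exceptional set'' bound from the proof of Proposition \ref{p4.4}, which is relative to the full space $\mathscr G_{S(j_1,j_2)}(\widetilde Z(P))$, tells you nothing about whether your specific $g$ lies in it.

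The paper sidesteps this by a decoupling trick that your sketch is missing. One does \emph{not} apply Proposition \ref{p4.4} to $\widetilde Z(P)$ at all; instead one applies it formally to the abstract space $Z\times[M_{\mathrm{box}}]$. This yields a single $g_0\in\mathscr G_{S(j_1,j_2)}(Z\times[M_{\mathrm{box}}])$ that works simultaneously for \emph{every} $S(j_1,j_2)$-acceptable system. Via Proposition \ref{pGovAdd}, $\mathscr G_{S(j_1,j_2)}(Z\times[M_{\mathrm{box}}])$ is identified with the additive maps $[M_{\mathrm{box}}]^l\to\Gal(M(Z)/M_\circ(Z))$, and $g_0$ is realised by any tuple $(p_1,\dots,p_{M_{\mathrm{box}}})$ of primes whose Frobenii in $M(Z)$ satisfy a fixed linear relation. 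Now Theorem \ref{t4r} (applied to $K=M(Z)L(P\cup Z)$) shows that, for almost all $f$, the sets $X_{i_2(j_1,j_2)}(\sigma)$ are of nearly equal size as $\sigma$ varies, so one can partition $X_{i_2(j_1,j_2)}(M^\circ(Z))$ into blocks of size $M_{\mathrm{box}}$ each realising $g_0$, up to a small remainder. One then restricts the $l$-additive system of Lemma \ref{lAddCon} to each block $Z\times A$ and applies Proposition \ref{p4.4} there. The point is that you \emph{choose} $g_0$ first and then force the Frobenius data to fit it, rather than hoping the Frobenius data happens to give a good $g$.
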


\begin{proof}[Proof that Proposition \ref{pFinal} implies Proposition \ref{pChar}.]
Define for $i > k$
\[
X_i(a) := \left\{x \in X_i : \chi_x^{f(x)}(\text{Frob}(x_j)) = a(j, i) \text{ and } \chi_{x_j}^{f(x_j)}(\text{Frob}(x)) = a(i, j) \text{ for all } 1 \leq j \leq k\right\}
\]
and
\[
X_{\text{var}} := \prod_{i \in S_{\text{var}}} X_i(a).
\]
We let $V_{\text{var}}$ be the subset of $X_{\text{var}}$ that is consistent with $a$, i.e. $P \in V_{\text{var}}$ if and only if for all distinct $i, j \in S_{\text{var}}$ we have 
$$
\chi_{\pi_j(P)}^{f(\pi_j(P))}(\text{Frob}(\pi_i(P))) = a(i, j).
$$
Set
\[
R := \exp\left(\exp\left(\frac{1}{10} \log \log N\right)\right).
\]
We let $Z_{\text{var}}^1, \ldots, Z_{\text{var}}^t$ be a longest sequence of subsets of $X_{\text{var}}$ with the following properties
\begin{itemize}
\item we have for all $1 \leq s \leq t$
\[
Z_{\text{var}}^s = \prod_{i \in S_{\text{var}}} Z_i^s
\]
for some subset $Z_i^s$ of $X_i(a)$ of cardinality $M_{\text{box}}$;
\item each $Z_{\text{var}}^s$ is a subset of $V_{\text{var}}$, and furthermore any element of $V_{\text{var}}$ is in at most $R$ different $Z_{\text{var}}^s$;
\item for all distinct $s$ and $s'$ we have $\left|Z_{\text{var}}^s \cap Z_{\text{var}}^{s'}\right| \leq 1$;
\item $Z_{\text{var}}^s$ is well-governed and $x_1, \ldots, x_k$ split completely in the extension $M_\circ(Z_{\text{var}}^s)/\Q$.
\end{itemize}
We make the important remark that the sequence $Z_{\text{var}}^1, \ldots, Z_{\text{var}}^t$ depends only on the choice of characters for $X_i$ with $i \in S_{\text{var}}$. Hence if $f, f' \in \text{Ch}([r] - [k])$ restrict to the same function in $\text{Ch}(S_{\text{var}})$, we may and will take the same sequence $Z_{\text{var}}^1, \ldots, Z_{\text{var}}^t$ for $f$ and $f'$.

Define $V_{\text{var}}^{\text{bad}}$ be the subset of points in $V_{\text{var}}$ that are in fewer than $R$ of the $Z_{\text{var}}^s$ and let $\delta$ be the density of $V_{\text{var}}^{\text{bad}}$ in $X_{\text{var}}$. Our goal is to give an upper bound for $\delta$ using the tools from Section \ref{sAdd}. Using a straightforward greedy algorithm, we can find a subset $W$ of $V_{\text{var}}^{\text{bad}}$ of density at least $\delta/RM_{\text{box}}^m$ such that $\left|W \cap Z_{\text{var}}^s\right| \leq 1$ for all $s$.

Our next step is to construct a ``nice'' $l$-additive system $\mathfrak{A}$ on $X_{\text{var}}$ with $\overline{Y}_\emptyset^\circ = W$. Using this $l$-additive system $\mathfrak{A}$ we are going to find a well-governed subset of $W$ if $\delta$ is sufficiently large. Since this is impossible by construction of $Z_{\text{var}}^1, \ldots, Z_{\text{var}}^t$, we obtain the desired upper bound for $\delta$. We will now define the $l$-additive system $\mathfrak{A}$.

First suppose $S$ is a singleton $\{j\}$. Then we define $F_j(\bar{x})$ to be
\[
j_l^{-1} \circ \chi_p\left(\text{Frob } \pr_1(\pi_j(\bar{x})) \cdot \ldots \cdot \text{Frob } \pr_l(\pi_j(\bar{x}))\right),
\]
where $p$ runs through the primes in $\pi_{S_{\text{var}} - \{j\}}(\bar{x})$ and $l$. Hence we can take $A_S(\mathfrak{A}) := \mathbb{F}_l^m$. Now suppose that $S$ is such that $|S| > 1$ and $i_1(j_1, j_2) \in S$. In this case we define $F_S(\bar{x})$ as
\[
\phi_{S, \bar{x}}\left(\text{Frob } p\right),
\]
where $p$ runs through $x_1, \ldots, x_k$ and the primes in $\pi_{S_{\text{var}} - S}(\bar{x})$, so that $A_S(\mathfrak{A}) := \mathbb{F}_l^{m - |S| + k}$. We remark that both $\chi_p$ and $\phi_{S, \bar{x}}$ implicitly depend on $f$. By Proposition \ref{pAS} the density of $\overline{Y}_{S_{\text{var}}}^\circ$ in $X_{\text{var}}^l$ is at least
\[
\delta' := \left(\frac{\delta}{RM_{\text{box}}^ml^{m + k}}\right)^{(l + 1)^m}.
\]
We will now use some of the techniques and notation from the proof of Proposition \ref{p4.4}. For $g: [l - 1] \rightarrow \overline{Y}_\emptyset^\circ(\mathfrak{A})$ and $x \in \overline{Y}_\emptyset^\circ(\mathfrak{A})$ we defined an element $c(g, x)$ of $\overline{X}_{S_{\text{var}}}$. Also define
\[
Z(\mathfrak{A}, S_{\text{var}}, g) := \{x \in \overline{Y}_\emptyset^\circ(\mathfrak{A}): \text{ writing } \bar{x} := c(g, x), \text{ we have } \bar{x} \in \overline{Y}_{S_{\text{var}}}^\circ(\mathfrak{A})\}.
\]
There is a natural injective map from $\overline{Y}_{S_{\text{var}}}^\circ(\mathfrak{A})$ to 
\[
\coprod_{g: [l - 1] \rightarrow \overline{Y}_\emptyset^\circ(\mathfrak{A})} Z(\mathfrak{A}, S_{\text{var}}, g), 
\]
where $\coprod$ denotes a disjoint union. This map is given by sending $\bar{y}$ to the pair $(g, x)$, where $g: [l - 1] \rightarrow \overline{Y}_\emptyset^\circ(\mathfrak{A})$ and $x \in Z(\mathfrak{A}, S_{\text{var}}, g)$ are uniquely determined by
\[
\pi_i(g(j)) = \pr_j(\pi_i(\bar{y})) \text{  and  } \pi_i(x) = \pr_l(\pi_i(\bar{y}))
\]
for $i \in S_{\text{var}}$ and $j \in [l - 1]$. We conclude that
\[
\left|\overline{Y}_{S_{\text{var}}}^\circ(\mathfrak{A})\right| \leq \sum_{g: [l - 1] \rightarrow \overline{Y}_\emptyset^\circ(\mathfrak{A})} \left|Z(\mathfrak{A}, S_{\text{var}}, g)\right| \leq |X_{\text{var}}|^{l - 1} \cdot \max_{g: [l - 1] \rightarrow \overline{Y}_\emptyset^\circ(\mathfrak{A})} \left|Z(\mathfrak{A}, S_{\text{var}}, g)\right|.
\]
This implies that $Z(\mathfrak{A}, S_{\text{var}}, g)$ has density at least $\delta'$ in $X_{\text{var}}$ for a good choice of $g$. Now the key observation is that there are no subsets $Z_i$ of $X_i(a)$ all of cardinality $M_{\text{box}}$ with the property
\[
\prod_{i \in S_{\text{var}}} Z_i \subseteq Z(\mathfrak{A}, S_{\text{var}}, g),
\]
since then we would be able to extend the sequence $Z_{\text{var}}^1, \ldots, Z_{\text{var}}^t$ to a longer sequence. Hence we can apply the contrapositive of Proposition \ref{pRam}. This yields the inequality
\[
\delta'^{2M_{\text{box}}^{m - 1}} < 2^{m + 2} \left(\min_{i \in S_{\text{var}}} |X_i(a)|\right)^{-1}.
\]
For almost all $f$ we know that $|X_i(a)|$ is of the expected size due to Theorem \ref{t4r}. For these $f$ we get the desired upper bound for $\delta$ and for the remaining $f$ we employ the trivial bound. It follows from Theorem \ref{t4r} that the contribution of those $x \in X(a, f)$ with $\pi_{S_{\text{var}}}(x) \in V_{\text{var}}^{\text{bad}}$ fits in the error term of Proposition \ref{pChar}. Define for $x \in X(a, f)$
\[
\Lambda(x) := \left|\left\{1 \leq s \leq t : x \in \widetilde{Z_{\text{var}}^s}\right\}\right|.
\]
We will compute the first and second moment of $\Lambda(x)$. Let $y \in V_{\text{var}}$ and let $d(M_{\text{box}}, m)$ be the degree of $M^\circ(Z_{\text{var}}^s)$ over $L(y, x_1, \ldots, x_k)$. Then $d(M_{\text{box}}, m)$ does not depend on $s$ or $y$ by Proposition \ref{pDegree}. For $y \in Z_{\text{var}}^s$, Theorem \ref{t4r} implies that the proportion of $f$ violating
\[
\left|\left|X(a, f) \cap \widetilde{Z_{\text{var}}^s} \cap \pi_{S_{\text{var}}}^{-1}(y)\right| - \frac{\left|X(a, f) \cap \pi_{S_{\text{var}}}^{-1}(y)\right|}{d(M_{\text{box}}, m)^{r - k - m}}\right| \leq \frac{A'(l)}{\log N} \frac{\left|X(a, f) \cap \pi_{S_{\text{var}}}^{-1}(y)\right|}{d(M_{\text{box}}, m)^{r - k - m}}
\]
for some $1 \leq s \leq t$ is within the error of the proposition, where $A'(l)$ is a sufficiently large constant. Therefore we have
\begin{align*}
\sum_{x \in X(a, f)} \Lambda(x) &= \sum_{y \in V_{\text{var}}} \sum_{\substack{x \in X(a, f) \\ \pi_{S_{\text{var}}}(x) = y}} \sum_{1 \leq s \leq t} \mathbf{1}_{x \in \widetilde{Z_{\text{var}}^s}} = \sum_{y \in V_{\text{var}}} \sum_{1 \leq s \leq t} \left|X(a, f) \cap \widetilde{Z_{\text{var}}^s} \cap \pi_{S_{\text{var}}}^{-1}(y)\right| \\
&= \sum_{y \in V_{\text{var}}} \sum_{\substack{1 \leq s \leq t \\ y \in Z_{\text{var}}^s}} \frac{\left|X(a, f) \cap \pi_{S_{\text{var}}}^{-1}(y)\right|}{d(M_{\text{box}}, m)^{r - k - m}} + O\left(\frac{1}{\log N} \frac{\left|X(a, f) \cap \pi_{S_{\text{var}}}^{-1}(y)\right|}{d(M_{\text{box}}, m)^{r - k - m}}\right) \\
&= \frac{R\left|X(a, f)\right|}{d(M_{\text{box}}, m)^{r - k - m}} + O\left(\frac{R}{\log N} \frac{\left|X(a, f)\right|}{d(M_{\text{box}}, m)^{r - k - m}}\right).
\end{align*}
An application of Proposition \ref{pDegree} shows that for distinct $s$ and $s'$
\[
\left[M^\circ(Z_{\text{var}}^s)M^\circ\left(Z_{\text{var}}^{s'}\right) : L(y, x_1, \ldots, x_k)\right] = d(M_{\text{box}}, m)^2,
\]
where we use that 
\[
\left|Z_{\text{var}}^s \cap Z_{\text{var}}^{s'}\right| \leq 1.
\]
If $y \in Z_{\text{var}}^s \cap Z_{\text{var}}^{s'}$ for distinct $s$ and $s'$, we obtain from Theorem \ref{t4r}
\[
\left|\left|X(a, f) \cap \widetilde{Z_{\text{var}}^s} \cap \widetilde{Z_{\text{var}}^{s'}} \cap \pi_{S_{\text{var}}}^{-1}(y)\right| - \frac{\left|X(a, f) \cap \pi_{S_{\text{var}}}^{-1}(y)\right|}{d(M_{\text{box}}, m)^{2(r - k - m)}}\right| \leq \frac{A'(l)}{\log N} \frac{\left|X(a, f) \cap \pi_{S_{\text{var}}}^{-1}(y)\right|}{d(M_{\text{box}}, m)^{2(r - k - m)}}.
\]
except for a vanishingly small proportion of $f$ that fit in the error term. Then we can compute the second moment of $\Lambda(x)$ in exactly the same way as we computed the first moment, and this yields
\begin{align*}
\sum_{x \in X(a, f)} \Lambda(x)^2 &= \frac{R\left|X(a, f)\right|}{d(M_{\text{box}}, m)^{r - k - m}} + \frac{(R^2 - R)\left|X(a, f)\right|}{d(M_{\text{box}}, m)^{2(r - k - m)}} + O\left(\frac{R^2}{\log N} \frac{\left|X(a, f)\right|}{d(M_{\text{box}}, m)^{2(r - k - m)}}\right)\\
&= \frac{R^2\left|X(a, f)\right|}{d(M_{\text{box}}, m)^{2(r - k - m)}} + O\left(\frac{R^2}{\log N} \frac{\left|X(a, f)\right|}{d(M_{\text{box}}, m)^{2(r - k - m)}}\right).
\end{align*}
Now use Chebyshev's inequality to finish the proof of the proposition.
\end{proof}

\begin{proof}[Proof of Proposition \ref{pFinal}.] Let $P$ be an element of $\pi_{[r] - S(j_1, j_2)}\left(\widetilde{Z} \cap X(a, f)\right)$ and make a choice of characters for all primes in $P$. Then it suffices to prove
\begin{multline}
\frac{1}{W(X, \{i_2(j_1, j_2)\})} \sum_{f \in \textup{Ch}(\{i_2(j_1, j_2)\})} \left|\sum_{x \in \widetilde{Z} \cap X(a, f, P, m - 1)} F(\textup{Art}(x, f, m)) \right| \leq \\ \nonumber
\frac{A}{W(X, \{i_2(j_1, j_2)\})} \sum_{f \in \textup{Ch}(\{i_2(j_1, j_2)\})} \frac{\left|\widetilde{Z} \cap X(a, f, P)\right|}{\left(\log \log N\right)^{\frac{c}{m(l^2 + l)^m}}}
\end{multline}
for all $P$, where $X(a, f, P, m - 1)$ and $X(a, f, P)$ are the subsets of $X(a, f, m - 1)$ and $X(a, f)$ equal to $P$ on $[r] - S(j_1, j_2)$. Define $Z := \pi_{S_{\text{var}}}\left(\widetilde{Z}\right)$. Then Proposition \ref{pDiff} implies that
\[
\Gal\left(M(Z)/M_\circ(Z)\right) \simeq \mathscr{G}_{S_{\text{var}}}(Z),
\]
where the isomorphism sends $\sigma$ to the map $\bar{x} \mapsto \phi_{S_{\text{var}}, \bar{x}}(\sigma)$. This is well-defined by our assumption that $Z$ is well-governed. Furthermore, the Galois group $\Gal\left(M(Z)/M_\circ(Z)\right)$ is the center of $\Gal\left(M(Z)/\Q\right)$.

We formally apply Proposition \ref{p4.4} with the space $X := Z \times [M_{\text{box}}]$. There is a natural bijection between $\mathscr{G}_S(X)$ and the set of maps $g$ from $[M_{\text{box}}]^l$ to $\Gal\left(M(Z)/M_\circ(Z)\right)$ with the property 
\[
g(i_1, \ldots, i_{l - 1}, k_1) + g(i_1, \ldots, i_{l - 1}, k_2) + \ldots + g(i_1, \ldots, i_{l - 1}, k_l) = g(k_1, \ldots, k_l).
\]
For a prime $p \in X_{i_2(j_1, j_2)}$, let $\mathfrak{p}$ be the prime ideal in $\Z[\zeta_l]$ above $p$ corresponding to the character $\chi_p^{f(p)}$. Given primes $p_1, \ldots, p_{M_{\text{box}}}$ in $X_{i_2(j_1, j_2)}$ one can construct such a function $g$ by defining
\[
g(i_1, \ldots, i_l) = j_l^{-1}(F(j_1, j_2)) \cdot \pi_{i_2(j_1, j_2)}(w_{2, j_2}) \cdot \left(\text{Frob}(\mathfrak{p}_{i_1}) + \ldots + \text{Frob}(\mathfrak{p}_{i_l})\right).
\]
Proposition \ref{p4.4} gives us a specific function $g_0$ such that we have equidistribution for all acceptable $l$-additive systems $\mathfrak{A}$, all choices of $\overline{Y}_\emptyset^\circ$ and all choices of $\widetilde{F}$ with
\[
d\widetilde{F}(\bar{x}) = g_0(\bar{x}).
\]
We are now going to use Theorem \ref{t4r} to partition $X_{i_2(j_1, j_2)}$ into sets of size $M_{\text{box}}$ with the property that they all give the function $g_0$, i.e.
\begin{align}
\label{eg0}
g_0(i_1, \ldots, i_l) = j_l^{-1}(F(j_1, j_2)) \cdot \pi_{i_2(j_1, j_2)}(w_{2, j_2}) \cdot \left(\text{Frob}(\mathfrak{p}_{i_1}) + \ldots + \text{Frob}(\mathfrak{p}_{i_l})\right).
\end{align}
Now define
\[
X_{i_2(j_1, j_2)}(\sigma) = \left\{p \in X_{i_2(j_1, j_2)} : \text{Frob}(\pp) = \sigma\right\},
\]
where $\sigma$ is an element of $\Gal(M(Z)L(P \cup Z)/\Q)$ that restricts to the element of $\Gal(M^\circ(Z)/\Q)$ corresponding to $X_{i_2(j_1, j_2)}(M^\circ(Z))$. Then Theorem \ref{t4r} implies
\begin{align}
\label{eXs}
\left|\left|X_{i_2(j_1, j_2)}(\sigma)\right| - \frac{\left|X_{i_2(j_1, j_2)}(M^\circ(Z))\right|}{l^{(M_{\text{box}} - 1)^m} \cdot l^{2r - 2k - 2m - 2}}\right| \leq \frac{A'(l)}{\log N} \frac{\left|X_{i_2(j_1, j_2)}(M^\circ(Z))\right|}{l^{(M_{\text{box}} - 1)^m} \cdot l^{2r - 2k - 2m - 2}}
\end{align}
for all but very few $f$, where $A'(l)$ is a sufficiently large constant. Now take any $p_1$. Then given $\text{Frob}(\mathfrak{p}_1)$, there is a unique choice of $\text{Frob}(\mathfrak{p}_2), \ldots, \text{Frob}(\mathfrak{p}_{M_{\text{box}}})$ such that equation (\ref{eg0}) is satisfied; in fact $\text{Frob}(\mathfrak{p}_2), \ldots, \text{Frob}(\mathfrak{p}_{M_{\text{box}}})$ is simply a linear function of $\text{Frob}(\mathfrak{p}_1)$ and the fixed function $g_0$. From this observation and equation (\ref{eXs}), we conclude that $X_{i_2(j_1, j_2)}(M^\circ(Z)L(P \cup Z))$ can be partitioned in sets $A$ of size $M_{\text{box}}$ such that equation (\ref{eg0}) is valid except for a small set that fits in the error. 

If $A$ is such a set of size $M_{\text{box}}$, we have an explicit bijection between $A$ and $[M_{\text{box}}]$ coming from our choice of $p_1, \ldots, p_{M_{\text{box}}}$. We first restrict the $l$-additive system $\mathfrak{A}$ constructed in Lemma \ref{lAddCon} to $Z \times A$ and then use this bijection to get an $l$-additive system on $Z \times [M_{\text{box}}]$. This gives the desired equidistribution for $F$ on $Z \times A$, and hence also $\widetilde{Z}$.
\end{proof}

\section{\texorpdfstring{Equidistribution in $(\mathcal{G}_{\mathbb{Z}_l[\zeta_l]},\mu_{\text{C.L.}}^{1})$}{Theorem 1.2 implies Theorem 1.1}}
In Proposition \ref{units are locally free} we have shown that $O_{K_\chi}^\ast \otimes_{\mathbb{Z}} \mathbb{Z}_l$ is a free module over $\mathbb{Z}_l[\zeta_l]$ of rank $1$ for $\chi \in\Gamma_{\mu_l}(\mathbb{Q})$, where the action of $\mathbb{Z}_l[\zeta_l]$ is as usual induced by the Galois action through $\chi$; observe that after tensoring with $\mathbb{Z}_l$ the norm operator acts trivially. Thus, by analogy with real quadratic fields, it is natural to expect that, as $\chi$ varies in $\Gamma_{\mu_l}(\mathbb{Q})$, the $\mathbb{Z}_l[\zeta_l]$-module $(1 - \zeta_l)\text{Cl}(K_\chi)[(1 - \zeta_l)^\infty]$ should equidistribute in the Cohen--Lenstra probability space $(\mathcal{G}_{\mathbb{Z}_l[\zeta_l]}, \mu_{\text{C.L.}}^1)$ which is defined as follows. 

The set $\mathcal{G}_{\mathbb{Z}_l[\zeta_l]}$ consists of the set of isomorphism classes of $\mathbb{Z}_l[\zeta_l]$-modules with finite cardinality. To each $A \in \mathcal{G}_{\mathbb{Z}_l[\zeta_l]}$ we give weight
$$
\mu_{\text{C.L.}}^1(A) := \frac{\eta_{\infty}^{1}(l)}{\left|A\right| \cdot \left|\text{Aut}_{\mathbb{Z}_l[\zeta_l]}(A)\right|},
$$
where $\eta_{\infty}^{1}(l) := \prod_{i = 2}^\infty (1 - \frac{1}{l^{i}})$. We will prove later in this section that this formula defines a probability measure. The goal of this section is to show that this statistical model is equivalent to the statistical model for the sequence of ranks established in Theorem \ref{tCyclic}. In particular with the material of this section one sees that Theorem \ref{tCyclic} implies Theorem \ref{tMain}. 

Let $\mathcal{D}$ be the set of non-increasing functions $f:\mathbb{Z}_{\geq 1}  \to \mathbb{Z}_{\geq 0}$ that are eventually $0$. Recall that the map
$$
\text{rk}:\mathcal{G}_{\mathbb{Z}_l[\zeta_l]} \to \mathcal{D},
$$
defined by the formula $A \mapsto (i \mapsto \text{rk}_{(1 - \zeta_l)^i} A)$, is a bijection of sets. The next proposition gives the pushforward of $\mu_{\text{C.L.}}^{1}$ under the bijection $\text{rk}$. For $0 \leq j \leq n$, recall that $P(j | n)$ is the probability that a $n \times (n + 1)$ matrix with entries in $\mathbb{F}_l$ has rank $n - j$. Moreover, we set
$$
\eta_n(l) := \prod_{i = 1}^{n} \left(1 - \frac{1}{l^i}\right).
$$
Then we have the following proposition.
 
\begin{prop} 
\label{We show Cohen--Lenstra}
Let $j$ be a positive integer. Let $i_1 \geq \ldots \geq i_j \geq 0$ be a sequence of integers. Then
\begin{multline*}
\mu_{\emph{C.L.}}^1 \left(\emph{rk}^{-1}\left(\{f \in \mathcal{D} : f(1) = i_1, \ldots, f(j) = i_j\}\right)\right) = \\ 
\frac{\eta_\infty(l)}{l^{i_1(i_1 + 1)} \eta_{i_1}(l) \eta_{i_1 + 1}(l)} \cdot \prod_{1 \leq k < j} P(i_{k + 1} | i_k).
\end{multline*}
\end{prop}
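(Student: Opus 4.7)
The plan is to reduce the proposition to a purely combinatorial identity via Macdonald's formula for the automorphism groups of finitely generated torsion modules over the DVR $\mathbb{Z}_l[\zeta_l]$, and then verify this identity by induction on $j$.

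First I parameterize $\mathcal{G}_{\mathbb{Z}_l[\zeta_l]}$ by partitions: to $\lambda=(\lambda_1\geq\lambda_2\geq\ldots)$ I associate $M_\lambda := \bigoplus_i \mathbb{Z}_l[\zeta_l]/(1-\zeta_l)^{\lambda_i}$. Writing $\lambda'$ for the conjugate partition, the map $\text{rk}$ sends $M_\lambda$ to the function $k\mapsto \lambda'_k$, so the event in question corresponds to partitions whose first $j$ columns are exactly $(i_1,\ldots,i_j)$. Macdonald's formula gives
\[
|\text{Aut}_{\mathbb{Z}_l[\zeta_l]}(M_\lambda)| = l^{\sum_{k\geq 1}(\lambda'_k)^2}\prod_{k\geq 1}\eta_{\lambda'_k-\lambda'_{k+1}}(l),
\]
and combined with $|M_\lambda|=l^{\sum_k \lambda'_k}$ this expresses $\mu^1_{\text{C.L.}}$ as a product over the columns of $\lambda$.

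Next I would prove the proposition by induction on $j$. For the base case $j=1$, I would split off the first column of $\lambda$, recognize the residual sum over partitions $\mu$ with $\mu'_1\leq i_1$ as a finite Cauchy-type $q$-series, and evaluate it to obtain the classical Cohen--Lenstra $l$-rank formula
\[
\mu^1_{\text{C.L.}}(\{f:f(1)=i_1\}) = \frac{\eta_\infty(l)}{l^{i_1(i_1+1)}\eta_{i_1}(l)\eta_{i_1+1}(l)}.
\]
For the inductive step I would compute the ratio of the two sides of the claim for $j+1$ and $j$. Using Macdonald's formula, after factoring out the contributions from $i_1,\ldots,i_{j-1}$, this ratio takes the form
\[
\frac{l^{-(i_{j+1}+i_{j+1}^2)}}{\eta_{i_j-i_{j+1}}(l)}\cdot\frac{\Sigma(i_{j+1})}{\Sigma(i_j)},
\]
where $\Sigma(n)$ denotes the generating sum over partitions $\mu$ with $\mu'_1\leq n$ of $1/(|M_\mu||\text{Aut}(M_\mu)|)$. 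The same Cauchy-type identity used in the base case evaluates $\Sigma(n)$ in closed form.

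The proposition therefore reduces to the combinatorial identity that this ratio equals $P(i_{j+1}\mid i_j)$. To verify it, I would expand
\[
P(j\mid n) = l^{-n(n+1)}\binom{n}{j}_l\binom{n+1}{j}_l\prod_{s=0}^{n-j-1}(l^{n-j}-l^s),
\]
obtained by directly counting $n\times(n+1)$ matrices of rank $n-j$, and rewrite the Gaussian binomials $\binom{n}{k}_l = \eta_n(l)/(\eta_k(l)\eta_{n-k}(l))\cdot l^{k(n-k)}$ in terms of $\eta$-products. Both sides then reduce to the same explicit product of $\eta_m(l)$ factors and powers of $l$. The main obstacle will be matching the $\eta$-bookkeeping between Macdonald's formula (indexed by the columns of $\lambda$) and the Gaussian binomials (indexed by matrix dimensions), since the two counts are offset by the asymmetry between $i_j$ rows and $i_j+1$ columns in $P(\cdot\mid\cdot)$; this is responsible for the asymmetric factors $\eta_{i_1}(l)\eta_{i_1+1}(l)$ in the final answer and needs to be tracked carefully in the telescoping.
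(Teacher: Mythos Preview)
Your approach is correct in outline but genuinely different from the paper's. The paper does not touch Macdonald's formula or $q$-series at all. Instead it realizes $\mu_{\text{C.L.}}^1$ as the $N\to\infty$ limit of the cokernel distribution of a Haar-random $N\times(N+1)$ matrix over $\mathbb{Z}_l[\zeta_l]$, and then reads off the rank filtration directly from this model: the event $f(1)=i_1$ is the event that $N+1$ random vectors in $\mathbb{F}_l^N$ span a subspace of dimension $N-i_1$, which gives the base case by an elementary count; and conditionally on the image of $(v_1,\ldots,v_{N+1})$ modulo $(1-\zeta_l)^j$ being fixed, the next rank $f(j+1)$ is determined by where $i_j+1$ residual vectors land in an $i_j$-dimensional $\mathbb{F}_l$-space, which is literally a uniform $i_j\times(i_j+1)$ matrix over $\mathbb{F}_l$ and hence yields $P(i_{j+1}\mid i_j)$ on the nose.

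The trade-off is clear. The paper's argument \emph{explains} why the transition probabilities are the matrix-rank probabilities $P(\cdot\mid\cdot)$: the Markov structure of the rank sequence is built into the random-cokernel model, and no identity needs to be verified. Your route is more self-contained (no limiting procedure, no Haar measure), but you pay for this by having to match two independently computed quantities --- the Macdonald/Cauchy side and the Gaussian-binomial expansion of $P(j\mid n)$ --- and, as you note, the $\eta$-bookkeeping with the $n$ versus $n+1$ asymmetry is where errors creep in. Both are valid; the paper's is shorter and more conceptual for this particular statement.
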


The rest of this section is devoted to the proof of Proposition \ref{We show Cohen--Lenstra}. First recall that for each $A \in \mathcal{G}_{\mathbb{Z}_l[\zeta_l]}$, the measure $\mu_{\text{C.L.}}^{1}(A)$ can be obtained as the limit
$$
\lim_{N \to \infty} \mu_{\text{Haar}}\left(\left\{(v_1, \ldots, v_{N + 1}) \in \left(\mathbb{Z}_l[\zeta_l]^N\right)^{N + 1} : \frac{\mathbb{Z}_l[\zeta_l]^N}{\langle v_1, \ldots, v_{N + 1}\rangle} \simeq A\right\}\right) = \mu_{\text{C.L.}}^{1}(A).
$$
Indeed, for each positive integer $N$, denote by $\mathcal{L}_{A, N}$ the set of $\mathbb{Z}_l[\zeta_l]$-submodules $L$ of $\mathbb{Z}_l[\zeta_l]^N$ satisfying 
$$
\frac{\mathbb{Z}_l[\zeta_l]^N}{L} \simeq A.
$$
We have that
\begin{multline*}
\mu_{\text{Haar}}\left(\left\{(v_1, \ldots, v_{N + 1}) \in \left(\mathbb{Z}_l[\zeta_l]^N\right)^{N + 1} : \frac{\mathbb{Z}_l[\zeta_l]^N}{\langle v_1, \ldots, v_{N + 1} \rangle} \simeq A\right\}\right) = \\
\sum_{L \in \mathcal{L}_{A, N}} \mu_{\text{Haar}}\left(\left\{(v_1, \ldots, v_{N + 1}) \in \left(\mathbb{Z}_l[\zeta_l]^N\right)^{N + 1} : \langle v_1, \ldots, v_{N + 1} \rangle = L\right\}\right).
\end{multline*}
We further have
\begin{multline*}
\mu_{\text{Haar}}\left(\left\{(v_1, \ldots, v_{N + 1}) \in \left(\mathbb{Z}_l[\zeta_l]^N\right)^{N + 1} : \langle v_1, \ldots, v_{N + 1} \rangle = L\right\}\right) = \\
\prod_{i = 2}^{N + 1} \left(1 - \frac{1}{l^i}\right) \cdot \frac{1}{\left|A\right|^{N + 1}} \cdot \left|\mathcal{L}_{A, N}\right|
\end{multline*}
and the following simple formula for $\left|\mathcal{L}_{A, N}\right|$ 
$$
\left|\mathcal{L}_{A, N}\right| = \frac{\left|\text{Epi}_{\mathbb{Z}_l[\zeta_l]}\left(\mathbb{Z}_l[\zeta_l]^{N}, A\right)\right|}{\left|\text{Aut}_{\mathbb{Z}_l[\zeta_l]}(A)\right|}.
$$
But observe that 
$$
\frac{\left|\text{Epi}_{\mathbb{Z}_l[\zeta_l]}(\mathbb{Z}_l[\zeta_l]^N, A)\right|}{\left|A\right|^N} \to 1
$$ 
as $N$ goes to infinity. This gives 
$$
\lim_{N \to \infty} \mu_{\text{Haar}}\left(\left\{(v_1, \ldots, v_{N + 1}) \in \left(\mathbb{Z}_l[\zeta_l]^N\right)^{N + 1} : \frac{\mathbb{Z}_l[\zeta_l]^N}{\langle v_1, \ldots, v_{N + 1} \rangle} \simeq A\right\}\right) = \mu_{\text{C.L.}}^1(A).
$$ 
It is not difficult to show the slightly refined conclusion that the convergence also holds if we take a subset of $\mathcal{G}_{\mathbb{Z}_l[\zeta_l]}$, which thus shows that $\mu_{\text{C.L.}}^{1}$ is a probability measure. Using this we can show Proposition \ref{We show Cohen--Lenstra} by first computing the pushforward of $\mu_{\text{C.L.}}^{1}$ by $\text{rk}$ at stage $N$, i.e. the $N$-th approximation of the pushforward. Sending $N$ to infinity will yield the desired conclusion. 

In the notation of Proposition \ref{We show Cohen--Lenstra}, let us begin with $j = 1$ and fix an integer $i_1 \geq 0$. Observe that the probability that $f(1) = i_1$ at stage $N$ is given by the probability that the reduction of $v_1, \ldots, v_{N + 1}$ modulo $(1 - \zeta_l)$ generates a subspace of dimension $N - i_1$. Splitting the probability by the contribution coming from each subspace of dimension $N - i_1$ one gets
$$
\left|\text{subspaces of dimension $N - i_1$ in $\mathbb{F}_l^N$}\right| \cdot \frac{\mathbb{P}\left((w_1, \ldots, w_{N + 1}) \in \mathbb{F}_l^{N - i_1} \text{ generate}\right)}{l^{i_1(N + 1)}}.
$$
This we can rewrite as
$$
\frac{\mathbb{P}\left((w_1, \ldots, w_{N + 1}) \in \mathbb{F}_l^{N-i_1} \text{ generate}\right)}{\text{Aut}_{\mathbb{F}_l}(\mathbb{F}_l^{i_1})} \cdot \frac{1}{l^{i_1}} \cdot \frac{\left|\text{Epi}\left(\mathbb{F}_l^N, \mathbb{F}_l^{i_1}\right)\right|}{l^{i_1N}}.
$$
Again the factor $\frac{\left|\text{Epi}\left(\mathbb{F}_l^N, \mathbb{F}_l^{i_1}\right)\right|}{l^{i_1N}}$ approaches $1$ as $N$ goes to infinity. Moreover
$$ 
\mathbb{P}\left((w_1, \ldots, w_{N + 1}) \in \mathbb{F}_l^{N - i_1} \text{ generate}\right) = \prod_{i = i_1 + 2}^{N + 1} \left(1 - \frac{1}{l^i}\right).
$$ 
Plugging in and sending $N$ to infinity yields the case $j = 1$ for Proposition \ref{We show Cohen--Lenstra}. We now continue by induction to compute the $N$-th approximation for any $N > i_1$. Observe that whether $(v_1, \ldots, v_{N + 1})$ is giving an $A$ with $f(1) = i_1, \ldots, f(j) = i_j$ can be decided completely by the image of $(v_1, \ldots ,v_{N + 1})$ modulo $(1 - \zeta_l)^j$. Hence we proceed to show that if we \emph{fix} the image of $(v_1, \ldots, v_{N + 1})$ modulo $(1 - \zeta_l)^j$, then the $N$-th probability that $f(j + 1) = i_{j + 1}$, conditional on the image modulo $(1 - \zeta_l)^j$ being fixed, is always $P(i_{j + 1} | i_j)$. From this the desired conclusion follows immediately.

Since the image modulo $(1 - \zeta_l)^j$ has been fixed, we fix a subset $\mathcal{B}$ of $[N + 1]$ such that $\{v_i\}_{i \in \mathcal{B}}$ forms a minimal set of generators for the image modulo $(1 - \zeta_l)^j$. By construction the set $\mathcal{B}$ has size $N - i_j$. By multiplying each element of $\mathcal{B}$ with suitable powers of $1 - \zeta_l$ we obtain a subset of 
$$
V := \frac{(1 - \zeta_l)^j \mathbb{Z}_l[\zeta_l]^N}{(1 - \zeta_l)^{j + 1} \mathbb{Z}_l[\zeta_l]^N}
$$
generating a space of dimension $N - i_j$, which we call $V'$. Note that $V'$ is fixed as $(v_1, \ldots, v_{N + 1})$ varies among vectors with fixed image modulo $(1 - \zeta_l)^j$. Since $\mathcal{B}$ is a minimal set of generators modulo $(1 - \zeta_l)^j$, we see that there is a natural map $F$ that sends the $i_j + 1$ vectors $(v_i)_{i \not \in \mathcal{B}}$ in the $i_j$-dimensional $\mathbb{F}_l$-vector space
$$
\frac{\frac{(1 - \zeta_l)^j \mathbb{Z}_l[\zeta_l]^N}{(1 - \zeta_l)^{j + 1}\mathbb{Z}_l[\zeta_l]^N}}{V'}.
$$
It is seen at once that if the image of $F$ spans a space of dimension $k$, then the resulting $A$ will satisfy $f(j + 1) = i_j - k$. Moreover, it is easy to see that each vector is obtained equally often through $F$. Thus we obtain the desired conclusion. 

\appendix
\section{Cyclic algebras} 
\label{general facts about cyclic algebras}
In this small appendix we collect several basic facts that are used in this paper coming from local and global class field theory along with some more general facts about cyclic algebras over general fields. 

Let $K$ be any field. Denote by $K^{\text{sep}}$ a fixed separable closure of $K$ and by $G_K$ the group of $K$-algebra automorphisms of $K^{\text{sep}}$. Let $\chi : G_K \to \mathbb{C}^\ast$ be a continuous character and define $K(\chi) := (K^{\text{sep}})^{\text{ker}(\chi)}$. Let $n$ be the degree of $K(\chi)$ over $K$ and let $\theta$ be in $K^\ast$. Following the notation from \cite[ch.\ 9]{Weil} we denote by $\{\chi, \theta\}$ the twisted polynomial ring $K(\chi) \langle \beta \rangle$ with the relations
$$
\beta^n = \theta \quad \text{  and  } \beta \lambda \beta^{-1} = \chi^{-1}\left(\exp\left(\frac{2 \pi \text{i}}{\text{ord}(\chi)}\right)\right)(\lambda),
$$
which is a cyclic algebra. Denote by $\Phi_\chi$ the unique map from $G_K$ to $\mathbb{R}$ such that $\text{Im}(\Phi_\chi) \subseteq [0,1)$ and
$$\
\exp\left(2 \pi \text{i} \cdot \Phi_\chi\right) = \chi.
$$
The map $\Phi_{\chi}$ is a locally constant map, whose values are in the set 
$$
\left\{0, \frac{1}{\text{ord}(\chi)}, \ldots, \frac{\text{ord}(\chi) - 1}{\text{ord}(\chi)}\right\}.
$$
Denote by $\widetilde{\Phi_{\chi}}$ the map $\text{ord}(\chi) \cdot \Phi_{\chi}$. The map $\widetilde{\Phi_\chi}$ is a locally constant map with values in $\{0, \ldots , \text{ord}(\chi)-1 \}$. Observe that since $\chi$ is a character, for each $\sigma, \tau \in G_K$ the element
$$
\Phi_\chi(\sigma) + \Phi_\chi(\tau) - \Phi_\chi(\sigma \tau)
$$
is an integer. This allows us to define a $2$-cocycle $h_{\{\chi, \tau\}}$ of $G_K$ with values in $K^\ast$ by the formula
$$
(\sigma, \tau) \mapsto \theta^{\Phi_\chi(\sigma) + \Phi_\chi(\tau) - \Phi_\chi(\sigma \tau)}.
$$
If we could separate the three values on the exponent of $h_{\{\chi, \tau \}}$ we would obtain trivially a coboundary, for this reason we already know that the above formula defines a $2$-cocycle. But the three terms in general can not be separated, since it is only the total sum that is an integer. This observation will be useful in Proposition \ref{cup products and cyclic algebras}. The reason why we introduced this particular $2$-cocycle is that the class of $h_{\{\chi, \theta\}}$ in $\text{Br}_K$ is precisely the class of $\{\chi,\theta\}$. This fact is established in \cite{Weil}. Recall the following fundamental fact from \cite[p.\ 223]{Weil}.

\begin{prop} 
\label{inv and Artin symbols}
Let $K$ be a local field. There is a unique isomorphism
$$
\eta_K : \emph{Br}_K \to \mu_{\infty}(\mathbb{C})
$$
such that for any continuous unramified character $\chi : G_K \to \mathbb{C}^\ast$ and any uniformizer $\pi$ of $K$ we have 
$$
\eta_K\left(\left\{\chi, \pi\right\}\right) = \chi\left(\emph{Frob}_K \ \pi\right).
$$
\end{prop}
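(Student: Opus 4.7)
The strategy is the classical construction of the local invariant map via the unramified subextension, as in Weil \cite[Ch.\ 9]{Weil} or Serre's local fields. First I would identify $\text{Br}_K$ with $H^2(G_K, (K^{\text{sep}})^\ast)$ and show that the subgroup split by the maximal unramified extension $K^{\text{unr}}/K$ is computable explicitly. Writing $\text{Gal}(K^{\text{unr}}/K) \cong \widehat{\mathbb{Z}}$ with canonical generator $\text{Frob}_K$, the valuation short exact sequence
\[
0 \to O_{K^{\text{unr}}}^\ast \to (K^{\text{unr}})^\ast \xrightarrow{v} \mathbb{Z} \to 0
\]
gives a long exact sequence in $H^\ast(\text{Gal}(K^{\text{unr}}/K), -)$. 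The vanishing $H^i(\text{Gal}(K^{\text{unr}}/K), O_{K^{\text{unr}}}^\ast) = 0$ for $i \geq 1$ (which one proves by filtering the unit group by principal units and using that the residue field cohomology vanishes in positive degrees for a finite Galois extension of a finite field) then yields
\[
H^2(\text{Gal}(K^{\text{unr}}/K), (K^{\text{unr}})^\ast) \xrightarrow{\ v_\ast\ } H^2(\text{Gal}(K^{\text{unr}}/K), \mathbb{Z}) \xrightarrow{\ \delta^{-1}\ } H^1(\text{Gal}(K^{\text{unr}}/K), \mathbb{Q}/\mathbb{Z}) \cong \mathbb{Q}/\mathbb{Z},
\]
where the last isomorphism evaluates a character at $\text{Frob}_K$. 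Composing with $x \mapsto \exp(2\pi \mathrm{i} x)$ produces the candidate map to $\mu_\infty(\mathbb{C})$ on the unramified part.

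Second, I would promote this to all of $\text{Br}_K$ by proving that inflation $H^2(\text{Gal}(K^{\text{unr}}/K), (K^{\text{unr}})^\ast) \to \text{Br}_K$ is an isomorphism, i.e.\ that every central simple algebra over $K$ is split by some unramified extension. The clean way is to reduce to cyclic algebras $\{\chi, \theta\}$ with $\chi$ of finite order (these generate $\text{Br}_K$), and then use a Skolem--Noether/maximal subfield argument to replace any splitting field by an unramified one of the same degree. Granting this, the composite of inflation with the explicit map above defines $\eta_K$.

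Third, I would verify the explicit formula on $\{\chi, \pi\}$ with $\chi$ unramified of order $n$ and $\pi$ a uniformizer. The $2$-cocycle $h_{\{\chi, \pi\}}(\sigma, \tau) = \pi^{\Phi_\chi(\sigma) + \Phi_\chi(\tau) - \Phi_\chi(\sigma \tau)}$ already factors through $\text{Gal}(K^{\text{unr}}/K)$, and applying $v$ gives the integer-valued cocycle $\widetilde{\Phi_\chi}(\sigma) + \widetilde{\Phi_\chi}(\tau) - \widetilde{\Phi_\chi}(\sigma \tau)$. Tracing through the connecting homomorphism $\delta$, this cocycle corresponds to the character $\widetilde{\Phi_\chi}/n \in \text{Hom}(\text{Gal}(K^{\text{unr}}/K), \mathbb{Q}/\mathbb{Z})$, whose value at $\text{Frob}_K$ is the element of $\frac{1}{n}\mathbb{Z}/\mathbb{Z}$ which exponentiates to $\chi(\text{Frob}_K)$. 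This gives exactly the formula in the statement, and uniqueness of $\eta_K$ is automatic since every class in $\text{Br}_K$ is represented by some $\{\chi, \pi\}$ with $\chi$ unramified.

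The main obstacle is the second step: the surjectivity of inflation from the unramified Brauer group, i.e.\ that every Brauer class over a local field is split by an unramified extension. The first and third steps are routine cohomological calculations and diagram chases once this input is in place. The independence of $\eta_K(\{\chi, \pi\})$ from the choice of uniformizer $\pi$ (implicit in the statement) also follows from the second step, since changing $\pi$ to $u\pi$ with $u \in O_K^\ast$ alters the cocycle by an image of $u$, which lies in $O_{K^{\text{unr}}}^\ast$ and hence vanishes in the unramified $H^2$ by the cohomological triviality of units.
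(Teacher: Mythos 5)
The paper gives no proof of this proposition; it is quoted verbatim from Weil \cite[p.\ 223]{Weil}, where it is established via the structure theory of central simple algebras over local fields (maximal orders, division algebras having unramified maximal subfields). Your plan is a correct and complete sketch of the other standard route, the Galois-cohomological construction of the local invariant map as in Serre's \emph{Local Fields} or Cassels--Fr\"ohlich: you identify the unramified part of $\text{Br}_K$ with $H^2(\Gal(K^{\text{unr}}/K), (K^{\text{unr}})^\ast)$, use the valuation sequence and cohomological triviality of $O_{K^{\text{unr}}}^\ast$ to transport to $H^2(\widehat{\mathbb{Z}}, \mathbb{Z}) \cong \mathbb{Q}/\mathbb{Z}$ via evaluation at $\text{Frob}_K$, then observe (the genuinely nontrivial step, which you correctly single out) that inflation from the unramified Brauer group is surjective, and finally trace the explicit cocycle $h_{\{\chi,\pi\}}$ through the chain of isomorphisms. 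The cocycle computation in your third step is the same unwinding that appears in the appendix here when the paper introduces $\Phi_\chi$ and $\widetilde{\Phi_\chi}$, and your observation that independence of the choice of uniformizer follows because units are norms from unramified extensions (equivalently, because $H^2(\Gal(K^{\text{unr}}/K), O_{K^{\text{unr}}}^\ast)=0$) is exactly the remark the paper makes in the proof of Corollary~\ref{being a norm in Kchi}. The trade-off between your approach and Weil's: the cohomological route makes the homomorphism property and uniqueness transparent, but pushes all the work into the splitting-by-unramified-extension theorem; Weil's algebra-theoretic route gets the invariant and its multiplicativity simultaneously from the structure of local division algebras, at the cost of more machinery about maximal orders. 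Both are well-trodden, and yours would serve the paper's purposes equally well.
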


The map $\eta_K$ actually equals $\exp(2 \pi \text{i} \cdot\text{inv}_K)$ (for a definition of $\text{inv}_K$ see \cite{Serre}). We shall use $\eta_K$ instead of $\text{inv}$ since our main reference is \cite{Weil}. It is defined also for $K = \mathbb{R}$ or $K = \mathbb{C}$ being trivial in the latter case and being the unique isomorphism between $\text{Br}_{\mathbb{R}}$ and $\langle -1 \rangle$ in the former. Recall the following reformulation of Hilbert's reciprocity law, whose proof can be found in \cite[p.\ 255]{Weil}. 

\begin{prop} 
\label{Hilbert reciprocity}
\emph{(Hilbert reciprocity law)}. Let $K$ be a number field and let $\alpha \in \emph{Br}_K$. Then $\eta_{K_v}(\alpha)$ is trivial for all but finitely many values of $v \in \Omega_{K_v}$. It is trivial at all places if and only if $\alpha$ itself is trivial. Moreover, we have that
$$ 
\prod_{v \in \Omega_K} \eta_{K_v}(\alpha) = 1.
$$
\end{prop}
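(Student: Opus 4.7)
The plan is to follow the classical route via reduction to cyclic algebras and then invoke local-global compatibility of the reciprocity map. First I would reduce to the case $\alpha = \{\chi, \theta\}$ for some continuous finite-order character $\chi : G_K \to \mathbb{C}^\ast$ and some $\theta \in K^\ast$. This is possible because every Brauer class over a number field is a cyclic class: a central simple algebra over $K$ is similar to a crossed-product algebra for some Galois extension of $K$, and after decomposing into primary components one is reduced to cyclic components using Grunwald--Wang--style splitting arguments (or, more directly, by invoking that $\text{Br}_K$ is generated by cyclic algebras, which is already implicit in Weil's treatment).

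For the finiteness assertion, I would let $S \subseteq \Omega_K$ be the finite set consisting of archimedean places, places ramifying in $K(\chi)/K$, and places at which $v(\theta) \neq 0$. For any $v \notin S$, the character $\chi$ extends to an unramified character of $G_{K_v}$ and $\theta \in O_{K_v}^\ast$. Proposition \ref{inv and Artin symbols} then yields
$$
\eta_{K_v}(\{\chi, \theta\}) = \chi(\text{Frob}_{K_v})^{v(\theta)} = 1,
$$
so only finitely many places can contribute non-trivially.

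The core of the proof is the product formula. Here the crucial local input, which should be extracted from Proposition \ref{inv and Artin symbols} by passing from uniformizers to arbitrary elements via multiplicativity, is that at each place $v$ one has the identity $\eta_{K_v}(\{\chi, \theta\}) = \chi(\text{rec}_{K_v}(\theta))$, where $\text{rec}_{K_v}: K_v^\ast \to G_{K_v}^{\text{ab}}$ is the local reciprocity map. Taking the product over all places and using that $\chi$ is a continuous homomorphism,
$$
\prod_{v \in \Omega_K} \eta_{K_v}(\{\chi, \theta\}) = \chi\!\left(\prod_v \text{rec}_{K_v}(\theta)\right) = 1,
$$
where the last equality is the fundamental statement of global Artin reciprocity: the diagonal image of $K^\ast$ inside the idèle group lies in the kernel of the global reciprocity map. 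Finally, the injectivity statement (a Brauer class is trivial iff locally trivial everywhere) is the Albert--Brauer--Hasse--Noether theorem.

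The main obstacle is that the reduction to cyclic algebras, while standard, already rests on non-trivial global class field theory, so the argument risks being circular if one tries to keep it entirely self-contained. The cleanest way around this, and the one Weil follows, is to sidestep the reduction entirely by first proving the product formula for all cup products $(\chi, \theta) \in H^2(G_K, K^{\text{sep} \ast})$ via a direct adelic computation with the $2$-cocycle $h_{\{\chi, \theta\}}$, exploiting that the obstruction to separating the terms $\theta^{\Phi_\chi(\sigma) + \Phi_\chi(\tau) - \Phi_\chi(\sigma\tau)}$ into a coboundary is precisely what is measured by the global reciprocity pairing.
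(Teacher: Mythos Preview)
The paper does not actually prove this proposition; it simply cites Weil, stating that the proof ``can be found in \cite[p.\ 255]{Weil}''. So there is no paper proof to compare against.

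Your sketch is a reasonable outline of the classical argument, and you correctly flag the circularity hazard in the reduction to cyclic algebras. One remark: your computation $\eta_{K_v}(\{\chi,\theta\}) = \chi(\text{Frob}_{K_v})^{v(\theta)}$ at unramified places is not literally what Proposition~\ref{inv and Artin symbols} says (that proposition only treats uniformizers), but it follows from bilinearity of $(\chi,\theta) \mapsto \{\chi,\theta\}$ together with the fact that unramified characters paired against units give trivial classes, which you allude to later. Since the paper treats this as a black box from Weil anyway, your level of detail is already well beyond what the paper provides.
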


Let $l$ be an odd prime. We now turn to recall a relation between cyclic algebras and cup products in case there are $l$-th roots of unity. We shall confine ourselves to classes killed by $l$, since this is the relevant case in our application. For more general results the reader can consult \cite{Szamuely}. For a field $K$ provided with a distinguished generator $\zeta_l$ for $\mu_l(K)$ we shall use precisely the same symbolic formulas introduced in Section \ref{conventions}. Moreover, for such a $K$ and for an element $\theta \in K^\ast$ we denote by
$$ 
\chi_{\theta} : G_K \to \mathbb{F}_l,
$$
the unique continuous character such that for each $\beta \in K^{\sep}$ with $\beta^l = \theta$ we have
$$
\sigma(\beta) = \left(j_l \circ \chi_\theta(\sigma)\right) \beta.
$$ 
In what follows, when we consider the cup product, the trivial Galois modules $\mathbb{F}_l \otimes \mathbb{F}_l$ and $\mathbb{F}_l$ are identified with the isomorphism $a \otimes b \mapsto a \cdot b$. In particular the cup product of two characters $\chi_1, \chi_2$ in $\mathbb{F}_l$ is literally just the product map $(\sigma, \tau) \mapsto \chi_1(\sigma) \chi_2(\tau)$.

\begin{prop} 
\label{cup products and cyclic algebras}
Suppose $K$ is equipped with an element $\zeta_l$ of multiplicative order equal to $l$. Let $\chi$ be a continuous homomorphism from $G_K$ to $\mathbb{F}_l$. Let $\theta$ be in $K^\ast$. We have the following equality in $\emph{Br}_K$
$$
h_{\{j_l \circ \chi, \theta\}} = j_l \circ (\chi \cup \chi_\theta).
$$
\end{prop}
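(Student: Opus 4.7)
The plan is to verify the equality at the level of explicit $2$-cocycles representing these Brauer classes, and then argue that they differ by a coboundary. Concretely, I will unfold the definition of $h_{\{j_l \circ \chi, \theta\}}$ from the twisted-polynomial construction recalled earlier in the appendix, exhibit a $1$-cochain whose coboundary compares it to $j_l \circ (\chi \cup \chi_\theta)$, and then dispose of the remaining discrepancy using the graded-commutativity of the cup product in degree $(1,1)$.

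First I would fix $\beta \in K^{\text{sep}}$ with $\beta^l = \theta$; by definition this gives $\sigma(\beta) = \zeta_l^{\chi_\theta(\sigma)} \beta$ for every $\sigma \in G_K$. Write $\tilde{\chi}(\sigma) \in \{0, 1, \ldots, l-1\}$ for the representative of $\chi(\sigma)$, so that $\Phi_{j_l \circ \chi}(\sigma) = \tilde{\chi}(\sigma)/l$. Then the integer $\varepsilon(\sigma,\tau) := (\tilde{\chi}(\sigma) + \tilde{\chi}(\tau) - \tilde{\chi}(\sigma\tau))/l$ lies in $\{0,1\}$, and the formula recalled from \cite[ch.\ 9]{Weil} gives
$$
h_{\{j_l \circ \chi, \theta\}}(\sigma, \tau) = \theta^{\varepsilon(\sigma, \tau)} = \beta^{l\,\varepsilon(\sigma,\tau)} = \beta^{\tilde{\chi}(\sigma) + \tilde{\chi}(\tau) - \tilde{\chi}(\sigma\tau)}.
$$

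Next I would define the continuous $1$-cochain $c : G_K \to (K^{\text{sep}})^\ast$ by $c(\sigma) := \beta^{\tilde{\chi}(\sigma)}$ and compute its coboundary. Using the Kummer relation $\sigma(\beta) = \zeta_l^{\chi_\theta(\sigma)} \beta$ one obtains
$$
dc(\sigma, \tau) = c(\sigma)\, \sigma(c(\tau))\, c(\sigma\tau)^{-1} = \zeta_l^{\chi_\theta(\sigma)\tilde{\chi}(\tau)} \cdot \beta^{\tilde{\chi}(\sigma) + \tilde{\chi}(\tau) - \tilde{\chi}(\sigma\tau)} = \zeta_l^{\chi_\theta(\sigma)\chi(\tau)} \cdot h_{\{j_l \circ \chi, \theta\}}(\sigma,\tau).
$$
Consequently $h_{\{j_l \circ \chi, \theta\}}$ differs from the cocycle $(\sigma, \tau) \mapsto \zeta_l^{-\chi_\theta(\sigma)\chi(\tau)} = j_l \circ (-\chi_\theta \cup \chi)(\sigma,\tau)$ by a coboundary, so the two represent the same class in $\text{Br}_K$.

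Finally, I would invoke the graded-commutativity of the cup product in $H^\ast(G_K, \mathbb{F}_l)$: for classes of degree one, $a \cup b \equiv -\, b \cup a$ modulo a coboundary, so that $-\chi_\theta \cup \chi$ and $\chi \cup \chi_\theta$ agree in $H^2(G_K, \mathbb{F}_l)$; applying $j_l$ and combining with the previous step yields the claimed identity $h_{\{j_l \circ \chi, \theta\}} = j_l \circ (\chi \cup \chi_\theta)$ in $\text{Br}_K$. The only subtle point is keeping the signs, lifts $\tilde{\chi}$, and conventions for the cup product consistent with the formula $(\sigma,\tau) \mapsto \chi(\sigma)\chi_\theta(\tau)$ used earlier in the paper; once this is handled, graded-commutativity absorbs the only remaining discrepancy.
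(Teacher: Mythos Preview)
Your proof is correct and follows essentially the same route as the paper's own argument: the paper divides $h_{\{j_l\circ\chi,\theta\}}$ by precisely the coboundary of your cochain $c(\sigma)=\beta^{\tilde\chi(\sigma)}$ (written there as $(\sigma,\tau)\mapsto \sigma(\beta^{\widetilde{\Phi_\chi}(\tau)})/\beta^{\widetilde{\Phi_\chi}(\sigma\tau)-\widetilde{\Phi_\chi}(\sigma)}$), arrives at $j_l\circ(-\chi_\theta\cup\chi)$, and then invokes the antisymmetry of the cup product. Your presentation is slightly more explicit in naming the cochain $c$, but the computation and the use of graded-commutativity are identical.
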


\begin{proof}
We divide the 2-cocycle $h_{\{j_l \circ \chi,\theta\}}$ by the $1$-coboundary
$$
(\sigma, \tau) \mapsto \frac{\sigma\left(\beta^{\widetilde{\Phi_\chi}(\tau)}\right)}{\beta^{\widetilde{\Phi_\chi}(\sigma \tau) - \widetilde{\Phi_\chi}(\sigma)}},
$$
where $\beta$ is any element of $K^{\text{sep}}$ with $\beta^l = \theta$ and $\widetilde{\Phi_\chi}$ is shorthand for $\widetilde{\Phi_{j_l \circ \chi}}$. Now using the formula 
$$
\sigma\left(\beta^{\widetilde{\Phi_\chi}(\tau)}\right) = \zeta_l^{\chi_{\theta}(\sigma) \chi(\tau)}\beta^{\widetilde{\Phi_\chi}(\tau)}, 
$$
we obtain that the cocycle we are writing is $j_l \circ (-\chi_{\theta} \cup \chi)$. Recalling that, in cohomology, the cup is antisymmetric we conclude immediately. 
\end{proof}

We in particular deduce the following corollary, which holds in greater generality, see \cite{Weil}, than for cyclic degree $l$ characters and without any restriction on the characteristic. However this generality is the one we need and on the other hand we propose an unusual argument based only on the material of Section \ref{central extensions}, that is anyway of fundamental use in this paper. 

\begin{corollary} 
\label{if and only if it is a norm}
Suppose $\emph{char}(K) \neq l$. Let $\chi$ be a continuous character from $G_K$ to $\mathbb{F}_l$. Let $\theta$ be in $K^\ast$. Then $\{\chi, \theta\}$ is trivial in $\emph{Br}_K$ if and only if $\theta$ is a norm from $K(\chi)$.
\end{corollary}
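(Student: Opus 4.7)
The plan is to deduce the corollary from Proposition \ref{Heisenberg group and norms} by first base-changing to $K(\zeta_l)$ (to gain access to Kummer theory) and then descending the norm condition back to $K$. We may assume $\chi$ has order exactly $l$, since the case $\chi = 0$ is trivial. For the easy implication, if $\theta = N_{K(\chi)/K}(\omega)$, then the explicit $1$-cochain $c(\sigma) := \omega^{\widetilde{\Phi_\chi}(\sigma)}$ can be checked to satisfy $dc = h_{\{\chi, \theta\}}$, so $\{\chi, \theta\}$ is trivial in $\text{Br}_K$.

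For the converse, set $d := [K(\zeta_l) : K]$. Since $d$ divides $l - 1$, it is coprime to $l$, and because $\{\chi, \theta\}$ is $l$-torsion in $\text{Br}_K$ while $\text{cor}_{K(\zeta_l)/K} \circ \text{res}_{G_{K(\zeta_l)}}$ acts as multiplication by $d$, the Brauer class vanishes in $\text{Br}_K$ if and only if its restriction vanishes in $\text{Br}_{K(\zeta_l)}$. Over $K(\zeta_l)$, Kummer theory gives $K(\chi) K(\zeta_l) = K(\zeta_l)(\sqrt[l]{b})$ for some $b \in K(\zeta_l)^\ast$, and after replacing $b$ with a suitable power coprime to $l$ we arrange $\chi|_{G_{K(\zeta_l)}} = \chi_b$ in the notation of Proposition \ref{Heisenberg group and norms}. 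Proposition \ref{cup products and cyclic algebras} identifies the class of $\{\chi_b, \theta\}$ in $\text{Br}_{K(\zeta_l)}$ with $j_l \circ (\chi_b \cup \chi_\theta)$, and Proposition \ref{abstract criterion for realizing central extensions} combined with Proposition \ref{Heisenberg group and norms} then gives, in the generic case in which $b$ and $\theta$ are independent modulo $l$-th powers in $K(\zeta_l)$, that this class vanishes precisely when $\theta$ lies in $N_{K(\zeta_l)(\sqrt[l]{b})/K(\zeta_l)}\bigl(K(\zeta_l)(\sqrt[l]{b})^\ast\bigr)$. The degenerate cases ($\theta \in K(\zeta_l)^{\ast l}$, or $\theta \cdot b^{-k} \in K(\zeta_l)^{\ast l}$ for some $k \in \mathbb{F}_l^\ast$) are handled separately: in both cases $\theta$ can be written explicitly as a norm from $K(\zeta_l)(\sqrt[l]{b})$ (using $N(\sqrt[l]{b}) = b$ for $l$ odd), and a short direct computation shows the corresponding cyclic algebra is split.

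The final step is to descend from $K(\zeta_l)$ back to $K$. If $\theta = N_{K(\chi) K(\zeta_l)/K(\zeta_l)}(\omega)$, then applying $N_{K(\zeta_l)/K}$ and using the tower formula yields $\theta^d = N_{K(\chi)/K}\bigl(N_{K(\chi) K(\zeta_l)/K(\chi)}(\omega)\bigr)$, so $\theta^d \in N_{K(\chi)/K}(K(\chi)^\ast)$. Since $\theta^l = N_{K(\chi)/K}(\theta)$ is trivially a norm and $\gcd(d, l) = 1$, a Bezout identity gives $\theta \in N_{K(\chi)/K}(K(\chi)^\ast)$, completing the proof. The main obstacle I anticipate is the bookkeeping in the degenerate Kummer cases of the middle step, where Proposition \ref{Heisenberg group and norms} does not directly apply and both sides of the equivalence must be verified by separate explicit computations; the corestriction reduction and the final Bezout descent are otherwise routine.
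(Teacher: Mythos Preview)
Your overall strategy coincides with the paper's: reduce to $K(\zeta_l)$ via corestriction (using $\gcd([K(\zeta_l):K],l)=1$), apply Proposition~\ref{Heisenberg group and norms} there, and descend the norm condition by the same B\'ezout argument on $\theta^{d}$ and $\theta^{l}$. The paper treats both implications uniformly through this reduction rather than giving a direct cochain for the easy direction, and it does not explicitly separate out the degenerate Kummer case, but the skeleton is identical.

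One concrete error in your easy direction: the cochain $c(\sigma)=\omega^{\widetilde{\Phi_\chi}(\sigma)}$ does not satisfy $dc=h_{\{\chi,\theta\}}$. For instance, with $l>2$ and $\widetilde{\Phi_\chi}(\sigma)=\widetilde{\Phi_\chi}(\tau)=1$ there is no carry, so $h(\sigma,\tau)=1$, whereas $dc(\sigma,\tau)=\sigma(\omega)\cdot\omega/\omega^{2}=\sigma(\omega)/\omega$, which is nontrivial unless $\omega\in K$. The cochain that actually works is the partial norm
\[
c(\sigma)=\prod_{i=0}^{\widetilde{\Phi_\chi}(\sigma)-1} g_0^{\,i}(\omega),
\]
with $g_0$ the distinguished generator of $\Gal(K(\chi)/K)$ determined by $\chi$; then $c(\sigma)\cdot\sigma(c(\tau))/c(\sigma\tau)$ is a product of $l$ consecutive Galois translates of $\omega$ (hence $N(\omega)=\theta$) exactly when $\widetilde{\Phi_\chi}(\sigma)+\widetilde{\Phi_\chi}(\tau)\geq l$, and is the empty product $1$ otherwise. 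This fix is local and does not affect the rest of your argument.
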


\begin{proof}
We show how to reduce to the case that $\mu_l(K) \neq \{1\}$. Once that is done, we reach the desired conclusion by an application of Proposition \ref{Heisenberg group and norms}. Since $\text{char}(K) \neq l$ we can adjoin in any case to $K$ an element $\zeta_l$ from $K^{\text{sep}}$ having multiplicative order equal to $l$. Thanks to the co-restriction map, we see that $\{\chi, \theta\}$ is trivial in $\text{Br}_{K(\zeta_l)}$ if and only if $\{\chi, \theta\}$ is trivial in $\text{Br}_{K}$. Here we use that $[K(\zeta_l) : K]$ divides $l - 1$ and hence is coprime to $l$.

It remains to prove that $\theta$ is a norm from $K(\zeta_l)(\chi)$ if and only if $\theta$ is a norm from $K(\chi)$. Suppose that $\theta$ is a norm from $K(\zeta_l)(\chi)$, say
$$
\theta = N_{K(\zeta_l)(\chi)/K(\zeta_l)}(\gamma)
$$ 
for some $\gamma \in K(\zeta_l)(\chi)$. Then we see that 
$$
\theta^{[K(\zeta_l):K]}=N_{K(\zeta_l)(\chi)/K}(\gamma)
$$
and hence 
$$
\theta^{[K(\zeta_l) : K]} = N_{K(\chi)/K}(N_{K(\zeta_l)(\chi)/K(\chi)}(\gamma)).
$$
Using once more that $[K(\zeta_l) : K] \mid l - 1$, we conclude that $\theta$ is a norm from $K(\chi)$. The other direction is more general. Now suppose $\theta = N_{K(\chi)/K}(\gamma)$ for some $\gamma \in K(\chi)$. Observe that $\text{Gal}(K(\zeta_l)(\chi)/K(\zeta_l))$ maps injectively into a normal subgroup of $\text{Gal}(K(\chi)/K)$. Fix a set of representatives $\mathcal{S}$ for the quotient of this normal subgroup. Write $\gamma':=\prod_{g \in \mathcal{S}} g(\gamma)$, then we have 
$$
N_{K(\zeta_l)(\chi)/K(\zeta_l)}(\gamma') = \prod_{h \in \text{Gal}(K(\zeta_l)(\chi)/K(\zeta_l))} h(\gamma') = \prod_{g \in \text{Gal}(K(\chi)/K)} g(\gamma) = \theta.
$$ 
This shows the other direction.
\end{proof}

We end this section by recalling how the field of definition of the character $\chi_q \in \Gamma_{\mu_l}(\mathbb{Q})$, introduced in Section \ref{conventions}, looks locally at $q$. We begin by recalling the following basic fact.

\begin{prop} 
\label{Tame lemma}
Let $K$ be a local field and $d$ a positive integer coprime with the size of the residue field of $K$. Let $f(x)$ be a degree $d$ Eisenstein polynomial over $K$. Then
$$
K[x]/f(x) \simeq_{K \emph{-alg}} K\left[\sqrt[d]{-f(0)}\right].
$$
\end{prop}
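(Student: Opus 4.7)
The plan is to exhibit an explicit $d$-th root of $-f(0)$ inside $L := K[x]/f(x)$ and then observe that both $L$ and $K[\sqrt[d]{-f(0)}]$ have degree $d$ over $K$.

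First I would set up the basic structure. Since $f$ is Eisenstein, it is irreducible, so $L$ is a field of degree $d$ over $K$ and a root $\pi$ of $f$ in $L$ is a uniformizer of the unique extension of the valuation of $K$; in particular $L/K$ is totally ramified of degree $d$, and since $d$ is coprime to the residue characteristic the extension is tame. Writing $f(x) = x^d + a_{d-1}x^{d-1} + \ldots + a_1 x + a_0$ with $v_K(a_i) \geq 1$ and $v_K(a_0) = 1$, the identity $f(\pi)=0$ rearranges as
$$
\pi^d = -a_0 \left(1 + \frac{a_{d-1}\pi^{d-1} + \ldots + a_1 \pi}{a_0}\right) = -f(0)\cdot u,
$$
where each term $a_i \pi^i / a_0$ has strictly positive $L$-valuation (this uses $v_L(\pi)=1$, $v_L(a_i)\geq d$, $v_L(a_0) = d$). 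So $u \in 1 + \mathfrak{m}_L$.

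Second I would apply Hensel's lemma to the polynomial $g(Y) = Y^d - u$ with approximate root $Y_0 = 1$. We have $g(1) = 1 - u \in \mathfrak{m}_L$ and $g'(1) = d$; since $d$ is a unit in $O_K$ by hypothesis on the residue field (as $\gcd(d,|k|)=1$ forces $\gcd(d,\mathrm{char}(k))=1$ so $|d|_K = 1$), the standard criterion $|g(1)| < |g'(1)|^2$ is satisfied. This yields $y \in 1 + \mathfrak{m}_L$ with $y^d = u$, whence
$$
(\pi/y)^d = -f(0).
$$
Thus $\pi/y \in L$ is an honest $d$-th root of $-f(0)$, so we obtain an injective $K$-algebra homomorphism $K[\sqrt[d]{-f(0)}] \hookrightarrow L$.

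Finally I would match degrees. Since $v_K(-f(0)) = v_K(a_0) = 1$, the polynomial $Y^d + f(0)$ is itself Eisenstein over $K$, hence irreducible, so $K[\sqrt[d]{-f(0)}]$ has degree exactly $d$ over $K$. Combined with $[L:K]=d$, the inclusion above is an isomorphism. There is no real obstacle here; the only point that must be handled with a little care is the tameness input, but this is exactly where the coprimality hypothesis between $d$ and $|k|$ is used, both in guaranteeing that Hensel applies to lift the $d$-th root of $u$ and in the implicit background fact that the Eisenstein extension $L/K$ is tame.
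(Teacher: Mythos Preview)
Your proof is correct and follows essentially the same route as the paper: both show that $\pi^d/(-f(0))$ lies in $1+\mathfrak{m}_L$ and then extract a $d$-th root of this principal unit to produce a $d$-th root of $-f(0)$ inside $L$, finishing by the Eisenstein criterion for $T^d+f(0)$. The only cosmetic difference is that where you invoke Hensel's lemma on $Y^d-u$, the paper instead observes that $U_1(L)$ is a $\mathbb{Z}_p$-module (with $p$ the residue characteristic) and hence $d$-divisible; these two justifications are equivalent.
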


\begin{proof}
Without loss of generality we may assume that $f$ is monic. In $K[x]/f(x)$ we have 
$$
x^d = -f(0) + \sum_{i = 1}^{d - 1} a_i x^i,
$$
where the $-a_i$ are the various coefficients of $f(x)$. Dividing out by $-f(0)$ we obtain
$$
\frac{x^d}{-f(0)} = 1 + \sum_{i = 1}^{d - 1} \frac{a_i}{-f(0)} x^i.
$$
Since $f(x)$ is Eisenstein, we have that $\frac{a_i}{-f(0)}$ is still integral. Hence $\frac{a_i}{-f(0)}x^i$ is in the maximal ideal of $O_{K[x]/f(x)}$ for each $i$ between $1$ and $d - 1$. This implies
$$
1 + \sum_{i = 1}^{d - 1}\frac{a_i}{-f(0)} x^i \in U_1(K[x]/f(x)).
$$ 
The topological group $U_1(K[x]/f(x))$ is a $\mathbb{Z}_l$-module, where $l$ is the residue characteristic of $K$. In particular it is $d$-divisible, since $d$ is coprime to $l$. Therefore we conclude that 
$$
1 + \sum_{i=1}^{d - 1} \frac{a_i}{-f(0)} x^i
$$
is a $d$-th power and hence also $-f(0)$ is a $d$-th power, since it is the ratio of two $d$-th powers. Finally, the polynomial $T^d + f(0)$ is again Eisenstein; so if we pick $\beta \in K[x]/f(x)$ with $\beta^d = -f(0)$ we see that $K(\beta) = K[x]/f(x)$ and 
$$
K(\beta) \simeq_{K \text{-alg}} K\left[\sqrt[d]{-f(0)}\right],
$$
which is the desired isomorphism.
\end{proof}

Therefore we conclude the following fact, which also follows from class field theory as we shall see in the second proof.

\begin{corollary}
\label{chiq locally at q}
Let $q$ be a prime that is $1$ modulo $l$ and let $\chi_q$ be as given in Section \ref{conventions}. Then we have
$$
(K_{\chi_q})_{\emph{Up}_{K_{\chi_q}}(q)} \simeq_{\mathbb{Q}_q\emph{-alg}} \mathbb{Q}_q(\sqrt[l]{q}).
$$
\end{corollary}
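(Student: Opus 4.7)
The plan is to use the Eisenstein-polynomial criterion of Proposition \ref{Tame lemma} together with the explicit description of $K_{\chi_q}$ as the unique cyclic degree $l$ subextension of $\mathbb{Q}(\zeta_q)/\mathbb{Q}$.

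First, I would observe that by the definition in Section \ref{conventions} we have $K_{\chi_q}\subseteq \mathbb{Q}(\zeta_q)$, and $K_{\chi_q}/\mathbb{Q}$ is totally ramified at $q$. Hence the completion $(K_{\chi_q})_{\mathrm{Up}_{K_{\chi_q}}(q)}$ is a degree $l$ subfield of $\mathbb{Q}_q(\zeta_q)$, and it is the unique such subfield because $\mathbb{Q}_q(\zeta_q)/\mathbb{Q}_q$ is totally ramified cyclic of degree $q-1$ (which is divisible by $l$).

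Next, I would apply Proposition \ref{Tame lemma} to the minimal polynomial of $\zeta_q - 1$ over $\mathbb{Q}_q$, namely $f(X) := \Phi_q(X+1)$. This is an Eisenstein polynomial of degree $q-1$ (coprime to the residue characteristic $q$), and its constant term is $f(0) = \Phi_q(1) = q$. Thus Proposition \ref{Tame lemma} yields a $\mathbb{Q}_q$-algebra isomorphism
$$
\mathbb{Q}_q(\zeta_q) \;=\; \mathbb{Q}_q[X]/f(X) \;\simeq\; \mathbb{Q}_q\!\left(\sqrt[q-1]{-q}\right).
$$

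The remaining step is to pin down the unique degree $l$ subfield of $\mathbb{Q}_q(\sqrt[q-1]{-q})$. Writing $\beta := \sqrt[q-1]{-q}$ and $m := (q-1)/l$, the Galois group is cyclic of order $q-1$ acting on $\beta$ by multiplication by $(q-1)$-th roots of unity (which all lie in $\mathbb{Q}_q$ since $q-1$ is coprime to $q$ and the residue field is $\mathbb{F}_q$). A direct computation shows that the element $\beta^m$ is fixed exactly by the unique index $l$ subgroup, so the unique degree $l$ subfield equals $\mathbb{Q}_q(\beta^m)$. Since $(\beta^m)^l = \beta^{q-1} = -q$, we conclude that this subfield is $\mathbb{Q}_q(\sqrt[l]{-q})$. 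Finally, because $l$ is odd we have $(-1)^l = -1$, so $\sqrt[l]{-q}$ and $\sqrt[l]{q}$ generate the same extension of $\mathbb{Q}_q$, yielding the desired isomorphism.

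I do not expect any serious obstacle: the only mildly subtle point is verifying that $\mathbb{Q}_q$ contains the $(q-1)$-th roots of unity needed to split the Galois action of $\mathbb{Q}_q(\zeta_q)/\mathbb{Q}_q$ into a diagonal one on the radical generator $\beta$. This follows from Hensel's lemma applied to $X^{q-1}-1$ over the residue field $\mathbb{F}_q$, or equivalently from the fact that the extension $\mathbb{Q}_q(\zeta_q)/\mathbb{Q}_q$ is tamely and totally ramified, so all the $(q-1)$-th roots of unity already live downstairs.
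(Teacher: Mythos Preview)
Your argument is correct and follows essentially the same route as the paper's first proof: apply Proposition~\ref{Tame lemma} to the Eisenstein polynomial $\Phi_q(X+1)$ to identify $\mathbb{Q}_q(\zeta_q)$ with $\mathbb{Q}_q(\sqrt[q-1]{-q})$, extract the unique degree~$l$ subfield as $\mathbb{Q}_q(\sqrt[l]{-q})$, and drop the sign using that $l$ is odd. Your explicit identification of the subfield via $\beta^m$ and the Galois action is a bit more detailed than the paper, which simply asserts that step, but the strategy is the same.
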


\begin{proof}[First proof]
If we denote by $\Phi_q(T)$ the $q$-th cyclotomic polynomial, we observe that $\Phi_q(T+1)$ is Eisenstein of degree $q - 1$. Moreover, if evaluated in $0$, $\Phi_q(T + 1)$ is equal to $q$. Therefore we conclude by Proposition \ref{Tame lemma} that $\mathbb{Q}_q(\zeta_q)$ completed at $(1 - \zeta_q)$ is the extension $\mathbb{Q}_q(\sqrt[q-1]{-q})$. In particular its unique degree $l$ subextension is given by $\mathbb{Q}_q(\sqrt[l]{-q})$. Since $l$ is odd, the minus sign is irrelevant and the conclusion follows. 
\end{proof}

\begin{proof}[Second proof]
Still by looking at the polynomial $\Phi_q(T)$, we see that $-q$ is a norm locally from $\mathbb{Q}_q(\zeta_q)$. Hence $-q$ is also a norm from the degree $l$ subextension, which is of odd degree, so $q$ is a norm from the degree $l$ subextension. On the other hand, since $q$ is $1$ modulo $l$, we have that the extension $\mathbb{Q}_q(\sqrt[l]{q})$ is cyclic of degree $l$. Moreover, taking the norm of $-\sqrt[l]{q}$ we see that $q$ is a norm also in this extension. It is not difficult to conclude from this that the two fields in the isomorphism have the same norm group. The conclusion follows from local class field theory. 
\end{proof}

\end{document}